\documentclass[a4paper]{amsart}

\usepackage{amsmath,amstext,amssymb,mathrsfs,amscd,amsthm,indentfirst}
\usepackage{amsfonts}
\usepackage{mathtools}
\usepackage{stmaryrd}

\usepackage{booktabs}
\usepackage{verbatim}
\usepackage{enumerate}

\usepackage{graphicx}
\numberwithin{equation}{section}


\newcommand{\CircNum}[1]{\ooalign{\hfil\raise .00ex\hbox{\scriptsize #1}\hfil\crcr\mathhexbox20D}}

\newcommand{\bF}{\mathbb{F}}

\newcommand{\bN}{\mathbb{N}}

\newcommand{\bR}{\mathbb{R}}

\newcommand{\bZ}{\mathbb{Z}}

\newcommand{\MTtheta}{\mathrm{MT \theta}}

\newcommand\lra{\longrightarrow}

\newcommand\Diff{\mathrm{Diff}}
\newcommand\Emb{\mathrm{Emb}}
\newcommand\End{\mathrm{End}}
\newcommand\Bun{\mathrm{Bun}}

\newcommand\colim{\operatorname*{colim}}
\newcommand\hocolim{\operatorname*{hocolim}}

\newcommand\Ker{\operatorname*{Ker}}

\newcommand{\hcoker}{/\!\!/}

\newcommand{\cob}{\mathcal{D}}


\newcommand{\Lk}{\mathrm{Lk}}

\newcommand{\R}{\bR}

\newcommand{\Int}{\mathrm{int}}

\newcommand{\Hom}{\mathrm{Hom}}

\newcommand{\hAut}{\mathrm{hAut}}

\renewcommand{\epsilon}{\varepsilon}

\newcommand{\Mod}{\mathrm{Mod}}

\newcommand{\MM}{\mathscr{M}}
\newcommand{\NN}{\mathscr{N}}

\newcommand{\Ob}{\mathrm{Ob}}
\newcommand{\Mor}{\mathrm{Mor}}

\newcommand{\supp}{\mathrm{supp}}

\mathchardef\ordinarycolon\mathcode`\:
\mathcode`\:=\string"8000
\begingroup \catcode`\:=\active
  \gdef:{\mathrel{\mathop\ordinarycolon}}
\endgroup

\usepackage[all]{xy}
\CompileMatrices

\usepackage{amsthm}
\theoremstyle{plain}

\newtheorem{theorem}{Theorem}[section]

\newtheorem{proposition}[theorem]{Proposition}
\newtheorem{lemma}[theorem]{Lemma}

\newtheorem{corollary}[theorem]{Corollary}

\theoremstyle{definition}
\newtheorem{definition}[theorem]{Definition}

\newtheorem{claim}[theorem]{Claim}
\newtheorem{construction}[theorem]{Construction}

\theoremstyle{remark}

\newtheorem{remark}[theorem]{Remark}
\newtheorem*{remark*}{Remark}

\title[Homological stability II]{Homological stability for moduli spaces
  of high dimensional manifolds. II}

\author{S{\o}ren Galatius}
\email{galatius@stanford.edu}
\address{Department of Mathematics\\
	Stanford University\\
	Stanford CA, 94305}

\author{Oscar Randal-Williams}
\email{o.randal-williams@dpmms.cam.ac.uk}
\address{Centre for Mathematical Sciences\\
Wilberforce Road\\
Cambridge CB3 0WB\\
UK}

\dedicatory{Dedicated to Michael Weiss on the occasion of his 60th birthday}

\date{\today}
\subjclass[2010]{57R90, 57R15, 57R56, 55P47}

\begin{document}
\begin{abstract}
  We prove a homological stability theorem for moduli spaces of
  manifolds of dimension $2n$, for attaching handles of index at least
  $n$, after these manifolds have been stabilised by countably many
  copies of $S^n \times S^n$.

  Combined with previous work of the authors, we obtain an analogue of
  the Madsen--Weiss theorem for any simply-connected manifold of
  dimension $2n \geq 6$.
\end{abstract}
\maketitle

\section{Introduction and statement of results}

Let $W$ be a smooth compact connected manifold of dimension
$2n \geq 2$ and let $\Diff_\partial(W)$ denote the topological group
of diffeomorphisms of $W$ relative to its boundary.  This paper
concludes our study (\cite{GR-W2, GR-W3}) of the cohomology of the
classifying space $B\Diff_\partial(W\sharp g(S^n \times S^n))$ as well
as some variants, resulting in a formula for its cohomology in the
limit $g \to \infty$ in terms of a purely homotopy theoretic
construction.

The main result is a new \emph{stable homological stability} theorem,
about the dependency of (a variant of)
$B\Diff_\partial(W \sharp g(S^n \times S^n))$ on $W$ in the limit
$g \to \infty$.  Roughly speaking, it asserts that this limit depends
only on the $n$-skeleton of $W$ together with the the restriction of
the tangent bundle, up to tangential homotopy equivalence.

When combined with our earlier work on the subject, we deduce a
homotopy theoretic formula for the cohomology of
$B\Diff_\partial (W \sharp g(S^n \times S^n))$ for finite $g$ in a
range of degrees increasing with $g$.  The formula applies when
$2n \geq 6$ and $W$ is simply-connected.  This type of formula is
well-suited for calculations, and has applications to positive scalar
curvature (\cite{BER-W}), topological Pontryagin classes
(\cite{Dalian}), and homotopy equivalences of manifolds
(\cite{BerglundMadsen2}).

\subsection{Statement of the main theorem}

In order to state our homological stability results more precisely, we
must first introduce a variant of the space $B\Diff_\partial(W)$ which
incorporates certain tangent bundle information. As in our earlier
paper \cite{GR-W2} the clearest statements can be made by not fixing a
manifold $W$, but rather considering all manifolds with a specified
boundary at once.

To define this modified classifying space, we fix a map
$\theta : B \to BO(2n)$ classifying a vector bundle
$\theta^*\gamma_{2n}$ over $B$, and we shall suppose throughout that
$B$ is path connected. We shall call a bundle map
$TW \to \theta^*\gamma_{2n}$ a \emph{$\theta$-structure} on $W$.

\begin{definition}\label{defn:1.1}
  Let $W$ be a $2n$-dimensional manifold with boundary $P$, and
  $\hat{\ell}_P : \epsilon^1 \oplus TP \to \theta^*\gamma_{2n}$ be a
  bundle map. Let $\Bun^\theta_{n, \partial}(TW;\hat{\ell}_{P})$
  denote the space of all bundle maps
  $\hat{\ell} : TW \to \theta^*\gamma_{2n}$ which are equal to
  $\hat{\ell}_{P}$ when restricted to $P$ and whose underlying map
  $\ell: W \to B$ is $n$-connected. Let
  $$\NN^\theta_n(P, \hat{\ell}_P) = \coprod_{[W]}
  \Bun^\theta_{n, \partial}(TW;\hat{\ell}_{P}) \hcoker
  \Diff_\partial(W),$$
  where the disjoint union is taken over compact manifolds $W$,
  equipped with a diffeomorphism $\partial W \cong P$, one in each
  diffeomorphism class relative to $P$.  The topological group
  $\Diff_\partial(W)$ is the group of diffeomorphisms of $W$
  restricting to the identity on a neighbourhood of $\partial W$, and
  ``$\hcoker$'' denotes the homotopy orbit space.
\end{definition}

If $M$ is a cobordism between $P$ and $Q$ such that $(M,Q)$ is
$(n-1)$-connected, and is equipped with a $\theta$-structure
$\hat{\ell}_M$ restricting to $\hat{\ell}_P$ over $P$ and
$\hat{\ell}_Q$ over $Q$, then there is an induced map
\begin{equation}\label{eq:22}
  - \cup_Q (M, \hat{\ell}_M) : \NN^\theta_n(Q, \hat{\ell}_Q) \lra
  \NN^\theta_n(P, \hat{\ell}_P),
\end{equation}
which on the path components corresponding to $W$ is induced by the
$\Diff_\partial(W)$-equivariant map
$\Bun^\theta_{n, \partial}(TW;\hat{\ell}_{Q}) \to
\Bun^\theta_{n, \partial}(T(W \cup_Q M );\hat{\ell}_{P})$ defined by
extending bundle maps by $\hat{\ell}_M$, and extending diffeomorphisms
of $W$ to $W \cup_Q M$ by the identity map of $M$.

Our main result will concern the homological effect of~(\ref{eq:22})
once all manifolds in sight have been stabilised by connect-sum with
countably many copies of $S^n \times S^n$. We shall give a point-set
model for forming this stabilisation in Section
\ref{sec:DefnStatement}, but for now the following explanation of
diagram~(\ref{eq:IntroStabMap}) below will suffice.

Let $p: [0,1] \times D^{2n-1} \hookrightarrow M$ be an embedding
sending $\{0\} \times D^{2n-1}$ into $Q$ and $\{1\} \times D^{2n-1}$
into $P$. (In particular, $P$ and $Q$ must be non-empty.) We may then
form the connect-sum
$${_Q H} = ([0,1] \times Q) \sharp (S^n \times S^n)$$
inside the image of the embedding
$[0,1] \times D^{2n-1} \hookrightarrow [0,1] \times Q$ in a standard
way; we may analogously form ${_P H}$.  Unless $P$ and $Q$ are path
connected, the diffeomorphism types of ${_P H}$ and ${_Q H}$ may
depend on $p$ and not just $P$ and $Q$, but we suppress this
dependence from the notation.  Sliding along the thickened path $p$
induces a diffeomorphism
\begin{equation}\label{eq:diffeo}
  M \cup_P ({_P H}) \cong ({_Q H}) \cup_Q M
\end{equation}
relative to the common boundaries $P$ and $Q$.

\begin{definition}\label{defn:admissible}
  Let $W_{1,1} = S^n \times S^n \setminus \mathrm{int}(D^{2n})$ and
  let us say that a $\theta$-structure
  $\hat{\ell} : TW_{1,1} \to \theta^*\gamma$ is \emph{admissible} if
  there is a pair of orientation-preserving embeddings
  $e, f : S^n \times D^n \hookrightarrow W_{1,1} \subset S^n \times
  S^n$ whose cores $e(S^n \times \{0\})$ and $f(S^n \times \{0\})$
  intersect transversely in a single point, such that each of the
  $\theta$-structures $e^*\hat{\ell}$ and $f^*\hat{\ell}$ on
  $S^n \times D^n$ extend to $\bR^{2n}$ for some
  orientation-preserving embeddings
  $S^n \times D^n \hookrightarrow \bR^{2n}$.
\end{definition}  

The manifolds ${_P H}$ and ${_Q H}$ each contain an embedded copy of
$W_{1,1}$ and we shall say that $\theta$-structures on ${_P H}$ and
${_Q H}$ are admissible if they restrict to admissible
$\theta$-structures on $W_{1,1}$. Choosing an admissible
$\theta$-structure $\hat{\ell}_{{_P H}}$ on ${_P H}$ which agrees with
$\hat\ell_P$ on $\{0\} \times P$, we obtain a $\theta$-structure
$\hat{\ell}_M \cup \hat{\ell}_{{_P H}}$ on $M \cup_P ({_P H})$, and
hence via the diffeomorphism \eqref{eq:diffeo} a $\theta$-structure on
$({_Q H}) \cup_Q M$. Restricted to ${_Q H}$ this is an admissible
$\theta$-structure $\hat{\ell}_{{_Q H}}$ which agrees with
$\hat\ell_Q$ on $\{1\} \times Q$.  Restricted to $M$ it gives a new
$\theta$-structure $\hat{\ell}'_M$, which restricts to new
$\theta$-structures $\hat{\ell}'_P$ over $P$ and $\hat{\ell}'_Q$ over
$Q$. These manifolds with $\theta$-structure induce maps in the
diagram
\begin{equation}\label{eq:IntroStabMap}
  \begin{gathered}
    \xymatrix{
      {\NN_n^\theta(Q, \hat{\ell}_Q)} \ar[d]_-{- \cup_Q (M, \hat{\ell}_M)} \ar[rr]^-{- \cup_Q ({_Q H}, \hat{\ell}_{_Q H})} && {\NN_n^\theta(Q, \hat{\ell}_{Q}')} \ar[d]^-{- \cup_Q (M, \hat{\ell}_{M}')} \\
      {\NN_n^\theta(P,\hat{\ell}_P)} \ar[rr]_-{- \cup_P ({_P H}, \hat{\ell}_{_P H})} && {\NN_n^\theta(P,\hat{\ell}_{P}'),}
    }
  \end{gathered}
\end{equation}
and the diffeomorphism~\eqref{eq:diffeo} induces a homotopy between
the two compositions. Iterating this construction, we obtain an
induced map from
$$\hocolim(\NN_n^\theta(Q, \hat{\ell}_Q)  \to \NN_n^\theta(Q, \hat{\ell}'_{Q}) \to \NN_n^\theta(Q, \hat{\ell}''_{Q}) \to \cdots)$$
to the analogous homotopy colimit for $(P, \hat{\ell}_P)$.  This
limiting map is denoted $- \cup_Q (M, \hat{\ell}_M^{(\infty)})$ in the
following theorem, which is the main new result of this paper.

\begin{theorem}\label{thm:Main}
  The map
  $$- \cup_Q (M, \hat{\ell}_M^{(\infty)}) : \hocolim_{g \to \infty} \NN_n^\theta(Q, \hat{\ell}_{Q}^{(g)}) \lra \hocolim_{g \to \infty} \NN_n^\theta(P, \hat{\ell}_{P}^{(g)})$$
  induces an isomorphism on homology with all abelian systems of local
  coefficients.
\end{theorem}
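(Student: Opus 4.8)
The plan is to factor $M$ into elementary cobordisms, dispose of those of index $>n$ directly, and treat the index-$n$ ones using the homological-stability machinery the authors have developed for the spaces $\NN_n^\theta$. To begin the reduction: since $(M,Q)$ is $(n-1)$-connected, a handle-trading argument produces a handle decomposition of $M$ relative to $Q\times\{0\}$ using only handles of index $k$ with $n\le k\le 2n$ (no higher index occurs as $\dim M=2n$), and $\hat\ell_M$ may be put in standard form near each handle. Reading these handles off one at a time, and using that the resulting maps commute up to homotopy with the stabilisation maps precisely as in the square \eqref{eq:IntroStabMap}, exhibits $-\cup_Q(M,\hat\ell_M^{(\infty)})$ as a finite composite of maps of the same form in which $M$ is the trace of a single $k$-surgery with $n\le k\le 2n$. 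So it suffices to treat such an elementary cobordism, and I would separate the cases $k>n$ and $k=n$.

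For $k>n$ I expect $-\cup_Q(M,\hat\ell_M)$ to be a weak homotopy equivalence already before stabilisation. The belt sphere of an index-$k$ handle in dimension $2n$ has dimension $2n-k-1<n-1$, which lies strictly below the middle dimension; the $n$-connectivity of the underlying maps $W'\to B$ forces the relevant spheres to be null-homotopic with the correct normal bundle data, so that the space of embedded $\theta$-structured discs along which the handle may be cancelled is (weakly) contractible, and no Whitney trick is needed. Carrying out this cancellation continuously over the moduli space identifies $-\cup_Q(M,\hat\ell_M)$ with a weak equivalence.

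The case $k=n$ is the heart of the matter: attaching a middle-dimensional handle genuinely alters the moduli space, so $-\cup_Q(M,\hat\ell_M)$ is not a weak equivalence and stabilisation must be used. By homological stability for the spaces $\NN_n^\theta(L,-)$, the homology of $\hocolim_{g}\NN_n^\theta(L,\hat\ell_L^{(g)})$ in each fixed degree agrees with that of $\NN_n^\theta(L,\hat\ell_L^{(g)})$ for $g$ large; so it is enough to show that $-\cup_Q(M,\hat\ell_M)\colon\NN_n^\theta(Q,\hat\ell_Q^{(g)})\to\NN_n^\theta(P,\hat\ell_P^{(g)})$ is a homology isomorphism in a range of degrees increasing with $g$. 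Here I would invoke the semi-simplicial resolution by spaces of embedded surgery data that underlies the proof of homological stability: in a range increasing with $g$ both moduli spaces are realisations of such resolutions, and $-\cup_Q(M,\hat\ell_M)$ lifts to a levelwise homology equivalence of them, the extra $S^n\times S^n$-summands being exactly what lets one produce --- in families, and compatibly with the face maps --- a disc cancelling the middle-dimensional handle. Comparing the two resolution spectral sequences then gives the isomorphism in the stable range, hence on the homotopy colimits; and the whole argument runs with an arbitrary abelian system of local coefficients, since homological stability holds with such coefficients and the maps in sight induce isomorphisms on $H_1$.

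The main obstacle is the middle-index step: constructing the map of resolutions and verifying that it is a levelwise equivalence calls for a delicate \emph{parametrised} surgery argument --- one has to cancel the handle against a destabilising disc continuously over the moduli space and compatibly with the simplicial structure --- and it is precisely at this point that the hypothesis that $(M,Q)$ is $(n-1)$-connected, and the admissibility of the stabilising $\theta$-structures, are used in an essential way.
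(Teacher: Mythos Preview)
Your reduction to elementary cobordisms of index $n\le k\le 2n$ via handle-trading matches the paper's strategy (Lemma~\ref{lem:HandleStr} and the surrounding discussion), including the need to treat $2n=4$ separately. But both of your main steps diverge from the paper in ways that leave genuine gaps.

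\textbf{The case $k>n$.} You claim that $-\cup_Q(M,\hat\ell_M)$ is a weak homotopy equivalence already before stabilisation, via a cancellation argument based on the low dimension of the belt sphere. The paper does not establish this, and I do not believe it is true: attaching a single $k$-handle with $n<k<2n$ will in general change the set of path components of $\NN_n^\theta$, so the map cannot be a weak equivalence. What the low codimension of the belt sphere \emph{does} buy (and the paper exploits in Theorem~\ref{thm:ContractibilityL}) is that a certain resolution $\mathcal{Z}_j(P)_\bullet\to\mathcal{F}_j(P)$ by spaces of embedded meridian discs is already contractible for finite $j$, by general position. But the paper then proceeds by \emph{induction on $k$}: the map on $p$-simplices of this resolution is identified, up to interchange of support, with gluing a cobordism built from $(k-1)$-handles (Lemma~\ref{lem:CancelHandle}), which lies in $\mathcal{W}$ by inductive hypothesis. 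One only concludes an abelian homology equivalence, not a weak equivalence.

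\textbf{The case $k=n$.} You propose to invoke finite-genus homological stability for $\NN_n^\theta(L,\hat\ell_L^{(g)})$ to reduce to large finite $g$. This is backwards relative to the paper's logic: Theorem~\ref{thm:Main} is proved \emph{first}, with no restriction on $n$ or on $\pi_1(B)$, and the finite-genus results (Corollaries~\ref{cor:Main1} and~\ref{cor:Main2}) are deduced afterwards by combining it with \cite{GR-W3}. Your route would import the hypotheses $2n\ge 6$ and $B$ simply-connected into Theorem~\ref{thm:Main}, which the paper explicitly avoids. More substantively, your sketch does not identify the mechanism the paper actually uses: one resolves by a ``higher-dimensional arc complex'' $\mathcal{Y}(P)_\bullet$ of embedded index-$n$ handles (Definition~\ref{defn:ComplexK}), and the contractibility of this resolution (Theorem~\ref{thm:ContractibilityK}, proved in Section~\ref{sec:pf-arc-cx}) holds only \emph{after} passing to the colimit $g\to\infty$, because the Whitney-trick-type move needed to separate $n$-dimensional cores in a $2n$-manifold consumes a copy of $W_{1,1}$ each time. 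The map on $p$-simplices is then identified with gluing an $H_{S_p}$ (Lemma~\ref{lem:InterchangeOfSupport}), which is handled directly in Section~\ref{sec:LeftRight} by a ladder-diagram argument, and one applies a $2$-out-of-$3$ argument to $M\circ r(M)$ and $r(M)\circ r(r(M))$ to conclude for $M$ itself.
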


\begin{remark}
  A system of local coefficients $\mathcal{L}$ on a space $X$ is
  called \emph{abelian} if for each point $x \in X$ the action of
  $\pi_1(X,x)$ on the abelian group $\mathcal{L}(x)$ factors through
  an abelian group. In particular, all constant coefficient systems
  are abelian. If $X$ is path connected and based then an abelian
  system of local coefficients is equivalent to the data of a module
  over the group ring $\mathbb{Z}[H_1(X;\mathbb{Z})]$.
\end{remark}

\subsection{Stable homology}

The homology of the limiting spaces
\begin{equation*}
  \hocolim_{g \to \infty} \NN_n^\theta(P, \hat{\ell}^{(g)}_{P})
\end{equation*}
may in many cases be deduced from the results of \cite{GR-W2}.  In
fact, in Section \ref{sec:StabHomAndGC} we shall revisit the results
of \cite{GR-W2} and see that the two sections of that paper concerning
``surgery on objects'' may be replaced by the homological stability
results of the present paper, by following a line of argument inspired
by~\cite{Tillmann}.  We shall see that in all cases the homology of
the limiting space agrees with the homology of the infinite loop space
of a certain Thom spectrum.  This approach using homological stability
does not give substantially simpler or easier proofs of the results of
\cite{GR-W2}, but it does work in more generality.  For example, we
shall be able to remove the restriction $2n \geq 6$ from \cite{GR-W2}.

Recall that we write $\MTtheta$ for the Thom spectrum of the virtual
vector bundle $-\theta^*\gamma_{2n}$ over $B$, and
$\Omega^\infty \MTtheta$ for its infinite loop space.

\begin{theorem}\label{thm:StableHomology}
  If $\NN_n^\theta(P, \hat{\ell}_{P}) \neq \emptyset$ then there is a
  map
  $$\hocolim_{g \to \infty} \NN_n^\theta(P, \hat{\ell}_{P}^{(g)}) \lra
  \Omega^{\infty} \MTtheta$$ which induces an isomorphism on homology,
  and is in fact acyclic.
\end{theorem}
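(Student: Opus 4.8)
The plan is to deduce Theorem \ref{thm:StableHomology} from the Galatius--Madsen--Tillmann--Weiss theorem for cobordism categories together with the techniques of \cite{GR-W2}, but to replace the ``surgery on objects'' step of that paper by a group-completion argument whose essential input is Theorem \ref{thm:Main}, following the line of reasoning that Tillmann \cite{Tillmann} used in the surface case. Recall that $\CC_\theta$ denotes the cobordism category of $2n$-dimensional $\theta$-manifolds, that the GMTW theorem (in the form established in \cite{GR-W2}) identifies $B\CC_\theta \simeq \Omega^{\infty-1}\MTtheta$, so that $\Omega B\CC_\theta \simeq \Omega^\infty\MTtheta$, and that the ``surgery on morphisms'' arguments of \cite{GR-W2} produce a subcategory $\CC_\theta^{(n-1)} \subset \CC_\theta$ of highly-connected cobordisms --- those satisfying the connectivity condition built into Definition \ref{defn:1.1} --- for which the inclusion induces a weak equivalence $B\CC_\theta^{(n-1)} \xrightarrow{\ \sim\ } B\CC_\theta$, with \emph{no} hypothesis imposed on the objects.

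First I would fix the object $c = (P, \hat{\ell}_P)$. Since $\NN_n^\theta(P, \hat{\ell}_P) \neq \emptyset$ the morphism space $\CC_\theta^{(n-1)}(\emptyset, c)$ is non-empty, and (possibly after a harmless surgery below the middle dimension in the interior of a null-cobordism, legitimate because $B$ is path connected) it is homotopy equivalent to $\NN_n^\theta(P, \hat{\ell}_P)$. Composition of cobordisms makes this space a module over the topological monoid $\mathcal{A} = \CC_\theta^{(n-1)}(c, c)$, and the stabilisation maps occurring in Theorem \ref{thm:Main} are realised up to homotopy by composition with the cobordisms ${_P H} = ([0,1] \times P) \sharp (S^n \times S^n)$ carrying admissible $\theta$-structures, as in diagram \eqref{eq:IntroStabMap}. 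Hence
$$\hocolim_{g \to \infty} \NN_n^\theta(P, \hat{\ell}_P^{(g)})$$
is the mapping telescope of this action --- the localisation of $\CC_\theta^{(n-1)}(\emptyset, c)$ obtained by inverting the ${_P H}$-stabilisation.

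One then invokes the group-completion theorem (McDuff--Segal, in the form used for cobordism categories in \cite{GR-W2} and \cite{Tillmann}): there is a natural map
$$\hocolim_{g \to \infty} \NN_n^\theta(P, \hat{\ell}_P^{(g)}) \lra \Omega_c B\CC_\theta^{(n-1)} \simeq \Omega B\CC_\theta \simeq \Omega^\infty\MTtheta,$$
and it is a homology isomorphism --- indeed an acyclic map --- as soon as $\pi_0(\mathcal{A})$ acts invertibly on the homology, with all abelian systems of local coefficients, of its source. This last hypothesis is exactly what Theorem \ref{thm:Main} supplies: applied with $Q = P$ to an arbitrary $(n-1)$-connected endo-cobordism $M \colon P \leadsto P$, it shows that the induced self-map of $\hocolim_g \NN_n^\theta(P, \hat{\ell}_P^{(g)})$ is a homology isomorphism, and such endo-cobordisms generate $\pi_0(\mathcal{A})$ up to the ${_P H}$-stabilisation that has already been inverted in forming the telescope; the acyclicity then follows, the homotopy fibre of a group-completion map of this type having the homology of a point.

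The step I expect to be the main obstacle is the group-completion argument itself: constructing the comparison map into $\Omega B\CC_\theta^{(n-1)}$ and checking the McDuff--Segal hypotheses in this parametrised setting --- in particular that the $\mathcal{A}$-action is a ``homology fibration'' in the appropriate sense, and that the change of $\theta$-structure accompanying each stabilisation (the passage from $\hat{\ell}_P$ to $\hat{\ell}_P'$ in \eqref{eq:IntroStabMap}) is correctly accounted for --- together with verifying that, after localisation, the action of all of $\mathcal{A}$ is controlled by the stabilisation maps to which Theorem \ref{thm:Main} applies. The remaining inputs --- the GMTW theorem, surgery on morphisms, and the surgery used to match the $n$-connectedness convention of Definition \ref{defn:1.1} --- are quoted from \cite{GR-W2} or are routine.
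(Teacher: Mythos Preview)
Your overall strategy --- replace ``surgery on objects'' by a group-completion argument fed by Theorem~\ref{thm:Main}, then quote GMTW and surgery on morphisms --- is exactly the paper's strategy.  But there is a genuine gap in your implementation, and it is precisely the step you try to dismiss with ``possibly after a harmless surgery below the middle dimension''.

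The morphism space $\mathcal{C}_\theta^{(n-1)}(\emptyset,c)$ consists of nullbordisms $W$ of $P$ with $(W,P)$ $(n-1)$-connected; the space $\NN_n^\theta(P,\hat\ell_P)$ consists of nullbordisms with $\ell_W\colon W\to B$ $n$-connected.  These are \emph{different} conditions and neither implies the other; you cannot pass between them by interior surgery in a way that respects the $\mathcal{A}$-action.  Consequently Theorem~\ref{thm:Main}, which concerns the functor $\NN_n^\theta(-)$, does not directly give you the group-completion hypothesis for the full hom-functor $\mathcal{C}_\theta^{(n-1)}(-,c)$.  The paper bridges this gap in two non-trivial steps.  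First (Lemma~\ref{lem:BetterEnd}), from $\NN_n^\theta(P,\hat\ell_P)\neq\emptyset$ it deduces that $B$ satisfies Wall's finiteness condition $(F_n)$, and uses this to construct a special $\theta$-end $K''$ with each $\ell_{K''|_{[i,\infty)}}$ $n$-connected.  Second (Lemma~\ref{lem:UnivEndMakesNConnected}), it shows that for \emph{this} end, the inclusion $\hocolim_i\cob_n(X,K''|_i)\hookrightarrow\hocolim_i\cob(X,K''|_i)$ is a weak equivalence: the $n$-connectivity of the structure map becomes automatic after gluing on enough of $K''$.  Only then does Theorem~\ref{thm:StabStab} feed into the categorical group-completion theorem (Theorem~\ref{thm:generalized-group-completion}).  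Finally, a separate zig-zag argument using left- and right-stabilisation (and working in $\mathcal{C}_{\theta,\partial L}$ rather than $\mathcal{C}_\theta$, so that there \emph{is} a ``left'') relates the $K''$-telescope back to the $W_{1,1}$-telescope appearing in the statement.  Your outline misses the first two of these steps and has no mechanism for the third.
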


In fact, we will construct a specific acyclic map, using a cobordism
category $\mathcal{C}_\theta$. The space
$\NN_n^\theta(P, \hat{\ell}_{P})$ will be modelled as a subspace of
the space of morphisms
$\mathcal{C}_\theta(\emptyset, (P, \hat{\ell}_P))$ in
$\mathcal{C}_\theta$, and hence comes with a canonical map
\begin{equation*}
  \NN_n^\theta(P,\hat{\ell}_P) \subset
  \mathcal{C}_\theta(\emptyset,(P,\hat{\ell}_P)) \lra
  \Omega_{[\emptyset,(P,\hat{\ell}_P)]} B\mathcal{C}_\theta,
\end{equation*}
where $B\mathcal{C}_\theta$ denotes the classifying space (geometric
realisation of the nerve) and $\Omega_{[\emptyset,(P,\hat{\ell}_P)]}$
denotes the space of paths between the points corresponding to the
objects $\emptyset$ and $(P,\hat{\ell}_P)$.  The main theorem of
\cite{GMTW} provides a weak equivalence from this space of paths, when
non-empty, to $\Omega^{\infty} \MTtheta$. This is the map which the
theorem is about.

\begin{remark}
  A map $f : X \to Y$ is \emph{acyclic} if for any system of local
  coefficients $\mathcal{L}$ on $Y$ the induced map
  $f_* : H_*(X;f^*\mathcal{L}) \to H_*(Y;\mathcal{L})$ is an
  isomorphism. This is equivalent to the homotopy fibre of $f$ over
  any point of $Y$ being acyclic, i.e.\ having the integral homology
  of a point.

  In the statement of Theorem \ref{thm:StableHomology} the target has
  abelian fundamental group. Thus every system of local coefficients
  is abelian, and so the map being acyclic is equivalent to inducing
  an isomorphism on homology with all abelian systems of local
  coefficients.
\end{remark}

Theorem \ref{thm:StableHomology}, and the slightly more general
Theorem \ref{thm:StableHomologyBetter} below, improves on
\cite[Theorem 1.8]{GR-W2} in a few ways. Firstly, it applies to
manifolds of all even dimensions, and not just dimensions $2n \geq 6$;
secondly, it does not require that $\theta$ be ``spherical'' (a
technical condition introduced in \cite{GR-W2}); thirdly, it requires
a simpler form of stabilisation (only copies of $W_{1,1}$ have to be
glued on, rather than a ``universal $\theta$-end''); fourthly, the
conclusion is that the map is acyclic rather than merely a homology
equivalence.  In the special case $2n = 2$ we recover the main result
of \cite{MW}.  In that case the acyclicity of the map seems to be new
(when $\pi_1 MT\theta \neq 0$), as does the extension to completely
general $\theta: B \to BO(2)$.

\subsection{Finite genus and closed
  manifolds}\label{sec:intro:finitegenus}

We can combine Theorem \ref{thm:Main} with our earlier work
\cite{GR-W3} to obtain results about the homology of the spaces
$\NN_n^\theta(P;\hat{\ell}_{P})$ before stabilising by $W_{1,1}$.  If
we write
$\MM_n^\theta(W,\hat{\ell}_W) \subset \NN_n^\theta(Q,\hat{\ell}_Q)$
for the path component containing $(W,\hat{\ell}_W)$, then the
map~(\ref{eq:22}) restricts to a map
\begin{equation}\label{eq:23}
  - \cup_Q (M, \hat{\ell}_M) : \MM^\theta_n(W, \hat{\ell}_W) \lra
  \MM^\theta_n(W', \hat{\ell}_{W'}),
\end{equation}
where $W' = W \cup_Q M$ and $\hat{\ell}_{W'}$ is the bundle map
obtained by gluing $\hat{\ell}_W$ and $\hat{\ell}_M$.  Recall from
\cite{GR-W3} that the \emph{genus} of a $\theta$-manifold is defined
as
\begin{equation*}
  g^\theta(W,\hat{\ell}_W) = \max\left\{g \in \bN
    \,\,\bigg|\,\, \parbox{18em}{there are $g$ disjoint copies of
      $W_{1,1}$ in $W$,\\ each with admissible
      $\theta$-structure}\right\}
\end{equation*}
and the \emph{stable genus} is defined as
\begin{equation*}
  \bar{g}^\theta(W,\hat{\ell}_W) = \max\{g^\theta(W\sharp W_{k,1},
  \hat{\ell}^{(k)}_W)-k\,\,|\,\, k \in \bN \},
\end{equation*}
where $W \sharp W_{k,1}$ is the manifold obtained from $W$ by cutting
out a disc and forming the boundary connected sum with $k$ copies of
$W_{1,1}$.  This glued manifold is equipped with a $\theta$-structure
$\hat{\ell}^{(k)}_W$ obtained by extending the restriction of
$\hat{\ell}_W$ by any admissible structures on the $W_{1,1}$.  In
Section~\ref{sec:finite-genus-stab} we shall explain how to deduce the
following two finite-genus results from the results in this paper and
\cite{GR-W3}.  (In fact we shall deduce slightly more general results
which imply the corollaries but which also imply some homological
stability for $H_0(\NN_n^\theta(-);\bZ)$.)

\begin{corollary}\label{cor:Main1}
  Assume $2n \geq 6$,
  $B$ is simply-connected, and $(M, \hat{\ell}_M)$ is a
  $\theta$-cobordism between $P$ and $Q$ such that $(M,Q)$ is
  $(n-1)$-connected, and denote by
  $\MM_n^\theta(W,\hat{\ell}_W)\subset \NN_n^\theta(Q,\hat{\ell}_Q)$
  the path component containing $(W,\hat{\ell}_W)$.  Write
  $g = \bar{g}^\theta(W,\hat{\ell}_W)$, and let $\mathcal{L}$ be an
  abelian coefficient system on the target of the map~(\ref{eq:23}).
  Then the induced map
  \begin{equation*}
    (- \cup_Q (M,\hat{\ell}_M))_* : H_i(\MM^\theta_n(W,
    \hat{\ell}_W);(- \cup_Q M)^* \mathcal{L}) \lra
    H_i(\MM^\theta_n(W', \hat{\ell}_{W'});\mathcal{L})
  \end{equation*}
  is an isomorphism, provided one of the following three assumptions
  hold.
  \begin{enumerate}[(i)]
  \item $2i \leq {g-3}$, $\mathcal{L}$ is constant, and $\theta$ is
    spherical;
  \item $3i \leq {g-4}$ and $\mathcal{L}$ is constant;
  \item $3i \leq {g-4}$, $g \geq 5$, and either $g \geq 7$ or $\theta$
    is spherical.
\end{enumerate}
\end{corollary}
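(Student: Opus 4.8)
The plan is to realise the map~\eqref{eq:23} as one edge of a commutative square whose three remaining edges are already understood. The opposite edge will be the map $-\cup_Q(M,\hat\ell_M^{(\infty)})$ on homotopy colimits, which by Theorem~\ref{thm:Main} is a homology isomorphism with all abelian systems of local coefficients; the two remaining edges will be genus-stabilisation maps, which by \cite{GR-W3} are homology isomorphisms in the ranges recorded in (i)--(iii).

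The first step is to identify the $W_{1,1}$-stabilisation used in \cite{GR-W3} with the stabilisation entering Theorem~\ref{thm:Main}. Gluing the self-cobordism ${_QH} = ([0,1]\times Q)\sharp(S^n\times S^n)$ onto $W$ along $Q$ produces a manifold diffeomorphic rel $Q$ to $W\sharp W_{1,1}$, compatibly with a choice of admissible $\theta$-structure. Making such choices coherently along the two colimit systems, the restriction of the lower horizontal map of~\eqref{eq:IntroStabMap} to the path component $\MM_n^\theta(W,\hat\ell_W)\subset\NN_n^\theta(Q,\hat\ell_Q)$ is the genus-stabilisation map of \cite{GR-W3}. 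Iterating and passing to the colimit, the canonical map from $\MM_n^\theta(W,\hat\ell_W)$ into the path component $X$ of $\hocolim_{g\to\infty}\NN_n^\theta(Q,\hat\ell_Q^{(g)})$ that it generates is, by the homological stability theorem of \cite{GR-W3} --- whose standing hypotheses $2n\geq 6$ and $B$ simply-connected are exactly those assumed here, and whose three cases of coefficients, phrased in terms of the stable genus, are exactly (i)--(iii) above --- a homology isomorphism with abelian coefficients in degrees $*\leq\phi(\bar g^\theta(W,\hat\ell_W))$, where $\phi$ denotes the bound relevant to the chosen case; and likewise for $W'$, in terms of $\phi(\bar g^\theta(W',\hat\ell_{W'}))$.

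Now I would assemble the square: its top edge is~\eqref{eq:23}, its vertical edges are the canonical maps of $\MM_n^\theta(W,\hat\ell_W)$ and $\MM_n^\theta(W',\hat\ell_{W'})$ into $X$ and into the analogous path component $X'$ of $\hocolim_{g\to\infty}\NN_n^\theta(P,\hat\ell_P^{(g)})$, and its bottom edge is the restriction of $-\cup_Q(M,\hat\ell_M^{(\infty)})$ to these components. Commutativity comes from restricting the homotopy-commuting squares~\eqref{eq:IntroStabMap} to path components and passing to the colimit; that this restriction makes sense uses that a homology isomorphism --- such as the map of Theorem~\ref{thm:Main} --- induces a bijection on path components, so carries $X$ to $X'$. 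Two observations then drive a diagram chase. First, $\bar g^\theta(W',\hat\ell_{W'})\geq g$ (recall $g=\bar g^\theta(W,\hat\ell_W)$), since the admissible copies of $W_{1,1}$ witnessing the stable genus of $W\sharp W_{k,1}$ persist, disjointly from $M$, in $W'\sharp W_{k,1}$; as $\phi$ is monotone, both vertical maps are homology isomorphisms in degrees $*\leq\phi(g)$. Second, by Theorem~\ref{thm:Main} the bottom edge is a homology isomorphism with all abelian coefficients, and the abelian coefficient system $\mathcal{L}$ on $\MM_n^\theta(W',\hat\ell_{W'})$ is handled by comparing it, through the (highly homologically connected) vertical maps, with abelian coefficient systems on $X'$, $X$ and $\MM_n^\theta(W,\hat\ell_W)$, exactly as in the corresponding statements of \cite{GR-W3} and Theorem~\ref{thm:Main}. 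Since three of the four maps of the square are homology isomorphisms in degrees $*\leq\phi(g)$, so is~\eqref{eq:23}, which is precisely the conclusion in case (i), (ii) or (iii).

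I expect the main obstacle to lie in the first step rather than in the final chase: one must pin down the admissible $\theta$-structures entering the ${_QH}$- and $W_{1,1}$-stabilisations coherently, and keep track of path components both under the maps~\eqref{eq:IntroStabMap} and under the equivalence of Theorem~\ref{thm:Main}, so that the square genuinely lives over the component $\MM_n^\theta(W,\hat\ell_W)$ and maps to the component of its image; once this is arranged, verifying that the ranges (i)--(iii) transfer verbatim from \cite{GR-W3} is routine. This bookkeeping is the content of the slightly more general statement to be proved in Section~\ref{sec:finite-genus-stab}, of which the corollary is a special case.
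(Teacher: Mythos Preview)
Your argument is essentially the same as the paper's for the case $P \neq \emptyset$: the paper (Proposition~\ref{prop:cor:MainPrelim}) builds the same commutative square, with Theorem~\ref{thm:Main} supplying the colimit edge and the stability theorem of \cite{GR-W3} (quoted as Theorem~\ref{thm:UnstableStability}) supplying the two genus-stabilisation edges. Your inequality $\bar{g}^\theta(W',\hat\ell_{W'}) \geq g$ is exactly what the paper uses, though the paper packages it differently by introducing a modified grading $\bar{g}^\theta_M$ on $\mathcal{N}^\theta_n(P)$ (Lemma~\ref{lem:ModifiedGenus}) satisfying $\bar{g}^\theta_M \leq \bar{g}^\theta$; for the single-path-component statement of the corollary these amount to the same thing. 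Your handling of case~(iii) is also in line with the paper: the extra numerical conditions $g \geq 7$ (or $g \geq 5$ when $\theta$ is spherical) are precisely what is needed for the vertical map on the $W'$ side to induce a bijection on $\pi_0$ and an isomorphism on $H_1$, so that every abelian $\mathcal{L}$ extends to the colimit.

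There is, however, a genuine gap. The corollary allows $P = \emptyset$ (for instance $M$ a disc capping off a sphere boundary component of $Q$), but Theorem~\ref{thm:Main} does not: its formulation requires the embedded path $p:[0,1]\times D^{2n-1}\hookrightarrow M$ meeting both $P$ and $Q$, so both must be non-empty. Your square therefore cannot be assembled when $P = \emptyset$, and the diagram chase does not get started. The paper deals with this separately: one factors $M: \emptyset \leadsto Q$ as $\emptyset \overset{D^{2n}}{\leadsto} S^{2n-1} \overset{M'}{\leadsto} Q$, applies the $P \neq \emptyset$ case to $M'$, and then proves the statement for the disc $D^{2n}:\emptyset \leadsto S^{2n-1}$ via a further semi-simplicial resolution $\mathcal{R}(P)_\bullet$ by embedded discs (Theorem~\ref{thm:ContractibilityR} and Proposition~\ref{prop:pSxAbEqR}), reducing again to cobordisms with non-empty incoming boundary. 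You should either add this reduction or restrict your statement to $P \neq \emptyset$.
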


The situation is formally quite similar to both \cite{HV} and
\cite{Wahl}, from which we learnt the following trick.  Suppose one
wishes to prove homological stability for a stabilisation map $\alpha$
which increases genus and leaves the boundary unchanged and also for
another stabilisation map $\beta$ which changes the boundary.  If
$\alpha$ and $\beta$ commute up to homotopy and stability has already
been shown for $\alpha$, then it suffices to prove stability for
$\beta$ after taking the colimit over stabilising infinitely many
times by $\alpha$.

\begin{corollary}\label{cor:Main2}
  There is a
  map
  $$\MM_n^\theta(W,\hat{\ell}_W) \lra \Omega^{\infty}_0 \MTtheta$$
  which, under the same conditions on $n$ and $B$ as in
  Corollary~\ref{cor:Main1}, is acyclic in degrees satisfying
  $3* \leq {\bar{g}^\theta(W,\hat{\ell}_W)-4}$, and induces an
  isomorphism on integral homology in degrees satisfying
  $2* \leq {\bar{g}^\theta(W,\hat{\ell}_W)-3}$ if in addition $\theta$
  is spherical.
\end{corollary}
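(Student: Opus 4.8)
The plan is to realise the asserted map as the composite of a map into a homotopy colimit of moduli spaces of increasing genus — the one appearing in Theorem~\ref{thm:Main} — with the acyclic map out of that colimit supplied by Theorem~\ref{thm:StableHomology}, and to read off its behaviour from Corollary~\ref{cor:Main1}. Write $X_g=\MM_n^\theta(W\sharp W_{g,1},\hat{\ell}^{(g)}_W)$ for the path component of $\NN_n^\theta(Q,\hat{\ell}^{(g)}_Q)$ containing the $g$-fold $W_{1,1}$-stabilisation of $(W,\hat{\ell}_W)$, so that $X_0=\MM_n^\theta(W,\hat{\ell}_W)$ and the maps~\eqref{eq:22} obtained by gluing on copies of the cobordism ${_Q H}$ of Section~\ref{sec:DefnStatement} form a sequence $X_0\to X_1\to X_2\to\cdots$ whose maps are instances of~\eqref{eq:23} with $M={_Q H}$; here $({_Q H},Q)$ is $(n-1)$-connected and $(W\sharp W_{g,1})\cup_Q{_Q H}\cong W\sharp W_{g+1,1}$. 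Recall from the discussion after Theorem~\ref{thm:StableHomology} that each $\NN_n^\theta(Q,\hat{\ell}^{(g)}_Q)$ comes with a canonical map to $\Omega_{[\emptyset,(Q,\hat{\ell}^{(g)}_Q)]}B\mathcal{C}_\theta\simeq\Omega^\infty\MTtheta$, under which the stabilisation map corresponds to concatenating with the path in $B\mathcal{C}_\theta$ represented by the morphism ${_Q H}$ — an equivalence of path spaces. Restricting these canonical maps to the $X_g$ and passing to the homotopy colimit (a path component of a sequential homotopy colimit being the homotopy colimit of the corresponding path components) gives a factorisation
\[
  \MM_n^\theta(W,\hat{\ell}_W)=X_0\xrightarrow{\,s\,}\hocolim_{g\to\infty}X_g\xrightarrow{\,a\,}\Omega^\infty_0\MTtheta
\]
of the map of the statement, where the image lies in a single path component of the infinite loop space $\Omega^\infty\MTtheta$, which we identify with $\Omega^\infty_0\MTtheta$ (all its path components being homotopy equivalent).

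By construction $a$ is the map of Theorem~\ref{thm:StableHomology} (with $P=Q$) restricted to the path component $\hocolim_g X_g$ of its source and to the image component of its target. Since the homotopy fibre of a map over a point is unchanged upon restricting to a union of path components of the target together with its preimage, $a$ inherits the acyclicity of the map of Theorem~\ref{thm:StableHomology}. It therefore remains to control $s$.

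For any local system $\mathcal{L}$ pulled back from $\Omega^\infty_0\MTtheta$, homology commutes with the sequential homotopy colimit, so $s_*$ on $H_i$ is the canonical map $H_i(X_0;\mathcal{L})\to\colim_g H_i(X_g;\mathcal{L})$. An elementary computation from the definition of stable genus gives $\bar{g}^\theta(W\sharp W_{g,1},\hat{\ell}^{(g)}_W)=\bar{g}^\theta(W,\hat{\ell}_W)+g$, so the ranges in which Corollary~\ref{cor:Main1} guarantees each map $X_g\to X_{g+1}$ to be a homology isomorphism grow without bound in $g$, the binding constraint being the first map $X_0\to X_1$. One reads off that $s_*$ is an isomorphism onto the colimit in degrees with $3*\leq\bar{g}^\theta(W,\hat{\ell}_W)-4$ for constant coefficients (case~(ii)), and for abelian coefficients in the presence of the low-genus hypotheses of case~(iii); and in degrees with $2*\leq\bar{g}^\theta(W,\hat{\ell}_W)-3$ for constant integral coefficients when $\theta$ is spherical (case~(i)).

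Composing with the acyclic map $a$, which is a homology isomorphism for every local system on $\Omega^\infty_0\MTtheta$, then shows that the map of the statement is a homology isomorphism with all abelian systems of local coefficients in degrees with $3*\leq\bar{g}^\theta(W,\hat{\ell}_W)-4$; as $\Omega^\infty_0\MTtheta$ has abelian fundamental group every local system there is abelian, so this is exactly the assertion that the map is acyclic in those degrees, and the integral statement for spherical $\theta$ follows in the same way. The substantive inputs — the homological stability of Theorem~\ref{thm:Main}/Corollary~\ref{cor:Main1} and the stable homology identification of Theorem~\ref{thm:StableHomology} — are already in hand, so the remaining work, carried out in Section~\ref{sec:finite-genus-stab}, is organisational. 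The point that needs genuine care is the compatibility used above: that the canonical maps $\NN_n^\theta(Q,\hat{\ell}^{(g)}_Q)\to\Omega^\infty\MTtheta$ are compatible up to coherent homotopy with $W_{1,1}$-stabilisation, so that the finite-genus map really does factor through $\hocolim_g X_g$ as the map of Theorem~\ref{thm:StableHomology}; this amounts to the statement that gluing cobordisms corresponds to concatenating paths in $B\mathcal{C}_\theta$, and pinning down the target path component together with the coefficient bookkeeping in degree~$0$ and for small $\bar{g}^\theta$ is where the minor friction lies.
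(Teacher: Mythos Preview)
Your proposal is correct and takes essentially the same approach as the paper: factor the map as
\[
  \mathcal{M}_n^\theta(W,\hat{\ell}_W) \lra \hocolim_{g\to\infty}\mathcal{N}_n^\theta(P,\hat{\ell}_P^{(g)}) \lra \Omega^\infty\MTtheta,
\]
control the first arrow in a range by Corollary~\ref{cor:Main1}, and use Theorem~\ref{thm:StableHomology} (in the paper, the slightly sharper Theorem~\ref{thm:StableHomologyBetter}) for acyclicity of the second. Two small points of difference: the paper works with the full $\mathcal{N}_n^\theta$ spaces and only restricts to the relevant path component at the end, rather than carrying the components $X_g$ throughout---your parenthetical that ``a path component of a sequential homotopy colimit is the homotopy colimit of the corresponding path components'' is not literally true in general, though the argument does not actually need it once one restricts on the target side. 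Also, your write-up implicitly assumes $\partial W\neq\emptyset$ (so that ${_Q H}$ is defined); the paper treats the closed case separately by first removing a disc and invoking Corollary~\ref{cor:Main1} once more for the map $-\circ D^{2n}$.
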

As in Theorem~\ref{thm:StableHomology} and the discussion following
it, Corollary~\ref{cor:Main2} concerns a specific map into a component
of a path space of $B\mathcal{C}_\theta$, which by \cite{GMTW} may be
identified with the basepoint component $\Omega^\infty_0 \MTtheta$ of
the infinite loop space $\Omega^\infty \MTtheta$.

\subsection{Other tangential structures}

The definition of $\NN_n^\theta(P,\hat{\ell}_P)$ used a space
$\Bun_{n,\partial}^\theta(TW;\hat{\ell}_P)$, consisting of all
$n$-connected bundle maps with fixed boundary condition.  If we
replace this by the space of all (i.e.\ not necessarily $n$-connected)
bundle maps with fixed boundary condition, then the results of this
paper may still be used to deduce the homology of the resulting moduli
space in a stable range.  Again we state the result one path component
at a time, and for simplicity we shall restrict attention to closed
manifolds in this introduction.

Let $\theta': B' \to BO(2n)$ be a map, and let $W$ be a closed
$2n$-dimensional manifold and
$\hat{\ell}'_W: TW \to (\theta')^* \gamma$ a $\theta'$-structure.  We
shall write $\Bun^{\theta'}(TW)$ for the space of all (not necessarily
$n$-connected) $\theta'$-structures and
\begin{equation*}
  \MM^{\theta'}(W,\hat{\ell}_W) \subset \Bun^{\theta'}(TW) \hcoker \Diff(W)
\end{equation*}
for the path component containing the image of $\hat{\ell}'_W$.  Let
$\ell'_W: W \to B'$ be the map underlying $\hat{\ell}'_W$ and let
$W \to B \to B'$ be a Moore--Postnikov $n$-stage factorisation of
$\ell'_W$, i.e.\ a factorisation into an $n$-connected cofibration
$\ell_W: W \to B$ and an $n$-co-connected fibration $u: B \to B'$. Let
us write $\theta = \theta' \circ u : B \to BO(2n)$. In
Section~\ref{sec:GeneralTheta} below, we explain how to deduce the
following result from Corollary~\ref{cor:Main2}.

\begin{corollary}\label{cor:GeneralThetaCalc}
  In this situation the topological monoid $\mathrm{hAut}(u)$,
  consisting of weak equivalences $B \to B$ commuting with the map
  $u: B \to B'$, acts on the space $\Omega^\infty \MTtheta$ and there
  is a map
  \begin{equation*}
    \MM^{\theta'}(W,\hat{\ell}'_W) \lra \left(\Omega^\infty \MTtheta\right)
    \hcoker \mathrm{hAut}(u).
  \end{equation*}
  Considered as a map to the path component which it hits, if
  $2n \geq 6$ and $W$ is simply-connected then this map is acyclic in
  degrees satisfying
  $3* \leq {\bar{g}^{\theta'}(W,\hat{\ell}'_{W})-4}$, and induces an
  isomorphism in integral homology in degrees satisfying
  $2* \leq {\bar{g}^{\theta'}(W,\hat{\ell}'_{W})-3}$ if in addition
  $\theta'$ is spherical.
\end{corollary}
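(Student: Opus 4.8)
The plan is to reduce the statement to Corollary \ref{cor:Main2} applied to the tangential structure $\theta = \theta' \circ u$, using the Moore--Postnikov factorisation to translate between $\theta'$-structures and $n$-connected $\theta$-structures. First I would observe that, since $u : B \to B'$ is an $n$-co-connected fibration and $\ell'_W : W \to B$ is $n$-connected, every $\theta'$-structure $\hat\ell'_W : TW \to (\theta')^*\gamma_{2n}$ in the relevant path component lifts, essentially uniquely up to contractible choice, to a $\theta$-structure; more precisely the space of lifts of the underlying map $W \to B'$ along $u$ is non-empty and has contractible components by obstruction theory (the obstructions lie in $H^{*}(W; \pi_{*-1}(\mathrm{hofib}(u)))$, which vanish in the relevant range because $W$ has dimension $2n$ and $\mathrm{hofib}(u)$ is $(n-1)$-connected). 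This identifies a union of path components of $\Bun^{\theta}(TW) \hcoker \Diff(W)$, namely those whose underlying map $W \to B$ is $n$-connected, with a union of path components of $\Bun^{\theta'}(TW) \hcoker \Diff(W)$; and $\MM^\theta_n(W,\hat\ell_W)$ sits inside the former while $\MM^{\theta'}(W,\hat\ell'_W)$ sits inside the latter.

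Next I would account for the discrepancy between the single path component $\MM^{\theta'}(W, \hat\ell'_W)$ and the various components $\MM^\theta_n(W, \hat\ell_W)$ lying over it. The group $\mathrm{hAut}(u)$ of weak equivalences $B \to B$ over $B'$ acts on the set of $n$-connected lifts, and I would argue that it acts transitively (up to homotopy) on the set of those components of $\Bun^\theta_n(TW)\hcoker\Diff(W)$ mapping to the fixed component of $\Bun^{\theta'}(TW)\hcoker\Diff(W)$ — because any two $n$-connected lifts of $\ell'_W$ differ by a self-equivalence of $B$ over $B'$, using that $\ell_W$ is a cofibration and both lifts are $n$-connected so the comparison map $B \to B$ is a weak equivalence. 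Consequently the homotopy quotient $\big(\coprod \MM^\theta_n(W,\hat\ell_W)\big)\hcoker\mathrm{hAut}(u)$ — equivalently $\MM^\theta_n(W,\hat\ell_W)\hcoker\mathrm{hAut}(u)_{\hat\ell_W}$ for the stabiliser — recovers a space acyclic-equivalent to $\MM^{\theta'}(W,\hat\ell'_W)$. One then applies Corollary \ref{cor:Main2} $\mathrm{hAut}(u)$-equivariantly: the acyclic map $\MM^\theta_n(W,\hat\ell_W) \to \Omega^\infty_0 \MTtheta$ of that corollary is natural enough to be made equivariant for the $\mathrm{hAut}(u)$-action (the action on $\Omega^\infty\MTtheta$ being the one induced on Thom spectra by self-maps of $B$ covering $\mathrm{id}_{B'}$ and hence preserving $\theta^*\gamma_{2n}$), and passing to homotopy orbits preserves the property of being acyclic in a range. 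Finally one checks that $\bar g^{\theta'}(W, \hat\ell'_W) = \bar g^{\theta}(W, \hat\ell_W)$, which follows since admissibility of a $\theta$-structure on $W_{1,1}$ only depends on the underlying $\theta'$-structure together with an $n$-connectivity condition that is automatic once one has lifted, so the genus ranges match.

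The main obstacle I expect is the equivariance and transitivity bookkeeping in the second step: making precise, at the point-set or $\infty$-categorical level, that $\mathrm{hAut}(u)$ acts on the moduli space $\coprod_{\hat\ell_W} \MM^\theta_n(W,\hat\ell_W)$ compatibly with its action on $\Omega^\infty\MTtheta$, and that the quotient computes $\MM^{\theta'}(W,\hat\ell'_W)$ rather than something merely abstractly equivalent to it. This requires care because $\mathrm{hAut}(u)$ is only a topological monoid of weak equivalences, so one must work with a rectified model (e.g.\ pass to a fibrant-cofibrant replacement of $u$, or use the simplicial monoid of self-homotopy-equivalences) in order to form honest homotopy quotients and to know that the map of homotopy quotients is acyclic; the obstruction-theoretic lifting arguments in the first and last steps are routine by comparison, being governed by the $(n-1)$-connectivity of $\mathrm{hofib}(u)$ against the $2n$-dimensionality of $W$.
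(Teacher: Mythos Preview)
Your overall strategy---reduce to Corollary~\ref{cor:Main2} via the Moore--Postnikov factorisation, compare genera, and pass to $\hAut(u)$-homotopy orbits---is exactly what the paper does in Section~\ref{sec:GeneralTheta}, and your Step~4 is the content of the paper's Lemma~\ref{lem:ChangeOfThetaGenus}.

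However, Step~1 contains a genuine error. You assert that $\hofib(u)$ is $(n-1)$-connected, but $u$ being $n$-co-connected means precisely the opposite: $\pi_i(\hofib(u)) = 0$ for $i \geq n$, so the fibre is an $(n-1)$-\emph{type}. The obstructions to uniqueness of lifts therefore live in $H^k(W;\pi_{k-1}(\hofib(u)))$ for $k \leq n$, and no dimension count makes these vanish. The space of $n$-connected lifts of $\ell'_W$ does not in general have contractible components; indeed, if it did there would be nothing for $\hAut(u)$ to do in Step~2.

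The correct statement, which is the heart of the paper's argument (Lemma~\ref{lem:ChangeOfStr}), is that the space of $n$-connected lifts is weakly equivalent to $\hAut(u)$ itself---not merely that $\hAut(u)$ acts transitively on its $\pi_0$ as you propose in Step~2. The key point is that precomposition with a chosen $n$-connected lift $\ell_W: W \to B$ gives a weak equivalence
\[
  \map_{/B'}(B,B) \overset{\sim}{\lra} \map_{/B'}(W,B),
\]
proved by cell induction: $(B,W)$ has relative cells only in dimensions $> n$, while the fibres of $u$ are $(n-1)$-types, so attaching such a cell does not change the mapping space into $B$ over $B'$. Restricting to weak equivalences on the source and $n$-connected maps on the target then identifies the fibre of $\Bun^\theta_n(TW) \to \Bun^{\theta'}(TW)$ with $\hAut(u)$. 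This is exactly what makes
\[
  \Bun^\theta_n(TW) \hcoker \hAut(u) \lra \Bun^{\theta'}(TW)
\]
an equivalence onto the components hit, and (after dividing by $\Diff(W)$) gives the identification you are aiming for in Step~2 directly, without a separate transitivity-plus-stabiliser argument. With this in hand, the equivariance and preservation-of-acyclicity-under-homotopy-orbits portions of your outline go through as you describe.
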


The proof given in Section~\ref{sec:GeneralTheta} again concerns a
specific map, and we will also treat the case
$\partial W \neq \emptyset$.  Let us briefly spell out two important
special cases.

Taking $\theta' = \mathrm{Id} : BO(2n) \to BO(2n)$ and letting
$W \overset{\ell_W}\to B \overset{u}\to BO(2n)$ be a Moore--Postnikov
$n$-stage factorisation of a Gauss map $\ell_W': W \to BO(2n)$, the
space $\MM^{\theta'}(W,\hat{\ell}'_W)$ is a model for $B\Diff(W)$ and
we obtain a map
\begin{equation}\label{eq:UnorientedComparisonMap}
  B\Diff(W) \lra
  \left(\Omega^\infty \MTtheta\right) \hcoker \hAut(u)
\end{equation}
which, as long as $2n \geq 6$ and $W$ is simply-connected, is a
homology isomorphism (respectively acyclic) in a range of degrees
depending on $g(W)$, onto the path component which it hits.

Similarly we can take $\theta' : BSO(2n) \to BO(2n)$ to be the
orientation cover and let
$W \overset{\ell_W}\to B \overset{u^+}\to BSO(2n)$ be a
Moore--Postnikov $n$-stage factorisation of an \emph{oriented} Gauss
map $\ell'_W: W \to BSO(2n)$.  Then the space
$\MM^{\theta'}(W,\hat{\ell}'_W)$ is a model for $B\Diff^+(W)$ and we
obtain a map
\begin{equation*}
  B\Diff^+(W)  \lra
  \left(\Omega^\infty \MTtheta\right) \hcoker
  \hAut(u^+)
\end{equation*}
which, as long as $2n \geq 6$ and $W$ is simply-connected, is a
homology isomorphism (respectively acyclic) in a range of degrees
depending on $g(W)$, onto the path component which it hits.

\subsection{Acknowledgements}

S.\ Galatius was partially supported by NSF grants DMS-1105058 and
DMS-1405001, the European Research Council (ERC) under the European
Union's Horizon 2020 research and innovation programme (grant
agreement No 682922), as well as the Danish National Research
Foundation through the Centre for Symmetry and Deformation (DNRF92)
and ERC-682992. O.\ Randal-Williams was partially supported by the
Herchel Smith Fund and EPSRC grant EP/M027783/1.

\tableofcontents
\section{Definitions}\label{sec:DefnStatement}

In the following we shall suppose that $n > 0$. We emphasise that we
allow $n=1$ and $n=2$, and we will point out when these cases need
special treatment.

\subsection{Cobordism categories}\label{sec:CobCats}

It is convenient to state the precise version of Theorem
\ref{thm:Main} using the language of cobordism categories; these are
also useful for the proof of Theorem \ref{thm:Main}, and will be
essential for the proof of Theorem \ref{thm:StableHomology}. See
\cite{GMTW, GR-W, GR-W2} for further background on these
categories. Throughout the paper we shall work with manifolds inside
$\bR \times \bR^\infty$, and will write $e_0$ for the basis vector of
the first copy of $\bR$. Apart from this, we will write
$e_1, e_2, \ldots, e_k$ for the basis vectors of
$\bR^k \subset \bR^\infty$.

\begin{definition}\label{defn:CobCat}
  Let $\mathcal{C}_{\theta}$ be the (non-unital) category with objects given by
  \begin{enumerate}[(i)]
  \item a $(2n-1)$-dimensional closed submanifold $P \subset \bR^\infty$, and
  \item a bundle map $\hat{\ell}_P : \epsilon^1 \oplus TP \to \theta^*\gamma$.
  \end{enumerate}
  A morphism from $(P, \hat{\ell}_P)$ to $(Q, \hat{\ell}_Q)$ is given by
  \begin{enumerate}[(i)]
  \item an $s \in (0, \infty)$,
  \item a $2n$-dimensional submanifold
    $W \subset [0,s] \times \bR^\infty$ which for some $\epsilon > 0$
    intersects
    $$([0,\epsilon) \times \bR^\infty) \cup  ((s-\epsilon,s] \times \bR^\infty)$$
    in $([0,\epsilon) \times P) \cup ((s-\epsilon,s] \times Q)$,
  \item a bundle map $\hat{\ell}_W : TW \to \theta^*\gamma$ which
    restricts to $\hat{\ell}_P$ on $\{0\} \times P$ and to
    $\hat{\ell}_Q$ on $\{s\} \times Q$.
\end{enumerate}

The composition of
$(s, W, \hat{\ell}_W) : (P, \hat{\ell}_P) \leadsto (Q, \hat{\ell}_Q)$
and
$(s', W', \hat{\ell}_{W'}) : (Q, \hat{\ell}_Q) \leadsto (R,
\hat{\ell}_R)$ is denoted
$(s',W',\hat{\ell}_{W'}) \circ (s,W,\hat{\ell}_W)$ and defined as
$$(s+s', W \cup (W' + s \cdot e_0), \hat{\ell}_W \cup \hat{\ell}_{W'})
: (P, \hat{\ell}_P) \leadsto (R, \hat{\ell}_R).$$
We make $\mathcal{C}_{\theta}$ into a topological (non-unital)
category using the evident isomorphism with the topological
(non-unital) category described in \cite[Definition 2.6]{GR-W2}. In
particular, the topology on the morphism space
$\mathcal{C}_\theta((P, \hat{\ell}_P),(Q, \hat{\ell}_Q))$ is that of a
subquotient of
$$(0,\infty) \times \left( \coprod_{[W]} \Emb(W, [0,1] \times \bR^\infty) \times \Bun^\theta(TW)\right),$$
where $W$ ranges over the set of all cobordisms from $P$ to $Q$, one
in each diffeomorphism class.
\end{definition}

\begin{definition}
  Let $L \subset (-\infty,0] \times \bR^{\infty-1}$ be a compact
  $(2n-1)$-dimensional submanifold which near
  $\{0\} \times \bR^{\infty-1}$ coincides with
  $(-\infty,0] \times \partial L$, and let
  $\hat{\ell}_L : \epsilon^1 \oplus TL \to \theta^*\gamma_{2n}$ be a
  $\theta$-structure.

  Let $\mathcal{C}_{\theta, L} \subset \mathcal{C}_\theta$ be the subcategory with
  \begin{enumerate}[(i)]
  \item objects those $(P, \hat{\ell}_P)$ such that $P \cap ((-\infty,0] \times \bR^{\infty-1}) = L$ and $\hat{\ell}_P\vert_L = \hat{\ell}_L$,
    
  \item morphisms those $(s,W, \hat{\ell}_W)$ such that 
    $$W \cap ([0,s] \times ((-\infty,0] \times \bR^{\infty-1})) = [0,s] \times L$$
    and $\hat{\ell}_W$ restricts to the bundle map $T([0,s] \times L) \to \epsilon^1 \oplus TL \overset{\hat{\ell}_L}\to \theta^*\gamma_{2n}$.
  \end{enumerate}
\end{definition}

We shall generally abbreviate $\theta$-manifolds by the name of the
underlying smooth manifold, say $X$, and write $\hat{\ell}_X$ for its
$\theta$-structure. Using this convention, for any object
$P \in \mathcal{C}_\theta$ and any $s \in (0,\infty)$ there is an
morphism $(s, [0,s] \times P)$, having underlying manifold
$[0,s] \times P \subset [0,s] \times \bR^\infty$ and
$\theta$-structure given by
$$\hat{\ell}_{[0,s] \times P} : T([0,s] \times P) \lra \epsilon^1 \oplus TP \overset{\hat{\ell}_P}\lra \theta^*\gamma_{2n}.$$
We shall call such morphisms ``cylindrical''.  We can make the
analogous construction for objects in $\mathcal{C}_{\theta, L}$.

\begin{remark}
  In this paper it suffices to work with the non-unital category
  $\mathcal{C}_\theta$ defined above.  If identities are desired, they
  could be added by replacing the morphism space with the pushout
  $[0,\infty) \times \Ob(\mathcal{C}_\theta) \leftarrow (0,\infty)
  \times \Ob(\mathcal{C}_\theta) \to \Mor(\mathcal{C}_\theta)$, along
  the map
  $(0,\infty) \times \Ob(\mathcal{C}_\theta) \to
  \Mor(\mathcal{C}_\theta)$ defined by sending $(t,P,\hat\ell_P)$ to
  $(t,[0,t] \times P, \hat\ell_P)$.

  This motivates calling a morphism $M : P \leadsto Q$ in
  $\mathcal{C}_\theta$ an \emph{isomorphism} if there exists a
  morphism $N : Q \leadsto P$ such that $M \circ N$ and $N \circ M$
  are in the same path components of $\mathcal{C}_\theta(Q,Q)$ and
  $\mathcal{C}_\theta(P,P)$ as the cylindrical morphisms
  $[0,s] \times Q$ and $[0,s] \times P$.  Pre- and postcomposing with
  an isomorphism $M: P \leadsto Q$ then define weak equivalences
  $\mathcal{C}_\theta(Q,R) \to \mathcal{C}_\theta(P,R)$ and
  $\mathcal{C}_\theta(R,P) \to \mathcal{C}_\theta(R,Q)$.  Similarly
  for $\mathcal{C}_{\theta,L}$.  The most important examples of
  isomorphisms will be cobordisms whose underlying manifold is
  diffeomorphic to a cylinder, but with non-cylindrical
  $\theta$-structure or embedding.
\end{remark}

\begin{definition}[\cite{GR-W2}]
  Let $\mathcal{C}_{\theta, L}^{n-1} \subset \mathcal{C}_{\theta, L}$
  be the subcategory having the same objects, and having those
  morphisms $(s, M) : P \leadsto Q$ for which the pair $(M, Q)$ is
  $(n-1)$-connected.
\end{definition}

The categories $\mathcal{C}_{\theta, L}$ do not really depend on $L$,
but rather only on $\partial L$: if $L'$ is another collared
$\theta$-manifold which is equal to $L$ near its boundary, then we
obtain an isomorphism of topological categories
$\mathcal{C}_{\theta, L} \overset{\sim}\to \mathcal{C}_{\theta, L'}$
by cutting out $L$ and gluing in $L'$ (and similarly on morphisms). In
order to make use of this, we make the following definition.

\begin{definition}\label{def:PreCat}
  If $P$ is an object of $\mathcal{C}_{\theta, L}$, let
  $P^\circ \subset [0,\infty) \times \bR^{\infty-1}$ be the
  $\theta$-manifold obtained by cutting out
  $\mathrm{int}(L)$. Similarly, if $(s,W)$ is a morphism in
  $\mathcal{C}_{\theta, L}$, let
  $W^\circ \subset [0,s] \times [0,\infty) \times \bR^{\infty-1}$ be
  the $\theta$-cobordism obtained by cutting out
  $[0,s] \times \mathrm{int}(L)$.

  Let $\mathcal{C}_{\theta, \partial L}$ be the (non-unital) category
  defined by
  \begin{align*}
    \Ob(\mathcal{C}_{\theta, \partial L}) &= \{(P^\circ, \hat{\ell}_{P^\circ}) \, \vert \, (P, \hat{\ell}_{P}) \in \Ob(\mathcal{C}_{\theta, L})\}\\
    \Mor(\mathcal{C}_{\theta, \partial L}) &= \{(s,W^\circ, \hat{\ell}_{W^\circ}) \, \vert \, (s,W, \hat{\ell}_W) \in \Mor(\mathcal{C}_{\theta, L})\}
  \end{align*}
  and made into a topological category by insisting that the functor
  $\mathcal{C}_{\theta, L} \to \mathcal{C}_{\theta, \partial L}$ given
  by $X \mapsto X^\circ$ is an isomorphism of topological categories.
\end{definition}

\begin{definition}\label{def:MainCat}
  Let
  $\mathcal{C}_{\theta, \partial L}^{n-1} \subset
  \mathcal{C}_{\theta, \partial L}$ be the subcategory having the same
  objects, and having those morphisms
  $(s, M^\circ) : P^\circ \leadsto Q^\circ$ for which the pair
  $(M^\circ, Q^\circ)$ is $(n-1)$-connected.

  This topological category shall play an especially important role
  and for brevity we shall often write
  $\cob = \mathcal{C}_{\theta,\partial L}^{n-1}$ when $\theta$ and
  $\partial L$ are fixed and understood.  When using the notation
  $\cob$, we shall implicitly assume that $\partial L \neq \emptyset$.
\end{definition}

\begin{remark}\label{rem:RemovingL}
  If
  $(s, M^\circ) : P^\circ \leadsto Q^\circ \in
  \mathcal{C}_{\theta, \partial L}^{n-1}$ then the cobordism
  $M = ([0,s] \times L) \cup M^\circ$ is $(n-1)$-connected relative to
  $Q = L \cup Q^\circ$, which defines a functor
  $\mathcal{C}_{\theta, \partial L}^{n-1} \to \mathcal{C}_{\theta,
    L}^{n-1}$. This is not in general an isomorphism of categories,
  but if $(L, \partial L)$ is $(n-1)$-connected then it is. (See
  \cite[Lemma 7.2]{GR-W2} for a related discussion.)
\end{remark}

We shall generally write $P, Q, \ldots$ for objects and $W, M, \ldots$
for morphisms of $\mathcal{C}_{\theta, \partial L}$ rather than
$\mathcal{C}_{\theta, L}$, and reserve the $(-)^\circ$ notation for
when we are directly comparing elements in $\mathcal{C}_{\theta, L}$
with their corresponding elements in
$\mathcal{C}_{\theta, \partial L}$.

For use in Section~\ref{sec:StabHomAndGC}, let us recall the following
result from \cite{GR-W2}. Together with the isomorphism
$\cob \cong \mathcal{C}_{\theta,L}^{n-1}$ from Remark
\ref{rem:RemovingL} and the equivalence
$B \mathcal{C}_\theta \simeq \Omega^{\infty-1} MT\theta$ from
\cite{GMTW}, it determines the weak homotopy type of $B\cob$ for
$(n-1)$-connected $(L,\partial L)$.
\begin{theorem}[\cite{GR-W2}]\label{thm:morphism-surgery-from-acta-paper}
  The maps
  \begin{equation}
    B\mathcal{C}_{\theta, L}^{n-1} \lra B\mathcal{C}_{\theta, L} \lra
    B\mathcal{C}_\theta,
  \end{equation}
  induced by the inclusion functors, are weak equivalences.
\end{theorem}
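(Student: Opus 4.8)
The plan is to prove the two maps are weak equivalences separately. The map $B\mathcal{C}^{n-1}_{\theta,L} \to B\mathcal{C}_{\theta,L}$ is the substantial one: it is a ``surgery on morphisms'' statement, asserting that every morphism of $\mathcal{C}_{\theta,L}$ can, up to the equivalence relation generated by cobordism, be made $(n-1)$-connected relative to its outgoing boundary. The map $B\mathcal{C}_{\theta,L} \to B\mathcal{C}_\theta$ merely records that fixing the collared boundary datum $L$ does not change the homotopy type of the classifying space, and is more formal. Throughout, the basic geometric point is that making the pair $(W,Q)$ more highly connected only requires attaching handles to $W$, relative to $Q$, of index at most $n$: such handles are attached along embedded spheres $S^{j}$, $j \le n-1$, in the $(2n-1)$-manifold $Q$, and since $2j < 2n-1$ any finite collection of such spheres can be made pairwise disjoint by general position, each can be equipped with a normal framing, and --- working inside $\bR \times \bR^\infty$, where there is unlimited room --- the surgeries can be performed in one-parameter families compatibly with the $\theta$-structure $\hat\ell_W$.

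For the first map I would interpolate by a finite tower of subcategories $\mathcal{C}_{\theta,L} = \mathcal{D}^{(0)} \supseteq \mathcal{D}^{(1)} \supseteq \cdots \supseteq \mathcal{D}^{(n)} = \mathcal{C}^{n-1}_{\theta,L}$, where $\mathcal{D}^{(j)}$ has the same objects as $\mathcal{C}_{\theta,L}$ and as morphisms those cobordisms which are $(j-1)$-connected relative to their outgoing boundary, and show that each inclusion $\mathcal{D}^{(j+1)} \hookrightarrow \mathcal{D}^{(j)}$ induces a weak equivalence on classifying spaces. Following \cite{GMTW, GR-W}, over $\mathcal{D}^{(j)}$ one resolves the morphism spaces: one introduces an auxiliary category $\mathcal{E}^{(j)}$ whose morphisms are a cobordism in $\mathcal{D}^{(j)}$ together with an unordered finite collection of pieces of ``surgery data'' --- pairwise disjoint embedded framed $j$-spheres in its interior, complete enough to kill $\pi_{j}$ relative to the outgoing boundary, carrying the structural nullhomotopies needed to do the surgeries compatibly with $\hat\ell_W$ --- organised as the realisation of an augmented semi-simplicial space over the morphism space, graded by the number of pieces. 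The pairwise disjointability of the data (general position, as $j\le n-1$) and the non-emptiness of the space of complete data over each cobordism are precisely the hypotheses of the standard resolution lemma for augmented semi-simplicial spaces, which therefore shows that forgetting the data induces a weak equivalence $B\mathcal{E}^{(j)} \to B\mathcal{D}^{(j)}$; performing the surgeries dictated by the data then relates $\mathcal{E}^{(j)}$ to $\mathcal{D}^{(j+1)}$, giving a zig-zag $B\mathcal{D}^{(j)} \leftarrow B\mathcal{E}^{(j)} \to B\mathcal{D}^{(j+1)}$ of weak equivalences. Since a functor which is the identity on objects and a weak equivalence on each morphism space induces a weak equivalence on classifying spaces, composing along the tower gives the first map.

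For the map $B\mathcal{C}_{\theta,L} \to B\mathcal{C}_\theta$ the content is that the fixed collared boundary $L$ can be absorbed. I would run the analogous surgery argument parametrically, now also allowed to modify the objects of $\mathcal{C}_\theta$ --- which are $(2n-1)$-manifolds, so surgery on them below dimension $n-1$ is likewise unobstructed --- to reduce to a cofinality statement; equivalently, one may invoke the form of the Galatius--Madsen--Tillmann--Weiss theorem for cobordism categories of manifolds with a fixed collared boundary together with its naturality, to identify both classifying spaces with $\Omega^{\infty-1}\MTtheta$ compatibly with the forgetful functor. This step carries no real geometric content beyond what is already used for the first map.

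The main obstacle is the bookkeeping in the first part: verifying precisely that the spaces of complete surgery data are non-empty, which is where compatibility with the $\theta$-structure must be checked, and then pushing the disjointability-plus-non-emptiness input through the semi-simplicial resolution machinery to the level of classifying spaces while tracking the $\theta$-structure at every stage. The cases $n = 1$ and $n = 2$ require extra care, since there the margin ``below the middle dimension'' is thin and the attaching spheres for index-$n$ handles can be knotted in the $(2n-1)$-dimensional object manifolds; however, the resolution lemma only uses that homotopic data can be joined up by general position, not that embeddings are isotopic, so these cases still go through.
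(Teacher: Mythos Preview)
The paper itself does not prove this theorem: it simply cites Theorem~3.1 and Corollary~2.17 of \cite{GR-W2}. Your proposal attempts to sketch the actual arguments behind those citations, so let me evaluate the sketch on its own terms.

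Your overall strategy for the first map --- a tower of subcategories filtered by connectivity of $(W,Q)$, each step compared via a semi-simplicial resolution by surgery data --- matches the architecture of \cite{GR-W2} at a coarse level. But there is a genuine gap in your implementation of the surgery step.

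You propose that the surgery data consists of framed $j$-spheres embedded in the \emph{interior} of the cobordism $W: P \leadsto Q$, and that performing closed surgery on them kills $\pi_j(W,Q)$ while keeping $P$ and $Q$ fixed. This does not work. Interior surgery on a $j$-sphere (with $j \le n-1$) does not change $\pi_{j-1}(W)$: one removes a sphere of codimension $2n-j \ge n+1$ and glues back something $(2n-j-2)$-connected. Hence if the map $\pi_{j-1}(Q) \to \pi_{j-1}(W)$ has non-trivial kernel --- which is entirely possible, since $(j-1)$-connectivity of $(W,Q)$ only forces it to be surjective --- then the boundary homomorphism $\pi_j(W',Q) \to \pi_{j-1}(Q)$ remains non-zero after any sequence of interior $j$-surgeries, and $(W',Q)$ can never become $j$-connected. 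In other words, interior $j$-surgery only sees the image of $\pi_j(W) \to \pi_j(W,Q)$, not the whole group. Your opening paragraph even says the handles are ``attached along embedded spheres $S^j$ \ldots\ in the $(2n-1)$-manifold $Q$'', which would change $Q$ and is inconsistent with ``in its interior'' and with treating the surgery as a functor that is the identity on objects.

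The actual argument in \cite{GR-W2} does not keep $P$ and $Q$ fixed. It passes to the sheaf model (spaces of long $\theta$-manifolds $\psi_\theta$), where a single point encodes the entire chain of objects and morphisms, and constructs the surgery as an explicit one-parameter deformation that modifies the slices as well as the cobordism between them. The surgery data is embedded touching the boundary rather than in the interior, and the ``perform surgery'' step is a homotopy in the $\psi$ model, not a functor $\mathcal{E}^{(j)} \to \mathcal{D}^{(j+1)}$. This is precisely why your appeal to ``a functor which is the identity on objects and a weak equivalence on each morphism space'' is misplaced: no such functor is available.

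For the second map, this is not a surgery statement at all. Corollary~2.17 of \cite{GR-W2} is a direct comparison of models showing that fixing the collared piece $L$ does not change the homotopy type --- essentially because the space of such choices is contractible --- and requires no modification of objects or morphisms.
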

\begin{proof}
  This first equivalence is \cite[Theorem 3.1]{GR-W2}, one of the
  major technical results of that paper (``surgery on morphisms'').
  The second is Corollary 2.17 of op.\ cit.
\end{proof}

\subsection{Standard $\theta$-structures}\label{sec:StdThetaStr}

Let $\theta : B \to BO(2n)$ be a tangential structure, and recall that
we insist that $B$ is path connected. Choose once and for all a
``basepoint'' $\theta$-structure on the vector space $\bR^{2n}$, i.e.\
a bundle map $\tau: \bR^{2n} \to \theta^*\gamma_{2n}$ (depending on
the orientability of $\theta^*\gamma_{2n}$, there are one or two
possible such choices, up to homotopy). This induces a canonical
$\theta$-structure $\hat{\ell}^\tau_X$ on any framed $2n$- or
$(2n-1)$-manifold $X$.

\begin{definition}
  The \emph{standard framing} $\xi_{S^n \times D^n}$ on
  $S^n \times D^n$ is the framing induced by the embedding
  \begin{equation*}
    \begin{aligned}
      S^n \times D^n & \lra \bR^{n+1} \times \bR^{n-1} = \bR^{2n}\\
      (x; y_1, y_2, \ldots, y_n) &\longmapsto (2 e^{-\tfrac{y_1}{2}}x
      ; y_2, \ldots, y_n).
    \end{aligned}
  \end{equation*}
  This framing induces a $\theta$-structure
  $\hat{\ell}^\tau_{S^n \times D^n}$, and we say that a
  $\theta$-structure on $S^n \times D^n$ is \emph{standard} if it is
  homotopic to $\hat{\ell}^\tau_{S^n \times D^n}$.
\end{definition}
\begin{definition}
  Let $W_{1,1} = S^n \times S^n \setminus \mathrm{int}(D^{2n})$ be
  obtained by removing an open disc inside $D^n_+ \times D^n_+$, the
  product of the two upper hemispheres.  A $\theta$-structure
  $\hat{\ell}$ on $W_{1,1}$ is \emph{standard} if both
  $\theta$-structures $\bar{e}^*\hat{\ell}$ and $\bar{f}^*\hat{\ell}$
  on $S^n \times D^n$ are standard, where $\bar{e}$ and $\bar{f}$ are
  the embeddings defined by
  \begin{equation*}
    \begin{aligned}
      \bar{e} : S^n \times D^n &\lra W_{1,1} \subset S^n \times S^n
      \subset \bR^{n+1} \times \bR^{n+1}\\
      (x,y) &\longmapsto \left(x; \tfrac{y}{2}, - \sqrt{1- \vert
          \tfrac{y}{2}\vert^2}\right)
    \end{aligned}
  \end{equation*}
  and
  \begin{equation*}
    \begin{aligned}
      \bar{f} : S^n \times D^n &\lra W_{1,1} \subset S^n \times S^n
      \subset \bR^{n+1} \times \bR^{n+1}\\
      (x,y) &\longmapsto \left(-\tfrac{y}{2}, - \sqrt{1- \vert
          \tfrac{y}{2}\vert^2}; x\right),
    \end{aligned}
  \end{equation*}
\end{definition}

\begin{remark}\label{rem:StdVsAdmissible}
  In Definition \ref{defn:admissible} we said that a
  $\theta$-structure $\hat{\ell}$ on $W_{1,1}$ was \emph{admissible}
  if there are orientation-preserving embeddings
  ${e},{f}: S^n \times D^n \to W_{1,1}$ with cores intersecting
  transversely in one point, such that each of the $\theta$-structures
  $e^*\hat{\ell}$ and $f^*\hat{\ell}$ on $S^n \times D^n$ extend to
  $\bR^{2n}$ for some orientation-preserving embeddings
  $S^n \times D^n \hookrightarrow \bR^{2n}$.  Up to reparametrisation,
  this condition is equivalent to the above notion of \emph{standard}:
  if $\hat{\ell}$ is admissible then there is an embedding
  $\phi : W_{1,1} \hookrightarrow W_{1,1}$ such that
  $\phi^*\hat{\ell}$ is standard, see \cite[Remark 7.3]{GR-W3}
\end{remark}

The embeddings $\bar{e}$ and $\bar{f}$ are orientation-preserving and
the union of their images is isotopy equivalent to
$W_{1,1}$. Therefore there exists a framing $\xi_{W_{1,1}}$ of
$W_{1,1}$ such that $\bar{e}^*\xi_{W_{1,1}}$ and
$\bar{f}^*\xi_{W_{1,1}}$ are each homotopic to $\xi_{S^n \times D^n}$,
so the associated $\theta$-structure $\hat{\ell}^\tau_{W_{1,1}}$ is
standard.

\begin{lemma}\label{lem:StdIsUnique}
  The space of standard $\theta$-structures on $W_{1,1}$ fixed on a
  disc in the boundary is path-connected.
\end{lemma}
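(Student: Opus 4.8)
The plan is to show that the space of standard $\theta$-structures on $W_{1,1}$ (with fixed boundary condition on a disc $D\subset\partial W_{1,1}$) is path-connected by reducing to the connectivity of spaces of bundle maps over the two handles $\bar e(S^n\times D^n)$ and $\bar f(S^n\times D^n)$ and the way they are glued. The underlying manifold $W_{1,1}$ is, up to isotopy, the union $\bar e(S^n\times D^n)\cup \bar f(S^n\times D^n)$, glued along a neighbourhood of the transverse intersection point of the cores; after deleting a boundary disc it is homotopy equivalent to $S^n\vee S^n$. So a $\theta$-structure on $W_{1,1}$ is, up to contractible choices, the same as a pair of bundle maps on the two copies of $S^n\times D^n$ agreeing near the intersection chart, i.e.\ near a point; and being \emph{standard} means each of these two bundle maps lies in the (by definition non-empty, and in fact a torsor) subset of bundle maps homotopic to $\hat\ell^\tau_{S^n\times D^n}$.

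First I would set up the relevant fibration sequence: restriction of a $\theta$-structure from $W_{1,1}$ to a regular neighbourhood $N$ of $\bar e(S^n\times\{0\})\cup \bar f(S^n\times\{0\})\cup D$ is a fibration, and since $W_{1,1}$ deformation retracts onto $N$ rel $D$, this restriction map is a weak equivalence; thus it suffices to work with bundle maps on $N$. Then I would decompose $N$ as the two thickened spheres glued along a disc chart around the intersection point, and use the fact that $\Bun^\theta(T(S^n\times D^n);-)$ with a fixed boundary condition on $D^n\times D^n$ (a chart) is, after choosing the standard framing, identified with the space of maps $(S^n\times D^n, D^n\times D^n)\to (\mathrm{Fr}(\theta^*\gamma_{2n}), \text{pt})$ — equivalently, lifts of a map to $B$ along $\theta$ rel a point — whose path components are a torsor for something like $\pi_{2n}(B)$ or a twisted version, and on which ``standard'' picks out exactly one path component by definition. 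So each factor's standard locus is path-connected; I then need that the gluing (a homotopy pullback over the space of bundle maps on the chart $D^n\times D^n$, which is contractible up to the choice of basepoint structure $\tau$) does not introduce extra components among \emph{standard} structures.

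The cleanest way to organise the gluing step is a Mayer–Vietoris/van Kampen argument for the restriction-of-structures fibrations, or more concretely: parametrise a putative path of standard structures, restrict it to each handle to get (by the previous paragraph) a path within the standard locus of that handle, and patch these using that the two handles overlap only in a chart $\cong D^{2n}$ over which all $\theta$-structures extending the given boundary condition form a connected (indeed, up to the at-most-two-element set of homotopy classes of $\tau$, contractible) space — and ``standard'' restricted to this chart is automatically in the single component containing the standard framing. One must be slightly careful with the ambiguity of $\tau$ when $\theta^*\gamma_{2n}$ is orientable: the two choices of $\tau$ differ by an orientation-reversal, but ``standard'' was defined using the fixed choice, and $\bar e$, $\bar f$ are orientation-preserving, so both handles see the \emph{same} component, which is what makes the patching work. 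I would finish by assembling: a path in the standard locus of $W_{1,1}$ rel $D$ exists because it can be built handle-by-handle, with the overlap contribution absorbed into the contractibility of the chart's structure space.

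The main obstacle I expect is the gluing/patching step — making precise that connectivity of the standard locus on each of the two overlapping pieces, together with connectivity of the (relative) structure space on their intersection chart, forces connectivity of the standard locus on the union. In other words, the heart of the matter is a homotopy-pullback argument showing that the square of restriction maps
$$\Bun^\theta_{\mathrm{std}}(TW_{1,1};-)\to \Bun^\theta_{\mathrm{std}}(T\bar e(S^n\times D^n);-)\times \Bun^\theta_{\mathrm{std}}(T\bar f(S^n\times D^n);-)$$
over $\Bun^\theta(T D^{2n};-)$ is a homotopy pullback, and then checking that a homotopy pullback of path-connected spaces over a path-connected base with surjective-enough $\pi_1$ is path-connected — the only real content being to verify the $\pi_0$/$\pi_1$ bookkeeping (e.g.\ that $\pi_1$ of the chart's structure space surjects onto, or acts transitively enough on, the relevant sets) so that no hidden obstruction class survives. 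The handle-by-handle statement itself is soft, following from elementary obstruction theory for lifts along $\theta$, since $W_{1,1}$ has cells only in dimensions $0$, $n$, and $2n$.
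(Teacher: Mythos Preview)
The paper does not give a proof here; it cites \cite[Lemma 7.7]{GR-W3}. Your outline heads in a workable direction but stops short of the one step that carries the content, and is imprecise about where the boundary disc $D$ sits.

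The missing ingredient is the framing $\xi_{W_{1,1}}$ constructed immediately before the lemma. Using it identifies $\Bun^\theta(TW_{1,1})$ with $\Map(W_{1,1},F)$ for $F$ the homotopy fibre of $\theta$, sending $\hat\ell^\tau_{W_{1,1}}$ to the constant map at $\tau$. Since $\bar e^*\xi_{W_{1,1}}$ and $\bar f^*\xi_{W_{1,1}}$ are each homotopic to $\xi_{S^n\times D^n}$, a structure is standard exactly when the corresponding map restricts to a \emph{freely} nullhomotopic map $S^n\to F$ along each core. Fixing the structure on a contractible $D\subset\partial W_{1,1}$ is, up to homotopy equivalence of pairs, the same as basing the map at the intersection point of the cores; and a based map $S^n\to F$ is freely nullhomotopic if and only if it is based nullhomotopic, because the $\pi_1(F)$-orbit of $0\in\pi_n(F)$ is $\{0\}$. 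Hence the space of standard structures rel $D$ is identified with $(\Omega^n_0 F)^2$, which is path-connected. In the language of your pullback square this is exactly the $\pi_1$-surjectivity you flagged: the constant maps give a section of the restriction fibration $\Bun^\theta_{\mathrm{std}}(T(S^n\times D^n))\to\Bun^\theta(TD^{2n})$, so $\pi_1$ surjects and the double coset is trivial. Without this observation, or an equivalent one, your outline does not close.

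Two smaller points. First, $D$ lies in $\partial W_{1,1}$ and is disjoint from the images of $\bar e$ and $\bar f$, so your pullback square as written has nowhere to put the boundary condition; you must either run the argument unbased first and then invoke the section argument once more to pass to the fibre over $\hat\ell_D$, or absorb $D$ into one of the pieces via an arc. Second, the components of $\Bun^\theta(T(S^n\times D^n))$ rel a chart are indexed by $\pi_n(\hofib(\theta))$, not by anything like $\pi_{2n}(B)$.
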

\begin{proof}
  This is \cite[Lemma 7.7]{GR-W3}, under a choice of identification of
  $W_{1,1}$ with the manifold $H$ of that paper.
\end{proof}

\subsection{Stable homological stability}

In this section we shall formulate our stable homological stability
theorem, which is a strengthening of Theorem \ref{thm:Main}.

\begin{definition}\label{defn:ThetaEnd}
  We say that a composable sequence of cobordisms
  $$K\vert_0 \overset{K\vert_{[0,1]}}\leadsto K\vert_1
  \overset{K\vert_{[1,2]}}\leadsto K\vert_2
  \overset{K\vert_{[2,3]}}\leadsto K\vert_3 \leadsto  \cdots$$
  in $\mathcal{C}_{\theta, \partial L}$ is a \emph{$\theta$-end in
    $\mathcal{C}_{\theta, \partial L}$} if each cobordism
  $K\vert_{[i,i+1]}$ satisfies
  \begin{enumerate}[(i)]
  \item\label{it:end:1} it is $(n-1)$-connected relative to both
    $K\vert_i$ and $K\vert_{i+1}$, and
  \item\label{it:end:2} it contains an embedded copy of $W_{1,1}$ with
    standard $\theta$-structure, in a path component intersecting
    $\partial L$.
\end{enumerate}
For a $\theta$-end $K$ in $\mathcal{C}_{\theta, \partial L}$, let
$$\mathcal{F}_i(P) \subset \mathcal{C}_{\theta, \partial L}(P, K\vert_i)$$
be the subspace consisting of those $\theta$-cobordisms $(s, M)$ such
that $\ell_M : M \to B$ is $n$-connected.
\end{definition}

\begin{lemma}
  The map
  $\mathcal{C}_{\theta,\partial L}(Q,K\vert_i) \to
  \mathcal{C}_{\theta,\partial L}(P,K\vert_i)$ induced by precomposing
  with any $M \in \cob(P,Q)$ sends $\mathcal{F}_i(Q)$ into
  $\mathcal{F}_i(P)$.  Similarly, postcomposing with
  $K\vert_{[i,i+1]}$ induces compatible maps
  $\mathcal{F}_i(P) \to \mathcal{F}_{i+1}(P)$.
\end{lemma}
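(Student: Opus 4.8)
The statement is essentially a bookkeeping lemma: it asserts that the two operations of precomposing with a morphism $M \in \cob(P,Q)$ and postcomposing with the end-cobordism $K\vert_{[i,i+1]}$ preserve the subspaces $\mathcal{F}_i(-)$ cut out by the $n$-connectivity condition, and that postcomposition is compatible across the index $i$. The plan is to check the two claims separately, in each case verifying that the relevant gluing operation respects $n$-connectivity of the map to $B$, and then to check compatibility by a standard associativity-of-composition argument. Throughout I would use that composition in $\mathcal{C}_{\theta,\partial L}$ is given by union of cobordisms along a common boundary object, together with the corresponding union of $\theta$-structures (hence of underlying maps to $B$).

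For the first claim, fix $M \in \cob(P,Q)$, i.e.\ $M : P^\circ \leadsto Q^\circ$ with $(M,Q^\circ)$ being $(n-1)$-connected, and let $(s',N) \in \mathcal{F}_i(Q)$, so $\ell_N : N \to B$ is $n$-connected. The precomposite is $N \circ M = N \cup_Q (M + s'\cdot e_0)$ (up to the reparametrisation built into the definition of composition), with underlying map $\ell_N \cup \ell_M : N\cup_Q M \to B$. I would then argue that this map is $n$-connected. The key input is that $M$ is $(n-1)$-connected relative to $Q$: van Kampen gives surjectivity on $\pi_1$, and a Mayer--Vietoris / relative-homology argument using $H_*(M,Q)=0$ for $*\le n-1$ shows that the inclusion $N \hookrightarrow N\cup_Q M$ is $n$-connected, so the composite $N\cup_Q M \to B$ is $n$-connected because $\ell_N$ is. (One must be mildly careful with basepoints and with the fact that these are relative constructions over the fixed $L$, but cutting out $[0,s]\times\mathrm{int}(L)$ does not affect the relevant connectivity statements — cf.\ Remark \ref{rem:RemovingL}.) Hence $N\circ M \in \mathcal{F}_i(P)$, as required.

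For the second claim, postcomposing $(s,M')\in\mathcal{F}_i(P)$ with $K\vert_{[i,i+1]}$ gives $K\vert_{[i,i+1]} \circ M' $, whose underlying map is $\ell_{M'} \cup \ell_{K\vert_{[i,i+1]}} : M' \cup_{K\vert_i} K\vert_{[i,i+1]} \to B$. Here condition \eqref{it:end:1} in Definition \ref{defn:ThetaEnd} says $K\vert_{[i,i+1]}$ is $(n-1)$-connected relative to $K\vert_i$, so by exactly the same inclusion-is-$n$-connected argument as above, $M' \hookrightarrow M'\cup_{K\vert_i}K\vert_{[i,i+1]}$ is $n$-connected, and since $\ell_{M'}$ is $n$-connected so is the composite map to $B$. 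Thus the image lands in $\mathcal{F}_{i+1}(P)$. Compatibility of these maps as $i$ varies, and with the precomposition maps, is then immediate from strict associativity of composition in $\mathcal{C}_{\theta,\partial L}$: $(K\vert_{[i+1,i+2]}\circ K\vert_{[i,i+1]})\circ M' = K\vert_{[i+1,i+2]}\circ(K\vert_{[i,i+1]}\circ M')$ and $(K\vert_{[i,i+1]}\circ N)\circ M = K\vert_{[i,i+1]}\circ(N\circ M)$ on the nose.

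The only real content — the part I would write out carefully — is the elementary homotopy-theoretic lemma that gluing on a cobordism which is $(n-1)$-connected relative to the boundary component along which one glues yields an $n$-connected inclusion, so that $n$-connectivity of the map to $B$ is preserved. Everything else is formal. I do not anticipate a genuine obstacle here; the mild subtlety is simply to phrase the connectivity argument so that it applies uniformly to both precomposition (with $M$) and postcomposition (with $K\vert_{[i,i+1]}$), since the two cases are literally the same statement with the roles of "new piece" and "old piece" interchanged.
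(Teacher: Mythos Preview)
Your approach is essentially the same as the paper's: factor $\ell_N$ through the inclusion $N \hookrightarrow N \cup_Q M$ and use the connectivity of the inclusion together with the $n$-connectivity of $\ell_N$ to deduce $n$-connectivity of $\ell_{N \cup_Q M}$. The second claim and the compatibility are handled identically.

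However, there is an off-by-one error in your key lemma. From $(M,Q)$ being $(n-1)$-connected you get (by excision) that $(N \cup_Q M, N)$ is $(n-1)$-connected, so the inclusion $N \hookrightarrow N \cup_Q M$ is $(n-1)$-connected, not $n$-connected as you claim. Your explicitly stated ``elementary homotopy-theoretic lemma'' at the end is therefore false as written. Fortunately the weaker statement is exactly what is needed: the paper's argument is that in the factorisation
\[
  \ell_N : N \lra N \cup_Q M \xrightarrow{\ell_N \cup \ell_M} B,
\]
the composite is $n$-connected and the first map is $(n-1)$-connected, and an elementary check on homotopy groups shows this forces the second map to be $n$-connected. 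So your strategy is right; just correct the connectivity degree of the inclusion and invoke the two-out-of-three-type statement for connectivity rather than claiming the inclusion alone carries the full $n$-connectivity.
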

In other words, $\mathcal{F}_i$ defines a subfunctor of
$\mathcal{C}_{\theta, \partial L}(-, K\vert_i) : \cob^{op} \to
\mathrm{Top}$ and postcomposition with $K\vert_{[i,i+1]}$ defines a
natural transformation $\mathcal{F}_i \Rightarrow \mathcal{F}_{i+1}$.
\begin{proof}
  If $(s, W) \in \mathcal{F}_i(Q)$ and
  $(t, M) : P \leadsto Q \in \cob$, then we must show that the map
  $\ell_M \cup \ell_W : M \cup_Q W \to B$ is $n$-connected. As
  $(M, Q)$ is $(n-1)$-connected, so is $(M \cup_Q W, W)$, and hence we
  have a factorisation
  $$\ell_W : W \lra M \cup_Q W \overset{\ell_M \cup \ell_W}\lra B$$
  of an $n$-connected map where the first map is $(n-1)$-connected: it
  follows that the second map is $n$-connected too.

  For the second claim, given $(s, W) \in \mathcal{F}_i(P)$ we must
  show that $(1+s,K\vert_{[i,i+1]} \circ W) \in \mathcal{F}_{i+1}(P)$,
  i.e.\ that ${\ell}_{K\vert_{[i,i+1]} \circ W}$ is
  $n$-connected. This follows from the same argument as above, using
  that $(K\vert_{[i,i+1]}, K\vert_i)$ is $(n-1)$-connected.
\end{proof}

Theorem \ref{thm:Main} can now be stated economically as follows.

\begin{theorem}[Stable homological stability]\label{thm:StabStab}
  For any fixed $\theta$-end in $\mathcal{C}_{\theta, \partial L}$ as
  in Definition~\ref{defn:ThetaEnd}, write
  $\mathcal{F}(P) = \hocolim\limits_{i \to \infty} \mathcal{F}_i(P)$.
  Then the map $\mathcal{F}(Q) \to \mathcal{F}(P)$ induced by any
  $M \in \cob(P,Q)$ is an abelian homology equivalence.
\end{theorem}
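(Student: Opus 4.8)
The plan is to reduce Theorem~\ref{thm:StabStab} to a homological stability statement about a family of moduli spaces with a single group acting, and then to run a ``standard'' stability argument on a suitable semi-simplicial resolution by embedded copies of $W_{1,1}$. First I would fix the $\theta$-end $K$ and observe that, because each $K\vert_{[i,i+1]}$ is $(n-1)$-connected relative to both ends and contains a standard $W_{1,1}$ in a path component meeting $\partial L$, postcomposition $\mathcal{F}_i(P)\to\mathcal{F}_{i+1}(P)$ is (after passing to path components, by Lemma~\ref{lem:StdIsUnique}) modeled on gluing on a cobordism containing $W_{1,1}$; the homotopy colimit $\mathcal{F}(P)$ is thus a model for ``$\NN_n^\theta(P,\hat\ell_P)$ stabilised by infinitely many $W_{1,1}$''. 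The key point is that the maps induced by $M\in\cob(P,Q)$ (which change the boundary but do not alter genus) commute up to homotopy with the $W_{1,1}$-stabilisation maps (which increase genus but fix the boundary), since sliding $W_{1,1}$ along a thickened path in $M$ produces the requisite homotopy — this is exactly the trick attributed to \cite{HV,Wahl} in the introduction, and it means it suffices to prove stability for the boundary-changing maps \emph{after} passing to the $W_{1,1}$-colimit.

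Next I would build, for each object $P^\circ$ of $\cob$, a semi-simplicial space $X_\bullet(P)$ whose space of $p$-simplices parametrises a point of $\mathcal{F}(P)$ together with $(p+1)$ ``ordered, disjoint, admissibly-embedded'' copies of $S^n\times D^n$ (or of $W_{1,1}$, or of a morphism in $\cob$ realising a genus-one reduction of the boundary) arranged so that forgetting any one returns a point of $\mathcal{F}(P)$ again. The two facts to establish about this resolution are: (a) the augmentation $|X_\bullet(P)|\to\mathcal{F}(P)$ is a homology equivalence (indeed highly connected, or an equivalence after stabilisation), which follows from a ``parametrised surgery / contractibility of the space of embeddings'' argument — here one uses that we have \emph{already} stabilised by infinitely many $W_{1,1}$, so there is always room to find one more admissible embedded copy, making the relevant space of choices homologically trivial; and (b) the space of $p$-simplices $X_p(P)$ is itself equivalent to $\mathcal{F}(P')$ for $P'$ obtained from $P$ by cutting along the chosen handles, compatibly with the simplicial structure. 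Then the map $\mathcal{F}(Q)\to\mathcal{F}(P)$ induced by $M$ lifts to a map of augmented semi-simplicial spaces $X_\bullet(Q)\to X_\bullet(P)$, and by a spectral-sequence comparison it is enough to show the induced maps on each space of $p$-simplices are abelian homology equivalences; but those are again of the form $\mathcal{F}(Q'')\to\mathcal{F}(P'')$, for smaller data, so one concludes by an induction — either on the number of handles of $M$ (each $(n-1)$-connected cobordism in $\cob$ is built from finitely many handles of index $\geq n$, and index-$2n$ handles can be traded using admissibility), or a Quillen-style bootstrapping in which the $p=0$ and $p=1$ levels of the resolution carry the whole argument.

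The main obstacle I anticipate is step (a): proving that the augmentation $|X_\bullet(P)|\to\mathcal{F}(P)$ is acyclic (or highly connected after stabilisation). The subtlety is that ``finding one more admissibly-embedded $W_{1,1}$ with \emph{standard} $\theta$-structure'' is not automatic for a fixed $\theta$-manifold of bounded genus — this is precisely why one passes to the $W_{1,1}$-colimit first — and one must show that the space of such embeddings, together with a compatible path of $\theta$-structures (using Lemma~\ref{lem:StdIsUnique} for uniqueness and Remark~\ref{rem:StdVsAdmissible} to pass between admissible and standard), has the homology of a point in the limit $g\to\infty$. Technically this requires a ``cancellation'' argument comparing embedded handles in $W\sharp g(S^n\times S^n)$ using Whitney tricks in dimension $2n$; when $2n=2$ or $4$ the Whitney trick fails and the argument must instead be carried out after one further stabilisation, or deduced formally from the corresponding statement in higher dimensions by a suspension/stabilisation trick — this is the step where the hypotheses $n=1,2$ ``need special treatment'' as flagged after Section~\ref{sec:DefnStatement}. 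Everything else (the commuting-square trick, the semi-simplicial spectral sequence, the handle induction, and identifying spaces of simplices with smaller moduli spaces) I expect to be routine given the machinery of \cite{GR-W2,GR-W3} recalled above.
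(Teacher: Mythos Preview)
Your high-level architecture (handle decomposition of $M$, semi-simplicial resolution, contractibility from the infinite supply of $W_{1,1}$'s, spectral-sequence comparison) matches the paper, and you correctly identify the contractibility step as the crux. But two specific mechanisms are missing, and without them the induction you sketch does not close.

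First, the paper does not use a single generic resolution by embedded $W_{1,1}$'s. It begins by proving $H_P\in\mathcal{W}$ \emph{directly and elementarily}, by a bare ladder-diagram argument (find diagonal maps making both triangles commute, then invoke Proposition~\ref{acyclicity-key-proposition}); no resolution is needed here. This is the base case. Then for an elementary cobordism $M$ of index $k$ it builds a resolution tailored to the attaching map $\sigma$ of $M$: the $p$-simplices are tuples of embedded thickened discs whose boundaries lie on prescribed spheres \emph{parallel} to the core of $\sigma$ (for $k=n$) or \emph{meridian} to it (for $k>n$). The payoff is the \emph{interchange of support} trick (Section~\ref{sec:SupportAndInterchange}): after cutting along these discs, the map on $p$-simplices is identified not with ``the same map for smaller data'' but with gluing on a cobordism of strictly lower complexity --- for $k=n$ it becomes $-\circ H_{S_p}$ composed with an isomorphism (Lemma~\ref{lem:InterchangeOfSupport}), already known to be in $\mathcal{W}$; for $k>n$ it becomes a composition of index-$(k-1)$ attachments (Lemma~\ref{lem:CancelHandle}), handled by induction on $k$. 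Your proposed induction on ``smaller $P''$, $Q''$'' has no obvious well-founded ordering and no base case: if you resolve by generic $W_{1,1}$'s and cut them out, the induced map on $p$-simplices is still $-\circ M$ with extra boundary spheres, no simpler than before.

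Second, for $k=n$ the paper does not resolve $M$ directly. It first forms $M\circ r(M)$, where $r(M)$ attaches the dual $n$-handle with $\theta$-structure chosen so that the glued $S^n\times D^n$ is standard; it is \emph{this} composite whose resolution by parallel discs untwists to $H_{S_p}$. One then deduces $M\in\mathcal{W}$ from $M\circ r(M)\in\mathcal{W}$ and $r(M)\circ r(r(M))\in\mathcal{W}$ by 2-out-of-3. Resolving $M$ alone by generic embedded handles will not produce this untwisting. (By contrast, for $k>n$ the meridian discs have dimension $2n-k<n$, so the resolution is contractible at each finite stage by general position --- no stable Whitney trick is needed there, only for $k=n$.)
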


\subsection{Models for moduli spaces of manifolds}

In order to explain how to deduce Theorem \ref{thm:Main} from Theorem
\ref{thm:StabStab}, we will make a particular choice of model for the
Borel constructions
$\Bun_{n,\partial}^\theta(TW;\hat{\ell}_P)\hcoker \Diff_\partial(W)$
which form path components of $\NN_n^\theta(P;\hat{\ell}_P)$. This
involves a comparison between two models for spaces of manifolds which
will be of further use in Section \ref{sec:StabHomAndGC}, so we give
it in full generality.

In \cite[Proposition 2.16]{GR-W2} we have explained how to start with
a $(2n-1)$-manifold
$L \subset \{0\} \times (-\infty,0] \times \bR^{\infty-1}$ with
collared boundary
$\partial L \subset \{0\} \times \{0\} \times \bR^{\infty-1}$, and
rotate it in $(-\infty,0] \times \bR \times \bR^{\infty-1}$ around the
origin in the first two coordinate directions to obtain a new manifold
$\overline{L} \subset \{0\} \times [0,\infty) \times
\bR^{\infty-1}$. The union $D(L) = L \cup \overline{L}$ is the double
of $L$, and the subset
$V_L \subset (-\infty,0] \times \bR \times \bR^{\infty-1}$ swept out
by the rotation gives a nullbordism $V_L : \emptyset \leadsto
D(L)$. The manifold $V_L$ is diffeomorphic to $[0,1] \times L$, after
unbending corners, and in particular the inclusion
$L \hookrightarrow V_L$ is a homotopy equivalence. Therefore a
$\theta$-structure $\hat{\ell}_L$ on $\epsilon^1 \oplus TL$ may be
extended to one, $\hat{\ell}_{V_L}$, on $V_L$. We then give
$\overline{L}$ the $\theta$-structure induced by restriction from
$V_L$.

We may equally well begin with a $(2n-1)$-manifold
$P \subset \{0\} \times [0,\infty) \times \bR^{\infty-1}$ with
collared boundary
$\partial P \subset \{0\} \times \{0\} \times \bR^{\infty-1}$ and
rotate it in $(-\infty,0] \times \bR \times \bR^{\infty-1}$. We write
$\overline{P} \subset \{0\} \times (-\infty,0] \times \bR^{\infty-1}$
for the manifold so obtained, and induce a $\theta$-structure on
$\overline{P}$ from one on $P$ in the same way.

In particular, if $P \in \mathcal{C}_{\theta, \partial L}$ is an
object then
$\overline{P} \subset \{0\} \times (-\infty,0] \times \bR^{\infty-1}$
is equal (as a $\theta$-manifold) to $L$ near their boundary, so there
are isomorphisms
$$\mathcal{C}_{\theta, L} \cong \mathcal{C}_{\theta, \partial L} =
\mathcal{C}_{\theta, \partial \overline{P}} \cong \mathcal{C}_{\theta,
  \overline{P}}.$$

A similar procedure associates to a $2n$-manifold
$N \subset [0,s] \times [0,\infty) \times \bR^{\infty-1}$ with
collared boundary
$(\{0\} \times P) \cup ([0,s] \times \partial L) \cup (\{s\} \times
P)$ a rotated submanifold
$\overline{N} \subset [0,s] \times (-\infty,0] \times \bR^{\infty-1}$
together with a rotated $\theta$-structure
$\hat{\ell}_{\overline{N}}$.

\begin{definition}
  For objects $P$ and $Q$ of $\mathcal{C}_{\theta, \partial L}$, let
  $\langle P, Q \rangle = \overline{P} \cup Q \in \mathcal{C}_{\theta,
    \overline{P}}$. For a morphism $(s,M) : Q \leadsto Q'$ let
  $\langle P, M \rangle = ([0,s] \times \overline{P}) \cup M : \langle
  P, Q \rangle \leadsto \langle P, Q' \rangle$, and for a morphism
  $(s,N) : P' \leadsto P$ let
  $\langle N, Q \rangle = \overline{N} \cup ([0,s] \times Q) : \langle
  P, Q \rangle \leadsto \langle P', Q \rangle$.  Both are morphisms in
  $\mathcal{C}_\theta$.
\end{definition}

\begin{lemma}\label{lem:ChangeOfModel}
  For any $P, Q \in \mathcal{C}_{\theta, \partial L}$ the composition
  \begin{equation}\label{eq:ChangeOfModel}
    \mathcal{C}_{\theta, \partial L}(P, Q) =
    \mathcal{C}_{\theta, \partial \overline{P}}(P, Q)  \hookrightarrow
    \mathcal{C}_{\theta}(D(P), \langle P, Q \rangle) \xrightarrow{-
      \circ V_{\overline{P}}} \mathcal{C}_\theta(\emptyset, \langle P,
    Q \rangle)
  \end{equation}
  is a weak homotopy equivalence. 

  If $M : Q \leadsto Q'$ and $N : P' \leadsto P$ are morphisms in
  $\mathcal{C}_{\theta, \partial L}$ then the squares
  \begin{equation*}
    \xymatrix{
      {\mathcal{C}_{\theta, \partial L}(P, Q)} \ar[r]^-{M \circ -} \ar[d]^-\simeq& {\mathcal{C}_{\theta, \partial L}(P, Q')} \ar[d]^-\simeq & {\mathcal{C}_{\theta, \partial L}(P, Q)} \ar[r]^-{- \circ N} \ar[d]^-\simeq & {\mathcal{C}_{\theta, \partial L}(P', Q)} \ar[d]^-\simeq\\
      {\mathcal{C}_{\theta}(\emptyset, \langle P, Q\rangle)} \ar[r]^-{\langle P, M\rangle \circ -}& {\mathcal{C}_{\theta}(\emptyset, \langle P, Q'\rangle)} & {\mathcal{C}_{\theta}(\emptyset, \langle P, Q\rangle)} \ar[r]^-{\langle N, Q\rangle \circ -}& {\mathcal{C}_{\theta}(\emptyset, \langle P', Q\rangle)}
    }
  \end{equation*}
  commute up to homotopy.
\end{lemma}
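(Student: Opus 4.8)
The plan is to identify the composite \eqref{eq:ChangeOfModel} with a manifestly weak equivalence by ``unbending'' the rotation construction. First I would recall from \cite[Proposition 2.16]{GR-W2} that $V_{\overline{P}} : \emptyset \leadsto D(\overline{P})$ has underlying manifold diffeomorphic to $[0,1] \times \overline{P}$, so that precomposition with $V_{\overline{P}}$ is, up to the weak equivalences that identify morphism spaces of isomorphic cobordism categories, the operation of gluing on a cylinder. The key point is that the first inclusion in \eqref{eq:ChangeOfModel} records a cobordism $W \subset [0,s] \times [0,\infty) \times \bR^{\infty-1}$ from $P$ to $Q$ (rel $[0,s] \times \partial L$), together with the fixed $\theta$-structure on $[0,s] \times L$; composing with $V_{\overline{P}}$ glues the rotated piece $V_{\overline{P}}$ onto the $\{0\}\times P$ end. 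I would argue that the resulting manifold $\overline{V_{\overline{P}}} \cup_{D(\overline P)} (\text{graph of } W)$ inside $[0,s+1]\times\bR^\infty$, after unbending corners exactly as in the rotation construction, is diffeomorphic rel boundary (and $\theta$-structure) to the ``rotated'' $2n$-manifold $\overline{N}\cup([0,s]\times Q)$ which is exactly the image of the bijection on underlying manifolds $\mathcal{C}_{\theta,\partial L}(P,Q)\ni W \mapsto \overline{W}\cup\cdots$. This is a homeomorphism on morphism spaces since it is induced by an ambient diffeomorphism of the background space $(-\infty,\infty)\times\bR\times\bR^{\infty-1}$ (namely the inverse of the rotation/unbending), hence in particular a weak equivalence.

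Concretely, the key steps in order are: (1) spell out that the first map in \eqref{eq:ChangeOfModel} is the evident inclusion of the subspace $\mathcal{C}_{\theta,\partial\overline P}(P,Q)$ of those cobordisms $D(P)\leadsto\langle P,Q\rangle$ which restrict to the product $\theta$-structure on $[0,s]\times L$ over the $\overline{P}$-half; (2) observe that the rotation map of \cite[Prop.\ 2.16]{GR-W2}, applied fibrewise over $[0,s]$, gives a homeomorphism from this subspace onto a subspace of $\mathcal{C}_\theta(\emptyset,\langle P,Q\rangle)$ consisting of nullbordisms whose ``lower half'' is exactly $V_{\overline P}$ extended by a cylinder — and that this homeomorphism is precisely $-\circ V_{\overline P}$ followed by the inclusion, up to the canonical identifications of isomorphic topological categories; (3) show that the inclusion of this subspace into all of $\mathcal{C}_\theta(\emptyset,\langle P,Q\rangle)$ is a weak equivalence, because every nullbordism of $\langle P,Q\rangle$ is, by a standard isotopy extension / bending argument, homotopic to one in standard position near the $\overline{P}$-side (here one uses that $V_{\overline P}$ is a cylinder, so it is an isomorphism in $\mathcal{C}_\theta$ in the sense of the Remark preceding Definition \ref{defn:ThetaEnd}, and precomposition with an isomorphism is a weak equivalence).

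For the naturality squares, I would note that $\langle P,M\rangle$ and $\langle N,Q\rangle$ are defined precisely by applying the same rotation/cylinder-gluing recipe to $M$ and $N$, so that each square commutes already on underlying manifolds and $\theta$-structures, and the vertical maps are compatible homeomorphisms-followed-by-equivalences constructed uniformly; commutativity up to homotopy then follows from the fact that gluing cobordisms and rotating commute up to a canonical isotopy (the ``unbending'' isotopies for the two orders of operations differ by an ambient isotopy of $[0,s+1]\times\bR\times\bR^{\infty-1}$). The main obstacle I expect is step (3): carefully producing the deformation retraction of $\mathcal{C}_\theta(\emptyset,\langle P,Q\rangle)$ onto the subspace of nullbordisms in standard position near $\overline P$. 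This is where one must invoke the uniqueness of collars together with parametrised isotopy extension, and argue that the $\theta$-structures can be dragged along; it is conceptually the same kind of argument that underlies the proof that precomposition with an isomorphism of $\mathcal{C}_\theta$ is a weak equivalence, so I anticipate citing or paralleling \cite[Corollary 2.17]{GR-W2} (as used in Theorem \ref{thm:morphism-surgery-from-acta-paper}) rather than redoing it from scratch.
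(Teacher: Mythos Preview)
Your proposal is correct and follows essentially the same approach as the paper: the composite \eqref{eq:ChangeOfModel} is, up to smoothing corners, precomposition with an invertible cobordism (since $V_{\overline{P}}$ is diffeomorphic to a cylinder), and the naturality squares commute up to homotopy because the two compositions glue on cobordisms that are diffeomorphic by a $\theta$-structure-preserving diffeomorphism. The paper in fact gives only a two-sentence sketch along exactly these lines and refers to \cite[Proposition 7.5]{GR-W2} for details, so your step (3)---which you flag as the main obstacle---is precisely the standard ``precomposition with an isomorphism is a weak equivalence'' argument you anticipated citing.
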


\begin{proof}[Proof sketch]
  Up to smoothing corners, \eqref{eq:ChangeOfModel} is given by gluing
  on an invertible cobordism. The squares each commute because the two
  compositions are given by gluing on cobordisms which are
  diffeomorphic, by a diffeomorphism preserving $\theta$-structures up
  to homotopy. For more details, see the proof of \cite[Proposition
  7.5]{GR-W2}.
\end{proof}

For an object $P \in \mathcal{C}_\theta$ we have a homeomorphism
$$\mathcal{C}_\theta(\emptyset, P) \cong \coprod_{[W]} \left((0,\infty) \times \Emb_\partial(W, (-1,0] \times \bR^\infty) \times \Bun^\theta_{\partial}(TW;\hat{\ell}_{P})\right)/\Diff_\partial(W),$$
where the disjoint union is taken over manifolds $W$ with boundary
$P$, one in each diffeomorphism class. As the action of
$\Diff_\partial(W)$ on $\Emb_\partial(W, (-1,0] \times \bR^\infty)$ is
free and has slices \cite{MR613004}, and $(0,\infty)$ is contractible,
this quotient is a model for the Borel construction so there is a weak
equivalence
$$\mathcal{C}_\theta(\emptyset, P) \simeq \coprod_{[W]} \Bun^\theta_{\partial}(TW;\hat{\ell}_{P})\hcoker\Diff_\partial(W),$$
where the disjoint union is taken over manifolds $W$ with boundary
$P$, one in each diffeomorphism class.

The spaces $\mathcal{N}_n^\theta(P)$ and
$\mathcal{M}^\theta_n(W,\hat{\ell}_W)$ in the following definition are
point-set models for the spaces $\NN_n^\theta(P)$ and
$\MM^\theta_n(W,\hat{\ell}_W)$ in the introduction.

\begin{definition}\label{defn:NN}\mbox{}
  \begin{enumerate}[(i)]
  \item For $P \in \mathcal{C}_\theta$ let $\mathcal{N}_n^\theta(P) \subset \mathcal{C}_{\theta}(\emptyset, P)$ denote the subspace of those nullbordisms $(s, W)$ of $P$ such that $\ell_W : W \to B$ is $n$-connected.
  \item For $(W, \hat{\ell}_W) \in \mathcal{N}_n^\theta(P)$, let $\mathcal{M}^\theta_n(W,\hat{\ell}_W)$ denote the path component of $\mathcal{N}_n^\theta(P)$ containing $(W,\hat{\ell}_W)$.
  \item For a manifold $W$ with boundary $P$, let $\mathcal{M}^\theta(W;\hat{\ell}_P) \subset \mathcal{C}_{\theta}(\emptyset, P)$ be the subspace of those $(X, \hat{\ell}_X)$ such that $X$ is diffeomorphic to $W$ relative to $P$.
  \end{enumerate}
\end{definition}

\begin{proof}[Proof of Theorem~\ref{thm:Main}, using Theorem~\ref{thm:StabStab}]
  We may embed $M$ in $[0,1] \times \bR^\infty$ as a cobordism
  $Q \leadsto P$, and after changing this embedding by an isotopy we
  may suppose that $M$ intersects
  $[0,1] \times [0,\infty) \times \bR^{\infty-1}$ precisely in the
  image of $p: [0,1] \times D^{2n-1} \hookrightarrow M$ and that
  $[0,1] \times D^{2n-1} \overset{p}\to M \subset [0,1] \times
  \bR^\infty$ is given by $(t, x) \mapsto (t, e(x))$ for an embedding
  $e : D^{2n-1} \to [0,\infty) \times \bR^{\infty-1}$.

  Let $L_Q = Q \cap ((-\infty,0] \times \bR^{\infty-1})$,
  $L_P = P \cap ((-\infty,0] \times \bR^{\infty-1})$, and
  $D = P \cap ([0,\infty) \times \bR^{\infty-1})$. Then $L_P$, $L_Q$,
  and $D$ all have equal boundaries, which we call $\partial L$. By
  rotating these and the manifold
  $N = M \cap ([0,1] \times (-\infty,0] \times \bR^{\infty-1})$ in the
  first two coordinate directions, we obtain a cobordism
  $$\overline{N} : \overline{L_P} \leadsto \overline{L_Q} \in \mathcal{C}_{\theta, \partial L}.$$
  Now let
  $$D=K\vert_0 \overset{K\vert_{[0,1]}}\leadsto K\vert_1 \overset{K\vert_{[1,2]}}\leadsto K\vert_2 \leadsto \cdots$$
  be a $\theta$-end in the category
  $\mathcal{C}_{\theta, \partial L}$, where each $K\vert_{[i,i+1]}$ is
  obtained from $[0,1] \times K\vert_i$ by the boundary connect-sum at
  $\{1\} \times K\vert_i$ with $W_{1,1}$ having a standard
  $\theta$-structure. There is then a commutative diagram
  \begin{equation}\label{eq:Ladder1}
    \begin{gathered}
      \xymatrix{
        {\mathcal{C}_{\theta, \partial L}(\overline{L_Q}, K\vert_0)} \ar[r]^-{K\vert_{[0,1]} \circ -} \ar[d]^-{- \circ \overline{N}} & {\mathcal{C}_{\theta, \partial L}(\overline{L_Q}, K\vert_1)}  \ar[r]^-{K\vert_{[1,2]} \circ -} \ar[d]^-{- \circ \overline{N}} & {\mathcal{C}_{\theta, \partial L}(\overline{L_Q}, K\vert_2)} \ar[d]^-{- \circ \overline{N}} \cdots\\
        {\mathcal{C}_{\theta, \partial L}(\overline{L_P}, K\vert_0)} \ar[r]^-{K\vert_{[0,1]} \circ -} & {\mathcal{C}_{\theta, \partial L}(\overline{L_P}, K\vert_1)} \ar[r]^-{K\vert_{[1,2]} \circ -} & {\mathcal{C}_{\theta, \partial L}(\overline{L_P}, K\vert_2)} \cdots
      } 
    \end{gathered}
  \end{equation}
  which Lemma \ref{lem:ChangeOfModel} shows is equivalent to the
  homotopy commutative diagram
  \begin{equation}\label{eq:Ladder2}
    \begin{gathered}
      \xymatrix{ {\mathcal{C}_{\theta}(\emptyset, Q)} \ar[r]^-{{_Q} H
          \circ -} \ar[d]^-{M \circ -} &
        {\mathcal{C}_{\theta}(\emptyset,Q')} \ar[d]^-{M' \circ -}
        \ar[r]^-{{_{Q'}} H \circ -} & {\mathcal{C}_{\theta}(\emptyset,
          Q'')} \ar[d]^-{M'' \circ -}
        \cdots\\
        {\mathcal{C}_{\theta}(\emptyset, P)} \ar[r]^-{{_P} H \circ -}
        & {\mathcal{C}_{\theta}(\emptyset, P')} \ar[r]^-{{_{P'}} H
          \circ -} & {\mathcal{C}_{\theta}(\emptyset, P'')}\cdots }
    \end{gathered}
  \end{equation}
  used to form the map of homotopy colimits in the statement of
  Theorem \ref{thm:Main}. Now Theorem \ref{thm:StabStab} implies that,
  after passing to the subdiagram of \eqref{eq:Ladder1} consisting of
  those components represented by cobordisms $(s,W)$ such that
  $\ell_W : W \to B$ is $n$-connected, the map on homotopy colimits is
  an abelian homology equivalence.

  The corresponding subdiagram of \eqref{eq:Ladder2} consists of the
  maps
  $(M,\hat{\ell}_M^{(g)}) \circ - :
  \mathcal{N}^\theta_n(Q,\hat{\ell}_Q^{(g)}) \to
  \mathcal{N}^\theta_n(P,\hat{\ell}_P^{(g)})$, and as a diagram in the
  homotopy category it is isomorphic to the diagram made out of the
  squares~\eqref{eq:IntroStabMap}.  In principle the homotopies
  implied in the diagram~\eqref{eq:Ladder2} could be different from
  the ones described informally in the introduction and since
  different choices of homotopies can lead to non-homotopic maps of
  homotopy colimits, we may not immediately conclude that the
  stabilised maps in Theorems~\ref{thm:Main} and \ref{thm:StabStab}
  are isomorphic as arrows in the homotopy category.  However, as we
  shall explain in Lemma~\ref{lem:LadderHtpies}, if the induced map of
  homotopy colimits is an abelian homology equivalence for one choice
  of homotopies then it is so for all choices.
\end{proof}

\subsection{Elementary simplifications of Theorem \ref{thm:StabStab}}\label{sec:Simplifications}

There are several easy simplifications which can be made before
embarking on the proof of Theorem \ref{thm:StabStab}. In order to
phrase these simplifications, it is convenient to introduce the
following notation: let $\mathcal{W} \subset \cob$ be the subcategory
of those morphisms $M : P \leadsto Q$ in $\mathcal{D}$ such that the
induced map $\mathcal{F}(Q) \to \mathcal{F}(P)$ is an abelian homology
equivalence. Theorem \ref{thm:StabStab} is then equivalent to the
assertion that $\mathcal{W}=\cob$, but this has the advantage that
intermediate results can be stated: we shall show that $\mathcal{W}$
contains larger and larger classes of morphisms, until it is clear
that it coincides with the entire category $\cob$.

The first simplification is a purely formal saturation property of
abelian homology equivalences.

\begin{lemma}\label{lem:W2outof3}
  The subcategory $\mathcal{W} \subset \cob$ has the 2-out-of-3 property.
\end{lemma}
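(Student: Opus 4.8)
The plan is to verify the 2-out-of-3 property by appealing directly to the definition of $\mathcal{W}$ together with the functoriality of $P \mapsto \mathcal{F}(P)$ on $\cob^{op}$ and the naturality statement for the stabilisation maps established earlier. Recall that for a morphism $M : P \leadsto Q$ in $\cob$ we have an induced map $M^* : \mathcal{F}(Q) \to \mathcal{F}(P)$ of homotopy colimits, and that $M \mapsto M^*$ is functorial (the maps $\mathcal{F}_i(Q) \to \mathcal{F}_i(P)$ are compatible with the natural transformations $\mathcal{F}_i \Rightarrow \mathcal{F}_{i+1}$, so they assemble into a functor $\cob^{op} \to \mathrm{Top}$, and then one passes to $\hocolim$). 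Thus if $M : P \leadsto Q$ and $N : Q \leadsto R$ are composable morphisms in $\cob$, then $(N \circ M)^* = M^* \circ N^* : \mathcal{F}(R) \to \mathcal{F}(P)$ up to homotopy.

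The key step is the following elementary fact: the class of \emph{abelian homology equivalences} of spaces is closed under 2-out-of-3. Indeed, an abelian homology equivalence is precisely a map inducing an isomorphism on homology with all abelian local coefficient systems; equivalently (as noted in the remarks following Theorem~\ref{thm:StableHomology}) a map whose effect on $\mathbb{Z}[H_1(-)]$-module homology is an isomorphism, or one can characterise it via acyclicity of homotopy fibres after a suitable plus-type construction. For the 2-out-of-3 statement the cleanest route is: given $f : X \to Y$ and $g : Y \to Z$ with two of $f$, $g$, $g \circ f$ abelian homology equivalences, one checks the third is too by comparing the induced maps on $H_*(-;\mathcal{L})$ for an abelian system $\mathcal{L}$, pulling $\mathcal{L}$ back along whichever maps are available; since isomorphisms of abelian groups satisfy 2-out-of-3, and pullback of abelian local systems along a map sends abelian systems to abelian systems, the conclusion follows once one observes that an abelian system on the source of a homology equivalence that is in the image of $\pi_1$ of the relevant map extends/pulls back appropriately. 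Concretely: if $f$ and $g\circ f$ are abelian homology equivalences, then for any abelian $\mathcal{L}$ on $Z$ we get $g_* : H_*(Y; g^*\mathcal{L}) \to H_*(Z;\mathcal{L})$ fits in a diagram with $f_* : H_*(X;(g\circ f)^*\mathcal{L}) \xrightarrow{\cong} H_*(Z;\mathcal{L})$ and $(g\circ f)^*$ factoring through $f^* g^*\mathcal{L}$, whence $g_*$ is an iso; the remaining two cases are analogous.

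Granting this, the lemma is immediate: for composable $M : P \leadsto Q$ and $N : Q \leadsto R$ in $\cob$, the maps $M^* : \mathcal{F}(Q) \to \mathcal{F}(P)$, $N^* : \mathcal{F}(R) \to \mathcal{F}(Q)$, and $(N\circ M)^* = M^* \circ N^* : \mathcal{F}(R) \to \mathcal{F}(P)$ satisfy the hypotheses of the topological 2-out-of-3 statement, so whenever two of $M$, $N$, $N \circ M$ lie in $\mathcal{W}$, so does the third. The main (and only real) obstacle is bookkeeping in the point-set 2-out-of-3 argument for abelian homology equivalences — in particular making sure one always pulls abelian local systems back along maps that actually exist in the relevant triangle, and that abelianity is preserved under these pullbacks; since pullback of local systems is along $\pi_1$ and a quotient of an abelian group by the kernel of a homomorphism to an abelian group is abelian, this causes no difficulty. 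No deep input is needed; this is the promised "purely formal saturation property."
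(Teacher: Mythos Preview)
Your overall strategy matches the paper's: reduce to the claim that abelian homology equivalences of spaces satisfy 2-out-of-3, then invoke functoriality of $P\mapsto\mathcal{F}(P)$. However, your sketch of the space-level 2-out-of-3 has a gap. The case you wrote out (deduce $g$ is an abelian equivalence from $f$ and $g\circ f$) is indeed straightforward, as is the case where $f$ and $g$ are assumed. But the third case is \emph{not} analogous: to deduce that $f:X\to Y$ is an abelian equivalence from $g$ and $g\circ f$, you must verify that $f_*$ is an isomorphism for every abelian system $\mathcal{L}$ on $Y$, not merely for those of the form $g^*\mathcal{L}'$ with $\mathcal{L}'$ on $Z$. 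Your closing remark about pullbacks and quotients of abelian groups does not address this; the issue is that $g^*$ on abelian local systems is not a priori essentially surjective.

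The paper closes this gap with one extra sentence: an abelian homology equivalence is in particular an ordinary (constant-coefficient) homology equivalence, and those trivially satisfy 2-out-of-3. Hence if any two of $f$, $g$, $g\circ f$ are abelian equivalences, all three induce isomorphisms on $H_1(-;\bZ)$, so all three spaces carry the same collection of abelian local systems (an abelian system on a path component being precisely a $\bZ[H_1]$-module). One may therefore assume every abelian system in sight is pulled back from $Z$, after which all three cases really do reduce to the same functoriality argument you described.
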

\begin{proof}
  This will follow from the fact that for maps of spaces $f : X \to Y$
  and $g : Y \to Z$, if any two of $f$, $g$, and $g \circ f$ are
  abelian homology equivalences, so is the third.

  Ordinary homology equivalences satisfy 2-out-of-3, so if any two of
  $f$, $g$, and $g \circ f$ are abelian homology equivalences then all
  three spaces have the same first homology, and so have the same
  collection of abelian local coefficient systems available. In
  particular, we may assume that all abelian local coefficient systems
  are pulled back from $Z$. The claim then follows by functoriality of
  homology with local coefficients.
\end{proof}

\begin{lemma}\label{lem:HandleStr}
  If $\mathcal{W}$ contains every morphism $M : P \leadsto Q$ in
  $\cob$ whose underlying smooth cobordism has a handle structure
  relative to $Q$ consisting of a single handle of index $k$ where
  $n \leq k \leq 2n$, then $\mathcal{W}=\cob$.
\end{lemma}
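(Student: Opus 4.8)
The plan is to reduce an arbitrary morphism of $\cob$ to a composite of the single-handle morphisms from the hypothesis and of invertible cobordisms, and then invoke the fact that $\mathcal{W}$ is a subcategory. Two preliminary observations will do most of the bookkeeping. First, every isomorphism of $\cob$ — in particular every cylindrical morphism — lies in $\mathcal{W}$: precomposition with an isomorphism $N : P' \leadsto P$ induces a weak equivalence $\mathcal{C}_{\theta,\partial L}(P,K\vert_i) \to \mathcal{C}_{\theta,\partial L}(P',K\vert_i)$ taking $\mathcal{F}_i(P)$ into $\mathcal{F}_i(P')$, hence a weak equivalence $\mathcal{F}(P) \to \mathcal{F}(P')$, which is in particular an abelian homology equivalence. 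Second, by the 2-out-of-3 property (Lemma~\ref{lem:W2outof3}) the subcategory $\mathcal{W}$ is closed under composition, and whether a morphism lies in $\mathcal{W}$ depends only on its path component in $\cob(P,Q)$, since homotopic maps agree on homology.

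Given any $(s,M) : P \leadsto Q$ in $\cob$, I would next produce a handle decomposition of $M$ relative to $Q$, performed in the complement of the collar $[0,s] \times \partial L$, all of whose handles have index in $\{n, n+1, \dots, 2n\}$. The upper bound is automatic from $\dim M = 2n$, and the lower bound is obtained from the hypothesis that $(M,Q)$ is $(n-1)$-connected: Morse theory gives some handle decomposition rel $Q$, and the standard handle-trading arguments eliminate handles of index $< n$ (cancelling $0$-handles against $1$-handles using that every component of $M$ meets $Q$, and trading handles of index $j$ with $1 \le j \le n-1$ for handles of index $j+2$). Ordering the handles $h_1, \dots, h_r$ by index and absorbing the collar on $Q$ into the first stage exhibits $M$, up to changing the morphism within its path component of $\cob(P,Q)$, as a composite $C_1 \circ C_2 \circ \dots \circ C_r$ (possibly with small connecting cylinders), where $C_j$ is obtained from a cylinder on its target cross-section by attaching $h_j$, carrying the $\theta$-structure restricted from $M$. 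Each intermediate cross-section is again an object of $\cob$, since it contains $\partial L$ with the structure $\hat{\ell}_L$ and carries a $\theta$-structure restricted from $M$; and each $C_j$ is a morphism of $\cob$ whose underlying cobordism has the single-handle structure of the hypothesis, of index $k_j$ with $n \le k_j \le 2n$, because the pair $(C_j, \text{target})$ is $(k_j-1)$-connected and hence $(n-1)$-connected.

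To conclude: by hypothesis each $C_j$ lies in $\mathcal{W}$, and so does each connecting cylinder by the first observation; since $\mathcal{W}$ is closed under composition, $C_1 \circ \dots \circ C_r \in \mathcal{W}$, and since $M$ lies in the same path component of $\cob(P,Q)$ as this composite, $M \in \mathcal{W}$. As $M$ was arbitrary, $\mathcal{W} = \cob$. The main obstacle is the handle-trading step: realising a relative handle decomposition of a $2n$-dimensional $(n-1)$-connected cobordism with handles only of index $\geq n$, compatibly with the collar $[0,s] \times \partial L$ and with the $\theta$-structure, and in particular dealing with the low-dimensional cases $n = 1$ and $n = 2$, where the usual Whitney-trick-based trading of index-$1$ handles is delicate and a separate argument is needed.
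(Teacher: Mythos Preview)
Your approach is exactly the paper's: factor $M$ into elementary cobordisms with handles of index $\geq n$ using that $(M,Q)$ is $(n-1)$-connected, and compose. For $2n\geq 6$ the handle-trading you invoke is the standard $s$-cobordism argument, and for $2n=2$ your ``cancel $0$-handles against $1$-handles'' is all that is needed; so in those dimensions your proof is complete and matches the paper.

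The gap you yourself flag at $n=2$ is real, and the paper fills it with an idea you have not supplied. In dimension $4$ the Whitney trick is unavailable, so one cannot in general trade $1$-handles for $3$-handles. The paper's fix is to first precompose $M$ with enough copies of the cobordism $H:([0,1]\times P)\sharp(S^2\times S^2)$. By a theorem of Quinn, after sufficiently many such stabilisations the composite $M'=M\circ H_0\circ\cdots\circ H_{-(k-1)}$ does admit a handle structure relative to $Q$ with all indices $\geq 2$, so $M'\in\mathcal{W}$ by hypothesis. The stabilising cobordism $H_0\circ\cdots\circ H_{-(k-1)}$ likewise has only handles of index $\geq 2$ relative to $P$, so it too lies in $\mathcal{W}$. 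Now the $2$-out-of-$3$ property (Lemma~\ref{lem:W2outof3}) gives $M\in\mathcal{W}$. This use of stabilisation plus $2$-out-of-$3$ is the missing ingredient in your sketch.
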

\begin{proof}
  If $2n \neq 4$ then this is easy: a cobordism $M : P \leadsto Q$ in
  $\cob$ is $(n-1)$-connected relative to $Q$ by definition, so admits
  a handle structure relative to $Q$ which only has handles of index
  at least $n$. For $2n = 2$ this is clear, by cancelling any handles
  of index 0, and for $2n \geq 6$ this may be seen by handle-trading,
  as in the proof of the $s$-cobordism theorem (see e.g.\
  \cite{KervaireSCobordism}). Cutting $M$ into the corresponding
  elementary cobordisms, each of which lie in $\mathcal{W}$ by
  assumption, it follows that $M$ does too.

  If $2n=4$ such handle-trading is not immediately available, but
  becomes available after connect-sum with many copies of
  $S^2 \times S^2$. More precisely, let us write
  $H_0: P_{-1} \leadsto P_0 = P$ for any morphism with underlying
  smooth bordism $([-1,0] \times P)\sharp (S^n \times S^n)$ and
  $\theta$ structure extending $\hat\ell_P$ on $\{0\} \times P$, and
  repeat to get composable morphisms
  $H_{-k}: P_{-k-1} \leadsto P_{-k}$ with underlying smooth bordism
  $([-k-1,-k] \times P) \sharp (S^n \times S^n)$ for all $k \geq 0$.
  Then it follows from \cite[Theorem 1.2]{Quinn4Mfld} that for
  sufficiently large $k$, the cobordism
  $$ M' : P_{-k} \overset{H_{-(k-1)}}\leadsto \cdots \overset{H_{-2}}\leadsto P_{-2} \overset{H_{-1}}\leadsto P_{-1} \overset{H_0}\leadsto P_0 = P \overset{M}\leadsto Q$$
  admits a handle structure relative to $Q$ which only has relative
  handles of index at least $2$, and so by assumption lies in
  $\mathcal{W}$. The cobordism $H_{0} \circ \cdots \circ H_{-(k-1)}$
  also admits a handle structure relative to $P$ which only has
  relative handles of index at least $2$, so also lies in
  $\mathcal{W}$; by Lemma \ref{lem:W2outof3}, it follows that $M$ also
  lies in $\mathcal{W}$.
\end{proof}

The manifold $\partial L$ has finitely many path-components, and each
$K\vert_{[i,i+1]}$ contains an embedded $W_{1,1}$ with standard
$\theta$-structure in a path component intersecting $\partial
L$. Therefore there is a path component
$\partial_0 L \subset \partial L$ such that infinitely-many
$K\vert_{[i,i+1]}$ contain an embedded $W_{1,1}$ with standard
$\theta$-structure in the path component of $\partial_0 L$. We call
the path-component of $\partial_0 L$ the \emph{basepoint
  component}. By composing some of the cobordisms $K\vert_{[i,i+1]}$,
and rescaling, we may assume that each $K\vert_{[i,i+1]}$ contains an
embedded $W_{1,1}$ with standard $\theta$-structure in the basepoint
component.

For a cobordism $M : P \leadsto Q$ which has a handle structure
relative to $Q$ consisting of a single handle of index $k \geq n$, it
makes sense to ask whether the handle is attached to the basepoint
component i.e.\ whether the attaching map has image which intersects
the basepoint component. When $k>1$ the image of such an attaching map
is always connected, so is required to lie inside the basepoint
component; when $k=n=1$ we require that at least one of the two
components lie inside the basepoint component.

\begin{lemma}
  If $\mathcal{W}$ contains every morphism $M : P \leadsto Q$ in
  $\cob$ whose underlying smooth cobordism has a handle structure
  relative to $Q$ consisting of a single handle of index
  $n \leq k < 2n$ attached to the basepoint component, then
  $\mathcal{W}=\cob$.
\end{lemma}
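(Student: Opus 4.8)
The plan is to reduce, via Lemma~\ref{lem:HandleStr}, to single-handle morphisms, and then to move each handle onto the basepoint component when its index is $<2n$, or cancel it against a handle we already control when its index is $2n$. By Lemma~\ref{lem:HandleStr} it is enough to show that $\mathcal{W}$ contains every morphism $M\colon P\leadsto Q$ in $\cob$ whose underlying cobordism has a handle structure relative to $Q$ consisting of a single handle of index $k$ with $n\le k\le 2n$, attached to an arbitrary component of $Q$. The case $n\le k<2n$ with the handle on the basepoint component is the hypothesis, so two cases remain.

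Suppose first that $n\le k<2n$ and the handle is attached to a component $C$ of $Q$ that is not the basepoint component. I would choose a morphism $c\colon Q\leadsto Q'$ whose underlying cobordism is the trace of a $1$-surgery fusing $C$ with the basepoint component of $Q$; this lies in $\cob$ (a $\theta$-structure extends over the $1$-handle because $B$ is path-connected), and relative to its target $Q'$ it is a single $(2n-1)$-handle attached to the basepoint component, hence $c\in\mathcal{W}$ by hypothesis. In the composite $c\circ M$ I would then rearrange the two handles into order of increasing index; this isotopes the attaching sphere of the $k$-handle past the $(2n-1)$-handle of $c$ down onto $Q'$, where it comes to lie in the basepoint component of $Q'$ (as $C$ is one of the two pieces into which the $(2n-1)$-surgery splits that component). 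The rearranged cobordism exhibits $c\circ M$, up to homotopy of morphisms, as a composite $\beta\circ\alpha$ with $\alpha\colon P\leadsto Q''$ a single $(2n-1)$-handle and $\beta\colon Q''\leadsto Q'$ a single $k$-handle, each attached to the basepoint component; both lie in $\mathcal{W}$ by hypothesis, so $c\circ M\in\mathcal{W}$, and then $M\in\mathcal{W}$ by the 2-out-of-3 property of Lemma~\ref{lem:W2outof3}. When $k=2n-1$, or when $2n=2$, the two handles being rearranged have equal index and one uses general position; this case is routine.

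Now suppose $k=2n$. Then $Q=P\sqcup S^{2n-1}$ and $M$ caps off the sphere with a disc; equivalently, relative to its source $P$ it is a single $0$-handle. Since the disc glued on carries a $\theta$-structure, the $\theta$-structure on $S^{2n-1}$ bounds a disc, so I can choose a morphism $M'\colon Q\leadsto P$ whose underlying cobordism, relative to $P$, is a single $(2n-1)$-handle attached to the basepoint component: the trace of a $(2n-1)$-surgery on an unknotted $S^{2n-2}$ in the basepoint component of $P$ which splits off a copy of $S^{2n-1}$ carrying the prescribed $\theta$-structure. Then $M'\in\mathcal{W}$ by hypothesis, while $M'\circ M\colon P\leadsto P$ is a cancelling $0$-handle/$1$-handle pair, hence diffeomorphic rel boundary to a cylinder on $P$ and so an invertible morphism of $\cob$ in the sense of Section~\ref{sec:CobCats}; invertible morphisms induce weak equivalences on the relevant mapping spaces, hence lie in $\mathcal{W}$. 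So $M\in\mathcal{W}$ by 2-out-of-3, and the proof is complete.

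The step I expect to be the main obstacle is the handle rearrangement in the first case: one has to keep careful track of path components while sliding the $k$-handle below the $(2n-1)$-handle of $c$, so that its attaching sphere is sure to land in the basepoint component of $Q'$. The only other point needing attention is the $\theta$-structure bookkeeping in the index-$2n$ case, namely realising the prescribed structure on the split-off sphere, which is possible exactly because that structure bounds and is otherwise routine.
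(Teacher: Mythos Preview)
Your proposal is correct and follows essentially the same strategy as the paper: reduce via Lemma~\ref{lem:HandleStr} to single handles, treat $k=2n$ by cancelling against a $(2n-1)$-handle, and for $n\le k<2n$ off the basepoint component, compose with the trace of a $1$-surgery joining that component to the basepoint, then rearrange the resulting disjoint handles.

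The one point of divergence is in the index-$2n$ case. The paper composes with a morphism $U\colon Q\leadsto R$ (the trace of a $0$-surgery on $Q$ joining the $S^{2n-1}$ to the basepoint component) and observes that $U\circ M$ has cancelling handles relative to $R$, without ever claiming $R=P$. You instead construct $M'\colon Q\leadsto P$ with target exactly $P$, by describing it as the trace of a $(2n-1)$-surgery on $P$ splitting off the prescribed sphere. This works, but it requires matching not just the abstract $\theta$-structure on the split-off $S^{2n-1}$ but the actual object $Q\in\cob$ as the incoming boundary, and likewise recovering $P$ exactly on the other end. The paper's formulation sidesteps this bookkeeping entirely: it only needs $U\circ M$ to be a smooth cylinder, hence an isomorphism in $\cob$, and $U$ itself to be a single $(2n-1)$-handle attached to the basepoint component of its target~$R$. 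Your argument becomes identical to the paper's if you relax the target of $M'$ to an unspecified object; insisting on $P$ is an unnecessary extra claim.
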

\begin{proof}
  Let $M : P \leadsto Q \in \cob$ be a morphism having a handle
  structure relative to $Q$ consisting of a single handle of index
  $n \leq k \leq 2n$. If we can show that $M \in \mathcal{W}$ then the
  previous lemma applies and gives the desired conclusion.

  First suppose that $k=2n$: then $M$ is obtained from $Q$ by
  attaching a single $2n$-handle to $Q$, along an entire component
  $\phi : S^{2n-1} \hookrightarrow Q$. Choose surgery data
  $\varphi : S^0 \times D^{2n-1} \hookrightarrow Q$ sending one disc
  to the basepoint component and the other into the image of $\phi$,
  and let $U : Q \leadsto R$ be the trace of the surgery along
  $\varphi$. Then the composition $U \circ M$ consists of a
  $2n$-handle and a $(2n-1)$-handle relative to $R$, and moreover
  these are cancelling handles by construction: thus $U \circ M$ is a
  cylinder and so is invertible in $\cob$ and hence lies in
  $\mathcal{W}$. On the other hand $U$ consists of a single
  $(2n-1)$-handle relative to $R$ attached to the basepoint component,
  so $U \in \mathcal{W}$ by hypothesis: thus $M \in \mathcal{W}$ by
  Lemma \ref{lem:W2outof3}.

  Now suppose that $n \leq k < 2n$. We will construct morphisms
  \begin{equation*}
    \xymatrix{
      P \ar@{~>}[r]^-M \ar@{~>}[d]_-{T'}& Q \ar@{~>}[d]^-{T}\\
      R \ar@{~>}[r]^-{M'} & S
    }
  \end{equation*}
  in $\cob$ such that $T \circ M$ and $M' \circ T'$ lie in the same
  path component of $\cob(P,S)$, and $T, M', T' \in \mathcal{W}$. It
  will then follow from Lemma \ref{lem:W2outof3} that
  $M \in \mathcal{W}$, as required.

  Suppose that $M$ consists of a single handle attached via a map
  $\phi : \partial D^k \times D^{2n-k} \hookrightarrow Q$, which does
  not land in the basepoint component. Choose surgery data
  $\varphi : S^0 \times D^{2n-1} \hookrightarrow Q$, disjoint from $L$
  and from $\phi$, such that surgery along it connects one of the path
  components intersecting the image of $\phi$ to the basepoint
  component, and let $T : Q \leadsto S$ be the trace of the surgery
  along $\varphi$. Considered relative to $S$, it consists of a single
  $(2n-1)$-handle attached to the basepoint component of $S$, so
  $T \in \mathcal{W}$.

  The cobordism $T \circ M : P \leadsto S$ has a $k$-handle and a
  $(2n-1)$-handle relative to $S$, and these are attached along
  disjoint embeddings. Thus it is isotopic to a composition
  $M' \circ T' : P \leadsto S$ where $M' : R \leadsto S$ has a
  $k$-handle relative to $S$ and $T' : P \leadsto R$ has a
  $(2n-1)$-handle relative to $R$. The $k$-handle of $M'$ is attached
  along $\phi$, but as $S$ is the result of surgery on $Q$ along
  $\varphi$ the map $\phi$ now lands in the basepoint component, and
  so $M' \in \mathcal{W}$. The $(2n-1)$-handle of $T'$ is attached to
  the basepoint component of $R$, because by construction it consists
  of a 1-handle relative to $P$ with one end attached in the basepoint
  component of $P$, so $T' \in \mathcal{W}$. This provides the
  required data for the argument given above.
\end{proof}

Thus to prove Theorem \ref{thm:StabStab} it suffices to show that
every cobordism $M : P \leadsto Q$ in $\cob$ which has a handle
structure relative to $Q$ consisting of a single handle of index
$n \leq k < 2n$ attached to the basepoint component induces an abelian
homology equivalence. Our proof occupies the following four sections,
and proceeds by induction on $k$.

\section{Proof of Theorem \ref{thm:StabStab}: stability for
  $W_{1,1}$}\label{sec:LeftRight} 

For any $P \in \cob$, let us write
$$H_{P} : {^\prime}P \leadsto P$$
for any morphism in $\cob$ obtained from $[0,1] \times P$ by forming
the boundary connect-sum with $W_{1,1}$ at $\{0\} \times P$ inside the
basepoint component of $P$ (that is, the path component of
$\partial_0 L$). Similarly, let
$${_P H} : P \leadsto P^\prime$$
be any morphism in $\cob$ obtained from $[0,1] \times P$ by forming
the boundary connect-sum with $W_{1,1}$ at $\{1\} \times P$ inside the
basepoint component of $P$.  Both ${_P H}$ and $H_P$ are equipped with
any $\theta$-structure which is standard when restricted to the
embedded $W_{1,1}$, and is equal to $\hat{\ell}_P$ when restricted to
$P$.  We shall establish the following case of Theorem
\ref{thm:StabStab} for any choice of such morphisms $H_P$ and
${_P H}$.

\begin{theorem}\label{thm:SnSnStability}
  For each $P \in \cob$, the morphism $H_{P}$ lies in $\mathcal{W}$.
\end{theorem}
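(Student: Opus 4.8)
The plan is to prove directly that the precomposition map $(H_{P})^{*} : \mathcal{F}(P) \to \mathcal{F}({}^{\prime}P)$ is an isomorphism on homology with all abelian systems of local coefficients, by the semi-simplicial resolution technique used in \cite{GR-W3} (in the style of Hatcher--Wahl and Randal-Williams--Wahl). The decisive feature of the present situation is that $K$ is a $\theta$-end in the sense of Definition~\ref{defn:ThetaEnd}, so that $\mathcal{F}(P) = \hocolim_{i} \mathcal{F}_{i}(P)$ contains an inexhaustible reservoir of embedded copies of $W_{1,1}$ with standard $\theta$-structure in the basepoint component; this will force the resolution we use to be highly connected --- in fact, I expect, contractible in the limit --- rather than merely $c\cdot g$-connected as at finite genus.

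First I would set up an augmented semi-simplicial space $Z_{\bullet} \to \mathcal{F}({}^{\prime}P)$ whose space of $p$-simplices lying over a cobordism $(s,N) : {}^{\prime}P \leadsto K\vert_{i}$ (with $\ell_{N}$ $n$-connected) consists of $(p+1)$-tuples of pairwise disjoint ``destabilisation data'' near the incoming boundary: each datum is, up to a fixed model, an embedded copy of $W_{1,1}$ joined to the basepoint component of ${}^{\prime}P$ by an embedded $1$-handle, carrying a standard $\theta$-structure, and so arranged that cutting $N$ along it exhibits $N$ --- up to diffeomorphism and homotopy of $\theta$-structure --- as $M \circ H_{P}$ for a cobordism $(t,M) \in \mathcal{F}(P)$; here standardness of the datum is exactly what keeps $\ell_{M}$ $n$-connected. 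Face maps forget one datum. Using Lemma~\ref{lem:StdIsUnique} (path-connectivity of the space of standard structures on $W_{1,1}$) together with the change-of-model Lemma~\ref{lem:ChangeOfModel}, one identifies $Z_{0}$, up to weak equivalence, with $\mathcal{F}(P)$ in such a way that the augmentation $Z_{0} \to \mathcal{F}({}^{\prime}P)$ becomes $(H_{P})^{*}$; more generally each $Z_{p}$ should be identified, at least on homology, with a copy of $\mathcal{F}(P)$ (or a further $W_{1,1}$-stabilisation of it) and the face maps with stabilisation-type maps, so that the spectral sequence of the augmented semi-simplicial space has an $E^{1}$-page built from copies of $H_{*}(\mathcal{F}(P);-)$ with differentials given by alternating sums of such maps.

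The heart of the argument, and the step I expect to be the main obstacle, is to show that $\vert Z_{\bullet}\vert \to \mathcal{F}({}^{\prime}P)$ is a homology equivalence (ideally a weak equivalence). This is where the $\theta$-end does all the work: any point of $\mathcal{F}({}^{\prime}P)$ lies in some $\mathcal{F}_{i}({}^{\prime}P)$, and postcomposition with a tail $K\vert_{[i,j]}$ of the $\theta$-end adjoins arbitrarily many disjoint copies of $W_{1,1}$ with standard structure in the basepoint component. Given finitely many simplices, hence a finite configuration of destabilisation data, one passes far enough down the $\theta$-end to produce a fresh standard $W_{1,1}$, shrinks it into a small ball, and joins it to the incoming boundary by an arc in the basepoint component which a general-position argument makes disjoint from all the given data; this ``swindle'' kills the relevant relative homotopy groups in the colimit. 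Unlike the finite-genus connectivity estimates of \cite{GR-W3} this argument needs no Whitney trick and applies in all dimensions $2n \geq 2$: in the borderline case $2n = 2$ one may instead quote classical genus-stabilisation for surfaces, and the case $2n = 4$ causes no trouble precisely because only high connectivity in the limit, not an optimal finite range, is required.

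Granting these ingredients, the spectral sequence degenerates onto $H_{*}(\mathcal{F}({}^{\prime}P);-)$, and a short induction of the shape familiar from \cite{GR-W3} shows that $(H_{P})_{*}$ is an isomorphism with all abelian systems of local coefficients, i.e.\ that $H_{P} \in \mathcal{W}$. The only genuinely delicate points are the precise formulation of the destabilisation data, so that the identifications of the $Z_{p}$ and of the face maps come out cleanly --- bookkeeping of the type carried out in \cite{GR-W3} --- and the connectivity swindle above, for which the hypotheses encoded in Definition~\ref{defn:ThetaEnd} were designed.
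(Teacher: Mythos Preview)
Your approach is genuinely different from the paper's, and the paper's is considerably simpler. The paper does \emph{not} use a semi-simplicial resolution here at all. Instead it observes that the map $-\circ H_{P}:\mathcal{F}(P)\to\mathcal{F}({}^{\prime}P)$ is the map on horizontal homotopy colimits of the commutative ladder whose rows are the $\theta$-end stabilisations $K\vert_{[i,i+1]}\circ-$ and whose vertical maps are $-\circ H_{P}$. After rewriting each square via Lemma~\ref{lem:ChangeOfModel} as a square of $\mathcal{N}^{\theta}_{n}(\langle-,-\rangle)$'s, the key point is that $\langle H_{P},K\vert_{i}\rangle$ embeds as a $\theta$-submanifold of $\langle P,K\vert_{[i,i+1]}\rangle$ relative to $\langle P,K\vert_{i}\rangle$: both cobordisms are obtained by boundary connect-sum with a standard $W_{1,1}$ in the basepoint component, and Lemma~\ref{lem:StdIsUnique} matches the $\theta$-structures. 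The complement of this embedding supplies a diagonal map making the top triangle of each square commute up to homotopy, and a symmetric construction handles the bottom triangle. Proposition~\ref{acyclicity-key-proposition} then applies and yields that the map of telescopes is an almost homotopy equivalence, hence an abelian homology equivalence. No resolution, no spectral sequence, no induction.

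Your resolution approach is the machinery the paper reserves for the harder Theorems~\ref{thm:StabNHandle} and~\ref{thm:StabHigherHandle}, and there it works precisely \emph{because} Theorem~\ref{thm:SnSnStability} is already in hand: in Proposition~\ref{prop:pSxAbEq} the map on $p$-simplices is identified with gluing on a cobordism of the form $H_{S_{p}}$, and one invokes Theorem~\ref{thm:SnSnStability} to conclude it is an abelian homology equivalence. If you try to run the same scheme to prove Theorem~\ref{thm:SnSnStability} itself, the ``short induction of the shape familiar from \cite{GR-W3}'' is not available: that induction is on genus, but here every $\mathcal{F}(-)$ already has infinite genus, so there is no smaller case to fall back on. Your spectral sequence would have $E^{1}$-columns given by copies of $H_{*}(\mathcal{F}(P);-)$ with $d_{1}$ built from the very stabilisation maps you are trying to prove are isomorphisms, and you have given no non-circular mechanism for collapsing it. One could try to rescue this by producing an extra degeneracy for the augmented semi-simplicial object using the $\theta$-end, but unwinding what that means brings you essentially back to the paper's diagonal-in-the-ladder argument.
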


\begin{proof}
  Recall that the functor $\mathcal{F}$ is defined as the objectwise
  homotopy colimit of the $\mathcal{F}_i$, so the map
  $- \circ H_{P} : \mathcal{F}(P) \to \mathcal{F}(P^\prime)$ which we
  must show is an abelian homology equivalence is the induced map on
  horizontal homotopy colimits of the commutative diagram
  \begin{equation*}
    \begin{gathered}
      \xymatrix{
        {\mathcal{F}_0(P)} \ar[d]^{- \circ H_{P}}\ar[rr]^-{K\vert_{[0,1]} \circ -} & & {\mathcal{F}_1(P)} \ar[d]^{- \circ H_{P}}\ar[rr]^-{K\vert_{[1,2]} \circ -} && {\mathcal{F}_2(P)} \ar[d]^{- \circ H_{P} }\ar[rr]^-{K\vert_{[2,3]} \circ -} & & \cdots\\ 
        {\mathcal{F}_0(P^\prime)} \ar[rr]_-{K\vert_{[0,1]} \circ -}&& {\mathcal{F}_1(P^\prime)} \ar[rr]_-{K\vert_{[1,2]} \circ -}&& {\mathcal{F}_2(P^\prime)} \ar[rr]_-{K\vert_{[2,3]} \circ -} & & \cdots.
      }
    \end{gathered}
  \end{equation*}

  We shall show that for each square in this diagram there are maps
  $$\Delta_i^\mathrm{top}, \Delta_i^\mathrm{bottom} : \mathcal{F}_i(P^\prime) \lra \mathcal{F}_{i+1}(P)$$
  which respectively make the top and bottom triangles of the squares
  in which they lie commute up to homotopy; it will then follow from
  Proposition \ref{acyclicity-key-proposition} that the induced maps
  on horizontal homotopy colimits are abelian homology
  equivalences. In order to do this, we will pass to a homotopy
  equivalent model of this diagram. For the reader familiar with
  \cite{GR-W2}, this is completely analogous to Lemma 7.15 of that
  paper.

  Lemma \ref{lem:ChangeOfModel} shows that
  $\mathcal{F}_i(P) \simeq \mathcal{N}^\theta_n(\langle P,K\vert_i
  \rangle) \subset \mathcal{C}_\theta(\emptyset, \langle P,K\vert_i
  \rangle)$, the subspace consisting of those nullbordisms
  $(s,W) : \emptyset \leadsto \langle P,K\vert_i \rangle$ for which
  $\ell_W : W \to B$ is $n$-connected. Similarly for $P^\prime$ and
  the morphism $K\vert_{[i,i+1]}$. Hence the square in which we are
  trying to find diagonal maps (up to homotopy) may be replaced by
  \begin{equation*}
    \xymatrix{
      {\mathcal{N}^\theta_n(\langle P,K\vert_i \rangle)}
      \ar[rrr]^-{\langle P, K\vert_{[i,i+1]} \rangle \circ -}
      \ar[d]_-{\langle H_P, K\vert_i \rangle \circ -} & & & {\mathcal{N}^\theta_n(\langle P,K\vert_{i+1} \rangle)} \ar[d]^-{\langle H_P, K\vert_{i+1} \rangle \circ -}\\
      {\mathcal{N}^\theta_n(\langle P^\prime,K\vert_i \rangle)} \ar[rrr]^-{\langle P^\prime, K\vert_{[i,i+1]} \rangle \circ -} & & & {\mathcal{N}^\theta_n(\langle P^\prime,K\vert_{i+1} \rangle)}.
    }
  \end{equation*}

  We claim that $X=\langle H_P, K\vert_i \rangle$ may be
  $\theta$-embedded into $Y=\langle P, K\vert_{[i,i+1]} \rangle$
  relative to $\langle P, K\vert_i \rangle$. If this is the case, the
  complement of such an embedding gives a cobordism
  $Z : \langle P^\prime, K\vert_i \rangle \leadsto \langle P,
  K\vert_{i+1} \rangle$ which is $(n-1)$-connected relative to
  $\langle P, K\vert_{i+1} \rangle$, and so $Z \circ -$ defines a
  diagonal map in the square making the top triangle commute up to
  homotopy, as required.

  The cobordism
  $X : \langle P, K\vert_i \rangle \leadsto \langle P^\prime, K\vert_i
  \rangle$ is---by definition of $H_P$---obtained from
  $\langle P, K\vert_i \rangle$ by forming the boundary connect-sum
  with $W_{1,1}$ with a standard $\theta$-structure inside the
  basepoint component of $\{1\} \times \langle P, K\vert_i
  \rangle$. On the other hand, the cobordism
  $Y : \langle P, K\vert_i \rangle \leadsto \langle P, K\vert_{i+1}
  \rangle$ contains an embedded $W_{1,1}$ with standard
  $\theta$-structure, in the basepoint component (as we re-indexed the
  sequence of cobordisms $K\vert_{[i,i+1]}$ in order to have this
  property). There is therefore an embedding $e$ of underlying
  manifolds from $X$ to $Y$ sending the $W_{1,1}$ in $X$ to that in
  $Y$. By Lemma \ref{lem:StdIsUnique} the $\theta$-structure
  $e^*\hat{\ell}_Y$ is homotopic to $\hat{\ell}_X$ relative to
  $\langle P, K\vert_i \rangle$, and extending this homotopy to
  $\hat{\ell}_Y$ it follows that $e$ is an embedding of
  $\theta$-manifolds.

  Similarly, $\langle H_P, [0,1] \times K\vert_{i+1} \rangle$ may be
  $\theta$-embedded into
  $\langle [0,1] \times P', K\vert_{[i,i+1]} \rangle$ relative to
  $\langle P', K\vert_{i+1} \rangle$, which provides a diagonal map
  making the bottom triangle commute up to homotopy.
\end{proof}

\section{Proof of Theorem \ref{thm:StabStab}: stability for handles of
  index $n$}\label{sec:stable-stab}

In this section and the next we shall prove Theorem \ref{thm:StabStab}
in the case of a cobordism which admits a handle structure relative to
its incoming boundary consisting of a single $n$-handle attached to
the basepoint component. In terms of the subcategory
$\mathcal{W} \subset \cob$ from Section~\ref{sec:Simplifications}, the
precise statement we prove is the following.

\begin{theorem}\label{thm:StabNHandle}
  If $M : P \leadsto Q$ is a morphism in $\cob$ whose underlying
  smooth cobordism admits a handle structure relative to $Q$
  consisting of a single $n$-handle attached to the basepoint
  component of $Q$, then $M \in \mathcal{W}$.
\end{theorem}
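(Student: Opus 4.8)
The plan is to imitate the structure of the proof of Theorem~\ref{thm:SnSnStability}, replacing the ``extra copy of $W_{1,1}$'' trick with a trick that cancels the single $n$-handle of $M$ against a copy of $W_{1,1}$ supplied by the $\theta$-end. Concretely, an $n$-handle attached to $Q$ along an embedding $\phi : S^{n-1}\times D^{n+1}\hookrightarrow Q$ landing in the basepoint component can, after stabilising by one further copy of $W_{1,1}$, be made to cancel: the core sphere $S^n$ obtained after the handle attachment can be arranged (by choosing an appropriate admissible, hence standard, $\theta$-structure, using Lemma~\ref{lem:StdIsUnique}) to be one of the two spheres $e(S^n\times\{0\})$ of an embedded $W_{1,1}$, so that attaching the complementary handle of that $W_{1,1}$ produces a trivial (cylindrical, hence invertible) cobordism. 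First I would make this precise: given $M : P\leadsto Q$ with its single $n$-handle attached to the basepoint component, I would exhibit in the ladder defining $-\circ M : \mathcal F(P)\to\mathcal F(Q)$ diagonal maps $\Delta_i^{\mathrm{top}},\Delta_i^{\mathrm{bottom}}:\mathcal F_{i}(\text{target})\to\mathcal F_{i+1}(\text{source})$ splitting the squares up to homotopy, and then invoke Proposition~\ref{acyclicity-key-proposition} (``acyclicity key proposition'') to conclude that the induced map on horizontal homotopy colimits is an abelian homology equivalence, i.e.\ $M\in\mathcal W$.

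The technical heart is the geometric claim that, after passing to the change-of-model description $\mathcal F_i(P)\simeq\mathcal N^\theta_n(\langle P,K\vert_i\rangle)$ of Lemma~\ref{lem:ChangeOfModel}, the relevant cobordism $X$ (obtained from $\langle P,K\vert_i\rangle$ by attaching the single $n$-handle in the basepoint component) admits a $\theta$-embedding into $Y=\langle P,K\vert_{[i,i+1]}\rangle$ relative to $\langle P,K\vert_i\rangle$. The cobordism $Y$ contains an embedded $W_{1,1}$ with standard $\theta$-structure in the basepoint component; writing $W_{1,1}=h_1\cup h_2$ as a single $n$-handle $h_1$ together with a complementary $n$-handle $h_2$, I would use that $X$'s $n$-handle is attached in the basepoint component to build an embedding of underlying manifolds $X\hookrightarrow Y$ identifying $X$'s handle with $h_1$. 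The $\theta$-structure on $h_1$ coming from the standard structure on $W_{1,1}$ is standard, and the attaching map of an $n$-handle in dimension $2n$ can be arranged freely up to isotopy and change of framing; so by the uniqueness of standard structures on $W_{1,1}$ fixed on a boundary disc (Lemma~\ref{lem:StdIsUnique}, or rather its analogue for a single handle, together with Remark~\ref{rem:StdVsAdmissible}) one can homotope $e^*\hat\ell_Y$ to $\hat\ell_X$ rel $\langle P,K\vert_i\rangle$ and extend the homotopy, making $e$ a $\theta$-embedding. The complement $Z:=Y\setminus e(X)$ is then the cobordism $\langle P',K\vert_i\rangle\leadsto\langle P,K\vert_{i+1}\rangle$ (where $P'=Q$), it is $(n-1)$-connected relative to its outgoing end because $Y$ is, and $Z\circ-$ is the desired top diagonal. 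The bottom diagonal is produced by the same argument applied to $\langle [0,1]\times P,K\vert_{[i,i+1]}\rangle$.

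The main obstacle I anticipate is precisely the $\theta$-structure bookkeeping for the embedding $X\hookrightarrow Y$: one must be sure that the $n$-handle of $M$, whose $\theta$-structure is \emph{some} structure extending $\hat\ell_Q$ and not a priori the ``half of a standard $W_{1,1}$'' structure, can be matched — after the right reparametrisation and after possibly first composing $M$ with an invertible cobordism to normalise its attaching data — with $h_1\subset W_{1,1}$. This is where the hypothesis that the handle is attached \emph{to the basepoint component} is used (so that the $W_{1,1}$ in $K\vert_{[i,i+1]}$, which also lives in the basepoint component, is available alongside it), and where one needs the fact that $n$-handles in a $2n$-manifold have a contractible space of attaching framings relative to the core, together with Lemma~\ref{lem:StdIsUnique} and Remark~\ref{rem:StdVsAdmissible} to absorb any discrepancy in $\theta$-structure. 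A secondary, more bookkeeping-like point is checking that the two resulting diagonals actually make the top and bottom triangles commute \emph{up to homotopy} in $\mathcal F$ and not merely on underlying manifolds — this follows by the same diffeomorphism-of-glued-cobordisms argument as in Lemma~\ref{lem:ChangeOfModel}, tracking $\theta$-structures up to homotopy throughout. Once the diagonals are in place, the conclusion is immediate from Proposition~\ref{acyclicity-key-proposition} and Lemma~\ref{lem:W2outof3}, exactly as in the proof of Theorem~\ref{thm:SnSnStability}.
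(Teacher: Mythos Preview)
Your proposal has a genuine gap at the geometric heart of the argument: the claimed $\theta$-embedding $X \hookrightarrow Y$ relative to $\langle Q, K\vert_i\rangle$ need not exist when the $n$-handle of $M$ is attached nontrivially.

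The cobordism $Y = \langle Q, K\vert_{[i,i+1]}\rangle$ is, up to isotopy, a cylinder on $\langle Q, K\vert_i\rangle$ with a single $W_{1,1}$ boundary-connect-summed on; the two $n$-handles $h_1, h_2$ of that $W_{1,1}$ are attached along \emph{trivially embedded} $(n-1)$-spheres bounding discs in a collar. By contrast, the $n$-handle of $X = \langle M, K\vert_i\rangle$ is attached along the given embedding $\phi : \partial D^n \times D^n \hookrightarrow Q$, whose core $(n-1)$-sphere may represent a nontrivial class in $\pi_{n-1}(Q)$, and hence in $\pi_{n-1}(\langle Q, K\vert_i\rangle)$. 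An embedding $X \hookrightarrow Y$ rel incoming boundary would force that sphere to bound a disc in $Y$, which it generally does not. Your assertion that ``the attaching map of an $n$-handle in dimension $2n$ can be arranged freely up to isotopy and change of framing'' is the error: the isotopy class of the attaching sphere is an invariant of the cobordism and cannot be altered. Lemma~\ref{lem:StdIsUnique} concerns only $\theta$-structures on a \emph{fixed} $W_{1,1}$ and does nothing to change the underlying topology of the attachment.

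This is precisely why the paper's proof is substantially longer. The paper first reduces (via the reflection $r(M)$ and a 2-out-of-3 argument) to showing $M \circ r(M) \in \mathcal{W}$, then constructs a semi-simplicial resolution $\mathcal{Y}(Q)_\bullet \to \mathcal{F}(Q)$ whose $p$-simplices record embedded $(D^n \times D^n)$'s with prescribed boundary behaviour. After cutting these out, the effect of $M \circ r(M)$ becomes that of a trivially attached $W_{1,1}$ (Lemma~\ref{lem:InterchangeOfSupport}), so Theorem~\ref{thm:SnSnStability} applies on each level of the resolution. The contractibility of this ``higher-dimensional arc complex'' (Theorem~\ref{thm:ContractibilityK}, proved in Section~\ref{sec:pf-arc-cx}) is itself nontrivial and uses the infinite supply of $W_{1,1}$'s in the $\theta$-end to perform a stable Whitney trick. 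Your proposal correctly handles the special case where $\phi$ is trivially attached, but the reduction from the general case to that special case is the real content of the theorem.
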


The detailed proof is cumbersome, but the strategy can be explained
informally as follows.  It suffices to prove that the composition of
$M$ and the morphism $r(M)$, which attaches the $n$-handle ``dual'' to
that of $M$, is in $\mathcal{W}$, since this then applies to $r(M)$,
and we may deduce that all three morphisms in the composition
$(M \circ r(M)) \circ r(r(M)) = M \circ (r(M) \circ r(r(M))$ induce
isomorphisms on $H_1(-;\bZ)$ and homology with abelian coefficients.
If the handle of $M$ is attached trivially, the composition of
$M \circ r(M)$ is essentially a model for the cobordism $H_P$
considered in the previous section.  We shall then use a simplicial
resolution to reduce the general case to the trivially attached case.

\subsection{Support and interchange of
  support}\label{sec:SupportAndInterchange}

We first introduce the following notion of support of a cobordism,
which will be used not only in this section but also in Section
\ref{sec:stable-stab2}.

\begin{definition}
  The \emph{support} $\supp(W)$ of a cobordism $(s,W) : P \leadsto Q$
  in $\mathcal{C}_\theta$ is the smallest closed set
  $A \subset \bR^\infty$ such that
  $$W \cap ([0,s] \times (\bR^\infty \setminus A)) = [0,s] \times (P
  \setminus A)$$ as $\theta$-manifolds. Or, equivalently, such that
  $$W \cap ([0,s] \times (\bR^\infty \setminus A)) = [0,s] \times (Q
  \setminus A)$$ as $\theta$-manifolds.
\end{definition}

This notion of support is specific to our particular model of
cobordism categories using manifolds embedded in euclidean space; it
does not have meaning for ``abstract'' cobordisms.  We used a similar
notion in \cite[p.\ 268]{GR-W2}.

\begin{definition}
  If $(s,W) : P \leadsto Q$ and $(s',W') : Q \leadsto R$ are
  cobordisms in $\mathcal{C}_\theta$ such that
  $\supp(W) \cap \supp(W')=\emptyset$, then we let the
  \emph{interchange of support} be the $\theta$-cobordisms
  $$\mathcal{R}_{W'}(W) = (W \setminus([0,s] \times
  \supp(W')))\cup([0,s] \times (R \setminus \supp(W)))$$ and
  $$\mathcal{L}_{W}(W') = (W' \setminus([0,s'] \times \supp(W)))\cup([0,s'] \times (P \setminus \supp(W'))).$$
  The composition
  $(s,\mathcal{R}_{W'}(W)) \circ (s',\mathcal{L}_{W}(W'))$ may be
  formed in $\mathcal{C}_\theta$, and is a morphism from $P$ to
  $R$. Furthermore, there is a path
  $$t \mapsto \tau_t(W,W') : [0,1] \lra \mathcal{C}_\theta(P,R)$$
  from $(s', W') \circ (s,W)$ to
  $(s,\mathcal{R}_{W'}(W)) \circ (s',\mathcal{L}_{W}(W'))$ given by
  sliding $W \cap ([0,s] \times \supp(W))$ in the positive direction
  and $W' \cap ([0,s'] \times \supp(W'))$ in the negative direction.
\end{definition}

This manoeuvre may be described graphically as in Figure
\ref{fig:Interchange2}.

\begin{figure}[h]
  \begin{center}
    \includegraphics[bb=0 0 308 96]{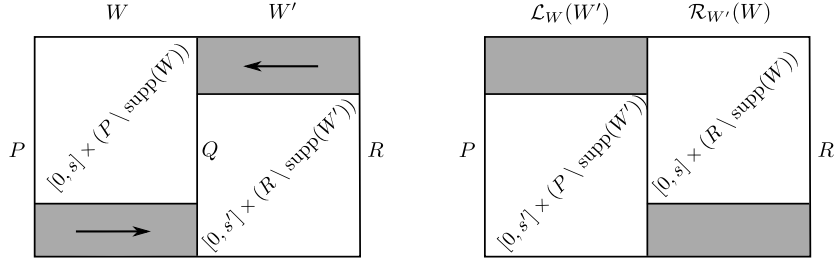}
    \caption{The interchange of support path. The grey regions indicate where the cobordisms are not cylindrical.}
    \label{fig:Interchange2}
  \end{center}
\end{figure}

\subsection{Constructing auxiliary cobordisms}\label{sec:AuxCob}

\newcommand{\phicirc}{\phi}
\newcommand{\varphicirc}{\varphi}

We begin by showing that the morphism $M : P \leadsto Q$ for which we
shall prove Theorem \ref{thm:StabNHandle} may be assumed to be of a
rather standard form. As we shall need the analogous result later,
when dealing with cobordisms having a single $k$-handle for
$n \leq k \leq 2n$, we begin by working in this generality.

\begin{construction}\label{const:Trace}
  The (reverse) trace of the surgery along
  $\partial D^k \times D^{2n-k} \subset \partial D^k \times
  \bR^{2n-k}$ gives a smooth manifold
  $T \subset [0,1] \times \bR^k \times \bR^{2n-k}$ such that
  \begin{enumerate}[(i)]
  \item $T$ agrees with $[0,1] \times \partial D^k \times \bR^{2n-k}$
    outside of $[0,1] \times D^k \times D^{2n-k}$,
  \item
    $T \cap ((1-\epsilon,1] \times \bR^{2n}) = (1-\epsilon,1]
    \times \partial D^k \times \bR^{2n-k}$ for some $\epsilon > 0$,
    and
  \item
    $T \cap ([0,\epsilon) \times \bR^{2n}) = [0,\epsilon) \times P$
    for some $\epsilon > 0$, with $P$ diffeomorphic to
    $\R^k \times \partial D^{2n-k}$,
  \end{enumerate}
  and comes equipped with embeddings of the $k$-handle and its dual
  $(2n-k)$-handle which we denote
  \begin{align*}
    \phicirc_T : D^k \times \bR^{2n-k} & \hookrightarrow T\\
    \varphicirc_T : \bR^k \times D^{2n-k} &\hookrightarrow T.
  \end{align*}
  We have
  $\phicirc_T(\partial D^{k} \times \R^{2n-k}) \subset T \cap (\{1\}
  \times \R^{2n})$ and
  $\varphicirc_T(\R^{k} \times \partial D^{2n-k}) \subset T \cap
  (\{0\} \times \R^{2n})$, and we may also arrange that
  $\phicirc_T(x,y) = (1,x,y) \in T \cap (\{1\} \times \R^{2n})$ for
  all $(x,y) \in \partial D^k \times \R^{2n-k}$.

  Suppose given a $(2n-1)$-manifold $Q \subset \bR^\infty$ and an
  embedding $\sigma: \R^k \times \R^{2n-k} \to \R^{\infty}$ such that
  $\sigma^{-1}(Q) = \partial D^k \times \bR^{2n-k}$.  Then
  $\sigma\vert_{\partial D^k \times \bR^{2n-k}}$ is a diffeomorphism
  from $\partial D^k \times \bR^{2n-k}$ onto an open subset of $Q$ and
  we may define an elementary cobordism
  $M_\sigma \subset [0,1] \times\R^\infty$ by
  \begin{equation*}
    M_\sigma = ([0,1] \times (Q \setminus \sigma(\partial D^k \times
    D^{2n-k}))) \cup ((\mathrm{Id}_{[0,1]} \times \sigma)(T)).
  \end{equation*}
  We then have
  \begin{equation*}
    \partial M_\sigma = (\{0\} \times P_\sigma) \cup (\{1\} \times Q)
  \end{equation*}
  for a closed $(2n-1)$-manifold $P_\sigma\subset \R^\infty$ obtained
  from $Q$ by surgery along $\sigma \vert_{\partial D^k \times D^k}$.
  (Even though the manifold $P_\sigma$ depends on $Q$ and $\sigma$ and
  not $P$ we shall use the notation $P_\sigma$ to emphasise that it is
  a model for $P$ in Lemma~\ref{lem:PrepCob} below.)  The embedding
  $\phicirc_T$ of a handle in $T$ then induces an embedding of a
  handle into $M$, for which we shall write
  \begin{equation*}
    \phicirc  = (\mathrm{Id}_{[0,1]} \times \sigma) \circ \phicirc_T :
    D^k \times \R^{2n-k} \to M_\sigma.
  \end{equation*}
  This embedding satisfies
  $\phicirc(\partial D^k \times \R^{2n-k}) \subset \{1\} \times Q$ and
  in fact $\phicirc(x,y) = (1,\sigma(x,y))$ for
  $(x,y) \in \partial D^k \times \R^{2n-k}$.

  If $Q$ is endowed with a $\theta$-structure $\hat{\ell}_Q$, then
  pulling back along $\sigma$ endows $\partial D^k \times \bR^{2n-k}$
  with a $\theta$-structure, and hence the subspace
  $$T_0 = (\{1\} \times \partial D^k \times \bR^{2n-k})\cup([0,1]
  \times \partial D^k \times (\bR^{2n-k} \setminus D^{2n-k})) \subset
  T$$ inherits a $\theta$-structure. The maps of pairs
  \begin{equation*}
    (D^k, \partial D^k) \times D^{2n-k}
    \xrightarrow{\phi_T\vert_{D^k}} (T, \{1\} \times \partial D^k
    \times \bR^{2n-k}) \xrightarrow{\text{incl}} (T,T_0)
  \end{equation*}
  are relative homotopy equivalences, and hence to extend the
  $\theta$-structure from $T_0$ to $T$ is the same, up to homotopy, as
  extending the bundle map
  \begin{equation}\label{eq:2}
    T(\partial D^k \times D^{2n-k})\oplus \epsilon^1 \xrightarrow{D\sigma \oplus \epsilon^1} TQ \oplus \epsilon^1 \overset{\hat{\ell}_Q}\lra \theta^*\gamma_{2n}
  \end{equation}
  to $T(D^k \times D^{2n-k})$.  In what follows we shall, whenever
  given an extension of~(\ref{eq:2}), tacitly pick an extension of
  $\theta$-structures from $T_0$ to $T$ in the corresponding homotopy
  class. Given such an extension to $T$, we obtain a
  $\theta$-structure on $M_\sigma$, and hence by restriction a
  $\theta$-structure on $P_\sigma$. The $\theta$-cobordism
  $(1,M_\sigma)$ so obtained has support in
  $\sigma(D^k \times D^{2n-k})$.
\end{construction}

\begin{lemma}\label{lem:PrepCob}
  Let $M: P \leadsto Q$ be a morphism in $\cob$ whose underlying
  smooth cobordism admits a handle structure relative to $Q$
  consisting of a single $k$-handle. Then
  \begin{enumerate}[(i)]
  \item there exists an embedding
    $\sigma: \R^k \times \R^{2n-k} \to [0,\infty) \times\R^{\infty-1}$
    such that $\sigma^{-1}(Q) = \partial D^k \times \R^{2n-k}$, and an
    extension of
    $(\sigma\vert_{\partial D^k \times D^{2n-k}})^*\hat{\ell}_Q$ to
    $T(D^k \times D^{2n-k})$, so that we may form the
    $\theta$-cobordism
    $$M_\sigma : P_\sigma \leadsto Q \in \cob,$$
  \item for a suitable choice of embedding and extension in (i), there
    exists an isomorphism $U: P \leadsto P_\sigma$ in $\cob$ such that
    $M_\sigma \circ U$ and $M$ lie in the same component of
    $\cob(P, Q)$.
  \end{enumerate}
\end{lemma}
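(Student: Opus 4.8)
The plan is to construct the embedding $\sigma$ so that the standard elementary cobordism $M_\sigma$ of Construction~\ref{const:Trace} is defined and lies in $\cob$, and then to exhibit an invertible cobordism $U : P \leadsto P_\sigma$ with $M_\sigma \circ U$ in the same path component of $\cob(P,Q)$ as $M$; throughout one exploits that all manifolds live in $\R^\infty$, so that abstract attaching data can be realised concretely with room to spare. First I would fix a handle structure on the underlying smooth cobordism of $M$ relative to $Q$, giving a diffeomorphism $M \cong ([0,1] \times Q) \cup_\psi (D^k \times D^{2n-k})$ relative to $\{1\} \times Q$ for an attaching embedding $\psi : \partial D^k \times D^{2n-k} \hookrightarrow Q$; since morphisms of $\cob$ are cylindrical near $\partial L$, the image of $\psi$ is disjoint from the collar $[0,\epsilon) \times \partial L \subset Q$. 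Note also that $k \geq n$, as $(M,Q)$ is $(n-1)$-connected.

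For part (i) I would build $\sigma$ as follows. Choose a tubular neighbourhood $e : Q \times \R^{m} \hookrightarrow [0,\infty) \times \R^{\infty-1}$ of $Q$ with $e(q,0) = q$; such exists, with $m$ as large as we please and image disjoint from the collar, because $Q$ is a finite-dimensional submanifold of an infinite-dimensional Euclidean space (so its normal bundle is trivial, by the Eilenberg swindle). Enlarge $\psi$ to an open embedding $\bar\psi : \partial D^k \times \R^{2n-k} \hookrightarrow Q$, shrinking the normal disc if necessary. Identify a collar of $\partial D^k \times \R^{2n-k}$ in $\R^k \times \R^{2n-k}$ with $(-\delta,\delta) \times (\partial D^k \times \R^{2n-k})$, with collar parameter $t$, and set $\sigma(t,p) = e(\bar\psi(p), t \cdot v)$ there for a fixed unit vector $v \in \R^{m}$; extend $\sigma$ over the rest of $\R^k \times \R^{2n-k}$ so that it stays disjoint from $Q$, and perturb it to an embedding (harmless, as there are infinitely many ambient dimensions). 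Then $\sigma^{-1}(Q) = \{t = 0\} = \partial D^k \times \R^{2n-k}$ and $\sigma$ restricts to $\psi$ on $\partial D^k \times D^{2n-k}$. To obtain the required extension of the bundle map~\eqref{eq:2} over $T(D^k \times D^{2n-k})$, I would pull back $\hat\ell_M$ along the diffeomorphism above; this produces the desired extension, and the resulting $\theta$-cobordism $M_\sigma : P_\sigma \leadsto Q$ is diffeomorphic to $M$ relative to $Q$, hence is $(n-1)$-connected relative to $Q$ and cylindrical near $\partial L$, so lies in $\cob$. This gives (i).

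For part (ii), the diffeomorphism $M \xrightarrow{\ \sim\ } M_\sigma$ relative to $\{1\} \times Q$ restricts to a diffeomorphism $P \to P_\sigma$, and—after adjusting the chosen extension of the $\theta$-structure over the handle—may be taken to carry $\hat\ell_{M_\sigma}$ to a $\theta$-structure homotopic to $\hat\ell_M$ relative to $\{1\} \times Q$. The residual discrepancy, namely the difference between the embeddings of $P$ and $P_\sigma$ in $[0,\infty) \times \R^{\infty-1}$ together with the difference of their $\theta$-structures, is absorbed into an invertible cobordism $U : P \leadsto P_\sigma$ whose underlying manifold is a cylinder but with non-cylindrical embedding and $\theta$-structure interpolating between the two; this $U$ is an isomorphism in $\cob$ in the sense discussed in Section~\ref{sec:CobCats}. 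Then $M_\sigma \circ U$ has underlying manifold diffeomorphic to $M$ relative to $\{1\} \times Q$, embedding isotopic to that of $M$, and $\theta$-structure homotopic to $\hat\ell_M$ relative to the boundary, so by the isotopy extension theorem, together with the fact that a pair of diffeomorphic cobordisms with isotopic embeddings and boundary-relative-homotopic $\theta$-structures lie in the same path component of the relevant morphism space, $M_\sigma \circ U$ and $M$ lie in the same path component of $\cob(P,Q)$. I expect part (ii) to be the main obstacle: the differential topology in (i) is routine once the infinite-dimensional ambient space is used, but arranging $U$ to correct simultaneously the ambient embedding of the incoming boundary and the indeterminacy of the $\theta$-structure over the handle, and then verifying carefully that $M_\sigma \circ U$ is genuinely joined to $M$ through morphisms of $\cob$, is where the care lies.
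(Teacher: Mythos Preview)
Your proposal is correct and follows essentially the same approach as the paper: use the handle structure to identify $M$ relative to $Q$ with a standard elementary cobordism $M_\sigma$, give $M_\sigma$ the $\theta$-structure pulled back from $M$, and absorb the leftover cylinder into an isomorphism $U$. The paper's proof is considerably terser---it simply observes that the handle structure yields an embedding $M_\sigma \hookrightarrow M$ relative to $Q$ with cylindrical complement and then isotopes the ambient embedding of $M$ so that it literally equals $M_\sigma \circ U$---whereas you construct $\sigma$ explicitly via a tubular neighbourhood of $Q$ and build $U$ as a separate interpolating cylinder; but the underlying idea is the same.
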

\begin{proof}
  Suppose for simplicity that the cobordism $M$ has length 2. The
  handle structure of $M$ gives an embedding of smooth manifolds
  $M_\sigma \hookrightarrow M$ relative to $Q$, whose complement is
  diffeomorphic to $[0,1] \times P$ relative to $\{0\} \times P$, and
  we give $M_\sigma$ the $\theta$-structure from $M$.  After changing
  the embedding of $M \subset [0,2] \times \R^\infty$ relative to
  $(\{0\} \times P) \cup (\{2\} \times Q)$ we may assume that it is
  \emph{equal} to the composition of $M_\sigma$ and an isomorphism
  $U$.
\end{proof}

We now return to the particular case where $k=n$. If Theorem
\ref{thm:StabNHandle} holds for the cobordism $M_\sigma$ produced by
this lemma, then it also holds for $M$, since $U$ is an isomorphism.
Hence without loss of generality we may assume that $M$ is of the form
$M_\sigma$ with respect to some embedding
$\sigma: \R^n \times \R^n \to [0,\infty) \times\R^{\infty-1}$ with
$\sigma^{-1}(Q) = \partial D^n \times \R^n$ and some extension of
$(\sigma\vert_{\partial D^n \times D^n})^*\hat{\ell}_Q$ to
$T(D^n \times D^n)$.  We shall henceforth make this assumption, and
then also $P_\sigma = P$.  The proof of Theorem~\ref{thm:StabNHandle}
shall use certain compositions
\begin{equation*}
  R \overset{r(M)}\leadsto P \overset{M}\leadsto Q \overset{V_p} \leadsto S_p
\end{equation*}
of $M$ and cobordisms $r(M)$ and $V_p$ for $p \geq 0$, which we first
define. We shall make use of the reflection diffeomorphism
$r(t,x) = (1-t, x) : [0,1] \times \bR^\infty \to [0,1] \times
\bR^\infty$.

\begin{construction}\label{const:Mbar}
  Construct a cobordism $r(M) : R \leadsto P$ having underlying
  manifold the reflection $r(M)$ of $M=M_\sigma$, so the underlying
  manifold of $R$ is the same as that of $Q$.  It remains to describe
  a $\theta$-structure on $r(M)$. The manifold $M$ has a single
  $n$-handle relative to $P$, namely
  \begin{equation*}
    \varphicirc = (\mathrm{Id}_{[0,1]} \times \sigma) \circ
    \varphicirc_T\vert_{\R^n \times D^n} : \R^n \times D^n \lra M
  \end{equation*}
  which has
  $\varphicirc(\R^n \times \partial D^n) \subset \{0\} \times P$.  The
  reflection of $\varphicirc$ gives an $n$-handle $r(\varphicirc)$ in
  $(r(M),P)$, and any $\theta$-structure on this handle extending
  $(r(\varphicirc)\vert_{\partial D^n \times D^n})^*\hat{\ell}_P$
  induces a $\theta$-structure on $r(M)$ supported in
  $\sigma(D^n \times D^{n})$. We choose the $\theta$-structure on this
  handle by insisting that the induced $\theta$-structure on
  $$S^n \times D^n \cong r(\varphicirc)(D^n \times D^n) \cup_P {\varphicirc}(D^n \times D^n) \subset r(M) \cup_P M$$
  is diffeomorphic to the standard one described in Section
  \ref{sec:StdThetaStr}.
\end{construction}

\begin{construction}\label{const:Vp}
  We construct a cobordism $V_0 : Q \leadsto S_0$ having support in
  $\sigma(D^n \times (3e_1 + D^n))$ by letting
  $$V_0 = ([0,1] \times (Q \setminus \sigma(\partial D^n \times (3e_1 + D^n)))) \cup ((\mathrm{Id}_{[0,1]} \times \sigma)(3e_1 + r(T))),$$
  which contains the handle
  $$\psi = (\mathrm{Id}_{[0,1]} \times \sigma) \circ (3e_1 + r(\phicirc_T\vert_{D^n \times D^n})) : D^n \times D^n \lra  V_0$$
  which has $\psi(\partial D^n \times D^n) \subset \{0\} \times Q$.
  We fix the $\theta$-structure on $V_0$ by insisting that the
  $\theta$-structure on
  $$S^n \times D^n \cong \phicirc(D^n \times (3e_1 + D^n)) \cup_Q \psi(D^n \times D^n) \subset M \cup_Q V_0$$
  is the standard one described in Section \ref{sec:StdThetaStr}. 
  
  More generally, for each $p \geq 0$ we construct a cobordism
  $V_p : Q \leadsto S_p$ having support in
  $\cup_{i=0}^p\sigma(D^n \times (3(i+1)e_1 + D^n))$ by performing the
  above construction simultaneously inside each of
  $\sigma(D^n \times (3(i+1)e_1 + D^n))$ for $i=0,1,\ldots, p$. We
  write $\psi_i : D^n \times D^n \to V_p$ for the $n$-handles relative
  to $Q$, $i = 0, \dots, p$.
\end{construction}

The composable cobordisms $V_p : Q \leadsto S_p$ and
$M \circ r(M) : R \leadsto Q$ arising from these constructions have
disjoint support, so may be subjected to interchange of
support. Recall from Section \ref{sec:LeftRight} that we write
$H_{P} : P' \leadsto P$ for any morphism in $\cob$ obtained from
$[0,1] \times P$ by forming the boundary connect-sum with $W_{1,1}$ at
$\{0\} \times P$.

\begin{lemma}\label{lem:InterchangeOfSupport}
  For each $p \geq 0$, the cobordism $\mathcal{R}_{V_p}(M \circ r(M))$
  obtained by interchange of support is $H_{S_p}$ composed with an
  isomorphism.
\end{lemma}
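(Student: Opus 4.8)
The plan is to unwind the definitions of $\mathcal{R}_{V_p}(M \circ r(M))$, $H_{S_p}$, and the isomorphism, and to produce an explicit diffeomorphism of underlying manifolds which respects $\theta$-structures up to homotopy relative to the boundary. Recall that $M = M_\sigma$ has support in $\sigma(D^n \times D^n)$, that $r(M)$ has support there too, and that $V_p$ has support in $\bigsqcup_{i=0}^p \sigma(D^n \times (3(i+1)e_1 + D^n))$, which is disjoint. After interchange of support, $\mathcal{R}_{V_p}(M \circ r(M)) : \langle \cdots \rangle$ is the cobordism from $S_p$-side data obtained by pushing $M \circ r(M)$ past $V_p$; its support is the union of the two disjoint regions, and away from this union it is cylindrical. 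The essential geometric observation is that $M \circ r(M)$ is, up to the interchange, exactly a cylinder $[0,1] \times (\text{something})$ with one embedded copy of $S^n \times S^n$ (minus a disc) glued in inside $\sigma(D^n \times D^n)$: by Construction~\ref{const:Mbar} the $\theta$-structure on $r(M) \cup_P M$ restricted to $r(\varphicirc)(D^n \times D^n) \cup_P \varphicirc(D^n \times D^n) \cong S^n \times D^n$ is standard, and completing the other factor via the handle of $M$ exhibits an embedded $W_{1,1}$ with standard $\theta$-structure. This is precisely the data defining a morphism of the form $H_{S_p}$ (connect-sum of $[0,1] \times S_p$ with $W_{1,1}$ carrying a standard structure at the incoming end), in the basepoint component.

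First I would set up the diffeomorphism: after the interchange of support, $V_p$ has been slid to the incoming side and $M \circ r(M)$ to the outgoing side, so the composite $\mathcal{R}_{V_p}(M\circ r(M)) \circ \mathcal{L}_{M \circ r(M)}(V_p)$ is, as an abstract smooth manifold, diffeomorphic to the original $V_p \circ (M \circ r(M))$; but what we want is the single cobordism $\mathcal{R}_{V_p}(M \circ r(M)) : S_p' \leadsto S_p$ (here the source is the result of the relevant surgeries). Its underlying smooth manifold is $([0,1] \times S_p)$ with a $W_{1,1}$ summed in --- the $W_{1,1}$ coming from the $n$-handle of $r(M)$ together with the dual handle of $M$, now living in the region $\sigma(D^n \times D^n)$ which, after the slide, sits over the cylinder on $S_p$. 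Second, I would check the $\theta$-structure: on the embedded $W_{1,1}$ it is standard by the choices in Constructions~\ref{const:Mbar} and \ref{const:Vp} (the standardness was built in precisely so that $S^n \times D^n \cong r(\varphicirc)(D^n \times D^n) \cup_P \varphicirc(D^n \times D^n)$ and the complementary $S^n \times D^n$ both carry standard structures), and on the rest of the cobordism it agrees with the cylinder structure on $S_p$ up to homotopy, since the supports were disjoint and the interchange path is through $\theta$-cobordisms. Finally, the $W_{1,1}$ lies in the basepoint component because the $n$-handle of $M$ was attached to the basepoint component of $Q$ (our standing assumption), and this property is preserved under the surgeries defining $V_p$ and $S_p$.

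Having matched underlying manifolds and $\theta$-structures, I would invoke Lemma~\ref{lem:StdIsUnique} (the space of standard $\theta$-structures on $W_{1,1}$ fixed on a boundary disc is path-connected) to replace $\mathcal{R}_{V_p}(M \circ r(M))$ by a literal model of $H_{S_p}$: the two cobordisms are diffeomorphic rel boundary by a diffeomorphism taking one $W_{1,1}$ to the other, and the pulled-back $\theta$-structure is homotopic, rel boundary, to the chosen standard one, so the two morphisms lie in the same path component of $\cob(S_p', S_p)$. The isomorphism in the statement is then the one absorbing the difference between the particular embedding/parametrisation produced by the interchange of support and the chosen model of $H_{S_p}$; concretely it is a cobordism whose underlying manifold is a cylinder but with a non-cylindrical embedding, hence invertible in $\cob$ as in the Remark following Definition~\ref{defn:MainCat}.

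The main obstacle I expect is purely bookkeeping: tracking precisely where the region $\sigma(D^n \times D^n)$ ends up after the interchange-of-support slide, and checking that the resulting embedded $W_{1,1}$ together with the cylinder-on-$S_p$ complement really does present $\mathcal{R}_{V_p}(M\circ r(M))$ as a boundary connect-sum at the incoming end rather than the outgoing end --- i.e.\ getting the ``left'' versus ``right'' of the interchange to line up with the convention that $H_{S_p}$ sums in $W_{1,1}$ at $\{0\} \times S_p$. The verification that the $\theta$-structure is standard on the $W_{1,1}$ and cylindrical elsewhere is then a direct consequence of the constructions, and the identification with $H_{S_p}$ up to an invertible cobordism follows from Lemma~\ref{lem:StdIsUnique} exactly as in the analogous argument in Section~\ref{sec:LeftRight}.
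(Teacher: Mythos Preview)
Your overall strategy --- exhibit an embedded $W_{1,1}$ with standard $\theta$-structure and show the complement is a cylinder relative to $S_p$ --- is correct and matches the paper. However, there is a genuine gap in your identification of the $W_{1,1}$.

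You correctly locate one $S^n \times D^n$, namely $r(\varphicirc)(D^n \times D^n) \cup_P \varphicirc(D^n \times D^n)$ with its standard $\theta$-structure from Construction~\ref{const:Mbar}; this does live inside $\sigma(D^n \times D^n)$. But your claim that ``completing the other factor via the handle of $M$'' produces the second $S^n \times D^n$, and that the whole $W_{1,1}$ lives ``in the region $\sigma(D^n \times D^n)$'', is not right. The handle $\phicirc$ of $M$ is only a $D^n \times D^n$, attached along $\sigma\vert_{\partial D^n \times D^n}$. In $Q$ this attaching sphere is in general \emph{not} nullisotopic, so $\phicirc$ cannot be capped off to an $S^n \times D^n$ by a disc in a collar of $Q$ --- and if it could, the lemma would already hold for $M \circ r(M)$ itself before any interchange, which is false in general (this is precisely the distinction between a trivially and a non-trivially attached $n$-handle emphasised in the informal outline of the proof of Theorem~\ref{thm:StabNHandle}).

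The role of $V_p$ that you are missing is this: the surgery along the \emph{parallel} sphere $\sigma\vert_{\partial D^n \times (3e_1 + D^n)}$ is exactly what makes $\sigma(\partial D^n \times \{0\})$ nullisotopic in $S_p$. After the interchange, $\phicirc$ becomes an $n$-handle of $\mathcal{R}_{V_p}(M \circ r(M))$ relative to $S_p$, and its attaching sphere now bounds a disc in $S_p$ (obtained by isotoping over to the surgered region and capping off there). Joining $\phicirc(D^n \times D^n)$ with this disc, pushed into a collar of $S_p$, gives the second $S^n \times D^n$; the $\theta$-structure on $V_p$ in Construction~\ref{const:Vp} was chosen precisely so that this sphere is standard. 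The two cores meet transversely in a single point, completing the embedded $W_{1,1}$. In particular the $W_{1,1}$ is \emph{not} contained in $\sigma(D^n \times D^n)$, and your description ``the $W_{1,1}$ coming from the $n$-handle of $r(M)$ together with the dual handle of $M$'' accounts for only one of its two $S^n \times D^n$ factors.
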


The cobordism $M \circ r(M)$ is obtained from $Q$ by attaching an
$n$-handle and its dual, and as the support of $M \circ r(M)$ is
disjoint from that of $V_p$ the cobordism
$\mathcal{R}_{V_p}(M \circ r(M))$ is also obtained by attaching an
$n$-handle and its dual. The proof of this lemma consists of showing
that in this case the $n$-handle is trivially attached.

\begin{proof}
  In order that a cobordism $W: S \leadsto T$ be isotopic to $H_T$
  composed with an isomorphism, it is enough that it contain an
  embedded copy of $W_{1,1}$ on which the induced $\theta$-structure
  is the standard one described in Section \ref{sec:StdThetaStr}, and
  so that the smooth cobordism (without $\theta$-structure)
  $(W \setminus \mathrm{int}(W_{1,1})) \cup_{\partial W_{1,1}} D^{2n}:
  S \leadsto T$ is diffeomorphic to $[0,1] \times T$ relative to $T$.

  By construction of the $\theta$-structure on $r(M)$,
  $$S^n \times D^n \cong r(\varphicirc)(D^n \times D^n) \cup \varphicirc(D^n \times D^n) \subset r(M) \cup_P M$$
  has a standard $\theta$-structure. This is disjoint from
  $\supp(V_p)$ so can also be found inside the cobordism
  $\mathcal{R}_{V_p}(M \circ r(M))$.

  The $n$-handle $\phicirc$ of $M$ relative to $Q$ has support
  disjoint from $\supp(V_p)$, so can also be considered as an
  $n$-handle of $\mathcal{R}_{V_p}(M \circ r(M))$ relative to
  $S_p$. The manifold $S_p$ is, tautologically, the result of doing
  surgery on the embeddings
  $$\sigma\vert_{\partial D^n \times (3(i+1)e_1 + D^n)} :  \partial D^n \times D^n \hookrightarrow Q \quad \quad i=0,1,\ldots,p,$$
  and so, using only that surgery on
  $\sigma\vert_{\partial D^n \times (3e_1 + D^n)}$ has been performed,
  the embedding
  $\sigma\vert_{\partial D^n \times D^n} : \partial D^n \times D^n
  \hookrightarrow Q \setminus \supp(V_p) \subset S_p$ can be isotoped
  into a collar neighbourhood of
  $S_p \subset \mathcal{R}_{V_p}(M \circ r(M))$ and extended there to
  an embedding of $D^n \times D^n$ such that the result of joining it
  with the $n$-handle $\phicirc$ of $\mathcal{R}_{V_p}(M \circ r(M))$
  gives an embedded $S^n \times D^n$. Furthermore, by our choice of
  $\theta$-structure on $V_p$ this may be arranged to have standard
  $\theta$-structure. By construction, the core of this
  $S^n \times D^n$ intersects that of the previous paragraph
  transversely in a single point, so their union gives an embedded
  $W_{1,1}$, on which the $\theta$-structure is standard.
\end{proof}

\subsection{A semi-simplicial resolution}\label{sec:ResNHandles}
In this section we explain how to construct certain semi-simplicial
spaces $\mathcal{Y}_j(P)_\bullet$ augmented over the spaces
$\mathcal{F}_j(P)$ of cobordisms introduced in Definition
\ref{defn:ThetaEnd}, when we have fixed a $\theta$-end
$$K\vert_0 \overset{K\vert_{[0,1]}}\lra K\vert_1 \overset{K\vert_{[1,2]}}\lra K\vert_2 \overset{K\vert_{[2,3]}}\lra K\vert_3 \lra  \cdots$$
and $P$ is an object of the category $\cob$ equipped with a little extra structure. For clarity we work in slightly more generality than we will eventually need. 

\begin{definition}\label{defn:ComplexK}
  Fix an object $P \in \cob$, an element
  $W = (s,W) \in \mathcal{F}_j(P)$ for some $j \geq 0$, an embedding
  $\chi : \partial D^n \times (1,\infty) \times\bR^{n-1}
  \hookrightarrow P$, and a 1-parameter family
  $t \mapsto \hat{\ell}_t^\mathrm{std}$, $t \in (2,\infty)$, of
  $\theta$-structures on $D^n \times D^n$ such that
  $\hat{\ell}_t^\mathrm{std}\vert_{\partial D^n \times D^n} =
  \chi^*\hat{\ell}_P\vert_{\partial D^n \times (t\cdot e_1 + D^n)}$.

  Let $Y(W)_0=Y(W, \chi, \hat{\ell}_t^\mathrm{std})_0$ be the set of
  tuples $(t,c,\hat{L})$ consisting of a $t \in (2, \infty)$, an
  embedding
  $c : (D^n \times D^n, \partial D^n \times D^n) \hookrightarrow (W,
  P)$, and a path of $\theta$-structures $\tau \mapsto \hat{L}(\tau)$
  on $D^n \times D^n$, $\tau \in [0,1]$, such that
  \begin{enumerate}[(i)]
  \item there is a $\delta>0$ such that
    $c(x,v) = \chi(\tfrac{x}{\vert x \vert}, v+t \cdot e_1) + (1-\vert
    x \vert)\cdot e_0$ for $1-\vert x \vert <\delta$,
  \item the image $C = c(D^n \times D^n)$ is disjoint from
    $([0,s] \times L) \cup (\{s\} \times K\vert_j)$, and
    $c^{-1}(P) = \partial D^n \times D^n$,
  \item the restriction
    $\ell_W\vert_{W\setminus C} : W \setminus C \to B$ is
    $n$-connected,
  \item $\hat{L}(0) = c^*\hat{\ell}_W$,
    $\hat{L}(1) = \hat{\ell}_t^\mathrm{std}$, and
    $\hat{L}(\tau)\vert_{\partial D^n \times D^n}$ is independent of
    $\tau \in [0,1]$.
  \end{enumerate}
  We topologise $Y(W)_0$ as a subspace of
  $$\bR \times \Emb(D^n \times D^n, [0,\infty) \times \bR^\infty) \times \Bun^\theta(D^n \times D^n)^I.$$
  Let
  $Y(W)_p=Y(W, \chi, \hat{\ell}_t^\mathrm{std})_p \subset
  (Y(W)_0)^{p+1}$ be the subset consisting of tuples
  $(t_0, c_0, \hat{L}_0, t_1, c_1, \hat{L}_1, \ldots, t_p, c_p,
  \hat{L}_p)$ such that
  \begin{enumerate}[(i)]
  \item the images $C_i$ of the $c_i$ are disjoint,
  \item $t_0 < t_1 < \cdots < t_p$,
  \item the restriction
    $\ell_W\vert_{W \setminus (\cup_i C_i)} : W \setminus (\cup_i C_i)
    \to B$ is $n$-connected.
\end{enumerate}
We topologise $Y(W)_p$ as a subspace of the $(p+1)$-fold product of
$Y(W)_0$. The collection $Y(W)_\bullet$ has the structure of a
semi-simplicial space, where the $i$th face map forgets
$(t_i, c_i, \hat{L}_i)$.
\end{definition}

We now wish to combine all of the $Y(W)_\bullet$ for all
$W = (s,W) \in \mathcal{F}_j(P)$ into a single semi-simplicial space.

\begin{definition}
  Fix a $P \in \cob$, an embedding
  $\chi : \partial D^n \times (1,\infty) \times\bR^{n-1}
  \hookrightarrow P$, and a 1-parameter family
  $t \mapsto \hat{\ell}_t^\mathrm{std}$, $t \in (2,\infty)$, of
  $\theta$-structures on $D^n \times D^n$ such that
  $\hat{\ell}_t^\mathrm{std}\vert_{\partial D^n \times D^n} =
  \chi^*\hat{\ell}_P\vert_{\partial D^n \times (t\cdot e_1 + D^n)}$.

  Let
  $\mathcal{Y}_j(P)_p = \mathcal{Y}_j(P, \chi,
  \hat{\ell}_t^\mathrm{std})_p$ be the set of tuples $(s,W;x)$ with
  $(s,W) \in \mathcal{F}_j(P)$ and
  $x \in Y(W,\chi, \hat{\ell}_t^\mathrm{std})_p$. Topologise this set
  as a subspace of
  $$\mathcal{F}_j(P) \times (\bR \times \Emb(D^n \times D^n, [0,\infty) \times \bR^\infty) \times \Bun^\theta(D^n \times D^n)^I)^{p+1}.$$
  The collection $\mathcal{Y}_j(P)_\bullet$ has the structure of a
  semi-simplicial space augmented over $\mathcal{F}_j(P)$, where the
  $i$th face maps forgets $(t_i, c_i, \hat{L}_i)$, and the
  augmentation map just remembers the underlying $\theta$-cobordism
  $(s,W)$.
\end{definition}

The maps
$K\vert_{[j,j+1]} \circ - : \mathcal{F}_j(P) \to \mathcal{F}_{j+1}(P)$
given by the $\theta$-end lift to semi-simplicial maps
\begin{align*}
  (K\vert_{[j,j+1]} \circ -)_\bullet : \mathcal{Y}_j(P)_\bullet &\lra \mathcal{Y}_{j+1}(P)_\bullet\\
  (s,W;x) &\longmapsto (s+1;K\vert_{[j,j+1]} \circ W;x),
\end{align*}
and we let $\mathcal{Y}(P)_\bullet \to \mathcal{F}(P)$ be the
augmented semi-simplicial space obtained as the levelwise homotopy
colimit.

\begin{lemma}\label{lem:qfib}
  The map $\vert \mathcal{Y}_j(P)_\bullet\vert \to \mathcal{F}_j(P)$
  is a quasifibration, with fibre $\vert Y(W)_\bullet\vert$ over
  $(s,W) \in \mathcal{F}_j(P)$.
\end{lemma}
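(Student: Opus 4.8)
The plan is to verify Dold's criterion for quasifibrations: one produces a numerable open cover $\{U_\alpha\}$ of $\mathcal{F}_j(P)$, closed under finite intersections, such that the restriction of $\vert\mathcal{Y}_j(P)_\bullet\vert \to \mathcal{F}_j(P)$ over each $U_\alpha$ is a quasifibration. Granting this, the identification of the fibre is automatic: geometric realisation of semi-simplicial spaces commutes with pullback along the inclusion of a point (the relation defining $\vert-\vert$ only identifies points lying over the same base point), and since $\mathcal{Y}_j(P)_p = \{(s,W;x)\mid x \in Y(W)_p\}$ by definition, the pullback of $\mathcal{Y}_j(P)_\bullet$ along $\{(s,W)\}\hookrightarrow\mathcal{F}_j(P)$ is $Y(W)_\bullet$, so the fibre over $(s,W)$ is $\vert Y(W)_\bullet\vert$.

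To build the cover, recall that $\mathcal{F}_j(P)$ is a subspace of the morphism space $\mathcal{C}_{\theta,\partial L}(P,K\vert_j)$, whose topology is a subquotient of $(0,\infty)\times\coprod_{[W]}\mathrm{Emb}(W,[0,1]\times\bR^\infty)\times\mathrm{Bun}^\theta(TW)$ by the free action of $\Diff_\partial(W)$, which has local slices \cite{MR613004}. Over the image $U$ of such a slice --- which we may take connected, shrinking if necessary --- the tautological family of underlying smooth manifolds is trivial: there is a fixed manifold $W_0$, a tautological embedding of $W_0$ defined over $U$, and a continuous family $\{\hat{\ell}^{u}\}_{u\in U}$ of $\theta$-structures on $W_0$ all restricting to $\hat{\ell}_P$ on the incoming boundary. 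Since $\mathcal{F}_j(P)$ is paracompact Hausdorff, the cover consisting of all nonempty finite intersections of such charts is numerable; and a nonempty finite intersection lies over a single diffeomorphism class and is again of the above form (retain one of the tautological trivialisations), so this cover is distinguished in the relevant sense.

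The heart of the matter is to show that over such a chart $U$ the map $\mathcal{Y}_j(P)_p\vert_U\to U$ is a Hurewicz fibration, compatibly with the face maps. Over $U$, conditions (i)--(ii) concern only the embeddings $c_i\colon(D^n\times D^n,\partial D^n\times D^n)\hookrightarrow(W_0,P)$, and condition (iii) --- that the underlying map $\ell^{u}$ of $\hat{\ell}^{u}$, restricted to $W_0\setminus\bigcup_i C_i$, be $n$-connected --- is a homotopy-invariant property of that map, hence independent of $u$ for fixed $c_\bullet$ and connected $U$; together these cut out a fixed space $E_p$ of tuples $(t_\bullet,c_\bullet)$, and forgetting the path data gives a map $\mathcal{Y}_j(P)_p\vert_U\to U\times E_p$. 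Because of the prescribed collar form of $c_i$ in condition (i), the boundary restriction $c_i^*\hat{\ell}^{u}\vert_{\partial D^n\times D^n}$ is independent of $u$ and agrees with $\hat{\ell}^{\mathrm{std}}_{t_i}\vert_{\partial D^n\times D^n}$; hence by condition (iv) the space $\mathcal{Y}_j(P)_p\vert_U$ is precisely the pullback, along $(u,t_\bullet,c_\bullet)\mapsto\bigl(c_i^*\hat{\ell}^{u},\hat{\ell}^{\mathrm{std}}_{t_i}\bigr)_{i=0}^{p}$, of the $(p+1)$-fold power of the fibrewise path space of the restriction-to-$\partial D^n\times D^n$ map out of $\mathrm{Bun}^\theta(D^n\times D^n)$. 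That restriction map is a Hurewicz fibration, since $\partial D^n\times D^n\hookrightarrow D^n\times D^n$ is a cofibration, so its fibrewise path space is a Hurewicz fibration, and therefore so are $\mathcal{Y}_j(P)_p\vert_U\to U\times E_p$ and --- composing with the projection to $U$ --- $\mathcal{Y}_j(P)_p\vert_U\to U$. The path-lifting functions obtained this way act coordinate-wise in $i$ and leave $t_\bullet$ and $c_\bullet$ unchanged, so they commute with the face map deleting the $i$-th coordinate; assembling them over the geometric realisation exhibits the restriction of $\vert\mathcal{Y}_j(P)_\bullet\vert$ over $U$ as a Hurewicz fibration over $U$, in particular a quasifibration, and Dold's criterion then gives the global statement.

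I expect the main obstacle to be bookkeeping rather than anything deep: one must make the chart structure supplied by the slice theorem genuinely compatible on overlaps so that the cover really is distinguished and closed under finite intersections, and --- more delicately --- verify that the levelwise fibration structures built from fibrewise path spaces are strictly compatible with the face maps, so that passage to $\vert-\vert$ preserves them. The one genuinely non-formal input is the slice theorem of \cite{MR613004} for the action of $\Diff_\partial(W)$ on spaces of embeddings; everything else is fibrewise homotopy theory together with Dold's criterion.
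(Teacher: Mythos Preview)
Your argument is correct. The paper gives no proof beyond the one-line reference ``This may be proved in the same way as the analogue of Lemma 6.8 in \cite[Section 7]{GR-W3}''; your approach via slice charts for the $\Diff_\partial$-action, identification of the levelwise maps over a chart as pullbacks of fibrewise path spaces (hence Hurewicz fibrations), coordinate-wise path lifting compatible with face maps, and Dold's criterion is the standard one and is what that reference carries out.
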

\begin{proof}
  This may be proved in the same way as the analogue of Lemma 6.8 in
  \cite[Section 7]{GR-W3}.
\end{proof}

The homotopy fibre of a map between mapping telescopes is weakly
homotopy equivalent to the mapping telescope of the homotopy
fibres. Hence the homotopy fibre of
$\vert \mathcal{Y}(P)_\bullet\vert \to \mathcal{F}(P)$ over a point
$(s,W) \in \mathcal{F}_j(P) \subset \mathcal{F}(P)$ is weakly homotopy
equivalent to
\begin{equation}\label{eq:ContractibilityK}
  \hocolim_{g \to \infty} \vert Y(K\vert_{[j,j+g]} \circ W, \chi, \hat{\ell}_t^\mathrm{std})_\bullet \vert.
\end{equation}
In Section \ref{sec:pf-arc-cx} we will prove the following theorem.

\begin{theorem}\label{thm:ContractibilityK}
  The space \eqref{eq:ContractibilityK} is weakly contractible, and
  hence the forgetful map
  $|\mathcal{Y}(P)_\bullet| \to \mathcal{F}(P)$ is a weak equivalence.
\end{theorem}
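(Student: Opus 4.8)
The goal is to prove that the homotopy colimit $\hocolim_{g \to \infty} \vert Y(K\vert_{[j,j+g]} \circ W, \chi, \hat{\ell}_t^\mathrm{std})_\bullet \vert$ is weakly contractible. I would prove this by showing that each $\vert Y(K\vert_{[j,j+g]} \circ W)_\bullet \vert$ is increasingly highly connected as $g \to \infty$: specifically, that its connectivity tends to infinity, so that the colimit is weakly contractible. The space $\vert Y(V)_\bullet \vert$, for a cobordism $V$ with appropriate $\theta$-end structure, is the realisation of a semi-simplicial space whose $p$-simplices are $(p{+}1)$-tuples of disjoint ``standardised'' embedded handles $c_i : (D^n \times D^n, \partial D^n \times D^n) \hookrightarrow (V, P)$ meeting $\chi$ in a prescribed way, together with paths of $\theta$-structures witnessing that each handle can be standardised, and subject to the constraint that deleting all the handle-cores keeps $\ell_V : V \to B$ $n$-connected. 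This is a ``space of disjoint standardised surgery data'', and the standard technique is to recognise it as (a parametrised/$\theta$-decorated version of) a complex of the type appearing in \cite{GR-W3}, and to bound its connectivity below in terms of how many disjoint copies of $W_{1,1}$ with standard $\theta$-structure are present in $V$.

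\emph{Key steps.} First I would reduce to a connectivity estimate: since a mapping telescope (sequential homotopy colimit along cofibrations, or after replacing the maps by cofibrations) of spaces $X_0 \to X_1 \to \cdots$ with $\mathrm{conn}(X_g) \to \infty$ is weakly contractible, it suffices to show $\mathrm{conn}\bigl(\vert Y(K\vert_{[j,j+g]} \circ W)_\bullet\vert\bigr)$ grows without bound in $g$. Second, using that each $K\vert_{[i,i+1]}$ is $(n-1)$-connected relative to both ends and contains an embedded $W_{1,1}$ with standard $\theta$-structure in the basepoint component, the cobordism $V_g := K\vert_{[j,j+g]} \circ W$ contains $g$ disjoint such copies of $W_{1,1}$, so its ($\theta$-)genus is at least $g$; moreover deleting the core of any one of these copies preserves $n$-connectedness of $\ell_{V_g}$, because the core is $n$-dimensional and the complement still surjects on $\pi_i$ for $i \le n$. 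Third, I would invoke (the proof technique of) the ``cancellation'' and ``surgery move'' arguments of \cite{GR-W3}, Section 5--7: one shows the augmented semi-simplicial space $Y(V)_\bullet \to (\text{point})$ satisfies the hypotheses of a connectivity argument (a ``standard connectivity of complexes of destabilisations'' type statement) by exhibiting that any finite collection of simplices can be ``pushed off'' using the extra $W_{1,1}$'s not yet used — the abundance of standard $W_{1,1}$-summands provides room to find new disjoint standardised handles, and the path-space factors $\Bun^\theta(D^n\times D^n)^I$ are contractible-up-to-the-boundary-constraint by Lemma \ref{lem:StdIsUnique} and Lemma \ref{lem:StdIsUnique}-type uniqueness of standard structures. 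Fourth, I would assemble these into the bound $\mathrm{conn} \ge \tfrac{g - c}{k}$ for absolute constants, and conclude.

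\emph{Main obstacle.} The hard part is the connectivity estimate for $\vert Y(V)_\bullet \vert$ itself — i.e. showing that the semi-simplicial space of disjoint standardised surgery data with the $n$-connectivity-of-complement constraint is highly connected when $V$ has large genus. This is exactly the kind of statement proved in \cite{GR-W3} for the relevant arc/handle complexes, and the real work is checking that the $\theta$-structure decorations (the paths $\hat L(\tau)$ and the compatibility with $\hat{\ell}_t^\mathrm{std}$) do not obstruct the combinatorial surgery moves, which is where Lemma \ref{lem:StdIsUnique} (path-connectedness of standard $\theta$-structures on $W_{1,1}$ rel a boundary disc) is essential. A secondary subtlety is that we work in the colimit over $g$ rather than at finite genus, which is what lets us upgrade ``highly connected'' to ``weakly contractible'' and also gives the room to realise the surgery moves; one must be careful that the stabilisation maps $(K\vert_{[j,j+1]} \circ -)_\bullet$ are compatible with the simplicial structure (they are, by the explicit formula $(s,W;x) \mapsto (s{+}1; K\vert_{[j,j+1]} \circ W; x)$, which leaves the handle data $x$ untouched) so that the colimit genuinely computes the homotopy fibre as claimed via Lemma \ref{lem:qfib}. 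Once the connectivity bound is in hand, the deduction that $\vert \mathcal{Y}(P)_\bullet\vert \to \mathcal{F}(P)$ is a weak equivalence is formal from Lemma \ref{lem:qfib}.
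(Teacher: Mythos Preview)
Your intuition that the many copies of $W_{1,1}$ supplied by the $\theta$-end are the essential resource is right, and the phrase ``pushed off using the extra $W_{1,1}$'s'' points at the key move. But the specific strategy you propose --- proving a finite-genus connectivity bound $\mathrm{conn}(|Y(V_g)_\bullet|) \geq (g-c)/k$ via the techniques of \cite{GR-W3} --- does not go through as stated, and the paper's argument is organised quite differently.

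The obstacle is that the cores $c_i(D^n \times \{0\})$ are $n$-dimensional submanifolds of a $2n$-manifold, so they meet generically in isolated points. Making them disjoint is a genuine middle-dimensional problem: the classical Whitney trick only cancels intersection pairs of opposite sign, needs trivial $\pi_1$, and fails outright for $n \leq 2$, whereas this theorem must hold for all $n \geq 1$ with no simple-connectivity hypothesis on $W$ or $B$. The complexes in \cite{GR-W3} whose connectivity is bounded in terms of genus are not of this type, and that paper assumes $2n \geq 6$ throughout. More concretely, the number of intersection points between two given cores is not bounded by $g$ or by $k$ --- it depends on the particular embeddings --- so there is no evident route to a uniform estimate of the form you state. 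What one \emph{can} prove is that any fixed map $\partial I^k \to |Y(V_g)_\bullet|$ becomes null-homotopic after passing to some $V_{g'}$ with $g'$ large enough, and this is exactly weak contractibility of the colimit.

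The paper proves precisely that, working directly in the infinite-genus manifold $M = K\vert_{[j,\infty)} \circ W$. It introduces auxiliary complexes $\overline{Y}(M)_\bullet$ (handles immersed but cores embedded) and $\widehat{Y}^\delta(M)_\bullet$ (cores merely immersed in general position, the $n$-connectivity-of-complement condition dropped, discrete topology). Contractibility of $|\widehat{Y}^\delta(M)_\bullet|$ is soft: perturb a vertex to be transverse to everything and cone. The substantive step is homotoping a relative map $(I^k,\partial I^k) \to (|\widehat{Y}^\delta|,|\overline{Y}{}^\delta|)$ into $|\overline{Y}{}^\delta|$. The key move is a \emph{stable Whitney trick}: one removes a single transverse intersection between two cores by tubing one of them into the core of $\bar{e}(S^n \times D^n)$ inside a fresh standard $W_{1,1} \subset M$, along an embedded path avoiding all other cores. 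This costs one $W_{1,1}$ per intersection point, works for all $n$, requires no sign-pairing or $\pi_1$ hypothesis, and --- because the $\theta$-structure on that $W_{1,1}$ is standard --- does not disturb the homotopy class of $c^*\hat{\ell}_M$, so the path $\hat{L}$ can be carried along. A separate and substantial further step, which you do not mention, uses more such moves to restore the condition that $\ell_M$ restricted to the complement of the cores remain $n$-connected; this is not automatic from disjointness and involves controlling both $\Ker(\pi_{n-1}(M\setminus\cup D_i)\to\pi_{n-1}(B))$ and $\Coker(\pi_n(M\setminus\cup D_i)\to\pi_n(B))$. Finally a discretisation argument in the style of \cite[Theorem~5.6]{GR-W3} passes from $\overline{Y}{}^\delta$ back to the topologised $\overline{Y}$.

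In short: keep the idea of using the $W_{1,1}$'s to remove intersections, but abandon the finite-connectivity framing and work directly in the colimit; and do not overlook the separate work needed for the $n$-connectivity-of-complement condition.
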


In the rest of this section we shall explain how to deduce
Theorem~\ref{thm:StabNHandle} from Theorem~\ref{thm:ContractibilityK}.

\subsection{Resolving composition with $M \circ r(M)$}

We now continue with the notation of Section \ref{sec:AuxCob}, in
particular we have the cobordisms $M \circ r(M) : R \leadsto Q$ and
$V_0 : Q \leadsto S_0$ and the embedding
$\sigma : \partial D^n \times \bR^n \hookrightarrow Q$, and we wish to
resolve the map
$$- \circ M \circ r(M) : \mathcal{F}(Q) \lra \mathcal{F}(R).$$
 We choose, once and for all, a 1-parameter family of embeddings
$$h_t : D^n \times D^n \hookrightarrow V_0, \quad\quad t \in (2,\infty)$$
so that $h_3$ is the map $\psi$ from Construction \ref{const:Vp}, and
$h_t\vert_{\partial D^n \times D^n}(x,v) = \sigma(x, t\cdot
e_1+v)$. We then let
$\hat{\ell}_t^\mathrm{std} = h_t^*\hat{\ell}_{V_0}$.

The $\theta$-structure on $V_0$ has been chosen (in Construction
\ref{const:Vp}) so that the induced $\theta$-structure on the sphere
$$S^n \times D^n \cong \phicirc(D^n \times (3e_1 + D^n)) \cup h_3(D^n \times D^n) \subset M \cup_Q V_0$$
is homotopic to the standard one; it follows that the same is true for
the spheres
$\phicirc(D^n \times (t e_1 + D^n)) \cup h_t(D^n \times D^n)$ for all
$t$.

The previous section then gives an augmented semi-simplicial space
\begin{equation}\label{eq:Aug1}
  \mathcal{Y}(Q)_\bullet = \mathcal{Y}(Q, \sigma\vert_{\partial D^n \times (1,\infty) \times \bR^{n-1}}, \hat{\ell}_t^\mathrm{std})_\bullet \lra \mathcal{F}(Q),
\end{equation}
and Theorem \ref{thm:ContractibilityK} shows that this augmentation
map becomes a weak homotopy equivalence after geometric realisation.

The underlying manifold of $R$ is equal to the underlying manifold of
$Q$, so the embedding $\sigma$ can also be considered as an embedding
$\sigma : \partial D^n \times \bR^n \hookrightarrow R$. The
$\theta$-structures $\sigma^*\hat{\ell}_Q$ and $\sigma^*\hat{\ell}_R$
are not necessarily equal, but they become equal when restricted to
$\partial D^n \times (1,\infty) \times \bR^{n-1}$ (as $M \circ r(M)$
is supported in $\sigma(\bR^n \times D^n)$). Hence we have defined an
augmented semi-simplicial space
\begin{equation}\label{eq:Aug2}
  \mathcal{Y}(R)_\bullet = \mathcal{Y}(R, \sigma\vert_{\partial D^n \times (1,\infty) \times \bR^{n-1}}, \hat{\ell}_t^\mathrm{std})_\bullet \lra \mathcal{F}(R).
\end{equation}
This also becomes a weak homotopy equivalence after geometric
realisation, by Theorem \ref{thm:ContractibilityK}.

We wish to cover the map
$- \circ M \circ r(M) : \mathcal{F}(Q) \to \mathcal{F}(R)$ by a map of
semi-simplicial resolutions
$\mathcal{Y}(Q)_\bullet \to \mathcal{Y}(R)_\bullet$.  The idea is that
for any embedding $c: D^n \times D^n \hookrightarrow W$ of a thickened
$n$-handle, as in Definition \ref{defn:ComplexK} but with $Q$
replacing $P$, there is an induced thickened $n$-handle embedded in
$W \circ M \circ r(M)$---provided that the boundary of the handle is
disjoint from the support of $M \circ r(M)$---defined by gluing a
cylinder $([0,1] \times \partial D^n) \times D^n$ to the handle and
embedding it into $M \circ r(M)$ in a cylindrical way.  In order to
define a map of semi-simplicial spaces in this way, we need to
reparametrise the source
\begin{equation*}
  (([0,1] \times \partial D^n) \times D^n) \cup_{\partial D^n \times
    D^n} (D^n \times D^n)
\end{equation*}
of the resulting embedding by a diffeomorphism to
$\partial D^n \times D^n$.  Of course we also need to explain what to
do with the other data ($\theta$-structures and so on) in the
simplices of $Y(W)_\bullet$.  Let us first explain the
reparametrisation. We shall have to do the same later, so the
following definition is given in greater generality than we need here:
we will only use the case $k=n$ in this section.

\begin{definition}\label{defn:Extrusion}
  Let
  $c : D^{2n-k} \times D^k \hookrightarrow [0,\infty) \times
  \bR^\infty$ be an embedding which is collared in the sense that
  there is a $\delta > 0$ such that
  $$c(x,v) = c(\tfrac{x}{\vert x \vert}, v) + (1-\vert x \vert)\cdot e_0 \quad \quad \text{ if } 1- \vert x \vert < \delta.$$
  We may then define subsets
  \begin{align*}
    A_t &= [0,t] \times c(\partial D^{2n-k} \times D^k)\\
    B_t &= c(D^{2n-k} \times D^k) + t \cdot e_0
  \end{align*}
  whose union is a smooth submanifold of
  $[0,\infty) \times \bR^\infty$. Define the \emph{extrusion} of $c$
  of length $t \in [0,\infty)$,
  $\epsilon_t(c) : D^{2n-k} \times D^k \hookrightarrow [0,\infty)
  \times \bR^\infty$, by
  $$(x, v) \longmapsto\begin{cases}
    c((1+t) \cdot x, v) + t \cdot e_0 & {\vert x \vert \leq \tfrac{1}{1+t}}\\
    c(\tfrac{x}{\vert x \vert}, v) + (1+t)(1 - \vert x \vert) \cdot e_0 & {\vert x \vert \geq \tfrac{1}{1+t}}.
  \end{cases}$$
  In particular $\epsilon_0(c) = c$.  We have
  \begin{align*}
    A_t &= \epsilon_t(c)((D^{2n-k} \setminus \tfrac{1}{1+t} \mathrm{int}(D^{2n-k})) \times D^k)\\
    B_t &= \epsilon_t(c)(\tfrac{1}{1+t} D^{2n-k} \times D^k),
  \end{align*}
  so that $A_t \cup B_t = \epsilon_t(c)(D^{2n-k} \times D^k)$.

  Let $\hat{\ell}$ be a $\theta$-structure on $D^{2n-k} \times
  D^k$. The diffeomorphism
  $$(x,v) \mapsto c(x,v) + t \cdot e_0 : D^{2n-k} \times D^k \lra B_t$$
  endows $B_t$ with a $\theta$-structure, and we may extend this
  uniquely to a $\theta$-structure on $A_t \cup B_t$ which is
  cylindrical over $A_t$. We define the \emph{extrusion} of
  $\hat{\ell}$ of length $t \in [0,\infty)$, $\epsilon_t(\hat{\ell})$,
  to be the $\theta$-structure on $D^{2n-k} \times D^k$ obtained by
  pulling back the $\theta$-structure just constructed along the
  diffeomorphism
  $\epsilon_t(c) : D^{2n-k} \times D^k \to A_t \cup B_t$.
\end{definition}

Composition with $M \circ r(M)$ gives a map
$\mathcal{F}_j(Q) \to \mathcal{F}_j(R)$, and we may lift it to a
semi-simplicial map
$(- \circ M \circ r(M))_\bullet : \mathcal{Y}_j(Q)_\bullet \to
\mathcal{Y}_j(R)_\bullet$ by the formula
$$(W; t, c, \hat{L}) \longmapsto (W \circ M \circ r(M); t, \epsilon_2(c), \epsilon_2(\hat{L}))$$
on 0-simplices, and the analogous formula on the higher
simplices. This construction commutes (strictly) with the
semi-simplicial maps
$(K\vert_{[j,j+1]} \circ -)_\bullet : \mathcal{Y}_j(Q)_\bullet \to
\mathcal{Y}_{j+1}(Q)_\bullet$ and
$(K\vert_{[j,j+1]} \circ -)_\bullet : \mathcal{Y}_j(R)_\bullet \to
\mathcal{Y}_{j+1}(R)_\bullet$, and so induces a semi-simplicial map
$(- \circ M \circ r(M))_\bullet : \mathcal{Y}(Q)_\bullet \to
\mathcal{Y}(R)_\bullet$ on the stabilisations.  The diagrams
\begin{equation*}
  \xymatrix{
    {\mathcal{Y}(Q)_p}\ar[rr]^-{- \circ M \circ r(M)}\ar[d] && {\mathcal{Y}(R)_p}\ar[d]\\
    {\mathcal{F}(Q)}\ar[rr]^-{- \circ M \circ r(M)} && {\mathcal{F}(R)}
  }
\end{equation*}
commute for all $p \geq 0$, and by Theorem~\ref{thm:ContractibilityK}
we may then view $\mathcal{Y}(Q)_\bullet \to \mathcal{Y}(P)_\bullet$
as a simplicial resolution of $\mathcal{F}(Q) \to \mathcal{F}(P)$.

\begin{proposition}\label{prop:pSxAbEq}
  For each $p \geq 0$, the map
  $$(- \circ M \circ r(M))_p : \mathcal{Y}(Q)_p \lra \mathcal{Y}(R)_p$$
  is an abelian homology equivalence.
\end{proposition}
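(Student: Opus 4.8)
The plan is to reduce $(-\circ M\circ r(M))_p$ to a $W_{1,1}$-stabilisation map and then run the argument behind Theorem~\ref{thm:SnSnStability}. Recall that a point of $\mathcal{Y}_j(Q)_p$ consists of a cobordism $(s,W)\in\mathcal{F}_j(Q)$ together with a $(p+1)$-tuple of disjoint thickened $n$-handles $c_i\colon(D^n\times D^n,\partial D^n\times D^n)\hookrightarrow(W,Q)$ whose attaching regions are the standard ones $\sigma(\,\cdot\,,t_ie_1+\,\cdot\,)$ with $2<t_0<\cdots<t_p$, carrying homotopies $\hat L_i$ to the standardised $\theta$-structures $\hat\ell^{\mathrm{std}}_{t_i}=h_{t_i}^*\hat\ell_{V_0}$, and subject to the condition that $\ell_W$ stays $n$-connected away from $\bigcup_i c_i(D^n\times D^n)$; and $(-\circ M\circ r(M))_p$ glues $M\circ r(M)\colon R\leadsto Q$ onto the incoming boundary and replaces each $c_i$ by its extrusion $\epsilon_2(c_i)$ (Definition~\ref{defn:Extrusion}).

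The first and main step is to observe that, because $M\circ r(M)$ has support in $\sigma(D^n\times D^n)$ while the attaching region of $c_0$ sits at $t_0>2$, the support of $M\circ r(M)$ may be moved past the support of $c_0$ by an interchange-of-support isotopy of the kind in Section~\ref{sec:SupportAndInterchange}, and that by the computation in the proof of Lemma~\ref{lem:InterchangeOfSupport} the standardised $\theta$-structure $\hat L_0$ forces the $n$-handle of $M$ to become trivially attached in the process: it combines with $c_0$ into an embedded $W_{1,1}$ carrying a standard $\theta$-structure, the removal-and-capping of which returns a cylinder. Hence $(-\circ M\circ r(M))_p$ is homotopic, compatibly with the face maps and with the $\theta$-end stabilisations $K|_{[j,j+1]}\circ -$, to the map which inserts a standard $W_{1,1}$-stabilisation cobordism of the type $H$ from Section~\ref{sec:LeftRight} into $W$ and extrudes the handle data accordingly; Lemma~\ref{lem:StdIsUnique} is used to see that the inserted $\theta$-structure is the expected one up to a contractible space of choices, and one checks throughout that condition~(iii) of Definition~\ref{defn:ComplexK} persists so that we never leave the resolution.

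Granting this reduction, it remains to prove that inserting an $H$-type $W_{1,1}$-stabilisation induces an abelian homology equivalence on the $p$-th levels $\mathcal{Y}(Q)_p=\hocolim_j\mathcal{Y}_j(Q)_p\to\mathcal{Y}(R)_p=\hocolim_j\mathcal{Y}_j(R)_p$. Here I would imitate the proof of Theorem~\ref{thm:SnSnStability} at the level of the resolutions: since every $K|_{[j,j+1]}$ contains an embedded $W_{1,1}$ with standard $\theta$-structure in the basepoint component, one can $\theta$-embed the stabilising cobordism into the next $K|_{[j,j+1]}$ (using Lemma~\ref{lem:StdIsUnique}) and take complements to produce, for each square in the resulting ladder, diagonal maps $\Delta^{\mathrm{top}}_j,\Delta^{\mathrm{bottom}}_j$ making the top and bottom triangles commute up to homotopy. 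Proposition~\ref{acyclicity-key-proposition} then gives that the induced map on horizontal homotopy colimits is an abelian homology equivalence. Since the spaces in sight need not have abelian fundamental group, ``abelian homology equivalence'' rather than ``weak equivalence'' is the best one gets, and the conclusion is transported along the chain of identifications by naturality of homology with local coefficients exactly as in Lemma~\ref{lem:W2outof3}; the ambiguity in the homotopies implicit in the ladder is harmless by the reasoning of Lemma~\ref{lem:LadderHtpies}.

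The hard part will be making the first step honest: turning the informal ``slide $M\circ r(M)$ past $c_0$ and recognise a standard $W_{1,1}$'' into a construction that varies continuously over all of $\mathcal{Y}(Q)_p$, is strictly compatible with the semi-simplicial structure and the stabilisation maps $K|_{[j,j+1]}\circ-$, and keeps coherent track of the three families of $\theta$-structures in play (the structure on $M\circ r(M)$, the standardising homotopies $\hat L_i$, and the extruded structures), so that the ``homotopic'' above is witnessed by an explicit homotopy of maps of resolutions. Everything after that is a formal consequence of the machinery behind Theorem~\ref{thm:SnSnStability}, but verifying these compatibilities --- and that the $n$-connectivity condition~(iii) is preserved by all the moves --- is the bulk of the work.
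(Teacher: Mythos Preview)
Your geometric intuition is right---the map should reduce to a $W_{1,1}$-stabilisation via an interchange of support---but you are missing the paper's key structural simplification, and this makes your ``step 2'' harder than necessary.

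The paper does not work directly with $\mathcal{Y}(Q)_p$. Instead it first proves a weak equivalence
\[
\mathcal{F}(S_p) \overset{\simeq}{\lra} \mathcal{Y}(Q)_p,\qquad X \longmapsto \bigl(X\circ V_p;\,3,\psi_0,\hat L_0,\ldots,3(p+1),\psi_p,\hat L_p\bigr),
\]
where $V_p: Q \leadsto S_p$ and its canonical handles $\psi_i$ come from Construction~\ref{const:Vp}; and similarly $\mathcal{F}(S'_p) \simeq \mathcal{Y}(R)_p$ using $\mathcal{L}_{M\circ r(M)}(V_p)$. This repackages the \emph{variable} handle data $(c_i)$ as the \emph{fixed} cobordism $V_p$, so that interchange of support is now literally applicable to the composable pair $(M\circ r(M),\,V_p)$. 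One then checks that the square
\[
\xymatrix{
\mathcal{F}(S_p) \ar[rr]^-{-\circ\,\mathcal{R}_{V_p}(M\circ r(M))} \ar[d]_{\simeq} && \mathcal{F}(S'_p) \ar[d]^{\simeq}\\
\mathcal{Y}(Q)_p \ar[rr]^-{(-\circ M\circ r(M))_p} && \mathcal{Y}(R)_p
}
\]
commutes up to the interchange-of-support homotopy $\tau_t$ (tracked on the handle side by the extrusions $\epsilon_{2t}$). By Lemma~\ref{lem:InterchangeOfSupport} the top map is $-\circ H_{S_p}$ composed with an isomorphism, so Theorem~\ref{thm:SnSnStability} applies \emph{directly}---there is no need to re-prove it for the resolution spaces.

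Two corrections to your sketch. First, ``interchange of support'' as defined in Section~\ref{sec:SupportAndInterchange} requires two composable \emph{cobordisms}; you cannot literally interchange $M\circ r(M)$ with a handle $c_0$ sitting inside a variable $W$. The change of model $\mathcal{Y}(Q)_p \simeq \mathcal{F}(S_p)$ is precisely what converts the handle data into a cobordism one can interchange with, and this is what makes your ``hard part'' tractable. Second, the $W_{1,1}$ is not formed by combining $M$'s handle with $c_0$: both $S^n\times D^n$ factors in Lemma~\ref{lem:InterchangeOfSupport} come from the handles of $M$ and $r(M)$; the role of the $\psi_0$-surgery is only to make $M$'s attaching sphere bound in the new boundary $S_p$, so that the handle becomes trivially attached.
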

\begin{proof}
  The idea, cf.\ diagram~\eqref{eq:ReqDiagram} below, is to identify
  this map up to homotopy with a map gluing on a cobordism $H_S$ as
  studied in Section~\ref{sec:LeftRight}.

  Recall from Construction \ref{const:Vp} that the cobordism
  $V_p : Q \leadsto S_p$ contains canonical handles
  $$\psi_i : D^n \times D^n \lra V_p \quad\quad 0 \leq i \leq p,$$
  and the $\theta$-structure on $V_p$ is determined (up to homotopy
  relative to $Q$) by its restriction to these handles. We may
  therefore suppose that the $\theta$-structure on $V_p$ is such that
  $\psi^*_i\hat{\ell}_{V_p} = \hat{\ell}^\mathrm{std}_{3(i+1)} =
  h_{3(i+1)}^*\hat{\ell}_{V_0}$ for each $i$, because the
  $\theta$-structure
  $$h_{t}^*\hat{\ell}_{V_0}\cup (\phicirc \vert_{D^n \times (te_1 + D^n)})^*\hat{\ell}_M : T(S^n \times D^n) \lra \theta^*\gamma_{2n}$$
  is standard for $t=3$ (by definition of $\hat{\ell}_{V_0}$), and
  hence for all $t$ (as being standard is homotopy invariant).

  Let $\hat{L}_i$ be the constant path
  $\hat{\ell}^\mathrm{std}_{3(i+1)}$. This determines an injection
\begin{equation}\label{eq:pSxModel}
  \begin{aligned}
    \mathcal{F}_j(S_p) & \lra \mathcal{Y}_j(Q)_p\\
    (s,X) & \longmapsto (s+1,X \circ V_p; 3, \psi_0, \hat{L}_0, 6, \psi_1, \hat{L}_1, \ldots, 3(p+1), \psi_p, \hat{L}_p).
  \end{aligned}
\end{equation}
We claim that this injection is a weak homotopy equivalence. In order
to see this, let $E$ denote the space of triples $((s,W),e,\hat{L})$
consisting of an $(s,W) \in \mathcal{F}_j(Q)$, an embedding
$e : V_p \hookrightarrow W$ relative to $Q$, and a path of
$\theta$-structures $\hat{L}$ from $e^*\hat{\ell}_W$ to
$\hat{\ell}_{V_p}$ which is constant over $Q \subset V_p$. We
topologise $E$ as a subspace of
$\mathcal{F}_j(Q) \times \Emb(V_p, [0,\infty) \times \bR^\infty;Q)
\times \Bun^\theta(TV_p)^I$. The map
\begin{align*}
  E & \lra \mathcal{Y}_j(Q)_p\\
  (s,W,e) & \longmapsto (s,W; 3, e \circ \psi_0, \psi_0^*\hat{L}, 6, e \circ \psi_1, \psi_1^*\hat{L}, \ldots, 3(p+1), e \circ \psi_p, \psi_p^*\hat{L})
\end{align*}
is then a weak homotopy equivalence, because the embedding
$$([0,\epsilon] \times Q) \cup \left (\coprod_{i=0}^p \psi_i(D^n \times D^n)\right) \lra V_p$$
is an isotopy equivalence for sufficiently small $\epsilon$. On the
other hand, the map
\begin{align*}
  E & \lra \Emb(V_p, [0,\infty) \times \bR^\infty;Q)\\
  (s,W,e) & \longmapsto e
\end{align*}
is a fibration (by the parameterised isotopy extension theorem) and
the base space is contractible (by the parameterised form of Whitney's
embedding theorem). The fibre of this map over the canonical embedding
$V_p \subset [0,1] \times \bR^\infty$ is the space of those
$N \in \mathcal{F}_j(Q)$ which contain the cobordism $V_p$, and this
is weakly homotopy equivalent to $\mathcal{F}_j(S_p)$. This proves the
claim that \eqref{eq:pSxModel} is a weak homotopy equivalence.

The cobordisms $V_p : Q \leadsto S_p$ and
$M \circ r(M) : R \leadsto Q$ are composable and have disjoint
support. We may perform interchange of support on these cobordisms,
giving composable cobordisms
$$\mathcal{L}_{M \circ r(M)}(V_p) : R \leadsto S'_p \quad\quad\quad \mathcal{R}_{V_p}(M \circ r(M)) : S'_p \leadsto S_p$$
and the embeddings $\psi_i$ have image inside
$\mathcal{L}_{M \circ r(M)}(V_p)$ and satisfy
$\psi^*_i\hat{\ell}_{\mathcal{L}_{M \circ r(M)}(V_p)} =
\hat{\ell}^\mathrm{std}_{3(i+1)}$. Thus
$\mathcal{L}_{M \circ r(M)}(V_p)$ has the same relationship to $R$ as
$V_p$ does to $Q$, and in particular the inclusion
\begin{equation*}
  \begin{aligned}
    \mathcal{F}_j(S'_p) & \lra \mathcal{Y}_j(R)_p\\
    (s,X) & \longmapsto (s+1, X \circ \mathcal{L}_{M \circ r(M)}(V_p); 3, \psi_0, \hat{L}_0, 6, \psi_1, \hat{L}_1, \ldots, 3(p+1), \psi_p, \hat{L}_p).
  \end{aligned}
\end{equation*}
is a weak homotopy equivalence for the same reason as
\eqref{eq:pSxModel} is.

Consider the diagram
\begin{equation}\label{eq:ReqDiagram}
  \begin{gathered}
    \xymatrix{
      {\mathcal{F}(S_p)} \ar[rr]^-{- \circ \mathcal{R}_{V_p}(M \circ r(M))} \ar[d]^-\simeq& &
      {\mathcal{F}(S'_p)} \ar[d]^-\simeq\\
      {\mathcal{Y}(Q)_p} \ar[rr]^-{(- \circ M \circ r(M))_p} && {\mathcal{Y}(R)_p},
    }
  \end{gathered}
\end{equation}
in which the vertical maps are weak homotopy equivalences by taking
the limit $j \to \infty$ of the weak homotopy equivalences established
above. By Lemma \ref{lem:InterchangeOfSupport} the cobordism
$\mathcal{R}_{V_p}(M \circ r(M))$ is isotopic to $H_{S_{p}} $ composed
with an isomorphism in $\cob$, and so by Theorem
\ref{thm:SnSnStability} the map
$- \circ \mathcal{R}_{V_p}(M \circ r(M))$ is an abelian homology
equivalence. Hence if the square commutes up to homotopy then we have
proved the proposition.

The two directions around the square are given by
$$X \mapsto (X \circ \mathcal{R}_{V_p}(M \circ r(M)) \circ \mathcal{L}_{M \circ r(M)}(V_p); 3, \psi_0, \hat{L}_0, 6, \psi_1, \hat{L}_1, \ldots, 3(p+1), \psi_p, \hat{L}_p)$$
for the upper composition and
$$X \mapsto (X \circ V_p \circ M \circ r(M); 3, \epsilon_2(\psi_0),
\epsilon_2(\hat{L}_0), \ldots, 3(p+1), \epsilon_2(\psi_p),
\epsilon_2(\hat{L}_p))$$ for the lower composition. The path
$t \mapsto \tau_t$ in $\cob(R,S_p)$ from $V_p \circ M \circ r(M)$ to
$\mathcal{R}_{V_p}(M \circ r(M)) \circ \mathcal{L}_{M \circ
  r(M)}(V_p)$ given by interchange of support (cf.\ Section
\ref{sec:SupportAndInterchange}) may be described as follows. For a
cobordism $(s,W) : A \leadsto B$ we write
$$\widehat{W} = ((-\infty,0] \times A) \cup W \cup ([s,\infty) \times B)$$
for its extension, then we let $\tau_t$ denote the $\theta$-manifold
$$(t\cdot e_0 + \widehat{M \circ r(M)} \setminus \supp(V_p)) \cup ((2-2t)\cdot e_0 + \widehat{V_p}\setminus \supp(M \circ r(M)))$$
intersected with $[0,3] \times \bR^\infty$, which is schematically
shown in Figure \ref{fig:Interchange}.

\begin{figure}[h]
  \begin{center}
    \includegraphics[bb=0 0 308 116]{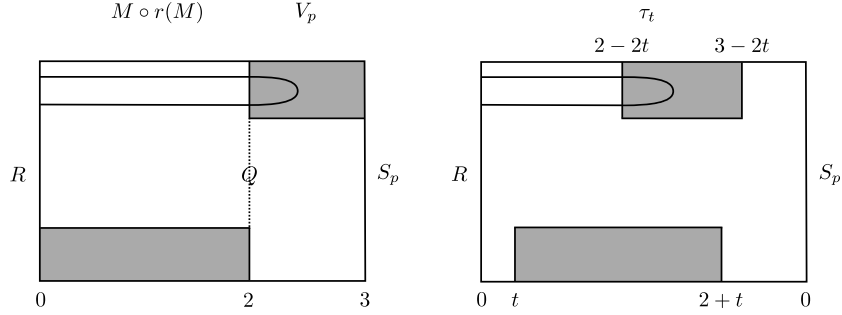}
    \caption{Interchanging the supports of $V_p$ and ${M} \circ r(M)$. The grey regions indicate where the cobordisms are not cylindrical.}
    \label{fig:Interchange}
  \end{center}
\end{figure}

The extruded embedding $\epsilon_{2t}(\psi_i)$ has image inside
$\tau_{1-t}$, by inspection of the formulae for the two
objects. Furthermore the bundle map
$\epsilon_{2t}(\psi_i)^*\hat{\ell}_{\tau_{1-t}} : T(D^n \times D^n)
\to \theta^*\gamma_{2n}$ agrees with the extruded bundle map
$\epsilon_{2t}(\hat{L}_i) =
\epsilon_{2t}(\hat{\ell}^\mathrm{std}_{3(i+1)})$, by
construction. Hence sending $X$ to
$$(X \circ \tau_{1-t}; 3, \epsilon_{2t}(\psi_1), \epsilon_{2t}(\hat{L}_1), 6, \epsilon_{2t}(\psi_2), \epsilon_{2t}(\hat{L}_2),\ldots,3(1+p), \epsilon_{2t}(\psi_p), \epsilon_{2t}(\hat{L}_p))$$
gives a homotopy in $\mathcal{Y}(R)_p$ between the two compositions in
\eqref{eq:ReqDiagram}, as required.
\end{proof} 

Now, for any system $\mathcal{L}$ of abelian local coefficients on
$\mathcal{F}(R)$ we have a map of spectral sequences
\begin{equation*}
  \xymatrix{
    E^1_{p,q}(P) = H_q(\mathcal{Y}(Q)_p;\mathcal{L}) \ar[d] \ar@{=>}[r]& H_{p+q}(\vert\mathcal{Y}(Q)_\bullet\vert;\mathcal{L}) \ar[d] \ar[r]^-\sim& H_{p+q}(\mathcal{F}(Q);\mathcal{L}) \ar[d]\\
    E^1_{p,q}(R) = H_q(\mathcal{Y}(R)_p;\mathcal{L})  \ar@{=>}[r]& H_{p+q}(\vert\mathcal{Y}(R)_\bullet\vert;\mathcal{L}) \ar[r]^-\sim & H_{p+q}(\mathcal{F}(R);\mathcal{L})
  }
\end{equation*}
where we have continued to write $\mathcal{L}$ for this system of
local coefficients pulled back along any of the maps occurring in this
set-up. By Proposition \ref{prop:pSxAbEq} the map of $E^1$-pages is an
isomorphism, as $\mathcal{L}$ is an \emph{abelian} system of local
coefficients. Thus the middle vertical map is also an isomorphism. The
two rightmost horizontal maps are isomorphisms because the
augmentation maps \eqref{eq:Aug1} and \eqref{eq:Aug2} are weak
homotopy equivalences, and hence the rightmost vertical map is
too. This shows that
\begin{equation*}
- \circ M \circ r(M) : \mathcal{F}(Q) \lra \mathcal{F}(R)
\end{equation*}
induces an isomorphism on homology with abelian coefficients, so
$M \circ r(M) \in \mathcal{W}$.

\begin{proof}[Proof of Theorem~\ref{thm:StabNHandle}, assuming
  Theorem~\ref{thm:ContractibilityK}]
  We must show that $M \in \mathcal{W}$. However, the discussion above
  holds for any cobordism which admits a handle structure relative to
  its outgoing boundary consisting of a single $n$-handle attached to
  the basepoint component. In particular, it applies to $r(M)$ too, so
  $r(M) \circ r(r(M)) \in \mathcal{W}$.  By the argument of
  Lemma~\ref{lem:W2outof3}, all three morphisms in the composition
  $M \circ r(M) \circ r(r(M))$ are in $\mathcal{W}$.  This establishes
  Theorem \ref{thm:StabNHandle}.
\end{proof}

\section{Contractibility of the higher-dimensional arc complex}
\label{sec:pf-arc-cx}

In this section we will give the proof of Theorem
\ref{thm:ContractibilityK}. It is convenient to work with two more
flexible approximations to the semi-simplicial space of Definition
\ref{defn:ComplexK}. In the first we relax the condition that the
handles be embedded: we allow them to be immersed, as long as their
cores $c(D^n \times\{0\})$ are still embedded.

\begin{definition}\label{defn-K-bar-bullet}
  Fix data $(W, \chi, \hat{\ell}_t^\mathrm{std})$ as in Definition
  \ref{defn:ComplexK}. Let us write
  $\overline{Y}(W)_0 = \overline{Y}(W, \chi,
  \hat{\ell}_t^\mathrm{std})_0$ for the set of triples
  $(t,c, \hat{L})$ where $t \in (2,\infty)$,
  $c : D^n \times D^n \looparrowright W$ is an immersion, and
  $\hat{L}(\tau)$ is a path of $\theta$-structures on
  $D^n \times D^n$, $\tau \in [0,1]$ such that
  \begin{enumerate}[(i)]
  \item\label{it:1} $c\vert_{D^n \times \{0\}}$ is an embedding, and
    there is a $\delta>0$ such that
    $c(x,v) = \chi(\tfrac{x}{\vert x \vert}, v+t \cdot e_1) + (1-\vert
    x \vert)\cdot e_0$ for $1-\vert x \vert <\delta$,
  \item\label{it:2} $c(D^n \times D^n)$ lies outside of
    $([0,s] \times L) \cup (\{s\} \times K\vert_j)$, and
    $c^{-1}(P) = \partial D^n \times D^n$,
  \item\label{it:3} the restriction
    $\ell_W\vert_{W \setminus c(D^n \times \{0\})} : W \setminus c(D^n
    \times \{0\}) \to B$ is $n$-connected,
  \item\label{it:4} $\hat{L}(0) = c^*\hat{\ell}_W$,
    $\hat{L}(1) = \hat{\ell}_t^\mathrm{std}$, and
    $\hat{L}(\tau)\vert_{\partial D^n \times D^n}$ is independent of
    $\tau \in [0,1]$.
  \end{enumerate}
  We topologise $\overline{Y}(W)_0$ as in Definition
  \ref{defn:ComplexK}, but using the $C^\infty$ topology on the space
  of immersions rather than embeddings. Let
  $\overline{Y}(W)_p \subset (Y(W)_0)^{p+1}$ be the subspace
  consisting of ordered tuples
  $((t_0, c_0, \hat{L}_0), \ldots, (t_p,c_p, \hat{L}_p))$ such that
  \begin{enumerate}[(i)]
  \item the sets $c_i(D^n \times \{0\})$ are disjoint,
  \item $t_0 < t_1 < \cdots < t_p$,
  \item $\ell_W$ restricts to an $n$-connected map
    $W \setminus (\cup_i c_i(D^n \times \{0\})) \to B$.
  \end{enumerate}
  We shall also write $\overline{Y}{}^\delta(W)_p$ for the set
  $\overline{Y}(W)_p$ equipped with the discrete topology.
\end{definition}

In the most flexible approximation, we further relax the condition
that the cores be embedded, as long as they are immersed and in
general position. We also forget condition (\ref{it:3}), and
topologise it discretely.

\begin{definition}
  Fix data $(W, \chi, \hat{\ell}_t^\mathrm{std})$ as in Definition
  \ref{defn:ComplexK}. Let us write
  $\widehat{Y}^\delta(W)_0 = \widehat{Y}^\delta(W, \chi,
  \hat{\ell}_t^\mathrm{std})_0$ for the set of triples
  $(t,c, \hat{L})$ where $t \in (2,\infty)$,
  $c : D^n \times D^n \looparrowright W$ is an immersion, and
  $\hat{L}$ is a path of $\theta$-structures on $D^n \times D^n$, such
  that
  \begin{enumerate}[(\ref{it:1}$^\prime$)]
  \item the immersion $c\vert_{D^n \times \{0\}}$ is self-transverse
    and has no triple points, and there is a $\delta>0$ such that
    $c(x,v) = \chi(\tfrac{x}{\vert x \vert}, v+t \cdot e_1) + (1-\vert
    x \vert)\cdot e_0$ for $1-\vert x \vert <\delta$.
  \end{enumerate}
  as well as (\ref{it:2}) and (\ref{it:4}) of
  Definition~\ref{defn-K-bar-bullet}. Note that the data is \emph{not}
  required to satisfy (\ref{it:3}). Let
  $\widehat Y^\delta(W)_p \subset (\widehat Y^\delta(W)_0)^{p+1}$ be
  the subset consisting of ordered tuples
  $((t_0, c_0, \hat{L}_0), \ldots, (t_p,c_p, \hat{L}_p))$ such that
  \begin{enumerate}[(i)]
  \item the immersions $c_i\vert_{D^n \times \{0\}}$ are in general
    position (i.e.\ pairwise transverse and without triple
    intersections),
  \item $t_0 < t_1 < \cdots < t_p$.
  \end{enumerate}
  As usual, this data defines a semi-simplicial set
  $\widehat Y^\delta(W)_\bullet$.
\end{definition}

\begin{lemma}\label{lem:shrinking}
  The map
  $\vert Y(W)_\bullet\vert \to \vert \overline{Y}(W)_\bullet\vert$ is
  a weak homotopy equivalence.
\end{lemma}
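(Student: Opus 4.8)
The plan is to prove the lemma by a \emph{shrinking} argument, using that an immersed thickened handle whose core is embedded becomes embedded once its normal $D^n$-direction is scaled down sufficiently, and that this scaling can be carried out continuously and compatibly with the semi-simplicial structure, the $\theta$-structure paths, and the collar conditions. The map $\vert Y(W)_\bullet\vert\to\vert\overline{Y}(W)_\bullet\vert$ is the inclusion of realisations induced by regarding an embedded thickened handle as an immersion with embedded core; this is compatible with all the defining conditions (for condition (iii) one uses that a thickened handle is a regular neighbourhood of its core, so the complements appearing in (iii) of Definitions~\ref{defn:ComplexK} and~\ref{defn-K-bar-bullet} are homotopy equivalent).

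First I would construct, for each $\lambda\in(0,1]$, a semi-simplicial self-map $\rho_\lambda$ of $\overline{Y}(W)_\bullet$. For each immersed handle $c$ fix, continuously in $c$ and with $\phi^c_1=\mathrm{id}$, a self-embedding $\phi^c_\lambda:D^n\times D^n\hookrightarrow D^n\times D^n$ which is the identity on a neighbourhood of $\partial D^n\times D^n$ lying inside the region on which the collar formula of condition (i) holds (such a neighbourhood can be chosen continuously in $c$), preserves $D^n\times\{0\}$ pointwise and each slice $\{x\}\times D^n$, and scales the $D^n$-factor by $\lambda$ away from a slightly larger neighbourhood. Then $\rho_\lambda$ sends a simplex $\bigl((t_i,c_i,\hat L_i)\bigr)_{i}$ to $\bigl((t_i,\,c_i\circ\phi^{c_i}_\lambda,\,\hat L^{c_i}_{i,\lambda})\bigr)_{i}$, where $\hat L^{c}_{i,\lambda}$ is the concatenation of $\tau\mapsto(\phi^c_\lambda)^*\hat L_i(\tau)$ with a path from $(\phi^c_\lambda)^*\hat{\ell}^{\mathrm{std}}_{t_i}$ back to $\hat{\ell}^{\mathrm{std}}_{t_i}$ coming from an isotopy of $\phi^c_\lambda$ to the identity rel $\partial D^n\times D^n$; this makes $\hat L^c_{i,\lambda}(0)=(c\circ\phi^c_\lambda)^*\hat{\ell}_W$, restores $\hat L^c_{i,\lambda}(1)=\hat{\ell}^{\mathrm{std}}_{t_i}$, and (since $\phi^c_\lambda$ is the identity near $\partial D^n\times D^n$) keeps the restriction to $\partial D^n\times D^n$ constant. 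One checks that conditions (i)--(iv) are preserved --- the cores, hence condition (iii), and the disjointness of the handles are unchanged, the handle images only shrink, and the collar formula is retained with a smaller parameter --- that $\rho_\lambda$ commutes with the face maps, and that $\rho_1=\mathrm{id}$ with $\lambda\mapsto\rho_\lambda$ continuous. Since $\phi^c_\lambda$ carries embeddings to embeddings, $\rho_\lambda$ restricts to a self-map of $Y(W)_\bullet$ for every $\lambda$, so that $\lambda\mapsto\vert\rho_\lambda\vert$, $\lambda\in[\lambda',1]$, is for each $\lambda'$ a homotopy within $\vert Y(W)_\bullet\vert$ from $\mathrm{id}$ to $\vert\rho_{\lambda'}\vert$.

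The geometric heart of the argument is \emph{eventual embeddedness}: for any compact space $X$ and map $f:X\to\vert\overline{Y}(W)_\bullet\vert$ there is $\lambda_0>0$ with $\vert\rho_{\lambda_0}\vert\circ f$ factoring through $\vert Y(W)_\bullet\vert\subseteq\vert\overline{Y}(W)_\bullet\vert$. Indeed $f(X)$ lies in the realisation of $\bigcup_{p\le N}\Delta^p\times C_p$ for some $N$ and compact $C_p\subseteq\overline{Y}(W)_p$; over $C_p$ the handles vary continuously with embedded, pairwise disjoint cores. Combined with the elementary fact that an immersion $c$ of $D^n\times D^n$ with $c\vert_{D^n\times\{0\}}$ embedded is injective on $D^n\times(rD^n)$ for all sufficiently small $r$ --- proved by a compactness argument, a failure producing coincidences that converge to a point of the embedded core and contradict local injectivity of the immersion --- applied uniformly over the compact family, this produces $\lambda_0$ such that for $\lambda\le\lambda_0$ every $c\circ\phi^c_\lambda$ is an embedding, the images are pairwise disjoint, and (automatically, since they only shrink) the images avoid $([0,s]\times L)\cup(\{s\}\times K\vert_j)$. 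To make this work cleanly one first applies a preliminary deformation inside $\overline{Y}(W)_\bullet$ which standardises the handles near $P$ (straightening the collars to a uniform size and pushing the cores off a neighbourhood of $P$); this is routine but is where most of the bookkeeping lies.

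Finally I would conclude by the usual compactness argument that $\vert Y(W)_\bullet\vert\to\vert\overline{Y}(W)_\bullet\vert$ is a weak equivalence. For surjectivity on $\pi_k$ (at any basepoint), given $f:S^k\to\vert\overline{Y}(W)_\bullet\vert$ choose $\lambda_0$ as above; then $\vert\rho_{\lambda_0}\vert\circ f$ lands in $\vert Y(W)_\bullet\vert$ and $\lambda\mapsto\vert\rho_\lambda\vert\circ f$, $\lambda\in[\lambda_0,1]$, is a homotopy in $\vert\overline{Y}(W)_\bullet\vert$ from it back to $f$. For injectivity on $\pi_k$, given $f:S^k\to\vert Y(W)_\bullet\vert$ with a null-homotopy $G:D^{k+1}\to\vert\overline{Y}(W)_\bullet\vert$, choose $\lambda_0$ for $G$; then $\vert\rho_{\lambda_0}\vert\circ G:D^{k+1}\to\vert Y(W)_\bullet\vert$ is a null-homotopy of $\vert\rho_{\lambda_0}\vert\circ f$, which is homotopic to $f$ within $\vert Y(W)_\bullet\vert$ by the homotopy constructed at the end of the second paragraph. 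The same argument handles $\pi_0$. The main obstacle is not conceptual but is the care needed to make $\rho_\lambda$ and the preliminary standardisation genuinely respect all the defining data, and to make the amount of shrinking uniform over compact families.
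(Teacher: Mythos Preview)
Your strategy is the paper's: shrink each immersed handle towards its core so it becomes an embedding. The paper states this more economically by observing that the inclusion $Y(W)_\bullet \hookrightarrow \overline{Y}(W)_\bullet$ is already a \emph{levelwise} weak equivalence (one isotopy of $D^n\times D^n$, applied to every handle), so there is no need to build a global homotopy on the geometric realisation or to run the compactness argument on $|\overline{Y}(W)_\bullet|$.

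There is, however, a genuine gap in your argument for $p$-simplices with $p\geq 1$. Because $\phi^c_\lambda$ is the identity near $\partial D^n\times D^n$ (otherwise the collar condition (i) is destroyed), the boundary pieces
\[
  c_i(\partial D^n\times D^n)=\chi\bigl(\partial D^n\times(t_i e_1+D^n)\bigr)\subset P
\]
are left untouched by $\rho_\lambda$. In $\overline{Y}(W)_p$ one only requires $t_0<\cdots<t_p$, so $|t_i-t_j|<2$ is allowed; in that case these boundary pieces already overlap in $P$, and the images of $c_i\circ\phi^{c_i}_\lambda$ are \emph{never} pairwise disjoint, no matter how small $\lambda$ is. Thus the claim that ``for $\lambda\le\lambda_0$ \ldots\ the images are pairwise disjoint'' fails, and such simplices never land in $Y(W)_p$ under $\rho_\lambda$. (There is a related inconsistency: the ``elementary fact'' you invoke is about $c\vert_{D^n\times rD^n}$, i.e.\ \emph{uniform} shrinking of the second factor, which is not what your $\phi^c_\lambda$ does.) Your preliminary deformation, as described, does not address this. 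The fix is easy and keeps your argument intact: first deform inside $\overline{Y}(W)_p$ to spread the $t_i$'s so that $t_{i+1}-t_i>2$ for all $i$---the open simplex $\{2<t_0<\cdots<t_p\}$ deformation retracts onto this region, and the retraction lifts to the full data via the collar of $P$ in $W$---and then apply your shrinking. The paper's one-line proof is equally silent on this point.
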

\begin{proof}
  The inclusion ${Y}(W)_\bullet \subset \overline{Y}(W)_\bullet$ is a
  levelwise weak homotopy equivalence, by precomposing an immersion
  $h : D^n \times D^n \looparrowright W$ which is an embedding of the
  core, and hence of a neighbourhood of the core, with an isotopy from
  the identity map of $D^n \times D^n$ to an embedding into a small
  neighbourhood of $(\partial D^n \times D^n) \cup (D^n \cup \{0\})$.
\end{proof}

By this lemma, in order to prove Theorem \ref{thm:ContractibilityK} it
is enough to show that
$$\hocolim_{g \to \infty} \vert \overline{Y}(K\vert_{[j,j+g]} \circ W, \chi, \hat{\ell}_t^\mathrm{std})_\bullet \vert$$
is weakly contractible. Since homotopy colimit commutes with geometric
realisation, we may equivalently prove weak contractibility of the
realisation of the semi-simplicial space
$$[p]
\longmapsto \hocolim_{g \to \infty} \overline{Y}(K\vert_{[j,j+g]}
\circ W, \chi, \hat{\ell}_t^\mathrm{std})_p.$$ Since each map
$(K\vert_{[i,i+1]} \circ -)_p$ forming the colimit diagram is the
inclusion of a subspace, we may replace the homotopy colimit by the
actual colimit.  We shall therefore prove weak contractibility of the
semi-simplicial space whose space of $p$-simplices is
\begin{equation*}
  \overline{Y}(K\vert_{[j,\infty)} \circ W)_p = \colim_{g \to \infty} \overline{Y}(K\vert_{[j,j+g]} \circ W, \chi, \hat{\ell}_t^\mathrm{std})_p.
\end{equation*}
(The proof would work for the homotopy colimit, but it is notationally
convenient to work with the actual colimit.)  We shall first prove
that the underlying semisimplicial \emph{set}
$\overline{Y}^\delta(K\vert_{[j,\infty)} \circ W)_\bullet$ has
contractible realisation, and then use the techniques of \cite{GR-W3}
to deduce weak contractibility in the topologised case.

\begin{lemma}\label{lem:K-hat}
  The realisation $|\widehat Y^\delta(W)_\bullet|$ is contractible.
\end{lemma}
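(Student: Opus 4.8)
The plan is to show that $|\widehat{Y}^\delta(W)_\bullet|$ is \emph{weakly} contractible; being the geometric realisation of a semi-simplicial set it is a CW complex, so this is equivalent to contractibility. The mechanism is the standard ``cone off every finite subcomplex'' argument, and what makes it available here is that $\widehat{Y}^\delta(W)_\bullet$ imposes no condition on the complement of the cores --- the analogue of condition~(\ref{it:3}) is absent --- so extra discs can be adjoined with great freedom.

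First I would note that $\widehat{Y}^\delta(W)_\bullet$ is non-empty: in the situations at hand $\chi$ is the restriction of the attaching map of a handle whose core disc provides a $0$-simplex (in general a formal-immersion argument suffices). Then I would reduce to finite subcomplexes: any map of a sphere into $|\widehat{Y}^\delta(W)_\bullet|$ factors through the realisation of a finite semi-simplicial subset $Z_\bullet$, so it is enough to exhibit, for each finite $Z_\bullet \subseteq \widehat{Y}^\delta(W)_\bullet$, a finite $Z'_\bullet$ with $Z_\bullet \subseteq Z'_\bullet \subseteq \widehat{Y}^\delta(W)_\bullet$ and $|Z'_\bullet|$ contractible --- whence $|Z_\bullet| \hookrightarrow |\widehat{Y}^\delta(W)_\bullet|$ is null-homotopic. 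To build $Z'_\bullet$, let $\{(t_i,c_i,\hat{L}_i)\}_{i\in I}$ be the finitely many vertices occurring in $Z_\bullet$, and construct a single new vertex $v=(t',c',\hat{L}')$ with $t'$ strictly larger than every $t_i$ and with core $c'(D^n\times\{0\})$ in general position with respect to all of the cores $c_i(D^n\times\{0\})$. Appending $v$ in the last position to a simplex $\sigma$ of $Z_\bullet$ yields a simplex of $\widehat{Y}^\delta(W)_\bullet$, since the only conditions on a simplex are that its entries be vertices, that the $t$-coordinates strictly increase, and that the cores be in general position. Taking $Z'_\bullet$ to be generated by $Z_\bullet$ together with these new simplices, $v$ is a cone vertex (its link is exactly $Z_\bullet$), so $|Z'_\bullet|$ is the topological cone on $|Z_\bullet|$, hence contractible; and $Z'_\bullet$ is finite.

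The work, and the hard part, is the construction of $v$. Having fixed $t'$ larger than all $t_i$, the collar condition prescribes the germ of $c'$ near $\partial D^n$; I would realise this germ by an immersion of $D^n\times D^n$ obtained, say, by sliding the handle of an existing vertex out along $\chi$ (an ambient isotopy of $W$ translating in the $(1,\infty)$-factor of the domain of $\chi$, extended by the isotopy extension theorem and supported away from $([0,s]\times L)\cup(\{s\}\times K\vert_j)$), which shows in particular that immersions with this germ exist. The delicate point is condition~(\ref{it:4}): the induced $\theta$-structure need not be homotopic rel $\partial D^n\times D^n$ to the prescribed $\hat{\ell}_{t'}^{\mathrm{std}}$. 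Here one exploits that the handle is only \emph{immersed} and that nothing is required of the complement of its core: by the Smale--Hirsch immersion theorem, in its relative form fixing the collar germ (we immerse the compact $2n$-manifold $D^n\times D^n$, which has non-empty boundary, into the $2n$-manifold $W$), the space of such immersions is weakly equivalent via the derivative to a space of bundle monomorphisms $T(D^n\times D^n)\to TW$; composing with the bundle isomorphism $\hat{\ell}_W:TW \xrightarrow{\sim}\theta^*\gamma_{2n}$ and using that $\ell_W$ is $n$-connected (so the underlying map of any $\theta$-structure on $D^n\times D^n$ rel the collar lifts to $W$), this is identified up to homotopy with the space of $\theta$-structures on $D^n\times D^n$ extending the collar $\theta$-structure. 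As $\hat{\ell}_{t'}^{\mathrm{std}}$ is, up to homotopy rel $\partial D^n\times D^n$, such a $\theta$-structure, we may choose the immersion $c'$ so that $(c')^*\hat{\ell}_W$ is homotopic rel $\partial D^n\times D^n$ to $\hat{\ell}_{t'}^{\mathrm{std}}$, the homotopy furnishing $\hat{L}'$. Finally a $C^\infty$-small perturbation of $c'$, fixed near $\partial D^n$, makes the core self-transverse without triple points, in general position with respect to the $c_i(D^n\times\{0\})$, and with $(c')^{-1}(P)=\partial D^n\times D^n$ and image disjoint from $([0,s]\times L)\cup(\{s\}\times K\vert_j)$ --- all generic conditions since $\dim W = 2n = n+n$ --- while changing neither the collar germ nor the homotopy class of $(c')^*\hat{\ell}_W$, so~(\ref{it:4}) persists. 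The main obstacle is thus this $\theta$-structural step, and the point to keep in mind is that it goes through only because $\widehat{Y}^\delta$ is the flexible model with no constraint on the complement of the cores; the more rigid complexes $\overline{Y}^\delta$, $\overline{Y}$ and $Y$ are then compared to it (and the condition on the complement re-introduced) afterwards.
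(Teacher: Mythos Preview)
Your proof is correct and at the strategic level matches the paper: show $\widehat{Y}^\delta(W)_0 \neq \emptyset$ via Smale--Hirsch together with the $n$-connectivity of $\ell_W$, then for any finite set of vertices produce a single new vertex spanning a simplex with each of them, so every map of a sphere lands in a cone. The Smale--Hirsch/lifting argument you give for producing a $0$-simplex with prescribed $\theta$-structure is exactly the paper's non-emptiness argument.

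Where you diverge is in the execution of the second step. You build the cone vertex $v$ from scratch, fixing $t'$ strictly larger than all existing $t_i$ and then re-running the full Smale--Hirsch and lifting argument to arrange condition~(\ref{it:4}); this is essentially the non-emptiness argument a second time. The paper instead takes one of the \emph{existing} vertices $(t_i,c_i,\hat{L}_i)$ and perturbs it by an arbitrarily small amount in the ambient space $\bR \times \mathrm{Imm}(D^n\times D^n, W) \times \Bun^\theta(T(D^n\times D^n))^I$. By Thom transversality a generic small perturbation makes the core transverse to all the other cores and removes triple intersections, and because being a $0$-simplex is an open condition the perturbed triple is automatically still a vertex---in particular condition~(\ref{it:4}) comes for free rather than requiring any further argument. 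The paper's route is thus shorter: one line of transversality in place of a second pass through immersion theory. The only cost is that the new $t$ need not be extremal, so instead of ``append $v$ in the last slot'' one says ``$f(\partial I^k)$ lies in the star of $v$''; this is of course the same conclusion.
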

\begin{proof}
  We first show that $\widehat Y^\delta(W)_0 \neq \emptyset$. Pick
  some $t \geq 2$ and consider the commutative square
  \begin{equation*}
    \xymatrix{
      \partial D^n \times D^n \ar@{^(->}[d]  \ar[rr]^-{\chi(-, -+t\cdot e_1)}& & P \ar@{^(->}[r] & W \ar[d]^-{\ell_W}\\
      D^n \times D^n \ar[rrr]^-{\ell^\mathrm{std}_t} \ar@{-->}[rrru]^-{g} & & & B.
    }
  \end{equation*}
  As $\ell_W$ is $n$-connected, and the pair
  $(D^n \times D^n, \partial D^n \times D^n)$ is homotopy equivalent
  to $(D^n, \partial D^n)$, there exists a dashed diagonal map $g$
  making the top triangle commute and the bottom triangle commute up
  to homotopy. As $\ell_W$ and $\ell_t^\mathrm{std}$ are covered by
  bundle maps $\hat{\ell}_W$ and $\hat{\ell}_t^\mathrm{std}$, this
  provides a bundle map $\hat{g} : T(D^n \times D^n) \to TW$ such that
  $\hat{\ell}_W \circ \hat{g}$ is homotopic to
  $\hat{\ell}^\mathrm{std}_t$ through bundle maps, via a homotopy
  which is constant over $\partial D^n \times D^n$. By Smale--Hirsch
  theory, the pair $(g, \hat{g})$ may be homotoped relative to
  $\partial D^n \times D^n$ to a pair of the form $(c, Dc)$, for $c$
  an immersion and $Dc$ its differential. Then
  $c^* \hat{\ell}_W = \hat{\ell}_W \circ Dc$ is still homotopic to
  $\hat{\ell}_t^\mathrm{std}$, and if we choose such a homotopy,
  $\hat{L}$, then we have constructed an element
  $(t,c,\hat{L}) \in \widehat Y^\delta(W)_0$.

  To prove contractibility assuming non-emptiness, we must prove that
  for each $k \geq 1$, any map
  $f: \partial I^k \to |\widehat Y^\delta(W)_\bullet|$ extends to a
  map from $I^k$.  By the simplicial approximation theorem, we can
  assume that $f$ is simplicial with respect to some PL triangulation
  of $\partial I^k$.  For each vertex $v_i \in \partial I^k$ in the
  triangulation, there is then given an element
  $f(v_i) = (t_i,c_i, \hat{L}_i) \in \widehat Y^\delta(W)_0$.  By a
  suitable application of Thom's transversality theorem, we may let
  $(t,c, \hat{L}) \in \widehat Y^\delta(W)_0$ be a slight perturbation
  of one of the $(t_i,c_i, \hat{L}_i)$ such that $t \neq t_j$ and
  $c(D^n \times \{0\}) \pitchfork c_j(D^n \times \{0\})$ for all $j$.
  A further perturbation will remove triple intersections between the
  cores.  Then $f(\partial I^k)$ is contained in the star of
  $(t,c, \hat{L})$, and therefore $f$ extends to the cone
  $C(\partial I^k) \cong I^k$.
\end{proof}

The (rather lengthy) proof of the following proposition makes
essential use of the passage to the limit $g \to \infty$.
\begin{proposition}\label{prop:K-hat-to-K}
  The natural maps
  $\overline{Y}{}^\delta(K\vert_{[j,\infty)} \circ W)_\bullet \to
  \widehat Y^\delta(K\vert_{[j,\infty)} \circ W)_\bullet$ induce a
  weak equivalence on geometric realisation.
\end{proposition}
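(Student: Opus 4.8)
The plan is to prove Proposition~\ref{prop:K-hat-to-K} by comparing the two semi-simplicial sets ``one intersection at a time'', exactly as in the standard argument for showing that arc complexes with self-intersecting arcs are equivalent to arc complexes with embedded arcs. First I would set up the bi-semi-simplicial (or rather semi-simplicial-object-in-semi-simplicial-sets) machinery: introduce an auxiliary semi-simplicial resolution $Z_{\bullet,\bullet}$ where a $(p,q)$-simplex consists of a $p$-simplex of $\widehat{Y}^\delta(K\vert_{[j,\infty)} \circ W)_\bullet$ together with $q+1$ ordered embedded cores (i.e.\ data as in $\overline{Y}^\delta$) which are each disjoint from one another and in general position with respect to all the immersed cores of the $p$-simplex. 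Augmenting in the $q$-direction recovers $\widehat{Y}^\delta$, and augmenting in the $p$-direction (by forgetting the immersed data) recovers $\overline{Y}^\delta$; so it suffices to show both augmentations are equivalences on realisation.

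For the augmentation $|Z_{p,\bullet}| \to \widehat{Y}^\delta_p$ (fixed $p$), the fibre over a $p$-simplex $x$ is the realisation of the semi-simplicial set of ordered families of disjoint embedded cores in general position relative to $x$; I would show this is contractible by the same ``microfibration / link is a cone'' argument used in Lemma~\ref{lem:K-hat}, i.e.\ by producing one such embedded core (using that $\ell_W$ is $n$-connected, Smale--Hirsch, and a small isotopy shrinking the immersion of Lemma~\ref{lem:shrinking} to an embedding near the core) and then showing any sphere of such data can be coned off after a transversality perturbation. This is where the passage to the colimit $g\to\infty$ enters and is essential: to embed a core disjointly from finitely many already-chosen cores and from the handles of a $\theta$-end we need ``room'', and after gluing on enough copies of $W_{1,1}$ (which carry standard $\theta$-structure, so the relevant hypothesis on $\hat{\ell}_t^{\mathrm{std}}$ is met) and using that $W\setminus(\text{finitely many cores})$ is still $n$-connected, Smale--Hirsch plus general-position arguments let us find and isotope such embeddings. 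For the other augmentation $|Z_{\bullet,q}| \to \overline{Y}^\delta_q$, the fibre over a $q$-simplex is the realisation of $\widehat{Y}^\delta$ of the manifold-minus-those-cores, which is contractible by Lemma~\ref{lem:K-hat} (applied to $K\vert_{[j,\infty)}\circ W$ with those embedded cores removed, which is still $n$-connected and still contains infinitely many standard $W_{1,1}$'s).

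Having both augmentations be equivalences, a bi-semi-simplicial realisation lemma (both ways of realising a semi-simplicial space of semi-simplicial spaces agree, and an augmentation which is a levelwise equivalence realises to one) gives the result. The main obstacle I expect is the fibre contractibility in the $q$-direction: one must carefully extend a map $\partial I^k \to |Z_{p,\bullet}|$ (fibre over $x$) to $I^k$, which requires producing an embedded core that is simultaneously (a) disjoint from the finitely many cores appearing in the image of $\partial I^k$, (b) in general position with respect to the $p$ immersed cores of $x$, (c) collared correctly near $\chi$, (d) carries a $\theta$-structure homotopic (rel boundary) to some $\hat{\ell}^{\mathrm{std}}_t$, and (e) still leaves $W$ (minus all cores) $n$-connected. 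Item (e) is the delicate one and is exactly what forces the colimit: each removed core can drop connectivity, so we must first stabilise by enough $W_{1,1}$'s to absorb this, using that an $n$-connected map stays $n$-connected after removing the core of a handle whose attaching sphere is nullhomotopic in the complement, together with the surgery-theoretic input from \cite{GR-W3} that was already invoked in Lemma~\ref{lem:qfib}. Once room has been made, Smale--Hirsch theory produces the immersion as in Lemma~\ref{lem:K-hat}, the Whitney trick (available since $2n \geq 4$ after stabilisation, or handled as in Lemma~\ref{lem:HandleStr} when $2n=4$) removes self-intersections of the new core, and a transversality perturbation puts it in general position with the old data, so the link of the new simplex is a cone and $f$ extends. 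The rest of the argument is bookkeeping with the bi-semi-simplicial structure and repeated application of Lemmas~\ref{lem:shrinking} and~\ref{lem:K-hat}.
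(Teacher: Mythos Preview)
Your bi-semi-simplicial reorganisation is formally sound, but it does not simplify the problem: the contractibility of the fibre of $|Z_{p,\bullet}| \to \widehat{Y}^\delta_p$ is exactly as hard as the original statement, since that fibre is essentially a version of $\overline{Y}^\delta_\bullet$ itself (with a mild transversality constraint). So the real content is still in producing embedded cores with $n$-connected complement and making them disjoint, and here your proposal has a genuine gap. The Whitney trick is the wrong tool: it requires $2n \geq 6$ (not $4$), it only cancels intersection points in algebraically cancelling \emph{pairs}, and the reference to Lemma~\ref{lem:HandleStr} for $2n=4$ is about handle structures on cobordisms, not about intersections of submanifolds. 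The paper instead uses a local move (Figure~\ref{fig:StableWhitneyTrick}) that removes a \emph{single} intersection point by piping one of the cores into a fresh copy of $W_{1,1}$ drawn from the $\theta$-end; this works in all dimensions (with minor reorganisation for $n=1$), and the fact that the $W_{1,1}$ carries \emph{standard} $\theta$-structure is precisely what ensures the homotopy $\hat{L}$ survives the move.

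Your treatment of condition (e) is also too loose. It is not true in general that removing a core whose attaching sphere is nullhomotopic preserves $n$-connectivity of $\ell_W$, and nothing in Lemma~\ref{lem:qfib} gives this. The paper handles this in a separate step (Step~3 and Claim~\ref{clm:Connectivity}): one first applies the $W_{1,1}$ move once to every core, which kills the kernel $K_\sigma$ of $\pi_{n-1}$ because the meridian of each modified core becomes nullhomotopic; one then makes finitely many chosen spheres $g_\alpha$ disjoint from the cores by the same move, which forces the image $I_\sigma$ of $\pi_n$ to be all of $\pi_n(B)$. Both steps rely on the $W_{1,1}$'s being standard so that the classes $\bar{e},\bar{f} \in \pi_n(W_{1,1})$ map to zero in $B$. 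Without identifying this move and this connectivity bookkeeping, your outline does not close.
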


\begin{proof}
  To ease notation throughout the proof, let us write
  $M=K\vert_{[j,\infty)} \circ W$.  Consider a map
  \begin{equation*}
    f: (I^k, \partial I^k) \lra (|\widehat Y^\delta(M)_\bullet|, |\overline{Y}{}^\delta(M)_\bullet|)
  \end{equation*}
  which we may assume simplicial with respect to some PL triangulation
  of $I^k$.  We shall explain how to homotope $f$ to a map with image
  in $|\overline{Y}{}^\delta(M)_\bullet|$.  If the image of some
  simplex $\sigma < I^k$ is a $p$-simplex
  $f(\sigma) = ((t_0, c_0, \hat{L}_0), \dots, (t_p,c_p, \hat{L}_p))$,
  then there can be three reasons why $f(\sigma)$ is not in this
  subcomplex: firstly, the cores $D_i = c_i(D^n \times \{0\})$ may not
  be embedded; secondly, they may not be pairwise disjoint; thirdly,
  the restriction
  $\ell\vert_{M \setminus \cup D_i} : M \setminus \cup D_i \to B$ may
  not be $n$-connected.  All three problems will be fixed using the
  infinite supply of embedded copies of
  $W_{1,1} = S^n \times S^n \setminus \Int(D^{2n})$ given by the
  $\theta$-end, using a well-known local move. Let us first explain
  it.

  Suppose that $h_0, h_1 : D^n \times D^n \looparrowright M$ are two
  immersed handles, such that each of the cores
  $h_i\vert_{D^n \times \{0\}}$ is self-transverse and has no triple
  points, and they also meet transversely. Around a point of
  intersection $x_0$ between the two cores, we may find a coordinate
  chart $\varphi : D^n \times D^n \hookrightarrow M$ inside which the
  cores are $\varphi(D^n \times \{0\})$ and
  $\varphi(\{0\} \times D^n)$ respectively. By scaling the handles in
  the meridian direction near the discs
  $h_0^{-1}\varphi(D^n \times \{0\})$ and
  $h_1^{-1}\varphi(\{0\} \times D^n)$, and making a change of
  coordinates, we may obtain new immersed handles $h_i'$ which agree
  with the old $h_i$ outside a small neighbourhood of
  $\varphi(D^n \times D^n)$, and which intersect
  $\varphi(D^n \times D^n)$ in $\varphi(D^n \times \tfrac{1}{2}D^n)$
  and $\varphi(\tfrac{1}{2}D^n \times D^n)$ respectively.  This
  preliminary move is illustrated in
  Figure~\ref{fig:ShrinkingHandles}.

  \begin{figure}[h]
    \begin{center}
      \includegraphics[bb=0 0 343 93]{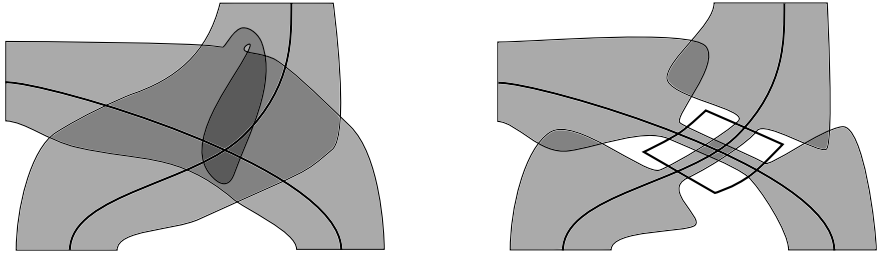}
      \caption{Shrinking and straightening handles near an intersection point so that they intersect a coordinate chart cleanly.}
      \label{fig:ShrinkingHandles}
    \end{center}
  \end{figure}
  
  We may then find an embedded path in $M$ from the boundary of
  $\varphi(D^n \times D^n)$ to an embedded copy of $W_{1,1}$ having
  standard $\theta$-structure, ensure that it is disjoint from the
  cores of all the $h_i'$, and thicken it up to obtain an embedding
  $\varphi' : (D^n \times D^n) \natural W_{1,1} \hookrightarrow M$, so
  that the handles intersect the image of $\varphi'$ in precisely
  $\varphi'(D^n \times \tfrac{1}{2}D^n)$ and
  $\varphi'(\tfrac{1}{2}D^n \times D^n)$.

  \begin{figure}[h]
    \begin{center}
      \includegraphics[bb=0 0 342 93]{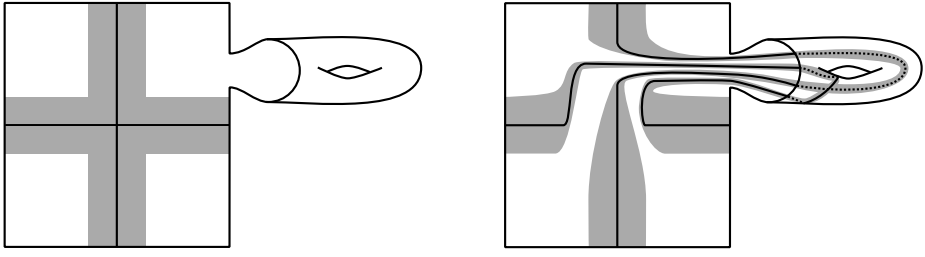}
      \caption{Extending the coordinate chart $\varphi$ to an embedding of $(D^n \times D^n) \natural W_{1,1}$, and using it to remove a point of intersection.}
      \label{fig:StableWhitneyTrick}
    \end{center}
  \end{figure}

  Now, as illustrated in Figure~\ref{fig:StableWhitneyTrick}, there
  are \emph{disjoint} embeddings
  \begin{align*}
    j_0: D^n \times \tfrac{1}{2}D^n \lra (D^n \times D^n) \natural W_{1,1},\\
    j_1: \tfrac{1}{2}D^n \times D^n \lra (D^n \times D^n) \natural W_{1,1},
  \end{align*}
  which near $\partial(D^n \times D^n)$ are given by
  $j_i(x,v) = (x, v)$. (These may be constructed as follows:
  $(D^n \times D^n) \natural W_{1,1}$ is diffeomorphic to
  $S^n \times S^n \setminus \mathrm{int}(D^n_- \times D^n_-)$, the
  manifold obtained by removing points having both coordinates in the
  lower hemisphere. This diffeomorphism can be taken to be the
  identity on that part of the boundary of
  $(D^n \times D^n) \natural W_{1,1}$ away from where the connect-sum
  was formed. Inside
  $S^n \times S^n \setminus \mathrm{int}(D^n_- \times D^n_-)$ the
  discs $D^n_+ \times \tfrac{1}{2}D^n_-$ and
  $\tfrac{1}{2}D^n_- \times D^n_+$ are disjoint, and satisfy the
  required boundary condition.) Replacing the map $h_0'$ on
  $(h'_0)^{-1}(\varphi(D^n \times D^n))$ by
  $$(h'_0)^{-1}(\varphi(D^n \times D^n)) \overset{\varphi^{-1} \circ h'_0}{\underset{\sim}{\lra}} D^n \times \tfrac{1}{2}D^n \overset{j_0} \lra (D^n \times D^n)\natural W_{1,1} \overset{\varphi'}\lra M,$$
  and the map $h_1'$ on $(h'_1)^{-1}(\varphi(D^n \times D^n))$ by
  $$(h'_1)^{-1}(\varphi(D^n \times D^n)) \overset{\varphi^{-1} \circ h'_1}{\underset{\sim}{\lra}} \tfrac{1}{2}D^n \times D^n \overset{j_1} \lra (D^n \times D^n)\natural W_{1,1} \overset{\varphi'}\lra M,$$
  we obtain new handles $h_i''$ which no longer intersect at $x_0$,
  and whose cores outside of
  $\varphi'(D^n \times D^n \natural W_{1,1})$ are unchanged.

  Finally, we claim that the $\theta$-structures $(h_0')^*\ell_M$ and
  $(h_0'')^*\ell_M$ on $D^n \times D^n$, which are already equal
  outside of $(h'_0)^{-1}(\varphi(D^n \times D^n))$, are homotopic
  relative to the complement of this subset.  This will make use of
  the embedded copy of $W_{1,1}$ having \emph{standard}
  $\theta$-structure.  If we write $\ell = (\varphi')^*\ell_M$ it
  suffices to see that the two embeddings
  $$D^n \times \tfrac{1}{2}D^n \lra (D^n \times D^n)\natural W_{1,1},$$
  given by the standard embedding and by $j_0$, pull back $\ell$ to
  $\theta$-structures which are homotopic relative to
  $\partial D^n \times \tfrac{1}{2} D^n$. The two embeddings differ by
  forming the ambient connected-sum (inside
  $\varphi'(D^n \times D^n \natural W_{1,1})$) with the sphere
  $$S^n \times\{0\} \subset S^n \times D^n_+ \subset S^n \times S^n \setminus \mathrm{int}(D^n_- \times D^n_-) \approx W_{1,1}.$$
  As the $\theta$-structure on this sphere is standard, it extends
  over the contractible space
  $$S^n \times D^n_+ \approx (D^{n+1} \setminus \tfrac{1}{2} \mathring{D}^{n+1}) \times D^{n-1} \subset D^{n+1} \times D^{n-1},$$
  and so the $\theta$-structure obtained after forming the ambient
  connected-sum is homotopic to the original one, as required. The
  analogous claim holds for $(h_1')^*\ell_M$ and $(h_1'')^*\ell_M$.

  Note that as the argument above took place locally near the
  intersection point, it works equally well for a transverse
  self-intersection of a core of an immersed handle.

  We now explain how to implement these moves. We first explain the
  argument in the case $n>1$, where the cores (having codimension at
  least 2) cannot separate. In the case $n=1$ the argument must be
  reorganised slightly, and we shall explain that at the end.
  
  \vspace{1ex}

  \noindent \textbf{Step 1}. We first explain how the map $f$ may be
  changed by a homotopy so that for each vertex $v \in I^k$ with
  $f(v)=(t,c, \hat{L})$, the manifold $D = c(D^n \times \{0\})$ is
  embedded. If $v \in \Int(I^k)$ with $f(v)=(t,c, \hat{L})$ is a
  vertex which does not satisfy this condition, then the map
  $c\vert_{D^n \times \{0\}}$ is an immersion with a finite number of
  transverse self-intersections.

  Firstly, let $c'$ be obtained by perturbing $c$ a small amount
  $\epsilon$ in the normal direction $e_1 \in \bR^n$. Precisely, we
  have a field of vectors tangent to $M$ defined on
  $\chi(\partial D^n \times (1,\infty) \times D^{n-1})$ given by the
  unit vector in the $(1,\infty)$ direction. We may extend this to a
  compactly supported smooth vector field on $M$ which is nowhere
  tangent to $c(D^n \times\{0\})$, and (choosing a Riemannian metric)
  obtain a 1-parameter family $\varphi_t$ of compactly-supported
  diffeomorphisms of $M$. Then $c'$ is the map obtained by applying
  $\varphi_\epsilon$ to $c$ for some small $\epsilon$; if $\epsilon$
  is small enough, the core of the immersion $c'$ has as many points
  of self-intersection as that of $c$, and is transverse to the core
  of $c$.

  Secondly, let $c''$ be obtained from $c'$ by applying the move using
  $W_{1,1}$ described above to remove a point of self-intersection. We
  make sure that the copy of $W_{1,1}$, and the path used to form the
  connect-sum, are disjoint from the core $c'(D^n \times \{0\})$ so
  that no new self-intersections are created when performing this
  move.  We may also ensure that no non-transverse intersections with
  other cores are created.

  To obtain a zero-simplex $(t + \epsilon, c'', \hat{L}'')$, it
  remains to find the path $\hat{L}''$.  The $\theta$-structures
  $(c')^*\hat{\ell}_{M}$, $(c'')^*\hat{\ell}_{M}$, and
  $\hat{\ell}^\mathrm{std}_{t+\epsilon}$ are all equal on
  $\partial D^n \times D^n$.  The isotopy $\varphi_s$,
  $s \in [0,\epsilon]$, gives paths
  $(c')^*\hat{\ell}_{M} \leadsto c^*\hat{\ell}_{M}$ and
  $\hat{\ell}^\mathrm{std}_{t+\epsilon} \leadsto
  \hat{\ell}^\mathrm{std}_{t}$ which become equal when restricted to
  $\partial D^n \times D^n$. Conjugating the path
  $\hat{L} : c^*\hat{\ell}_{M} \leadsto \hat{\ell}^\mathrm{std}_{t}$
  by these determines a path
  $(c')^*\hat{\ell}_{M} \leadsto \hat{\ell}^\mathrm{std}_{t+\epsilon}$
  which restricts to a nullhomotopic loop over
  $\partial D^n \times D^n$.  We may then use homotopy extension to
  find a path
  $\hat{L}' : (c')^*\hat{\ell}_{M} \leadsto
  \hat{\ell}^\mathrm{std}_{t+\epsilon}$ which is constant over
  $\partial D^n \times D^n$. Finally, by our discussion of the move
  above, $(c')^*\hat{\ell}_{M}$ and $(c'')^*\hat{\ell}_{M}$ are
  homotopic relative to $\partial D^n \times D^n$, so we may find a
  path
  $\hat{L}'' : (c'')^*\hat{\ell}_{M} \leadsto
  \hat{\ell}^\mathrm{std}_{t+\epsilon}$ which is constant over
  $\partial D^n \times D^n$.

  By construction the vertices $(t,c, \hat{L})$ and
  $(t+\epsilon,c'', \hat{L}'')$ span a 1-simplex, and if
  $f(w)=(t_i, c_i, \hat{L}_i)$ for $w$ a vertex adjacent to $v$, then
  $(t+\epsilon,c'', \hat{L}'')$ and $(t_i, c_i, \hat{L}_i)$ are
  adjacent too (when $\epsilon$ is small enough, the relative order of
  $t+\epsilon$ and $t_i$ is the same as that of $t$ and $t_i$, and if
  $c$ and $c_i$ are in general position so are $c''$ and $c_i$). Hence
  we may define a simplicial map
  \begin{equation*}
    F: [0,1] \times I^k \lra |\widehat Y^\delta(M)_\bullet|
  \end{equation*}
  by $F(0,v)=f(v) = (t,c, \hat{L})$,
  $F(1,v)=(t+\epsilon, c'', \hat{L}'')$, and $F(-, w)=f(w)$ for
  $w \neq v$. This is a homotopy which is constant on $\partial I^k$,
  and $F(1,-)$ sends $v$ to an immersion whose core has one fewer
  self-intersection. After finitely many applications of this
  technique, we may change $f$ by a homotopy so that each disc is
  embedded.

  \vspace{1ex}
  
  \noindent \textbf{Step 2}. We now explain how to modify $f$ so that
  it sends adjacent vertices in $I^k$ to embeddings with disjoint
  cores. For each pair of adjacent vertices $(v,u)$ in $I^k$ mapping
  to a 1-simplex $((t_0, c_0, \hat{L}_0),(t_1, c_1, \hat{L}_1))$ for
  which $D_0 = c_0(D^n \times \{0\})$ intersects
  $D_1 = c_1(D^n \times \{0\})$, we create embeddings $c_0''$ and
  $c_1''$ by the process described in Step 1: we perturb each $c_i$ a
  little in the $e_1$-direction, and then use the move to eliminate a
  point of intersection (using a path from the intersection point to a
  $W_{1,1}$ which is disjoint from the $D_i$ and from any adjacent
  cores). We obtain $(t_0+\epsilon, c_0'', \hat{L}''_0)$ disjoint from
  $(t_0, c_0, \hat{L}_0)$ and $(t_1+\epsilon, c_1'', \hat{L}_1'')$
  disjoint from $(t_1, c_1, \hat{L}_1)$, with $c_0''$ and $c_1''$
  having one fewer point of intersection than $c_0$ and $c_1$
  did. Hence we may define a simplical map
  \begin{equation*}
    F: [0,1] \times I^k \lra |\widehat Y^\delta(M)_\bullet|
  \end{equation*}
  by $F(0,u)=f(u) = (t_0,c_0, \hat{L}_0)$,
  $F(1,u)=(t_0+\epsilon, c''_0, \hat{L}''_0)$, similarly for $v$, and
  $F(-, w)=f(w)$ for $w \neq u$ or $v$. Since $c_i''$ is disjoint from
  $c_i$ this gives a homotopy of pairs
  $(I^k, \partial I^k) \lra (|\widehat Y^\delta(M)_\bullet|,
  |\overline{Y}{}^\delta(M)_\bullet|)$ after which the total number of
  intersection points has been reduced by one. After finitely many
  applications of this technique, we obtain a map $f$ sending adjacent
  vertices in $I^k$ to embeddings with disjoint cores.

  \vspace{1ex}

  \noindent \textbf{Step 3}. Finally, we fix simplices $\sigma < I^k$
  with
  $f(\sigma) = ((t_0,c_0, \hat{L}_0), \dots, (t_p,c_p, \hat{L}_p))$
  for which the restriction
  $\ell_{M \setminus \cup D_i}: M \setminus \cup D_i \to B$ is not
  $n$-connected.  The discs $D_i$ that we have cut out have
  codimension $n$, so the inclusion
  $M \setminus \cup D_i \hookrightarrow M$ is $(n-1)$-connected. Thus
  $\ell_{M \setminus \cup D_i}$ can fail to be $n$-connected either
  because it is not surjective on $\pi_n$ or because it is not
  injective on $\pi_{n-1}$.  Thus we set
  \begin{align}\label{eq:25}
    \begin{aligned}
      K_\sigma &= \mathrm{Ker}(\pi_{n-1}(M \setminus \cup D_i) \to \pi_{n-1}(B))\\
      I_\sigma &= \mathrm{Im}(\pi_n(M \setminus \cup D_i) \to \pi_n(B))
    \end{aligned}
  \end{align}
  and aim to kill the groups $K_\sigma$ and $\pi_n(B)/I_\sigma$.  As
  the map $\ell_M : M \to B$ is $n$-connected, it follows from the
  long exact sequence of a triple that the map
  $$\pi_n(M, M \setminus \cup D_i) \lra \pi_n(B, M \setminus \cup D_i)$$
  is surjective. If $n>2$, so $M$, $M \setminus \cup D_i$, and $B$
  have a common fundamental group, $\pi$, then by the Hurewicz theorem
  $\pi_n(M, M \setminus \cup D_i)$ is generated as a
  $\mathbb{Z}[\pi]$-module by the $(p+1)$ meridian spheres of the
  handles which have been cut out, so it follows that
  $\pi_n(B, M \setminus \cup D_i)$ is a finitely generated
  $\mathbb{Z}[\pi]$-module, and hence that $K_\sigma$ is also a
  finitely generated $\mathbb{Z}[\pi]$-module; in fact, it is
  generated by the meridian spheres of the handles that have been
  removed. If $n=2$ then the Seifert--van Kampen theorem shows that
  analogous claim is true: $K_\sigma$ is normally generated by the
  meridian circles of the handles that have been removed. Furthermore,
  \emph{if} $K_\sigma$ is trivial, then (if $n=2$ the map
  $\pi_1(M \setminus \cup D_i) \to \pi_1(M) = \pi$ is an isomorphism
  and) there is an exact sequence
  $$\pi_n(M \setminus \cup D_i) \lra \pi_n(B) \lra \pi_n(B, M \setminus \cup D_i) \lra 0$$
  with rightmost term a finitely generated
  $\mathbb{Z}[\pi]$-module. Thus $\pi_n(B)$ is generated as a
  $\mathbb{Z}[\pi]$-module by $I_\sigma \subset \pi_n(B)$ along with
  finitely many elements (this makes sense, and holds, for
  $n \geq 2$).  We will explain how to kill these finitely many extra
  elements.

  Let us describe a general construction, and some of its
  properties. Let $\{v\} < I^k$ be an interior vertex with
  $f(v) = (t,c,\hat{L})$. Suppose that we modify $c$ by choosing a
  standard $W_{1,1}$, and a path from a point in
  $D = c(D^n \times \{0\})$ to the $W_{1,1}$, both disjoint from all
  the cores of all vertices in $\Lk(t, c, \hat{L})$ (which is possible
  as $n \geq 2$), and then form a new embedding $c'$ by perturbing $c$
  a small amount $\epsilon$ in the $e_1$-direction, and then forming
  the connect-sum of its core with the core of
  $\bar{e}(S^n \times D^n) \subset W_{1,1}$ along the path. The
  $\theta$-structure $(c')^*\hat{\ell}_M$ is homotopic to
  $c^*\hat{\ell}_M$ extending the standard homotopy on the boundary,
  because the $\theta$-structure on
  $\bar{e}(S^n \times D^n) \subset W_{1,1}$ is standard, and hence by
  the same argument as in Step 1 there is a path
  $\hat{L}' : (c')^*\hat{\ell}_M \leadsto
  \hat{\ell}^\mathrm{std}_{t+\epsilon}$.  The following claim
  describes how to use this construction to modify
  $f: I^k \to |\hat{Y}^\delta(M)_\bullet|$, and the effect of the
  modification on the groups $I_\sigma$ and $K_\sigma$ defined
  in~\eqref{eq:25} above.

  \begin{claim}\label{clm:Connectivity}
    In the situation described above, let
    $f': I^k \to |\hat{Y}^\delta(M)_\bullet|$ be the simplicial map
    obtained from $f$ by changing its value at the single interior
    vertex $v \in I^k$: $f'(w) = f(w)$ for vertices $w \neq v$, but
    $f'(v) = (t + \epsilon, c',\hat{L}')$.  Then the maps $f$ and $f'$
    are homotopic relative to $\partial I^k$, and furthermore
    \begin{enumerate}[(i)]
    \item\label{it:Connectivity:2} For any simplex
      $\sigma = (v, w_1, \ldots, w_p)$, if
      $\sigma' = (v', w_1, \ldots, w_p)$ then there is a surjection
      $K_\sigma \to K_{\sigma'}$, sending the class of the meridian of
      the core of $w_i$ to the class of the meridian of the core of
      $w_i$, and whose kernel contains the class of the meridian of
      the core of $v$.      
    \item\label{it:Connectivity:3} For any simplex
      $\sigma = (v, w_1, \ldots, w_p)$, if
      $\sigma' = (v', w_1, \ldots, w_p)$, then
      $I_\sigma \subset I_{\sigma'}$.
    \end{enumerate}
  \end{claim}
  \begin{proof}
    The statement about the relative homotopy is immediate, using a
    homotopy defined as in the previous steps.

    For (\ref{it:Connectivity:2}), consider the region $X$ formed by
    the union of $c(D^n \times D^n)$, the standard $W_{1,1}$, and a
    thickening of the chosen path between them, inside which the
    (perturbed) core $D = c(D^n \times \{0\})$ is connect-summed with
    $\bar{e}(S^n \times \{0\}) \subset W_{1,1}$ to form the modified
    core $D' = c'(D^n \times \{0\})$. If
    $\sigma = (v, w_1, \ldots, w_p)$ is a simplex then all the cores
    $D_i = w_i(D^n \times \{0\})$ are disjoint from $X$. Thus there
    are maps
    $$K_{\sigma'} \overset{l} \longleftarrow\mathrm{Ker}(\pi_{n-1}(M \setminus (X \cup_i D_i)) \to \pi_{n-1}(B)) \overset{r}\lra K_\sigma,$$
    which are both surjective by transversality, as $X$ and the $D_i$
    have $n$-dimensional cores. We claim that $r$ is also injective.
    If $g : S^{n-1} \to M \setminus (X \cup_i D_i)$ becomes trivial in
    $K_\sigma$, then there is a nullhomotopy
    $\bar{g} : D^n \to M \setminus (D \cup_i D_i)$. After possibly
    perturbing it, if this nullhomotopy intersects the core of $X$
    then it must do so by intersecting $W_{1,1} \subset X$. But
    $\partial W_{1,1} \cong S^{2n-1}$ is $n$-connected, so $\bar{g}$
    can be rechosen to miss $W_{1,1}$, and hence to lie in
    $M \setminus (X \cup_i D_i)$. Thus $r$ is an isomorphism. On the
    other hand, under $l \circ r^{-1}$ the meridian of $D$ in
    $K_\sigma$ maps to the meridian of $D'$ in $K_{\sigma'}$, which is
    nullhomotopic as the meridian of
    $\bar{e}(S^n \times \{0\}) \subset W_{1,1}$ is.

    For (\ref{it:Connectivity:3}), let
    $\psi : S^n \to M \setminus (D \cup_i D_i)$. After perturbing it,
    if this map intersects the core of $X$ then it must do so by
    intersecting $W_{1,1} \subset X$. But if $\psi'$ is obtained as
    above by rechoosing the part of the image of $\psi$ in $W_{1,1}$,
    the homotopy class of $\psi'$ in $M$ is obtained from that of
    $\psi$ by the addition of a class in the $\bZ[\pi]$-submodule
    generated by $\bar{e}, \bar{f} \in \pi_n(W_{1,1})$. As the
    $\theta$-structure on $W_{1,1}$ is standard,
    $\ell_M \circ \bar{e}$ and $\ell_M \circ \bar{f}$ are
    nullhomotopic, and so
    $[\ell_M \circ \psi] = [\ell_M \circ \psi'] \in \pi_n(B)$. Thus
    $I_{\sigma'} \supset I_\sigma$.
  \end{proof}

  Let us now explain how to use the Claim above to inductively fix
  simplices $\sigma < I^k$ with
  $f(\sigma) = ((t_0,c_0, \hat{L}_0), \dots, (t_p,c_p, \hat{L}_p))$
  for which the restriction
  $\ell_{M \setminus \cup D_i}: M \setminus \cup D_i \to B$ is not
  $n$-connected. Firstly, apply the construction described above once
  for each interior vertex $v \in I^k$. This changes the map $f$ by a
  homotopy, and for every simplex $\sigma$ we now have that $K_\sigma$
  is generated by the meridians of the cores of its vertices, but also
  by Claim \ref{clm:Connectivity} (\ref{it:Connectivity:2}) that the
  classes of all these meridians are trivial. Thus $K_\sigma=0$ for
  every simplex $\sigma < I^k$.

  It remains to explain how to achieve $I_\sigma = \pi_n(B)$ for all
  simplices $\sigma < I^k$.  By the discussion before Claim
  \ref{clm:Connectivity}, when $K_\sigma=0$ it follows that the
  fundamental group of $M \setminus \cup D_i$ is also $\pi$ and that
  $\pi_n(B)$ is generated as a $\mathbb{Z}[\pi]$-module by $I_\sigma$
  along with finitely-many additional elements. We may thus choose
  finitely-many maps $\{g_\alpha : S^n \to M\}_{\alpha \in J}$ such
  that $[\ell_M \circ g_\alpha] \in \pi_n(B)$ are the additional
  generators, and we may suppose that the $g_\alpha$ are transverse to
  the discs $D_i$.  Using the same move to eliminate intersection
  points as in all the previous steps, with embedded paths and copies
  of $W_{1,1}$ chosen disjoint from all the $c_i$, the $g_\alpha$, and
  any cores in the link of any vertex of $\sigma$, we iteratedly
  remove points of intersection between the $c_i$ and the $g_\alpha$
  until they are disjoint.  As in Steps 1 and 2, this changes the map
  $f$ by a simplicial homotopy. The effect of this move on each $c_i$
  is to connect-sum its core with the core of
  $\bar{e}(S^n \times D^n) \subset W_{1,1}$, and so is described by
  Claim \ref{clm:Connectivity}. In particular, we must still have
  $K_\tau=0$ for every simplex $\tau < I^k$ by Claim
  \ref{clm:Connectivity} (\ref{it:Connectivity:2}), and must have
  $I_\sigma \subset I_{\sigma'}$ by Claim \ref{clm:Connectivity}
  (\ref{it:Connectivity:3}). But in addition, the map obtained from
  $g_\alpha$ is now disjoint from the cores of the vertices of
  $\sigma$, so gives a class
  $[g'_\alpha] \in \pi_n(M \setminus \cup_i D_i')$. Furthermore,
  although $[g_\alpha] \neq [g'_\alpha] \in \pi_n(M)$, we have
  $[\ell_M \circ g_\alpha] = [\ell_M \circ g'_\alpha]$, because the
  spheres with which we took the connected sum of $g_\alpha$ were
  inside copies of $W_{1,1}$ with standard $\theta$-structure, and so
  had nullhomotopic maps to $B$. Thus the classes
  $[\ell_M \circ g_\alpha] \in \pi_n(B)$ also lie in $I_{\sigma'}$,
  but this means that $I_{\sigma'} = \pi_n(B)$. We have therefore
  arranged that
  $\ell_{M \setminus \cup D_i}: M \setminus \cup D_i \to B$ is
  $n$-connected. By a further application of Claim
  \ref{clm:Connectivity} (\ref{it:Connectivity:2}) and
  (\ref{it:Connectivity:3}), this cannot be undone as we go on to fix
  other simplices.

  \vspace{1ex}

  \noindent \textbf{The case $n=1$}. In this case we use the above
  techniques, but with some reorganisation. We sketch the necessary
  changes.

  We first show that $\overline{Y}{}^\delta(M)_0$ is non-empty. To do
  this, choose a vertex in the non-empty set
  $\widehat{Y}{}^\delta(M)_0$, represented by a tuple $(t,c, \hat{L})$
  with $c\vert_{D^1 \times \{0\}}$ a self-transverse immersed arc
  having no triple points. This immersed arc may separate $M$, but its
  complement will always have a (unique) non-compact path component,
  which we shall call the ``non-compact region''. Step 1 can now be
  performed as follows: choose a self-intersection point which is in
  the closure of the non-compact region, and choose a path in the
  non-compact region to a copy of $W_{1,1}$, then use the move
  described above to remove this intersection point. Continuing in
  this way, we may remove all self-intersection points, and hence
  suppose that $c\vert_{D^1 \times \{0\}}$ is an embedding. We then
  perform the move in Step 3, forming the connect-sum of
  $D= c(D^1 \times \{0\})$ with a copy of
  $S^1 \times \{x\} \subset (S^1 \times S^1) \setminus \Int(D^{2}) =
  W_{1,1}$ along a path in the non-compact region: this ensures that
  the arc $D$ is non-separating (which in this dimension is the
  analogue of $K_\sigma$ being trivial). We then proceed as in Step 3,
  to ensure that $\ell_M\vert_{M \setminus D} : M \setminus D \to B$
  is 1-connected (as $D$ is non-separating, we may choose all the
  necessary paths to be disjoint from it).  We have produced the data
  of a vertex of $\overline{Y}{}^\delta(M)$.

  We now show that $\vert \overline{Y}{}^\delta(M)_\bullet \vert$ is
  contractible. Let
  $f : S^{k-1} \to \vert \overline{Y}{}^\delta(M)_\bullet \vert$ be a
  map, which we may assume is simplicial with respect to some PL
  triangulation $\vert K \vert \approx S^{k-1}$, and let
  $(t_0, c_0, \hat{L}_0) \in \overline{Y}{}^\delta(M)_0$ be a vertex
  (for example, one constructed as above). As $K$ has finitely many
  vertices, we may perturb the data $(t_0, c_0, \hat{L}_0)$ so that
  its core $D = c_0(D^1 \times\{0\})$ is transverse to the core
  $D_v = c_v(D^1 \times \{0\})$ of $f(v) = (t_v, c_v, \hat{L}_v)$ for
  every $v \in K$. Furthermore, by stretching we may ensure that $D$
  is not disjoint from the non-compact region of
  $M \setminus \cup_{v \in K} D_v$. Now we may proceed as in Step 2,
  using paths in the non-compact region of
  $(M \setminus \cup_{v \in K} D_v) \setminus D$ to remove
  intersections between $D$ and the $D_v$. This can be done so that it
  changes $f$ by a homotopy, by ensuring that when forming a parallel
  copy of $D_v$ it remains disjoint from any $D_{v'}$ that it was
  already disjoint from. This move also changes
  $(t_0, c_0, \hat{L}_0)$ and hence $D$, but does not change the fact
  that it intersects the boundary of the non-compact region of
  $(M \setminus \cup_{v \in K} D_v) \setminus D$. After finitely-many
  applications we may therefore suppose that all $D_v$ are disjoint
  from $D$.

  Finally, we choose a $W_{1,1}$ and a path from $D$ to it in the
  non-compact region, and change $c_0$ by forming the connect-sum of
  its core with that of $\bar{e}(S^1 \times D^1) \subset W_{1,1}$. For
  any simplex $\sigma < S^{k-1}$ the map
  $M \setminus \cup_{v \in \sigma} D_v \to B$ is 1-connected. The arc
  $D$ is non-separating in $M \setminus \cup_{v \in \sigma} D_v$, as
  $\bar{e}(S^1 \times \{0\})$ is non-separating in $W_{1,1}$, and
  furthermore $D$ has a dual circle $C$ in $W_{1,1}$. It follows that
  $(M \setminus \cup_{v \in K} D_v) \setminus D$ is path connected,
  and that $\pi_1(M \setminus \cup_{v \in K} D_v)$ is generated by
  $\pi_1((M \setminus \cup_{v \in K} D_v) \setminus D)$ and the
  conjugacy class of $C$. But $C$ is nullhomotopic in $B$, as the
  $W_{1,1}$ had standard $\theta$-structure, and so the map
  $(M \setminus \cup_{v \in K} D_v) \setminus D \to B$ is 1-connected
  and hence $f(\sigma)$ spans a simplex with $(t_0, c_0,
  \hat{L}_0)$. This means that $f$ has image in the star of
  $(t_0, c_0, \hat{L}_0)$, so is nullhomotopic.
\end{proof}

\begin{lemma}
  The space $|\overline{Y}(K\vert_{[j,\infty)} \circ W)_\bullet|$ is
  weakly contractible.
\end{lemma}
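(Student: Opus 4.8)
The plan is to deduce this from the two results just proved together with the discrete\nobreakdash-to\nobreakdash-topological comparison of \cite{GR-W3}. Write $M = K\vert_{[j,\infty)} \circ W$, so that by definition $\overline{Y}(M)_\bullet = \colim_{g\to\infty}\overline{Y}(K\vert_{[j,j+g]}\circ W)_\bullet$, and similarly for $\overline{Y}^\delta(M)_\bullet$ and $\widehat{Y}^\delta(M)_\bullet$. Since $W \in \mathcal{F}_j(P)$ the map $\ell_W : W \to B$ is $n$-connected, and since each $K\vert_{[i,i+1]}$ is $(n-1)$-connected relative to $K\vert_i$, every stabilisation $K\vert_{[j,j+g]}\circ W$ again lies in $\mathcal{F}_{j+g}(P)$ and in particular has $n$-connected reference map. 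Hence Lemma~\ref{lem:K-hat} applies to each $K\vert_{[j,j+g]}\circ W$, and, as geometric realisation commutes with filtered colimits, $|\widehat{Y}^\delta(M)_\bullet|$ is contractible. Proposition~\ref{prop:K-hat-to-K} then shows that $|\overline{Y}^\delta(M)_\bullet|$ is weakly contractible.

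It remains to pass from the discretised realisation to the topologised one. The identity maps on underlying sets assemble into a semi-simplicial map $\overline{Y}^\delta(M)_\bullet \to \overline{Y}(M)_\bullet$, and I would show that it induces a weak equivalence on geometric realisations by the method of \cite{GR-W3}: given a map $\phi : S^k \to |\overline{Y}(M)_\bullet|$, a preliminary homotopy and subdivision put $\phi$ in the form of a simplicial map for a triangulation of $S^k$ whose simplices are carried to continuous families of $p$\nobreakdash-simplices of $\overline{Y}(M)_\bullet$; one then runs the perturbation and $W_{1,1}$\nobreakdash-surgery moves from the proof of Proposition~\ref{prop:K-hat-to-K} \emph{in families over the compact parameter simplices}, using the infinite reservoir of embedded copies of $W_{1,1}$ with standard $\theta$-structure supplied by the $\theta$-end to choose all the auxiliary data (the normal pushes, the embedded paths, and the connect-sum parameters) continuously and without interference between different parameter values. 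This homotopes $\phi$ into the image of $|\overline{Y}^\delta(M)_\bullet|$, which is weakly contractible, so $\phi$ is nullhomotopic.

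The main obstacle is exactly this parametrised surgery: each individual move is local and harmless for a single simplex, but over a compact family one must arrange all the relevant transversality, disjointness, and $n$-connectivity conditions to hold simultaneously and the resulting tuples to vary continuously. This is precisely the bookkeeping packaged in the general machinery of \cite{GR-W3} for comparing topologised arc-type complexes with their discretisations, and I would invoke that machinery rather than redo it by hand.
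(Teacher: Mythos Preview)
Your overall plan is correct: one first establishes weak contractibility of the discretised realisation $|\overline{Y}^\delta(M)_\bullet|$ via Lemma~\ref{lem:K-hat} and Proposition~\ref{prop:K-hat-to-K}, and then compares with the topologised version using the machinery of \cite{GR-W3}. The paper does exactly this.

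However, your description of what that machinery \emph{is} and \emph{does} is off, and the discrepancy matters. You propose to homotope a map $\phi: S^k \to |\overline{Y}(M)_\bullet|$ into the discrete subcomplex by running the $W_{1,1}$-surgery moves of Proposition~\ref{prop:K-hat-to-K} ``in families over the compact parameter simplices''. This would be a genuine technical undertaking: the moves involve discrete choices (which $W_{1,1}$ to use, which embedded path to it) and arranging these to vary continuously over a parameter space, while maintaining all the disjointness and $n$-connectivity conditions, is not something the cited machinery provides. The technique of \cite[Theorem~5.6 and Lemma~5.8]{GR-W3}, which the paper invokes, \emph{avoids} parametrised surgery altogether. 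Instead one introduces, for each $\sigma \in \overline{Y}(M)_p$, the subcomplex $F(\sigma)_\bullet \subset \overline{Y}^\delta(M)_\bullet$ of discrete simplices $\sigma'$ such that $(\sigma,\sigma')$ is a $(p+q+1)$-simplex, and shows $|F(\sigma)_\bullet|$ is contractible by rerunning the \emph{discrete} argument of Proposition~\ref{prop:K-hat-to-K} in the complement $M \setminus \bigcup_i c_i(D^n \times \{0\})$ (which still has $n$-connected structure map). A bi-semi-simplicial space $D_{\bullet,\bullet}$ built from pairs $(\sigma,\sigma')$ then mediates between $|\overline{Y}(M)_\bullet|$ and $|\overline{Y}^\delta(M)_\bullet|$: the augmentation $|D_{p,\bullet}| \to \overline{Y}(M)_p$ is a Serre fibration with contractible fibres $|F(\sigma)_\bullet|$, and the other augmentation factors through the contractible $|\overline{Y}^\delta(M)_\bullet|$. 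No surgery in families is needed; the only new ingredient beyond what you already have is the contractibility of the link complexes $F(\sigma)_\bullet$, which is a reapplication of the same discrete argument.
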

\begin{proof}
  We have proved that
  $|\overline{Y}{}^\delta(K\vert_{[j,\infty)} \circ W)_\bullet|$ is
  contractible, so it remains to show that the map
  \begin{equation*}
    |\overline{Y}{}^\delta(K\vert_{[j,\infty)} \circ W)_\bullet| \lra |\overline{Y}(K\vert_{[j,\infty)} \circ W)_\bullet|
  \end{equation*}
  induced by the identity is a weak homotopy equivalence.  To this
  end, we proceed as in the proof of \cite[Theorem 5.6]{GR-W3}.  In
  more detail, we first observe that for any
  $\sigma = ((t_0, c_0, \hat{L}_0), \dots, (t_p,c_p, \hat{L}_p)) \in
  \overline{Y}(K\vert_{[j,\infty)} \circ W)_p$, there is a subcomplex
  $F(\sigma)_\bullet \subset \overline{Y}{}^\delta(K\vert_{[j,\infty)}
  \circ W)_\bullet$ whose $q$-simplices are those
  $\sigma' = ((t_0',c_0', \hat{L}'_0),\dots,(t_q',c_q', \hat{L}'_q))$
  satisfying
  \begin{equation}\label{eq:4}
    ((t_0, c_0, \hat{L}_0), \dots, (t_p,c_p, \hat{L}_p), (t_0',c_0',\hat{L}'_0),
    \dots, (t_q',c_q', \hat{L}'_q)) \in \overline{Y}{}^\delta(K\vert_{[j,\infty)} \circ W)_{p+q+1}.
  \end{equation}
  The same argument that we used to prove contractibility of
  $|\overline{Y}{}^\delta(K\vert_{[j,\infty)} \circ W)_\bullet|$
  applies to $F(\sigma)_\bullet$ and proves it has contractible
  realisation. (That space is a slight enlargement of
  $\overline{Y}{}^\delta((K\vert_{[j,\infty)} \circ W) \setminus
  \cup_i c_i(D^n \times\{0\}))_\bullet$, and the restriction of
  $\ell_{K\vert_{[j,\infty)} \circ W}|_{K\vert_{[j,\infty)} \circ W
    \setminus \cup_i c_i(D^n \times\{0\})}$ is still $n$-connected.)
  Then define the subspace
  $D_{p,q} \subset \overline{Y}(K\vert_{[j,\infty)} \circ W)_p \times
  \overline{Y}{}^\delta(K\vert_{[j,\infty)} \circ W)_q$ consisting of
  those $(\sigma,\sigma')$ satisfying~\eqref{eq:4}. The obvious
  forgetful maps make $D_{\bullet, \bullet}$ into a bi-semi-simplicial
  space, augmented in both directions. The augmentation
  $D_{p,q} \to \overline{Y}(K\vert_{[j,\infty)} \circ W)_p$ then
  induces a map
  $|D_{p,\bullet}| \to \overline{Y}(K\vert_{[j,\infty)} \circ W)_p$
  with fibre $|F(\sigma)_\bullet|$ over $\sigma$, which is
  contractible. The argument of \cite[Theorem 5.6]{GR-W3} shows that
  this augmentation map is a weak equivalence (and in fact a Serre
  fibration).  By the argument of \cite[Lemma 5.8]{GR-W3}, the
  resulting weak equivalence
  $|D_{\bullet,\bullet}| \to |\overline{Y}(K\vert_{[j,\infty)} \circ
  W)_\bullet|$ factors up to homotopy through the contractible space
  $|\overline{Y}{}^\delta(K\vert_{[j,\infty)} \circ W)_\bullet|$,
  proving that all three spaces are weakly contractible.
\end{proof}

\section{Proof of Theorem \ref{thm:StabStab}: stability for handles of index $k$, $n < k < 2n$}\label{sec:stable-stab2}

In this section we shall prove the following instance of Theorem
\ref{thm:StabStab}.

\begin{theorem}\label{thm:StabHigherHandle}
  If $M : P \leadsto Q$ is a morphism in $\cob$ whose underlying
  smooth cobordism admits a handle structure relative to $Q$
  consisting of a single $k$-handle, with $n \leq k < 2n$, attached to
  the basepoint component of $Q$, then $M \in \mathcal{W}$.
\end{theorem}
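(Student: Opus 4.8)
The plan is to proceed by induction on $k$, with the base case $k=n$ already established in Theorem~\ref{thm:StabNHandle}. So fix $n < k < 2n$ and assume that every morphism in $\cob$ whose underlying cobordism has a handle structure relative to its outgoing boundary consisting of a single handle of index $< k$ (attached to the basepoint component) lies in $\mathcal{W}$. The overall strategy mirrors Section~\ref{sec:stable-stab}: first normalise $M$, then resolve the relevant stabilisation map by a semi-simplicial space built from embedded $(2n-k)$-dual handles, and finally show the resolution reduces the general case to a ``trivially attached'' case which can be recognised via the $W_{1,1}$-stability of Theorem~\ref{thm:SnSnStability}.

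In detail: first, by Lemma~\ref{lem:PrepCob} we may assume $M = M_\sigma : P_\sigma \leadsto Q$ for some embedding $\sigma : \bR^k \times \bR^{2n-k} \to [0,\infty) \times \bR^{\infty-1}$ with $\sigma^{-1}(Q) = \partial D^k \times \bR^{2n-k}$, together with a chosen extension of $(\sigma|_{\partial D^k \times D^{2n-k}})^*\hat{\ell}_Q$ over $T(D^k \times D^{2n-k})$; write $P = P_\sigma$. As in Construction~\ref{const:Mbar} we build the reflected cobordism $r(M) : R \leadsto P$ with a $\theta$-structure chosen so that the glued handle pair $r(M) \cup_P M$ contains an embedded $S^n \times D^n$ with standard $\theta$-structure (now the core and cocore spheres have dimensions $k-1$ and $2n-k$; I will instead realise an embedded $S^{2n-k} \times D^{k}$, or rather arrange a copy of $W_{1,1}$, using the \emph{dual} $(2n-k)$-handle of $M$, which has index $2n-k \leq n$). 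Then as in Construction~\ref{const:Vp} we build auxiliary cobordisms $V_p : Q \leadsto S_p$ supported in disjoint translated copies of $\sigma(D^k \times D^{2n-k})$, each carrying a canonical dual handle $\psi_i$, and with $\theta$-structure chosen so that $\phicirc(D^k \times (\text{translate} + D^{2n-k})) \cup \psi_i(\cdots)$ inside $M \cup_Q V_p$ has standard $\theta$-structure. Since $M \circ r(M)$ and $V_p$ have disjoint support, the interchange-of-support manoeuvre of Section~\ref{sec:SupportAndInterchange} applies; the analogue of Lemma~\ref{lem:InterchangeOfSupport} will say that $\mathcal{R}_{V_p}(M \circ r(M))$ is $H_{S_p}$ composed with an isomorphism, because after the interchange the $k$-handle becomes trivially attached (it is cancelled by a dual handle in the $W_{1,1}$-like region it now sees), and $\mathcal{R}_{V_p}(M\circ r(M))$ contains an embedded $W_{1,1}$ with standard $\theta$-structure whose complement-with-a-disc-filled-in is a cylinder.

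Next, following Section~\ref{sec:ResNHandles}, I would build semi-simplicial resolutions $\mathcal{Y}_j^{(2n-k)}(P)_\bullet \to \mathcal{F}_j(P)$ whose $p$-simplices record tuples of embedded thickened $(2n-k)$-handles $c_i : (D^{2n-k}\times D^k, \partial D^{2n-k}\times D^k) \hookrightarrow (W, P)$ with standard boundary condition prescribed by $\sigma$, disjoint images, and the crucial connectivity condition that $\ell_W|_{W \setminus \cup_i c_i(D^{2n-k}\times\{0\})}$ is still $n$-connected (note the cores now have codimension $k \geq n+1$, so removing them is $(n-1)$-connected, exactly as in Step~3 of the proof of Proposition~\ref{prop:K-hat-to-K}, so the same transversality arguments govern the obstruction groups $K_\sigma, I_\sigma$). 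The contractibility-after-stabilisation statement, Theorem~\ref{thm:ContractibilityK}, should carry over verbatim with $n$ replaced by $2n-k$ in the handle dimension (but $n$ unchanged in the connectivity hypothesis on $\ell_W$) — the proofs in Section~\ref{sec:pf-arc-cx} only used that $W_{1,1}$ contains two transverse $n$-spheres and that the ambient $\theta$-end supplies infinitely many standard $W_{1,1}$'s, together with the codimension bound that makes cores not separate; since $2n-k < n$ would fail the ``codimension $\geq 2$, cores cannot separate'' input when $k$ is close to $2n$, care is needed, but $2n-k \geq 1$ always and the $n=1$ reorganisation handles the worst case. Then, exactly as in Section~\ref{sec:stable-stab}, the extrusion construction (Definition~\ref{defn:Extrusion}, which was written for general $k$) lets me lift $- \circ M \circ r(M) : \mathcal{F}(Q) \to \mathcal{F}(R)$ to a semi-simplicial map of resolutions, and the analogue of Proposition~\ref{prop:pSxAbEq} — proved by identifying the map on $p$-simplices, via interchange of support, with $- \circ \mathcal{R}_{V_p}(M\circ r(M))$, hence with an $H_{S_p}$-stabilisation, abelian-homology-equivalent by Theorem~\ref{thm:SnSnStability} — gives that each $(- \circ M\circ r(M))_p$ is an abelian homology equivalence. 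A spectral sequence comparison then yields $M \circ r(M) \in \mathcal{W}$, and the $2$-out-of-$3$ argument of Lemma~\ref{lem:W2outof3} applied to $M \circ r(M) \circ r(r(M))$ (using that $r(M)$ has a dual handle of the same index $k$ attached to the basepoint component, so the same argument gives $r(M)\circ r(r(M)) \in \mathcal{W}$) forces $M \in \mathcal{W}$.

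The main obstacle I expect is making Theorem~\ref{thm:ContractibilityK} (i.e.\ the contractibility of the higher-dimensional arc complex) genuinely go through with handle dimension $2n-k$ rather than $n$: the proof of Proposition~\ref{prop:K-hat-to-K} uses the local ``stabilised Whitney trick'' to trade intersections of $n$-dimensional cores, and the bookkeeping of the obstruction groups $K_\sigma$ and $I_\sigma$ relied on the cores having codimension $n$ (so that $M\setminus\cup D_i \hookrightarrow M$ is $(n-1)$-connected) — with cores of dimension $2n-k < n$ the inclusion $M \setminus \cup D_i \hookrightarrow M$ is only $(k-1)$-connected, which is \emph{more} than $(n-1)$-connected, so in fact this helps rather than hurts, but one must re-examine whether $\ell_W|_{W\setminus\cup D_i}$ being $n$-connected is still the right (and achievable) condition and whether the $W_{1,1}$-connect-sum move still kills the relevant $\pi_{n-1}$ and $\pi_n$ classes when the cores are lower-dimensional. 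I would isolate this as the one place needing new (though routine) verification; everything else is a faithful transcription of Sections~\ref{sec:stable-stab}--\ref{sec:pf-arc-cx} with $k$ in place of $n$ in the handle dimensions, justified by the fact that Construction~\ref{const:Trace}, Lemma~\ref{lem:PrepCob}, and Definition~\ref{defn:Extrusion} were all already stated for general $n \leq k \leq 2n$.
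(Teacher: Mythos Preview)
Your proposal has a genuine gap at the step where you claim the analogue of Lemma~\ref{lem:InterchangeOfSupport}: that $\mathcal{R}_{V_p}(M \circ r(M))$ is $H_{S_p}$ composed with an isomorphism. For $k=n$ this worked because the glued dual handles $r(\varphi)(D^n \times D^n) \cup_P \varphi(D^n \times D^n)$ form an $S^n \times D^n$, and after interchange the core handle $\phi$ together with a bounding disc forms another $S^n \times D^n$, and these two $n$-spheres intersecting once give a $W_{1,1}$. For $k>n$ the same construction gives an $S^{2n-k} \times D^k$ and an $S^k \times D^{2n-k}$ intersecting once, whose neighbourhood is $(S^k \times S^{2n-k}) \setminus \mathrm{int}(D^{2n})$, \emph{not} $W_{1,1}$. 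So $\mathcal{R}_{V_p}(M\circ r(M))$ is a cylinder connect-summed with $S^k \times S^{2n-k}$, and Theorem~\ref{thm:SnSnStability} does not apply. Your parenthetical acknowledgement (``now the core and cocore spheres have dimensions $k-1$ and $2n-k$; I will \ldots\ rather arrange a copy of $W_{1,1}$'') does not resolve this, and indeed cannot: there is no natural pair of $n$-spheres in sight. A further symptom is that your argument never actually uses the inductive hypothesis on $(k-1)$-handles.

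The paper's approach is genuinely different and uses the induction in an essential way. Rather than building $r(M)$ and aiming at $H_{S_p}$-stability, it constructs $U_p : Q \leadsto R_p$ by doing surgery on $(p+1)$ \emph{meridian} $(2n-k-1)$-spheres of the attaching sphere $\sigma(\partial D^k \times \{0\})$ (Construction~\ref{const:UV}), together with a ``left inverse'' $V_p : R_p \leadsto Q$ with $V_p \circ U_p$ a cylinder. The key Lemma~\ref{lem:CancelHandle} shows that after interchange the composition $V_p \circ \mathcal{R}_{U_p}(M) : P'_p \leadsto Q$ has a handle structure with only $(k-1)$-handles: the $k$-handle of $M$ cancels against one of the $(k-1)$-handles of $V_p$, because the core of the latter meets the attaching sphere of the former transversely once. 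Since $V_p$ itself also has only $(k-1)$-handles relative to $Q$, both $V_p$ and $V_p \circ \mathcal{R}_{U_p}(M)$ lie in $\mathcal{W}$ by induction, and $2$-out-of-$3$ gives $\mathcal{R}_{U_p}(M) \in \mathcal{W}$. The semi-simplicial resolution $\mathcal{Z}(P)_\bullet$ (Definition~\ref{defn:CxL}) records embedded $(2n-k)$-handles as you suggest, but is simpler than your $\mathcal{Y}^{(2n-k)}$: it needs no $n$-connectivity condition on $\ell_W\vert_{W \setminus C}$, and its contractibility (Theorem~\ref{thm:ContractibilityL}) holds for each finite $j$ by pure general position, since $2(2n-k) < 2n$. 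The identification of the map on $p$-simplices with $-\circ \mathcal{R}_{U_p}(M)$ then finishes as in Proposition~\ref{prop:pSxAbEqL}, and one concludes $M \in \mathcal{W}$ directly---no $r(M)$ and no $M\circ r(M)\circ r(r(M))$ trick is needed.
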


The proof shall be by induction on $k$, where the case $k=n$ has
already been established by Theorem \ref{thm:StabNHandle}. Thus we
suppose that $k > n$ and that Theorem \ref{thm:StabHigherHandle} holds
for elementary cobordisms of index $k-1$.  As in the case $k = n$ the
detailed proof is again cumbersome, but the strategy of the induction
step can be explained informally as follows.  Suppose first that there
exists an elementary cobordism $V$ of index $k-1$ such that the
composition $V \circ M = V \cup_Q M$ is defined and is diffeomorphic
to a trivial cobordism.  Then the composition $V \circ M$ is in
$\mathcal{W}$ and by induction $V$ is in $\mathcal{W}$ so by the
2-out-of-3 property we deduce that $M$ is in $\mathcal{W}$.  This
proves the theorem for elementary bordisms $M$ admitting a ``left
inverse'' elementary cobordism $V$ in this sense.  We shall then use a
simplicial resolution to reduce the general case to the case where $M$
admits a left inverse.

\subsection{Constructing auxiliary cobordisms}

Recall that to the data of an object $Q \in \cob$, an embedding
$\sigma: \bR^k \times \bR^{2n-k} \to [0,\infty) \times \bR^{\infty-1}$
with $\sigma^{-1}(Q) = \partial D^k \times \bR^{2n-k}$, and an
extension of a bundle map
$(\sigma\vert_{\partial D^k \times D^{2n-k}})^*\hat{\ell}_Q: T(D^k
\times D^{2n-k})\vert_{\partial D^k \times D^{2n-k}} \to
\theta^*\gamma$ to $T(D^k \times D^{2n-k})$ we have associated an
object $P_\sigma$ and a morphism $M_\sigma: P_\sigma \leadsto Q$ in
$\cob$.  This was explained in detail in
Construction~\ref{const:Trace}, but we remind the reader that
$P_\sigma$ was obtained by surgery along
$\sigma \vert_{\partial D^k \times \bR^{2n-k}}$ and $M_\sigma$ is the
associated trace of the surgery.  By Lemma~\ref{lem:PrepCob} it
suffices to prove Theorem~\ref{thm:StabHigherHandle} for elementary
bordisms $M = M_\sigma$ of this special type.  In particular, $M$ has
support in $\sigma(D^k \times D^{2n-k})$.

Let us consider $D^k$ as a subspace of $D^{k-1} \times D^1$, writing
coordinates as $(y,z)=(y_1, y_2, \ldots, y_{k-1}, z)$, and write
$\iota : D^{k-1} \to \partial D^k$ for the diffeomorphism onto the
lower hemisphere which is inverse to the stereographic projection
$(y_1, \dots, y_k) \mapsto \frac1{1-y_1}(y_2, \dots, y_k)$.  For each
$t \in (2,\infty)$ we have an embedding
\begin{align*}
  \mu_{t}: D^{2n-k} \times D^{k} & \lra  [0,2] \times \partial D^k \times \bR^{2n-k}\\
  (x;y,z) & \longmapsto  (\tfrac{3}{2}(1-\vert x \vert^2)(1+\tfrac{1}{3}z), \iota(y), t(1+\tfrac{1}{3}z)x).
\end{align*}
Composing $\mu_t$ with $[0,2] \times \sigma$ gives an embedding into
$[0,2] \times Q$, and this embedding when $t=3+3i$, $i \in \bN$, will
be important for us. We write
\begin{equation*}
  \hat{\ell}_i^\mathrm{triv} = (([0,2]\times\sigma) \circ \mu_{3+3i})^* \hat{\ell}_Q
\end{equation*}
for the $\theta$-structure which $\hat{\ell}_Q$ induces on
$D^{2n-k} \times D^{k}$ via these embeddings, and
\begin{equation*}
  \partial \psi_i: \partial D^{2n-k} \times D^k \lra Q
\end{equation*}
for the restriction of $([0,2] \times \sigma) \circ \mu_{3+3i}$ to
$\partial D^{2n-k} \times D^{k}$.

The following is analogous to Construction~\ref{const:Vp}, except that
we now do surgery on the spheres
$(\partial \psi_i)(\partial D^{2n-k} \times \{0\})$, which are
\emph{meridian} to $\sigma(\partial D^k \times \{0\})$, instead of the
spheres $\sigma(\partial D^n \times \{3(i+1)\})$ which were
\emph{parallel} to $\sigma(\partial D^n \times \{0\})$.

\begin{construction}\label{const:UV}
  For each $p \geq 0$ we will construct a pair of composable
  cobordisms $(1,U_p) : Q \leadsto R_p$ and $(1,V_p) : R_p \leadsto Q$
  such that $U_p$ has support outside of $\supp(M)$, and
  $V_p \circ U_p$ is an isomorphism.

  Let $(1,U_p) : Q \leadsto R_p$ be the cobordism obtained as the
  simultaneous (forward) trace of the surgeries along the disjoint
  embeddings
  $$\partial \psi_i : \partial D^{2n-k} \times D^k \lra Q \quad i=0,1,\ldots,p,$$
  with $\theta$-structure constructed using the extensions
  $\hat{\ell}^\mathrm{triv}_{i}$ of $(\partial
  \psi_i)^*\hat{\ell}_Q$. This is analogous to the cobordism
  $V_p : Q \leadsto S_p$ in Construction \ref{const:Vp}, except that
  we are now doing surgery on several spheres meridian to the core of
  $\sigma$ instead of several spheres parallel to the core of
  $\sigma$.  It is possible to arrange that the support of $U_p$ is
  disjoint from that of $M$, and we do so. We write
  $\psi_i : D^{2n-k} \times D^k \to U_p$ for the embedding of the
  $i$th handle.

  By construction, there is an embedding
  $i : U_p \hookrightarrow [0,2] \times Q$ relative to
  $\{0\} \times Q$, and a homotopy of $\theta$-structures
  $i^*\hat{\ell}_Q \simeq \hat{\ell}_{U_p}$ relative to $Q$. Using the
  isotopy extension theorem, and the homotopy extension property for
  bundle maps, this gives a $\theta$-cobordism
  $(1, V_p) : R_p \leadsto Q$ and a path from $V_p \circ U_p$ to
  $[0,2] \times Q$ in $\cob(Q,Q)$.
\end{construction}

The composable cobordisms $M : P \leadsto Q$ and
$U_p : Q \leadsto R_p$ have disjoint support by construction, so may
be subjected to interchange of support, giving composable cobordisms
$\mathcal{L}_{M}(U_p) : P \leadsto P'_p$ and
$\mathcal{R}_{U_p}(M) : P'_p \leadsto R_p$.

\begin{lemma}\label{lem:CancelHandle}
  The cobordism $V_p \circ \mathcal{R}_{U_p}(M) : P'_p \leadsto Q$ has
  a handle structure relative to $Q$ having handles of index $(k-1)$
  only.
\end{lemma}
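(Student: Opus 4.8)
The plan is to read off explicit handle structures, relative to $Q$, for the two cobordisms $V_p$ and $\mathcal{R}_{U_p}(M)$ separately, and then to cancel the single top-index handle of the composite against one of the remaining ones; essentially all of the work is in verifying that this cancellation is available.

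First I would record the handle structures of the pieces. By construction $U_p : Q \leadsto R_p$ is the trace of surgeries along the $p+1$ disjoint embeddings $\partial\psi_i : \partial D^{2n-k}\times D^k \hookrightarrow Q$ of $(2n-k-1)$-spheres, so relative to $Q$ it has $p+1$ handles, all of index $2n-k$. Since $V_p$ is, up to isotopy, the complement of a regular neighbourhood of $U_p$ in $[0,2]\times Q$, and $V_p\circ U_p$ is a trivial cobordism, the handles of $V_p$ cancel those of $U_p$ in pairs in the resulting handle decomposition of $[0,2]\times Q$; hence relative to $Q$ the cobordism $V_p$ has $p+1$ handles, all of index $2n-(2n-k+1)=k-1$. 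For $\mathcal{R}_{U_p}(M)$ I would use that $\supp(U_p)$ is disjoint from $\supp(M)$, which is contained in $\sigma(D^k\times D^{2n-k})$: so $\mathcal{R}_{U_p}(M)$ agrees with $M$ over $\sigma(D^k\times D^{2n-k})$ and is cylindrical elsewhere, while $R_p$ agrees with $Q$ over $\sigma(D^k\times D^{2n-k})$; thus relative to $R_p$ the cobordism $\mathcal{R}_{U_p}(M)$ consists of a single $k$-handle, attached along the sphere $\sigma(\partial D^k\times\{0\})\subset R_p$ with the framing coming from $M$. Stacking these, $V_p\circ\mathcal{R}_{U_p}(M) : P'_p \leadsto Q$ acquires a handle structure relative to $Q$ given by the $p+1$ $(k-1)$-handles of $V_p$, followed by the single $k$-handle of $\mathcal{R}_{U_p}(M)$.

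It then remains to cancel this $k$-handle against one of the $(k-1)$-handles. Inspecting Construction~\ref{const:UV}, the spheres surgered in $U_p$ are
\[
\partial\psi_i(\partial D^{2n-k}\times\{0\}) = \sigma(\{p_0\}\times 3(i+1)\,\partial D^{2n-k}), \qquad p_0 = \iota(0)\in\partial D^k,
\]
all lying in the normal fibre of $\sigma(\partial D^k\times\{0\})$ over the point $\sigma(p_0,0)$; in particular $\partial\psi_0(\partial D^{2n-k}\times\{0\})$ is a meridian of $\sigma(\partial D^k\times\{0\})$, linking it exactly once. The $(k-1)$-handle of $V_p$ associated with the $i=0$ surgery is, by the discussion above, precisely the one needed to undo that surgery, and the linking number being one translates into the statement that its belt sphere in $R_p$ meets the attaching sphere $\sigma(\partial D^k\times\{0\})$ of the $k$-handle transversally in a single point. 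The remaining surgeries ($i\ge 1$) take place at radius $3(i+1)\ge 6$, so their traces — and hence the belt spheres of the corresponding $(k-1)$-handles of $V_p$ — can be kept away from $\sigma(D^k\times D^{2n-k})$ and thus from the $k$-handle. Smale's handle-cancellation lemma then removes the $k$-handle together with the $i=0$ handle of $V_p$, leaving a handle structure relative to $Q$ with $p$ handles, all of index $k-1$, as required.

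The hard part is the last paragraph: one must unwind the explicit formula for $\mu_t$ far enough to pin down the belt spheres of the $(k-1)$-handles of $V_p$ and to confirm the single transverse intersection point together with the disjointness from the other handles. Morally this is the familiar fact that surgering a framed sphere and then surgering one of its meridians recovers the original manifold, realised cobordism-theoretically and with $p$ spare parallel copies present; the radial spacing $3,6,9,\dots$ designed into Construction~\ref{const:UV} is exactly what lets one isotope away the $p$ unwanted intersections. As a cross-check, the conclusion is also forced by the interchange-of-support relation $V_p\circ\mathcal{R}_{U_p}(M)\circ\mathcal{L}_M(U_p)\simeq V_p\circ U_p\circ M\simeq M$ together with the handle structure of $\mathcal{L}_M(U_p)$, which consists of $p+1$ handles of index $2n-k$.
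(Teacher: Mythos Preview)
Your proof is correct and follows essentially the same approach as the paper's: identify the $(k-1)$-handles of $V_p$ and the single $k$-handle of $\mathcal{R}_{U_p}(M)$, use that $\partial\psi_0$ is a meridian of $\sigma(\partial D^k\times\{0\})$ to obtain a single transverse intersection between the relevant belt and attaching spheres in $R_p$, and cancel. The paper makes the belt sphere slightly more explicit---writing it as the union of a disc $D^{2n-k}\times\{\ast\}$ in the glued-in part of $R_0$ with the meridian disc $\sigma(\{\ast\}\times 2D^{2n-k})$---whereas you appeal to the linking number; but these are the same observation, and your cross-check via interchange of support is a nice addition.
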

\begin{proof}
  The cobordism $V_p$ consists of $(p+1)$ handles of index $(k-1)$
  relative to $Q$, and $\mathcal{R}_{U_p}(M)$ consists of a single
  handle of index $k$ relative to $R_p$. We claim that the handle of
  $\mathcal{R}_{U_p}(M)$ may be cancelled against one of the handles
  of $V_p$, leaving a cobordism with $p$ handles of index $(k-1)$. We
  shall explain the case $p=0$; the argument is the same in general,
  working only with the innermost handle.

  As an abstract manifold $R_0$ is the result of doing surgery on $Q$
  along
  $$\partial \psi_0 : \partial D^{2n-k} \times D^k \lra Q.$$
  The image of $\partial\psi_0$ is disjoint from the set
  $\sigma(\partial D^k \times D^{2n-k}) \subset Q$ onto which the
  handle of $M$ is attached, so we may consider
  $\sigma\vert_{\partial D^k \times D^{2n-k}}$ as the attaching map
  for the unique handle of $\mathcal{R}_{U_p}(M)$ as well. In $Q$,
  $\partial\psi_0$ is the embedding of a (thickened) meridian sphere
  of $\sigma(\partial D^k \times \{0\})$. The cobordism $V_0$ is the
  trace of a surgery along a $(2n-k)$-sphere, which in terms of the
  surgered manifold
  $$R_0 \approx (Q \setminus \partial\psi_0(\partial D^{2n-k} \times \mathrm{int} D^k)) \cup_{\partial D^{2n-k} \times \partial D^k} (D^{2n-k} \times \partial D^k)$$
  can be described as the union of the disc $D^{2n-k} \times \{\ast\}$
  with the meridian disc $\sigma(\{\ast\} \times 2D^{2n-k})$ for some
  $\ast \in \partial D^k$. In particular, it intersects
  $\sigma(\partial D^k \times \{0\})$ transversely in a single point,
  which shows that the handle of $\mathcal{R}_{U_p}(M)$ cancels the
  handle of $V_0$, as required.
\end{proof}

\subsection{A semi-simplicial resolution}

In this section we shall construct augmented semi-simplicial spaces
$\mathcal{Z}_j(P)_\bullet \to \mathcal{F}_j(P)$, for any $P \in \cob$
equipped with some auxiliary data. These semi-simplicial spaces will
play roles to those in Section~\ref{sec:ResNHandles}, although their
definition is different.

\begin{definition}
  Fix a $P \in \cob$, a $W = (s, W) \in \mathcal{F}_j(P)$ for some
  $j \geq 0$, an embedding
  $\chi : \partial D^k \times (\bR^{2n-k} \setminus D^{2n-k})
  \hookrightarrow P$ and a $\theta$-structure
  $\hat{\ell}^\mathrm{std}$ on
  $[0,2] \times \partial D^k \times \bR^{2n-k}$ which restricts to
  $\chi^*\hat{\ell}_P$ on
  $\{0\} \times \partial D^k \times (\bR^{2n-k} \setminus D^{2n-k})$.

  Let $Z(W)_0 = Z(W, \chi, \hat{\ell}^\mathrm{std})_0$ be the set of
  tuples $(t,c,\hat{L})$ consisting of a $t \in (2,\infty)$, an
  embedding $c : D^{2n-k} \times D^{k} \hookrightarrow W$, and a path
  of $\theta$-structures
  $\hat{L} \in \Bun^\theta(T(D^{2n-k} \times D^{k}))^I$ such that
  \begin{enumerate}[(i)]
  \item there is a $\delta>0$ such that
    $c(x,v) = \chi \circ \mu_t(\tfrac{x}{\vert x \vert}, v) + (1-\vert
    x \vert)\cdot e_0$ for $1-\vert x \vert < \delta$, where we have
    used that
    $\mu_t(\partial D^{2n-k} \times D^k) \subset \{0\} \times \partial
    D^k \times (\bR^{2n-k} \setminus D^{2n-k})$ to form
    $\chi \circ \mu_t$,
  \item the image of $c$ is disjoint from $[0,s] \times L$, and
    $c^{-1}(P) = \partial D^{2n-k} \times D^k$,
  \item $\hat{L}$ is a path from $c^*\hat{\ell}_W$ to
    $\mu_t^*\hat{\ell}^\mathrm{std}$ which is constant over
    $\partial D^{2n-k} \times D^k$.
  \end{enumerate}
  We topologise $Z(W)_0$ as a subspace of
  $$\bR \times \Emb(D^{2n-k} \times D^{k}, [0,\infty) \times \bR^\infty) \times \Bun^\theta(T(D^{2n-k} \times D^{k}))^I.$$
  Let
  $Z(W)_p = Z(W, \chi, \hat{\ell}^\mathrm{std})_p \subset (Z(W, \chi,
  \hat{\ell}^\mathrm{std})_0)^{p+1}$ be the subset consisting of
  tuples
  $(t_0, c_0, \hat{L}_0, t_1, c_1, \hat{L}_1, \ldots, t_p, c_p,
  \hat{L}_p)$ such that
  \begin{enumerate}[(i)]
  \item each $(t_i, c_i, \hat{L}_i)$ lies in $Z(W)_0$,
  \item\label{it:disj} the $c_i$ are disjoint,
  \item $t_0 < t_1 < \cdots < t_p$.
  \end{enumerate}
  We topologise $Z(W)_p$ as a subspace of
  $(Z(W, \chi, \hat{\ell}^\mathrm{std})_0)^{p+1}$. The collection
  $Z(W)_\bullet$ has the structure of a semi-simplicial space, where
  the $i$th face map is given by forgetting $(t_i, c_i, \hat{L}_i)$.
\end{definition}

We now combine all of the $Z(W)_\bullet$ into a single augmented
semi-simplicial space.

\begin{definition}\label{defn:CxL}
  Fix a $P \in \cob$, a $j \geq 0$, an embedding
  $\chi : \partial D^k \times (\bR^{2n-k} \setminus D^{2n-k})
  \hookrightarrow P$ and a $\theta$-structure
  $\hat{\ell}^\mathrm{std}$ on
  $[0,2] \times \partial D^k \times \bR^{2n-k}$ which restricts to
  $\chi^*\hat{\ell}_P$ on
  $\{0\} \times \partial D^k \times (\bR^{2n-k} \setminus D^{2n-k})$.

  Let
  $\mathcal{Z}_j(P)_p = \mathcal{Z}_j(P, \chi,
  \hat{\ell}^\mathrm{std})_p$ be the set of tuples $(s,W;x)$ with
  $(s,W) \in \mathcal{F}_j(P)$ and
  $x \in Z(W, \chi, \hat{\ell}^\mathrm{std})_p$. Topologise this set
  as a subspace of
  $$\mathcal{F}_j(P) \times (\bR \times \Emb(D^{2n-k} \times D^{k}, [0,\infty) \times \bR^\infty) \times \Bun^\theta(T(D^{2n-k} \times D^{k}))^I)^{p+1}.$$
  The collection $\mathcal{Z}_j(P)_\bullet$ has the structure of a
  semi-simplicial space augmented over $\mathcal{F}_j(P)$, where the
  $i$th face maps forgets $(t_i, c_i, \hat{L}_i)$, and the
  augmentation map just remembers the underlying $\theta$-manifold
  $(s,W)$.
\end{definition}

The main result concerning these semi-simplicial spaces is the
following, which is analogous to Theorem \ref{thm:ContractibilityK}.

\begin{theorem}\label{thm:ContractibilityL}
  If $k > n$, then for any data $(\chi, \hat{\ell}^\mathrm{std})$ as
  in Definition \ref{defn:CxL} the augmentation map
  $\vert \mathcal{Z}_j(P)_\bullet \vert \to \mathcal{F}_{j}(P)$ is a
  weak homotopy equivalence for all $j$.
\end{theorem}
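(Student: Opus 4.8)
The plan is to follow the same scheme as the proof of Theorem~\ref{thm:ContractibilityK}, but exploiting one crucial simplification. Here we resolve $\mathcal F_j(P)$ by (thickened) handles $c:D^{2n-k}\times D^k\hookrightarrow W$ whose \emph{cores} $c(D^{2n-k}\times\{0\})$ have codimension $k$ in $W$, and since $n<k<2n$ forces $n\ge 2$ and hence $k\ge 3$, the expected dimension of an intersection between two such cores, or a self-intersection of one, is $2(2n-k)-2n=-2(k-n)<0$. Thus a generic perturbation of a core makes it embedded and disjoint from any finite collection of others. Consequently no analogue of the stabilised Whitney trick of Section~\ref{sec:pf-arc-cx} (hence no use of the embedded copies of $W_{1,1}$ supplied by the $\theta$-end) is required, and this is exactly why the statement can be proved for each fixed $j$ rather than only in the limit $g\to\infty$. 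The first step is to establish the analogue of Lemma~\ref{lem:qfib}: the augmentation $|\mathcal Z_j(P)_\bullet|\to\mathcal F_j(P)$ is a quasifibration with fibre $|Z(W)_\bullet|$ over $(s,W)$, by the argument of \cite[Lemma 6.8]{GR-W3}. Granting that $Z(W)_0\ne\emptyset$ for every $(s,W)\in\mathcal F_j(P)$, the map is surjective, so it suffices to prove that each fibre $|Z(W)_\bullet|$ is weakly contractible.

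Non-emptiness of $Z(W)_0$ is obstruction-theoretic: fix $t>2$; the sphere $\chi\circ\mu_t(\partial D^{2n-k}\times\{0\})\subset P\subset W$ has dimension $2n-k-1\le n-2$, and $\hat\ell^\mathrm{std}$ exhibits $\ell_W$ restricted to it as nullhomotopic over a disc. Since $\ell_W$ is $n$-connected one produces a map $D^{2n-k}\times D^k\to W$ with the prescribed collar, covered by a bundle map restricting to $\mu_t^*\hat\ell^\mathrm{std}$; Smale--Hirsch theory promotes it to an immersion, general position (the core has codimension $k>n$) makes a neighbourhood of $(\partial D^{2n-k}\times D^k)\cup(D^{2n-k}\times\{0\})$ embedded, shrinking gives an embedding $c$, and the homotopy-lifting property for bundle maps supplies a suitable path $\hat L$. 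For contractibility of $|Z(W)_\bullet|$ I would pass, as in Section~\ref{sec:pf-arc-cx}, to more flexible variants: a version $\overline Z(W)_\bullet$ in which handles are only immersed with embedded core (comparable with $Z(W)_\bullet$ by the shrinking argument of Lemma~\ref{lem:shrinking}), and its discretisation $\overline Z^\delta(W)_\bullet$. That $|\overline Z^\delta(W)_\bullet|$ is contractible follows from the star-of-a-vertex argument of Lemma~\ref{lem:K-hat}: given a simplicial $f:S^{q-1}\to|\overline Z^\delta(W)_\bullet|$, let $D_v$ be the finitely many cores occurring in its image; by Thom transversality pick $(t,c,\hat L)\in\overline Z^\delta(W)_0$ with $t$ distinct from all the $t_v$ and with core transverse to — hence, by the dimension count above, disjoint from — every $D_v$; after shrinking, $c(D^{2n-k}\times D^k)$ is disjoint from all the handles in the image of $f$, so $f$ lands in the star of $(t,c,\hat L)$ and extends over the cone $I^q$. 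Unlike in Proposition~\ref{prop:K-hat-to-K}, the intersections one must remove here never require the connect-sum move with $W_{1,1}$, only general position; this is the step that is genuinely easier. Finally the comparison between the discretised and the topologised complex is carried out exactly as in the last lemma of Section~\ref{sec:pf-arc-cx}, using the enlargements $F(\sigma)_\bullet$ together with \cite[Theorem 5.6, Lemma 5.8]{GR-W3}, which yields weak contractibility of $|Z(W)_\bullet|$ and hence Theorem~\ref{thm:ContractibilityL}.

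The geometric content is thus light; the real work, and the main obstacle, is bookkeeping. Two points need care. First, all of the model comparisons (quasifibration, shrinking, discrete versus topologised) must be carried out with the $\theta$-structure paths $\hat L$ dragged along coherently, so that the star argument yields homotopies of \emph{pairs} $(I^q,\partial I^q)$ — not merely of maps — which requires the homotopy-extension property for bundle maps over $\partial D^{2n-k}\times D^k$, together with the fact that $\hat\ell^\mathrm{std}$ and $\mu_t^*$ of it are compatible as $t$ varies. Second, in the shrinking step one must remember that $Z(W)_p$ demands disjointness of the full embeddings $c_i$, not just of the cores; but on $\partial D^{2n-k}\times D^k$ each $c_i$ is determined by $\mu_{t_i}$, and the core boundary spheres $\mu_{t_i}(\partial D^{2n-k}\times\{0\})$ are concentric spheres of distinct radii, hence automatically disjoint, so after shrinking the disjointness of the $c_i$ reduces to that of the cores, which is generic.
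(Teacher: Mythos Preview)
Your argument is correct, and the key geometric insight---that cores of dimension $2n-k$ generically miss each other in a $2n$-manifold when $k>n$---is exactly what the paper exploits.  However, the paper's proof is considerably shorter because it bypasses the discretisation step entirely.  Rather than following the full scheme of Section~\ref{sec:pf-arc-cx} (introduce $\overline{Z}^\delta(W)_\bullet$, prove it contractible by a star argument, then compare discrete and topologised via the $F(\sigma)_\bullet$ machinery of \cite[Theorem~5.6, Lemma~5.8]{GR-W3}), the paper observes that $\overline{Z}(W)_\bullet$ is a \emph{topological flag complex} in the sense of \cite[Definition~6.1]{GR-W2} and verifies the three hypotheses of \cite[Theorem~6.2]{GR-W2} directly: condition~(i) is vacuous (augmented over a point), condition~(ii) is your non-emptiness argument, and condition~(iii)---that any finite set of vertices has a common neighbour---is the single application of general position you use in your star argument.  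This yields contractibility of $|\overline{Z}(W)_\bullet|$ in one stroke.

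Your route is not wrong, just longer: you essentially reprove a special case of \cite[Theorem~6.2]{GR-W2} by hand.  The payoff of your approach is that it makes the parallel with the $k=n$ case completely explicit and isolates precisely where the simplification occurs (no Whitney-type move, no need for the $W_{1,1}$'s).  The payoff of the paper's approach is brevity, and that it makes transparent why no discrete-to-continuous comparison is needed here: the flag-complex criterion already works at the level of spaces.  One small inaccuracy in your write-up: the paper's $\overline{Z}(W)_\bullet$ keeps the $0$-simplices as genuine embeddings (it only relaxes the $p$-simplex condition to disjointness of cores), whereas you describe it as allowing immersions with embedded core; either model works, but they are not literally the same.
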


The proof of this theorem is rather easier than the corresponding
Theorem \ref{thm:ContractibilityK}, which is suggested by the fact
that while that theorem only holds in the limit $j \to \infty$, this
theorem holds for finite $j$. The reason is that Theorem
\ref{thm:ContractibilityK} concerns $n$-dimensional submanifolds of a
$2n$-manifold, which were made disjoint using the infinite supply of
$W_{1,1}$'s available in the limit, whereas Theorem
\ref{thm:ContractibilityL} concerns $(2n-k)$-dimensional submanifolds
of a $2n$-manifold, which can be made disjoint merely by general
position since $2n-k < n$.

\begin{proof}[Proof of Theorem \ref{thm:ContractibilityL}]
  Just as in Lemma \ref{lem:qfib}, the map
  $\vert \mathcal{Z}_j(P)_\bullet\vert \to \mathcal{F}_j(P)$ is a
  quasifibration with fibre $\vert Z(W)_\bullet \vert$ over
  $(s,W) \in \mathcal{F}_j(P)$. Hence it will be enough to show that
  $\vert Z(W)_\bullet\vert$ is weakly contractible for each $W$.

  Let $\overline{Z}(W)_\bullet$ be defined analogously to
  ${Z}(W)_\bullet$ with the exception that for a tuple
  $(t_0, c_0, \hat{L}_0, t_1, c_1, \hat{L}_1, \ldots, t_p, c_p,
  \hat{L}_p)$ to span a $p$-simplex we require only the weaker
  condition
  \begin{enumerate}[(\ref{it:disj}$^\prime$)]
  \item the embeddings $c_i\vert_{D^{2n-k} \times \{0\}}$ are disjoint.
  \end{enumerate}
  The inclusion ${Z}(W)_\bullet \to \overline{Z}(W)_\bullet$ is a
  levelwise weak homotopy equivalence by the argument of Lemma
  \ref{lem:shrinking}, so it is enough to show that
  $\vert\overline{Z}(W)_\bullet\vert$ is weakly contractible for each
  $W$. It is easy to verify that $\overline{Z}(W)_\bullet$ is a
  topological flag complex in the sense of \cite[Definition
  6.1]{GR-W2}, and we claim that it satisfies the conditions of
  \cite[Theorem 6.2]{GR-W2}. (We recall these conditions below.)  This
  immediately implies the result.

  Condition (i), which says that the augmentation map has local
  sections, is vacuous because $\overline{Z}(W)_\bullet$ is augmented
  only over a point.

  Next we establish condition (ii): that the augmentation map is
  surjective, or in other words that $\overline{Z}(W)_0$ is not
  empty. Let $t \in (2,\infty)$ and consider the commutative diagram
  of bundle maps
  \begin{equation*}
    \xymatrix{
      T(D^{2n-k} \times D^{k})\vert_{\partial D^{2n-k} \times D^{k}} \ar[d]\ar[rr]^-{D \chi \circ \mu_t} & & TW\vert_P \ar[r]& TW \ar[d]^{\hat{\ell}_W}\\
      T(D^{2n-k} \times D^{k}) \ar@{-->}[rrru]^-{\hat{c}}\ar[rrr]^-{\mu_t^*\hat{\ell}^\mathrm{std}}& & & \theta^*\gamma.
    }
  \end{equation*}
  As $\ell_W : W \to B$ is $n$-connected, and
  $(D^{2n-k}, \partial D^{2n-k}) \times D^{k}$ only has relative cells
  of dimension strictly less than $n$, there is a dashed bundle map
  $\hat{c}$ making the top triangle commute, and the bottom triangle
  commute up to a homotopy of bundle maps which is constant over
  $\partial D^{2n-k} \times D^{k}$. By Smale--Hirsch theory, we may
  find an immersion $c : D^{2n-k} \times D^{k} \looparrowright W$
  extending
  $\chi \circ \mu_t\vert_{\partial D^{2n-k} \times D^k} : \partial
  D^{2n-k} \times D^k \to P$ and with differential homotopic to
  $\hat{c}$ relative to $\partial D^{2n-k} \times D^{k}$. By general
  position of the core $D^{2n-k} \times \{0\}$ and shrinking, this may
  be supposed to be an embedding, which then satisfies
  $c^* \hat{\ell}_W = \hat{\ell}_W \circ Dc \simeq \hat{\ell}_W \circ
  \hat{c} \simeq \mu_t^*\hat{\ell}^\mathrm{std}$.

  Finally we establish condition (iii): that any finite collection of
  vertices of $\overline{Z}(W)_\bullet$ each span a 1-simplex with
  some other common vertex. Let $\{(t_i, c_i, \hat{L}_i)\}_{j \in J}$
  be a finite collection of 0-simplices, and choose (as in part (ii))
  another $(t, c, \hat{L})$ having $t \ll t_i$. The embedding
  $c\vert_{D^{2n-k} \times \{0\}}$ may be perturbed relative to the
  boundary to make it disjoint from every
  $c_i\vert_{D^{2n-k} \times \{0\}}$, as these cores are
  $(2n-k)$-dimensional and $(2n-k)+(2n-k) < 2n$ as we have supposed
  that $k > n$. After changing $c$ in this way (using isotopy
  extension), the 0-simplex $(t, c, \hat{L})$ spans 1-simplex with
  each $(t_i, c_i, \hat{L}_i)$, as required.
\end{proof}

\subsection{Resolving composition with $M$}

We now come to the proof of Theorem \ref{thm:StabHigherHandle}
proper. We have a morphism $(1,M) : P \leadsto Q \in \cob$ which we
have supposed is of the form $M_\sigma$ for some
$\sigma : \bR^k \times \bR^{2n-k} \hookrightarrow [0,\infty) \times
\bR^{\infty-1}$ and some extension of
$(\sigma\vert_{\partial D^k \times D^{2n-k}})^*\hat{\ell}_Q$ to
$T(D^k \times D^{2n-k})$. Gluing on $M$ defines a map
$- \circ M : \mathcal{F}(Q) \to \mathcal{F}(P)$. The restricted
embedding
$\sigma\vert_{\partial D^k \times (\bR^{2n-k} \setminus D^{2n-k})}$
has image outside of the support of $M$, so may be considered as an
embedding into either $Q$ or $P$. Let $\hat{\ell}^\mathrm{std}$ be the
$\theta$-structure on $[0,2] \times \partial D^k \times \bR^{2n-k}$
given by
$$T([0,2] \times \partial D^k \times \bR^{2n-k}) = \epsilon^1 \oplus T(\partial D^k \times \bR^{2n-k}) \xrightarrow{\epsilon^1 \oplus D\sigma} \epsilon^1 \oplus TQ \overset{\hat{\ell}_Q}\lra \theta^*\gamma_{2n}.$$
Taking the limit $j \to \infty$ in Definition \ref{defn:CxL} gives
augmented semi-simplicial spaces
\begin{align*}
  \mathcal{Z}(Q)_\bullet = \mathcal{Z}(Q, \sigma\vert_{\partial D^k \times (\bR^{2n-k} \setminus D^{2n-k})}, \hat{\ell}^\mathrm{std})_\bullet &\lra \mathcal{F}(Q)\\
  \mathcal{Z}(P)_\bullet = \mathcal{Z}(P, \sigma\vert_{\partial D^k \times (\bR^{2n-k} \setminus D^{2n-k})}, \hat{\ell}^\mathrm{std})_\bullet &\lra \mathcal{F}(P),
\end{align*}
both of which become homotopy equivalences after geometric
realisation. We wish to cover
$- \circ M : \mathcal{F}(Q) \to \mathcal{F}(P)$ by a semi-simplicial
map
$(- \circ M)_\bullet : \mathcal{Z}(Q)_\bullet \to
\mathcal{Z}(P)_\bullet$, and can do this using the extrusion
construction of Definition \ref{defn:Extrusion}, via the formula
$$(W; t, c, \hat{L}) \longmapsto (W \circ M  ; t, \epsilon_1(c), \epsilon_1(\hat{L}))$$
on 0-simplices, and the analogous formula on higher simplices. This
commutes with face maps, and defines a semi-simplicial map
$(- \circ M)_\bullet : \mathcal{Z}(Q)_\bullet \to
\mathcal{Z}(P)_\bullet$.

\begin{proposition}\label{prop:pSxAbEqL}
  For each $p \geq 0$ the map
  $(- \circ M)_p : \mathcal{Z}(Q)_p \to \mathcal{Z}(P)_p$ is an
  abelian homology equivalence.
\end{proposition}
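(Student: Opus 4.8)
The plan is to mirror the proof of Proposition~\ref{prop:pSxAbEq}, using the auxiliary cobordisms $U_p : Q \leadsto R_p$ and $V_p : R_p \leadsto Q$ of Construction~\ref{const:UV} in the role played there by $V_p : Q \leadsto S_p$, and using the extrusion construction of Definition~\ref{defn:Extrusion} with the length parameter coming from $M$ having length $1$. Recall from Construction~\ref{const:UV} that $U_p$ has support disjoint from $\supp(M)$, that its $\theta$-structure restricts to $\mu_{3+3i}^*\hat{\ell}^\mathrm{std}$ on each handle $\psi_i : D^{2n-k}\times D^k \hookrightarrow U_p$, and that $V_p \circ U_p$ is an isomorphism in $\cob$.

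First I would exhibit weakly equivalent models for the source and target of $(- \circ M)_p$. As in~\eqref{eq:pSxModel}, the assignment $(s,X)\mapsto (s+1,\,X \circ U_p;\,3,\psi_0,\hat{L}_0,\dots,3(p+1),\psi_p,\hat{L}_p)$, with each $\hat{L}_i$ the constant path at $\mu_{3+3i}^*\hat{\ell}^\mathrm{std}$, defines a map $\mathcal{F}_j(R_p) \to \mathcal{Z}_j(Q)_p$. It is well defined because $X \hookrightarrow X\circ U_p$ is $(n-1)$-connected (the dual handles of $U_p$ relative to $R_p$ have index $k\geq n$), so $\ell_{X\circ U_p}$ is $n$-connected since $\ell_X$ is; and it is a weak homotopy equivalence by the same argument used for~\eqref{eq:pSxModel} (introduce the space $E$ of triples $((s,W),e,\hat{L})$ with $e: U_p \hookrightarrow W$ rel $Q$, note that forgetting $e$ is a weak equivalence while remembering only $e$ is a fibration over a contractible base with fibre $\simeq \mathcal{F}_j(R_p)$, using the parameterised isotopy extension theorem and Whitney's embedding theorem). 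Interchange of support applied to $M : P \leadsto Q$ and $U_p : Q \leadsto R_p$ gives $\mathcal{L}_M(U_p) : P \leadsto P'_p$ and $\mathcal{R}_{U_p}(M) : P'_p \leadsto R_p$ with $\mathcal{R}_{U_p}(M)\circ \mathcal{L}_M(U_p)$ connected by a path to $U_p\circ M$; since the $\psi_i$ lie away from $\supp(M)$ they also lie in $\mathcal{L}_M(U_p)$, carrying the same $\theta$-structure, and $\mathcal{L}_M(U_p)$ stands in the same relationship to $P$ as $U_p$ does to $Q$, so the analogous assignment gives a weak homotopy equivalence $\mathcal{F}_j(P'_p) \to \mathcal{Z}_j(P)_p$.

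Next I would form, after passing to the limit $j\to\infty$, the square
\begin{equation*}
  \begin{gathered}
    \xymatrix{
      {\mathcal{F}(R_p)} \ar[d]_-{\simeq} \ar[rr]^-{- \circ \mathcal{R}_{U_p}(M)} & & {\mathcal{F}(P'_p)} \ar[d]^-{\simeq} \\
      {\mathcal{Z}(Q)_p} \ar[rr]^-{(- \circ M)_p} & & {\mathcal{Z}(P)_p}
    }
  \end{gathered}
\end{equation*}
and verify it commutes up to homotopy. The two composites send $X$ to $X\circ \mathcal{R}_{U_p}(M)\circ \mathcal{L}_M(U_p)$ with handle data $(\psi_i,\hat{L}_i)$ and to $X\circ U_p\circ M$ with handle data $(\epsilon_1(\psi_i),\epsilon_1(\hat{L}_i))$ respectively; the interchange-of-support path $t\mapsto \tau_t$ from $U_p\circ M$ to $\mathcal{R}_{U_p}(M)\circ \mathcal{L}_M(U_p)$ then provides the homotopy $X\mapsto (X\circ \tau_{1-t};\,\epsilon_t(\psi_i),\epsilon_t(\hat{L}_i))$, once one checks --- exactly as in the proof of Proposition~\ref{prop:pSxAbEq} --- that $\epsilon_t(\psi_i)$ has image inside $\tau_{1-t}$ and that $\epsilon_t(\psi_i)^*\hat{\ell}_{\tau_{1-t}}$ agrees with the extruded $\theta$-structure $\epsilon_t(\hat{L}_i)$.

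Finally I would prove that $-\circ \mathcal{R}_{U_p}(M) : \mathcal{F}(R_p)\to \mathcal{F}(P'_p)$ is an abelian homology equivalence by invoking the inductive hypothesis (Theorem~\ref{thm:StabHigherHandle} for index $k-1$). By Lemma~\ref{lem:CancelHandle} the composite $V_p\circ \mathcal{R}_{U_p}(M) : P'_p\leadsto Q$ admits a handle structure relative to $Q$ with only $(k-1)$-handles, and --- arranging in Construction~\ref{const:UV} that the surgery spheres $\partial\psi_i$ lie in the basepoint component, which is possible because the handle of $M$ is attached there --- these $(k-1)$-handles are all attached to the basepoint component; cutting into elementary cobordisms, each lies in $\mathcal{W}$ by the inductive hypothesis (here $k-1\geq n$), so $V_p\circ \mathcal{R}_{U_p}(M)\in\mathcal{W}$. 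Likewise $V_p$, a composite of $(p+1)$ cobordisms each with a single $(k-1)$-handle attached to the basepoint component, lies in $\mathcal{W}$. By the $2$-out-of-$3$ property (Lemma~\ref{lem:W2outof3}) it follows that $\mathcal{R}_{U_p}(M)\in\mathcal{W}$, i.e.\ $-\circ \mathcal{R}_{U_p}(M)$ is an abelian homology equivalence; combined with the homotopy-commutative square, whose verticals are weak equivalences, this proves $(-\circ M)_p$ is an abelian homology equivalence. The main obstacle I expect is the compatibility of the interchange-of-support homotopy with the extruded handle data in the third paragraph, which requires the same careful bookkeeping of embeddings and $\theta$-structures as in Proposition~\ref{prop:pSxAbEq}; the geometric inputs --- the handle cancellation of Lemma~\ref{lem:CancelHandle} and the placement of the relevant spheres in the basepoint component --- are comparatively routine.
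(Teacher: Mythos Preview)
Your proposal is correct and follows essentially the same approach as the paper: both identify $\mathcal{Z}(Q)_p \simeq \mathcal{F}(R_p)$ and $\mathcal{Z}(P)_p \simeq \mathcal{F}(P'_p)$ via the handles $\psi_i \subset U_p$ and $\psi_i \subset \mathcal{L}_M(U_p)$, verify homotopy-commutativity of the resulting square using the interchange-of-support path together with extrusion, and deduce $\mathcal{R}_{U_p}(M) \in \mathcal{W}$ from $V_p \in \mathcal{W}$ and $V_p \circ \mathcal{R}_{U_p}(M) \in \mathcal{W}$ (Lemma~\ref{lem:CancelHandle}) via 2-out-of-3. Your write-up is in fact slightly more explicit than the paper's in two places---the $n$-connectivity check needed for the map $\mathcal{F}_j(R_p)\to\mathcal{Z}_j(Q)_p$ to land in $\mathcal{F}_j$, and the basepoint-component condition needed to invoke the inductive hypothesis---both of which the paper leaves implicit.
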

\begin{proof}
  The cobordism $U_p : Q \leadsto R_p$ provided by Construction
  \ref{const:UV} has embeddings $\psi_i : D^{2n-k} \times D^k \to U_p$
  for $i=0,1,\ldots,p$ extending $\partial\psi_i$, and
  $\psi_i^*\hat{\ell}_{U_p}$ is equal to
  $\hat{\ell}^\mathrm{triv}_{i}=\mu_{3+3i}^*\hat{\ell}^\mathrm{std}$. Hence,
  letting $\hat{L}_i = \hat{\ell}^\mathrm{triv}_{i}$ be the constant
  homotopy, we have a map
  \begin{align*}
    \mathcal{F}(R_p) &\lra \mathcal{Z}(Q)_p\\
    X &\longmapsto (X \circ U_p; 3,\psi_0,\hat{L}_0,6,\psi_1,\hat{L}_1,\ldots,3+3p,\psi_p,\hat{L}_p).
  \end{align*}
  This map is a weak homotopy equivalence, as may be proved in the
  same way as the corresponding step of Proposition
  \ref{prop:pSxAbEq}.

  If $\mathcal{L}_{M}(U_p) : P \leadsto P'_p$ and
  $\mathcal{R}_{U_p}(M) : P'_p \leadsto R_p$ are the cobordisms
  produced by the interchange of support, then $\mathcal{L}_{M}(U_p)$
  also contains the handles $\psi_i$ carrying the tangential
  structures $\hat{\ell}^\mathrm{triv}_{i}$. Thus we may form the
  analogous map $\mathcal{F}(P'_p) \to\mathcal{Z}(P)_p$, which is a
  weak homotopy equivalence by the same argument. Now consider the
  diagram
  \begin{equation}
    \begin{gathered}
      \xymatrix{
        {\mathcal{F}(Q)}\ar[r]^-{- \circ V_p} \ar@/_/[rd]_-{- \circ V_p \circ
          \mathcal{R}_{U_p}(M)} & 
        {\mathcal{F}(R_p)} \ar[d]^-{- \circ \mathcal{R}_{U_p}(M)} \ar[r]^-\simeq &
        {\mathcal{Z}(Q)_p} \ar[d]^-{(- \circ M)_p}\\
        &
        {\mathcal{F}(P'_p)} \ar[r]^-\simeq & 
        {\mathcal{Z}(P)_p}.
      }
    \end{gathered}
  \end{equation}
  The left triangle commutes by definition of the diagonal map. The
  cobordism $V_p$ is obtained from $Q$ by attaching $(k-1)$-handles,
  so lies in $\mathcal{W}$ by inductive hypothesis. By Lemma
  \ref{lem:CancelHandle} the cobordism
  $V_p \circ \mathcal{R}_{U_p}(M)$ is obtained from $Q$ by attaching
  $(k-1)$-handles, so also lies in $\mathcal{W}$ by inductive
  hypothesis. Hence by Lemma \ref{lem:W2outof3} the cobordism
  $\mathcal{R}_{U_p}(M)$ also lies in $\mathcal{W}$, so induces an
  isomorphism on homology with all abelian coefficient systems.

  Hence if the bottom square commutes up to homotopy then we have
  proved this proposition. But this follows by the argument for the
  analogous step of Proposition \ref{prop:pSxAbEq}.
\end{proof}

This proposition shows that the semi-simplicial map
$(- \circ M)_\bullet : \mathcal{Z}(Q)_\bullet \to
\mathcal{Z}(P)_\bullet$ is a levelwise abelian homology equivalence,
and as the augmentation maps for these semi-simplicial spaces both
become weak homotopy equivalences after geometric realisation, the
spectral sequence argument from the end of Section
\ref{sec:stable-stab} shows that
$- \circ M : \mathcal{F}(Q) \to \mathcal{F}(P)$ is an abelian homology
equivalence too. Thus $M \in \mathcal{W}$, which finishes the proof of
Theorem \ref{thm:StabHigherHandle}.

By the discussion in Section \ref{sec:Simplifications}, this finishes
the proof of Theorem \ref{thm:StabStab}.

\section{Stable homology and group-completion}\label{sec:StabHomAndGC}

In this section we shall explain how Theorem \ref{thm:Main} may be
combined with the ``surgery on morphisms'' part of \cite{GR-W2} to
re-prove and strengthen the main result of that paper.

For a pair of objects $A, B \in \mathcal{C}_\theta$, there is a map
$\mathcal{C}_\theta(A,B) \to \Omega_{[A,B]} B\mathcal{C}_\theta$ to
the space of paths in $B\mathcal{C}_\theta$ from the point represented
by $A$ to that represented by $B$. We shall consider the subspaces
$\mathcal{N}_n^\theta(P) \subset \mathcal{C}_\theta(\emptyset,P)$ of
Definition \ref{defn:NN}, and we shall establish a theorem which
describes the effect of the map
$$\mathcal{N}_n^\theta(P) \hookrightarrow \mathcal{C}_\theta(\emptyset, P) \lra \Omega_{[\emptyset,P]} B\mathcal{C}_\theta$$
on homology, after suitable stabilisation.  Our proof shall make use
of the group-completion theorem applied to the category
$\cob = \mathcal{C}_{\theta,\partial L}^{n-1}$ and the weak
equivalence $B\cob \simeq B\mathcal{C}_\theta$ from
Theorem~\ref{thm:morphism-surgery-from-acta-paper}.

\begin{definition}
  Analogously to Definition \ref{defn:ThetaEnd}, we say that a
  composable sequence of cobordisms
  $$K\vert_0 \overset{K\vert_{[0,1]}}\leadsto K\vert_1 \overset{K\vert_{[1,2]}}\leadsto K\vert_2 \overset{K\vert_{[2,3]}}\leadsto K\vert_3 \leadsto  \cdots$$
  in $\mathcal{C}_{\theta}$ is a \emph{$\theta$-end in $\mathcal{C}_{\theta}$} if each $K\vert_{[i,i+1]}$ satisfies
  \begin{enumerate}[(i)]
  \item\label{it:end2:1} it is $(n-1)$-connected relative to both
    $K\vert_i$ and $K\vert_{i+1}$, and
  \item\label{it:end2:2} it contains an embedded copy of $W_{1,1}$
    with standard $\theta$-structure.
  \end{enumerate}
  We shall often refer to such a $\theta$-end in $\mathcal{C}_\theta$
  by the non-compact $\theta$-manifold
  $K \subset [0,\infty) \times (-1,1)^\infty$ obtained by composing
  all of these cobordisms.
\end{definition}

\begin{remark}
  This definition of $\theta$-end generalises the notion of a
  universal $\theta$-end in \cite[Definition 1.7]{GR-W2}; in terms of
  the characterisation in \cite[Addendum 1.9]{GR-W2}, it omits
  conditions (i) and (ii).
\end{remark}

If $K$ is a $\theta$-end in $\mathcal{C}_\theta$ then there are
induced maps
$$K\vert_{[i,i+1]} \circ - : \mathcal{N}^\theta_n(K\vert_i) \lra \mathcal{N}^\theta_n(K\vert_{i+1}),$$
as $W \cup_{K\vert_i} K\vert_{[i,j]}$ has $n$-connected structure map
to $B$, by assumption (\ref{it:end2:1}) above. The following should be
considered as a strengthened version of \cite[Theorem 1.8]{GR-W2}.

\begin{theorem}\label{thm:StableHomologyBetter}
  Let $K$ be a $\theta$-end in $\mathcal{C}_\theta$ such that
  $\mathcal{N}^\theta_n(K\vert_0) \neq \emptyset$. Then the map
  $$\hocolim_{i \to \infty} \mathcal{N}^\theta_n(K\vert_i) \lra \hocolim_{i \to \infty} \Omega_{[\emptyset, K\vert_i]} B\mathcal{C}_\theta$$
  is acyclic.
\end{theorem}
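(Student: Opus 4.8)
The plan is to run a group-completion argument in the cobordism category $\cob = \mathcal{C}^{n-1}_{\theta,\partial L}$, using the homological stability theorem of this paper (Theorem~\ref{thm:StabStab}) as the essential input, together with the identifications $B\cob \simeq B\mathcal{C}_\theta \simeq \Omega^{\infty-1}\MTtheta$ coming from Theorem~\ref{thm:morphism-surgery-from-acta-paper} and \cite{GMTW}. Because this route invokes only ``surgery on morphisms'' and not ``surgery on objects'' from \cite{GR-W2}, it applies in every even dimension.

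\emph{Reductions.} First I would identify the target: since $B\mathcal{C}_\theta$ is path-connected ($\simeq \Omega^{\infty-1}\MTtheta$), each path space $\Omega_{[\emptyset,K\vert_i]}B\mathcal{C}_\theta$ is weakly equivalent to $\Omega B\mathcal{C}_\theta$ and the structure maps of the target homotopy colimit (postcomposition with the $K\vert_{[i,i+1]}$) are weak equivalences, so $\hocolim_i \Omega_{[\emptyset,K\vert_i]}B\mathcal{C}_\theta \simeq \Omega B\mathcal{C}_\theta \simeq \Omega^\infty\MTtheta$; taking $L$ a disc (so Remark~\ref{rem:RemovingL} also gives $\cob \cong \mathcal{C}^{n-1}_{\theta,L}$), Theorem~\ref{thm:morphism-surgery-from-acta-paper} lets us compute this as $\Omega B\cob$. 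Next I would identify the source: using the hypothesis $\mathcal{N}^\theta_n(K\vert_0)\neq\emptyset$ to fix, compatibly for all $i$, a collared embedded disc in $K\vert_i$ lying in a component meeting a standard $W_{1,1}$ of the $\theta$-end, the sequence $\tilde K_i$ obtained by deleting these discs is a $\theta$-end in $\cob$ in the sense of Definition~\ref{defn:ThetaEnd}; and the rotation/doubling device of Lemma~\ref{lem:ChangeOfModel}, applied exactly as in the proof of Theorem~\ref{thm:Main} with $P_0$ the disc object whose rotation recovers the deleted discs, identifies $\hocolim_i \mathcal{N}^\theta_n(K\vert_i)$ with $\mathcal{F}(P_0) = \hocolim_i \mathcal{F}_i(P_0)$ compatibly with the maps into the relevant path spaces of $B\mathcal{C}_\theta \simeq B\cob$ (gluing on the contractible invertible cobordism $V_{\overline{P_0}}$ and the discs preserves $n$-connectedness of structure maps). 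It then remains to prove that the induced map $\mathcal{F}(P_0)\to\Omega_{[\,\cdot\,]}B\cob$ is acyclic.

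\emph{Group completion.} By Theorem~\ref{thm:StabStab} the functor $\mathcal{F}\colon\cob^{op}\to\mathrm{Top}$ takes every morphism to an abelian homology equivalence, so by the McDuff--Segal homology-fibration criterion the forgetful map $\hocolim_{P\in\cob^{op}}\mathcal{F}(P)\to B\cob$ is a homology fibration with fibre $\mathcal{F}(P_0)$ over $[P_0]$. One then checks that the total space $\hocolim_{P\in\cob^{op}}\mathcal{F}(P)$ is acyclic: for each $i$ it is a sub-semi-simplicial space of the nerve of the slice $\cob\downarrow\tilde K_i$, which is contractible because that slice has a terminal object, and condition (ii) of the $\theta$-end---the infinite supply of standard copies of $W_{1,1}$---provides (via the stable classification of highly-connected $\theta$-manifolds, for which connect-sum with standard $W_{1,1}$'s is the universal stabilisation) the cofinality needed to see that, in the limit over the $\theta$-end, the $n$-connected part exhausts the slice up to homology. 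A homology fibration whose total space is acyclic has its fibre homology-equivalent to the homotopy fibre of $*\to B\cob$ over $[P_0]$, namely $\Omega_{[\,\cdot\,]}B\cob$. Finally this equivalence is acyclic and not merely a homology isomorphism: the target is an infinite loop space, hence has abelian fundamental group and so only abelian local systems, and Theorem~\ref{thm:StabStab} gives an isomorphism on homology with all abelian local systems (the homotopy-commutativity of the mapping telescope, coming from the disjoint-union symmetric monoidal structure on $\cob$, is what makes the strong group-completion statement available). Tracing back through the reductions yields the theorem.

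The main obstacle is the acyclicity of the total space $\hocolim_{P\in\cob^{op}}\mathcal{F}(P)$---equivalently, the cofinality statement that in the colimit the $n$-connected cobordisms fill up the slice category homologically. This is exactly the step that the ``surgery on objects'' of \cite{GR-W2} handled under the hypothesis $2n\ge 6$, and here it must instead be extracted from the infinitely many standard $W_{1,1}$'s supplied by the $\theta$-end; making this precise, and carrying out the bookkeeping needed so that the final map lands in the correct components of $\Omega^\infty\MTtheta$, is where the real work lies.
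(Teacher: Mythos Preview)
Your overall strategy---group-completion in $\cob$, using Theorem~\ref{thm:StabStab} as the stability input and Theorem~\ref{thm:morphism-surgery-from-acta-paper} for the identification of classifying spaces---matches the paper's, and most of your reductions are along the right lines. The genuine gap is exactly where you flag it: the acyclicity of the total space, equivalently the claim that ``in the limit over the $\theta$-end, the $n$-connected part exhausts the slice up to homology''.

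Your proposed mechanism, that the infinite supply of standard $W_{1,1}$'s provides this cofinality, cannot work. A standard $\theta$-structure on $W_{1,1}$ has underlying map to $B$ which is nullhomotopic, so forming the connected sum with such a $W_{1,1}$ cannot hit any new element of $\pi_n(B)$, nor kill anything in the kernel of $\pi_{n-1}(W) \to \pi_{n-1}(B)$. Thus composing a cobordism $W$ with more of the given $\theta$-end need not make $\ell_W$ any more connected; for a generic $\theta$-end the inclusion $\hocolim_i \cob_n(X, \tilde K_i) \hookrightarrow \hocolim_i \cob(X, \tilde K_i)$ is not an equivalence, and the group-completion theorem applies only to the right-hand (representable) functor.

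The paper's solution has two ingredients. First, observe that $\mathcal{N}^\theta_n(K\vert_0) \neq \emptyset$ forces $B$ to satisfy Wall's finiteness condition $(F_n)$. Using this, one constructs (Lemma~\ref{lem:BetterEnd}) a \emph{new} $\theta$-end $K''$ in $\mathcal{C}_{\theta,\partial L}$ with the additional property that each $\ell_{K''\vert_{[i,\infty)}}$ is $n$-connected; the construction does surgeries to kill $\ker(\pi_{n-1}(P) \to \pi_{n-1}(B))$ and then attaches disc bundles $E_i \to S^n$ classified by lifts $\hat\beta_i: S^n \to BO(n)$ to hit missing generators $\beta_i \in \pi_n(B)$. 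For this special $K''$ the cofinality statement does hold (Lemma~\ref{lem:UnivEndMakesNConnected}): any $W \in \cob(X, K''\vert_i)$ becomes $n$-connected after composing with a finite piece of $K''$, because the obstructions lift to $K''\vert_{[i,\infty)}$ and are then supported in some $K''\vert_{[i,k]}$. One can now run group-completion for $K''$ (Proposition~\ref{prop:StableHomologyBad}). Second, one transfers the result from $K''$ back to the original $\theta$-end by a comparison diagram which stabilises on \emph{both} sides---the ends on the right and a sequence $\cdots \leadsto \overline{L}\vert_{-1} \leadsto \overline{L}$ of $W_{1,1}$-attachments on the left---invoking Theorem~\ref{thm:StabStab} (and its left-handed analogue) several times. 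The choice of $L$ is also not just a disc: it is taken from a self-indexing Morse function on $K\vert_0$ (Lemma~\ref{lem:StraightenEnd}) so that $K$ can be straightened to contain $[0,\infty)\times L$ and the cut-down $K^\circ$ really is a $\theta$-end in $\mathcal{C}_{\theta,\partial L}$.
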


Similarly to \cite[Section 7.4]{GR-W2}, the proof of Theorem
\ref{thm:StableHomologyBetter} will use the group-completion theorem
for categories. In fact, the group-completion theorem will be used to
prove a weaker preliminary result, which we state in
Proposition~\ref{prop:StableHomologyBad} below; we shall deduce
Theorem \ref{thm:StableHomologyBetter} from it by a further
application of Theorem \ref{thm:StabStab}.

We first make the following crucial observation: if the tangential
structure $\theta$ is such that $\mathcal{N}^\theta_n(K\vert_0)$ is
non-empty for some $K\vert_0$, then there is an $n$-connected map
$\ell_W : W \to B$ from a compact manifold. Cells of dimension at
least $(n+1)$ can be attached to $W$ in order to make this map a weak
homotopy equivalence, and hence, by \cite[Theorem A]{WallFin}, $B$
satisfies Wall's finiteness condition ($F_n$). We shall therefore
assume this property throughout this section.

\subsection{The group-completion argument}

To state the preliminary result, let
$L \subset (-1,0] \times \bR^{\infty-1}$ be such that
$(L, \partial L)$ is $(n-1)$-connected, and as in Definition
\ref{def:MainCat} consider the category
$\cob = \mathcal{C}_{\theta, \partial L}^{n-1} \subset
\mathcal{C}_{\theta, \partial L}$ consisting of those cobordisms which
are $(n-1)$-connected relative to their outgoing boundaries. As
$(L,\partial L)$ is $(n-1)$-connected, Remark \ref{rem:RemovingL}
shows that the natural map
$\mathcal{C}_{\theta, \partial L}^{n-1} \to \mathcal{C}_{\theta,
  L}^{n-1}$ is an isomorphism of categories.

\begin{definition}
  For $A, B \in \cob$, let $\cob_n(A,B) \subset \cob(A,B)$ be those
  path-components represented by cobordisms $W : A \leadsto B$ such
  that the structure map $\ell_{W}: W \to B$ is $n$-connected (recall
  that $W \in \cob(A,B)$ has had $[0,1] \times \mathrm{int}(L)$ cut
  out).
\end{definition}

The notation $\cob_n(A,B)$ should not be taken to imply that this
defines a subcategory $\cob_n$ of $\cob$: it does not; but
$\cob_n(A,-)$ is a subfunctor of $\cob(A,-)$.  The following is our
preliminary version of Theorem \ref{thm:StableHomologyBetter}.

\begin{proposition}\label{prop:StableHomologyBad}
  Suppose that $B$ satisfies Wall's finiteness condition ($F_n$), and
  let $K'$ be a $\theta$-end in $\mathcal{C}_{\theta, \partial L}$
  such that $\ell_{K'\vert_0}$ is $(n-1)$-connected. Then the map
  $$\hocolim_{i \to \infty} \cob_n(\overline{L}, K'\vert_i) \lra \hocolim_{i \to \infty} \Omega^\infty_{[\overline{L}, K'\vert_i]} B\cob$$
  is acyclic.
\end{proposition}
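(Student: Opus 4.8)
The plan is to run the group-completion argument of \cite[Section~7.4]{GR-W2} for the topological category $\cob = \mathcal{C}_{\theta,\partial L}^{n-1}$, feeding in Theorem~\ref{thm:StabStab} at exactly the place where that reference invokes its ``surgery on objects'' results. Two reductions come first. Since $(L,\partial L)$ is $(n-1)$-connected, Remark~\ref{rem:RemovingL} identifies $\cob$ with $\mathcal{C}_{\theta,L}^{n-1}$, and then Theorem~\ref{thm:morphism-surgery-from-acta-paper} together with \cite{GMTW} gives $B\cob \simeq B\mathcal{C}_\theta \simeq \Omega^{\infty-1}MT\theta$; in particular each path component of $\Omega B\cob$ is a component of the infinite loop space $\Omega^\infty MT\theta$, and so has abelian fundamental group. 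Hence every local system on such a component is abelian, and a map onto it which induces an isomorphism on homology with all abelian coefficients is automatically acyclic, so it suffices to prove the displayed map is an abelian homology equivalence. Moreover the stabilisation maps $\Omega^\infty_{[\overline{L},K'\vert_i]}B\cob \to \Omega^\infty_{[\overline{L},K'\vert_{i+1}]}B\cob$ are concatenation with a fixed path, hence homotopy equivalences, so the target is simply a path component of $\Omega B\cob$.

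For the main step I would invoke the group-completion theorem for topological categories, in the form used in \cite[Section~7.4]{GR-W2} (see also \cite{GR-W}). The lemma preceding Theorem~\ref{thm:StabStab} exhibits the functor $\mathcal{F}$, given by $P\mapsto\hocolim_i\cob_n(P,K'\vert_i)$, as a subfunctor of $P\mapsto\hocolim_i\cob(P,K'\vert_i)$---here one uses that the $\theta$-end has $\ell_{K'\vert_0}$, hence each $\ell_{K'\vert_i}$, $(n-1)$-connected, so that an $n$-connected cobordism into $K'\vert_i$ is automatically $(n-1)$-connected relative to $K'\vert_i$---and hence as a module over $\cob$; and Theorem~\ref{thm:StabStab} states precisely that every morphism of $\cob$ induces an abelian homology equivalence on $\mathcal{F}$, i.e.\ that $\mathcal{F}$ is homologically locally constant. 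Feeding this into the group-completion machinery---concretely, the McDuff--Segal homology-fibration criterion applied to the forgetful map to $B\cob$ from the Grothendieck construction of $\mathcal{F}$ over the nerve of $\cob$, following \cite[Section~7.4]{GR-W2} with ``surgery on objects'' replaced by Theorem~\ref{thm:StabStab}---identifies, for every abelian coefficient system $\mathcal{L}$, the group $H_*(\hocolim_i\cob_n(\overline{L},K'\vert_i);\mathcal{L})$ with $H_*(\hocolim_i\Omega^\infty_{[\overline{L},K'\vert_i]}B\cob;\mathcal{L})$ via the map of the proposition. Wall's finiteness condition $(F_n)$ on $B$ is used to guarantee that such a $\theta$-end $K'$ exists, and to keep the homology of all the spaces in sight finitely generated in each degree, as needed to run the relevant spectral-sequence comparisons.

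The main obstacle is the careful construction and verification of this homology fibration: adapting the group-completion and homology-fibration machinery to the topological, non-unital, non-connected category $\cob$, and confirming that the single hypothesis the McDuff--Segal criterion asks for is supplied verbatim by the abelian-homology-equivalence conclusion of Theorem~\ref{thm:StabStab}. This is, in the present setting, the analogue of the bulk of \cite[Section~7.4]{GR-W2}. Granting it, one combines with the two reductions above to conclude that $\hocolim_{i\to\infty} \cob_n(\overline{L},K'\vert_i) \to \hocolim_{i\to\infty} \Omega^\infty_{[\overline{L},K'\vert_i]}B\cob$ is acyclic, which is the statement of the proposition.
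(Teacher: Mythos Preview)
Your overall strategy---group-completion for $\cob$ with Theorem~\ref{thm:StabStab} as input---is right, but there is a genuine gap in how you connect these. The group-completion theorem (Theorem~\ref{thm:generalized-group-completion}, or the argument of \cite[Section~7.4]{GR-W2}) applies to the \emph{full} hom-functor $\hocolim_i \cob(-,K'\vert_i)$: the proof builds a semi-simplicial space $E^i_\bullet$ from $N_{p+1}\mathcal{C}$ and uses an extra-degeneracy argument to show the total space $\hocolim_i |E^i_\bullet|$ is contractible, whence the fiber over $a$ is identified with $\Omega_{[a,c_i]}B\cob$. If you instead form the Grothendieck construction of the \emph{subfunctor} $\mathcal{F}=\hocolim_i\cob_n(-,K'\vert_i)$, you lose this contractibility (appending $f_{i+1}$ need not preserve $n$-connectivity of the structure map), and nothing identifies the homotopy fiber with a loop space of $B\cob$. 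So you cannot feed Theorem~\ref{thm:StabStab}, which is about $\mathcal{F}$, directly into the homology-fibration machine.

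The paper closes this gap with two lemmas that are not just bookkeeping. Lemma~\ref{lem:BetterEnd} replaces $K'$ by a $\theta$-end $K''$ with each $\ell_{K''\vert_{[i,\infty)}}$ $n$-connected, and Lemma~\ref{lem:UnivEndMakesNConnected} shows that for such $K''$ the inclusion $\hocolim_i\cob_n(X,K''\vert_i)\hookrightarrow\hocolim_i\cob(X,K''\vert_i)$ is a weak equivalence for every $X$. This is where Wall's condition $(F_n)$ is actually used---to construct $K''$ and to lift finitely many $\pi_{n-1}$- and $\pi_n$-classes into $K''\vert_{[i,\infty)}$---not, as you suggest, to ensure $K'$ exists or to control finite generation of homology. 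Once $\cob_n$ and $\cob$ agree in the limit for $K''$, Theorem~\ref{thm:StabStab} supplies hypothesis~(\ref{item:asdf}) of the group-completion theorem and one obtains the acyclicity for $K''$. A further diagram comparing $K'$ and $K''$ (stabilising on the left by copies of $W_{1,1}$ and again invoking Theorem~\ref{thm:StabStab}) transfers the conclusion back to $K'$.
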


\begin{lemma}\label{lem:BetterEnd}
  Suppose that $B$ satisfies Wall's finiteness condition ($F_n$), and
  let $K'$ be a $\theta$-end in $\mathcal{C}_{\theta, \partial L}$
  such that $\ell_{K'\vert_0}$ is $(n-1)$-connected. Then there is
  another $\theta$-end $K''$ in $\mathcal{C}_{\theta, \partial L}$
  such that $K''\vert_0 = K'\vert_0$ and for all $i$ the structure map
  $\ell_{K''\vert_{[i,\infty)}} : K''\vert_{[i,\infty)} \to B$ is
  $n$-connected.
\end{lemma}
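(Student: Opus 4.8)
The plan is to retain the object $K''|_0 := K'|_0$ and to replace all the later cobordisms by new ones, built by attaching $n$-handles. Recall that condition ($F_n$) implies $\pi_1(B)$ is finitely presented and that $B$ has, up to homotopy, a finite $n$-skeleton $B^{(n)}$; I shall use this only through the consequence that the groups $\pi_k(B)$ are countable for $k \le n$. I would construct, by induction on $i$, morphisms $K''|_{[i,i+1]} : K''|_i \leadsto K''|_{i+1}$ in $\mathcal{C}_{\theta,\partial L}$, each obtained from $[0,1]\times K''|_i$ by three moves performed in disjoint discs of $\{1\}\times K''|_i$ disjoint from $L$: \textbf{(a)} the boundary connect-sum with a copy of $W_{1,1}$ carrying a standard $\theta$-structure, in a path component meeting $\partial L$; \textbf{(b)} the attachment of a single, trivially attached, $n$-handle; and \textbf{(c)} the attachment of a single $n$-handle along an embedded $(n-1)$-sphere $\Sigma_i\subset K''|_i$. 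Because every handle involved has index $n$, each $K''|_{[i,i+1]}$ is automatically $(n-1)$-connected relative to both of its boundaries and contains a standard $W_{1,1}$ meeting $\partial L$, so the composable sequence is a $\theta$-end in the sense of Definition~\ref{defn:ThetaEnd}; the construction will preserve the invariant that $\ell_{K''|_i}:K''|_i\to B$ is $(n-1)$-connected, which holds for $i=0$ by hypothesis.

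The $\theta$-structures are chosen as follows. For \textbf{(b)}: given $\beta\in\pi_n(B)$, attach the handle along a small unknot $S^{n-1}\times D^n$ bounding a disc $D\subset K''|_i$; as $\ell_{K''|_i}$ is defined on $D$ it extends over the core $D^n$, and varying this extension realises every element of $\pi_n(B)$ as the class of the sphere $D\cup_\partial D^n$, so we may arrange this class to be $\beta$ and then lift to a bundle map over the (contractible) handle. Since the attaching sphere is nullhomotopic, this changes neither $\pi_{<n-1}$ nor surjectivity onto $\pi_{n-1}(B)$, hence preserves the invariant. For \textbf{(c)}: the sphere $\Sigma_i$ will always be chosen so that $\ell_{K''|_i}\circ\Sigma_i$ is nullhomotopic in $B$; then $\epsilon^1\oplus TK''|_i$ is trivial over $\Sigma_i$, and since $\pi_{n-1}(BO(n))\to\pi_{n-1}(BO)$ is an isomorphism the normal bundle of $\Sigma_i$ is trivial, so the surgery can be performed compatibly with $\theta$-structures exactly as in Constructions~\ref{const:Trace}--\ref{const:Vp}. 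This surgery quotients $\pi_{n-1}$ by a class mapping to $0$ in $\pi_{n-1}(B)$ and does not affect $\pi_{<n-1}$ or surjectivity onto $\pi_{n-1}(B)$, so it too preserves the invariant.

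The substance of the proof is the bookkeeping ensuring that every tail $K''|_{[m,\infty)}$ is $n$-connected. Since each cobordism is $(n-1)$-connected relative to each of its ends, the map $\pi_{n-1}(K''|_i)\to\pi_{n-1}(K''|_{[m,i]})$ is surjective for $m\le i$, and $K''|_m\hookrightarrow K''|_{[m,\infty)}$ is $(n-1)$-connected; combined with the invariant, $\ell_{K''|_{[m,\infty)}}$ is $(n-1)$-connected. Fix an enumeration $\beta_1,\beta_2,\dots$ of $\pi_n(B)$ in which each element occurs infinitely often, and perform \textbf{(b)} at stage $i$ with $\beta=\beta_i$; then each $\alpha\in\pi_n(B)$ is some $\beta_i$ with $i$ as large as we like, and is therefore in the image of $\pi_n(K''|_{[m,i+1]})\to\pi_n(B)$, so $\ell_{K''|_{[m,\infty)}}$ is surjective on $\pi_n$. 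For \textbf{(c)}, fix a bijection $(m,b)\mapsto\phi(m,b)\in\mathbb{N}$ with $\phi(m,b)\ge m$; once $K''|_m$ is built, enumerate the countable group $\ker(\pi_{n-1}(K''|_m)\to\pi_{n-1}(B))$ as $y^{(m)}_1,y^{(m)}_2,\dots$, and at stage $i=\phi(m,b)$ let $\Sigma_i\subset K''|_i$ represent a preimage, under $\pi_{n-1}(K''|_i)\to\pi_{n-1}(K''|_{[m,i]})$, of the image of $y^{(m)}_b$. That image lies in the kernel of the map to $\pi_{n-1}(B)$, so $\ell_{K''|_i}\circ\Sigma_i$ is nullhomotopic as required, and surgery along $\Sigma_i$ kills the image of $y^{(m)}_b$ in $\pi_{n-1}(K''|_{[m,i+1]})$. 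Passing to the colimit, the whole of $\ker(\pi_{n-1}(K''|_m)\to\pi_{n-1}(B))$ dies in $\pi_{n-1}(K''|_{[m,\infty)})$, so $\ell_{K''|_{[m,\infty)}}$ is injective on $\pi_{n-1}$; together with the $(n-1)$-connectivity and the surjectivity on $\pi_n$, this gives that $\ell_{K''|_{[m,\infty)}}$ is $n$-connected, for every $m$.

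The hard part is exactly this interleaving: one must realise every element of $\pi_n(B)$ and kill every $\pi_{n-1}$-kernel class \emph{for every tail $K''|_{[m,\infty)}$ at once}, while keeping each finite piece a legitimate constituent of a $\theta$-end in $\mathcal{C}_{\theta,\partial L}$ and keeping the map to $B$ $(n-1)$-connected at every stage; the countability furnished by ($F_n$) is what makes the bookkeeping run. The remaining points — existence of the required $\theta$-structures on the attached handles, $\theta$-compatibility of the surgeries on $(n-1)$-spheres, and the (elementary) effect of connect-summing with a standard $W_{1,1}$ and of the surgeries on $\pi_{\le n-1}$ — are routine, the first two proceeding as in Section~\ref{sec:AuxCob}.
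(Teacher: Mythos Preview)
Your approach is genuinely different from the paper's and, modulo one gap, correct.  The paper does not interleave: it constructs each cobordism $K''\vert_{[i,i+1]}$ so that its \emph{own} structure map to $B$ is already $n$-connected, by (i) doing surgery on a \emph{finite} $\bZ[\pi_1(B)]$-generating set of $\ker(\pi_{n-1}(K''\vert_i)\to\pi_{n-1}(B))$, and (ii) forming the boundary connect-sum with finitely many disc bundles $E_j\to S^n$ whose zero-sections hit a finite set of $\bZ[\pi_1(B)]$-module generators of $\pi_n(B)/\mathrm{image}$.  This uses the full strength of $(F_n)$ via finite generation.  Your version only uses countability of $\pi_k(B)$ for $k\le n$; you spread the work over infinitely many stages and appeal to the colimit.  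Both yield the stated conclusion, but the paper's construction gives the stronger intermediate fact that each finite piece is $n$-connected.

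The gap is in move \textbf{(b)}.  You fix the underlying map $\ell$ on the core so that the sphere $D\cup_\partial D^n$ represents $\beta$, and then assert one can ``lift to a bundle map over the (contractible) handle''.  But this is a \emph{relative} extension problem: you must extend $\hat\ell$ from $\partial D^n\times D^n$ to $D^n\times D^n$, and the obstruction lies in $\pi_{n-1}(O(2n))\cong\pi_n(BO(2n))$.  Concretely, the tangent bundle of the cobordism restricted to the sphere $D\cup_\partial D^n$ is classified by the framing of the handle attachment, whereas a $\theta$-structure over $\beta$ requires this restriction to be classified by $\theta_*\beta\in\pi_n(BO(2n))$.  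With the trivial framing you therefore only realise those $\beta$ with $\theta_*\beta=0$.  The fix is exactly what the paper does: lift $\theta_*\beta$ along the surjection $\pi_n(BO(n))\to\pi_n(BO(2n))$ to some $\hat\beta$, and either attach the handle with the framing corresponding to $\hat\beta$, or equivalently form the boundary connect-sum with the disc bundle $E_\beta\to S^n$ classified by $\hat\beta$, equipped with the $\theta$-structure coming from $\beta\circ\pi$.  With this correction the rest of your argument goes through.
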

\begin{proof}
  It is enough to show that for any
  $P \in \mathcal{C}_{\theta, \partial L}$ such that $\ell_P$ is
  $(n-1)$-connected there is a morphism $W_P : P \leadsto P'$ in
  $\mathcal{C}_{\theta, \partial L}$ such that
  \begin{enumerate}[(i)]
  \item $\ell_{P'}$ is $(n-1)$-connected,
  \item $W_P$ is $(n-1)$-connected relative to either end,
  \item $W_P$ contains an embedded copy of $W_{1,1}$ with standard
    $\theta$-structure,
  \item $\ell_{W_P} : W_P \to B$ is $n$-connected.
  \end{enumerate}
  Then we can take $K''$ to be given by
  $W_{K'\vert_0}, W_{(K'\vert_0)'}, W_{(K'\vert_0)''}, \ldots$. Let us
  first suppose that $n \geq 3$, and we shall explain the necessary
  changes for small $n$ later.

  If we write $\pi = \pi_1(B)$ and $P^{(n-1)}$ for an $(n-1)$-skeleton
  of $P$, then it follows from the definition of Wall's condition
  ($F_n$) that $\pi_n(B, P^{(n-1)})$ is a finitely-generated
  $\bZ[\pi]$-module, and so from the long exact sequence of the triple
  $(B, P, P^{(n-1)})$ that $\pi_n(B, P)$ is finitely-generated
  too. Thus by the long exact sequence for the pair $(B, P)$ the
  $\bZ[\pi]$-module
  $$\Ker((\ell_P)_* : \pi_{n-1}(P) \to \pi_{n-1}(B))$$
  is also finitely generated. We can represent $\bZ[\pi]$-module
  generators of this kernel by finitely many maps
  $$\alpha_1, \alpha_2, \ldots, \alpha_k : S^{n-1} \lra P,$$
  and as $P$ has dimension $(2n-1)$ we may suppose that these maps are
  disjoint embeddings. Because they become nullhomotopic in $B$, and
  the tangent bundle of $P$ is pulled back from $B$, the embeddings
  $\alpha_i$ have stably trivial normal bundles, but as their normal
  bundles are of dimension $n > \mathrm{dim}(S^{n-1})$ they must in
  fact be unstably trivial. Thus we may upgrade the $\alpha_i$ to
  disjoint embeddings
  $$\hat{\alpha}_1, \hat{\alpha}_2, \ldots, \hat{\alpha}_k : S^{n-1} \times D^n \hookrightarrow \mathrm{int}(P).$$

  The trace of the (simultaneous) surgeries along these maps gives a
  cobordism $W' : P \leadsto P''$ in $\cob$. Consider the diagram
  \begin{equation*}
    \xymatrix{
      \pi_{n-1}(P) \ar[r] \ar[rd] & \pi_{n-1}(W') \ar[d]\\
      & \pi_{n-1}(B).
    }
  \end{equation*}
  As the diagonal map is surjective, the kernel of the diagonal map is
  contained in the kernel of the horizontal map, and the horizontal
  map is surjective, it follows that the vertical map is an
  isomorphism.

  Now $\pi_n(B, W')$ is a quotient of $\pi_n(B, P)$ (by the long exact
  sequence of the triple $(B,W',P)$ and the fact that
  $\pi_{n-1}(W', P)=0$) and so a finitely-generated
  $\bZ[\pi]$-module. The exact sequence
  $$\cdots \lra \pi_n(W') \lra \pi_n(B) \lra \pi_n(B,W') \lra 0$$
  shows that $\pi_n(B)$ is generated as a $\bZ[\pi]$-module by the
  image of $\pi_n(W')$ along with finitely many additional elements,
  $\beta_1, \beta_2, \ldots, \beta_l : S^n \to B$. For each $\beta_i$,
  the map $S^n \overset{\beta_i}\to B \overset{\theta}\to BO(2n)$ may
  be lifted to $\hat{\beta}_i : S^n \to BO(n)$, and the disc bundle
  $D^n \to E_i \overset{\pi}\to S^n$ classified by $\hat{\beta}_i$ has
  tangent bundle classified by
  $E_i \overset{\pi}\to S^n \overset{\beta_i}\to B \overset{\theta}\to
  BO(2n)$, and so is endowed with a $\theta$-structure by the lift
  $\beta_i \circ \pi$. The map
  $\ell_{E_i} : E_i \overset{\pi}\to S^n \overset{\beta_i}\to B$ hits
  $[\beta_i] \in \pi_n(B)$. Let $W_P : P \leadsto P'$ be the manifold
  obtained from $W' : P \leadsto P''$ by forming the boundary
  connect-sum at $P''$ with the $\theta$-manifolds
  $E_1, E_2, \ldots, E_l$, as well as with a copy of $W_{1,1}$.

  By construction $\pi_n(\ell_{W_P})$ is surjective, as its image
  contains the image of $\pi_n(\ell_{W'})$ and the $[\beta_i]$ and
  these generate $\pi_n(B)$. Furthermore,
  $\pi_{n-1}(W') \to \pi_{n-1}(W_P)$ is an isomorphism, as
  homotopically $W_P$ is obtained by wedging $n$-spheres on to
  $W'$. Thus properties (ii)--(iv) hold. For property (i) note that
  the composition
  $$P' \hookrightarrow W_P \overset{\ell_{W_P}}\lra B$$
  is $(n-1)$-connected as the first map is $(n-1)$-connected and the
  second is $n$-connected.

  Let us now explain the necessary changes for small $n$. If $n=2$ the
  first step should be interpreted as saying that
  $\Ker((\ell_P)_* : \pi_{1}(P) \to \pi_{1}(B))$ is normally finitely
  generated as a subgroup of $\pi_1(P)$. This no longer follows from
  Wall's condition ($F_2$), but is instead a standard exercise in
  group theory: a normal subgroup $N \lhd G$ of a finitely presented
  group is normally finitely generated if and only if $G/N$ is
  finitely presented. The technique described can then be used to kill
  the finitely many normal generators of this kernel, giving
  $W' : P \leadsto P''$, and the same argument shows that
  $\pi_1(\ell_{W'})$ is an isomorphism. Now we apply Wall's ($F_2$) to
  the map $(W')^{(2)} \to W' \to B$, and hence deduce that
  $\pi_2(B, W')$ is a finitely generated $\bZ[\pi]$-module. The
  argument is then concluded as above.

  If $n=1$ the first step should be interpreted as saying that
  $\pi_0(P)$ is a finite set, which is true as $P$ is compact. We can
  then perform finitely many 0-surgeries on it to obtain a connected
  cobordism $W' : P \leadsto P''$. As $B$ satisfies Wall's ($F_1$) the
  group $\pi$ is finitely generated, so in particular is generated by
  the image of $\pi_1(W')$ and finitely many additional elements,
  $\beta_1, \beta_2, \ldots, \beta_l : S^1 \to B$. Performing the
  construction described on these gives a cobordism having the
  required properties.
\end{proof}

\begin{lemma}\label{lem:UnivEndMakesNConnected}
  Suppose that $B$ satisfies Wall's finiteness condition ($F_n$), and
  let $K''$ be a $\theta$-end in $\mathcal{C}_{\theta, \partial L}$
  such that the structure map
  $\ell_{K''\vert_{[i,\infty)}} : K''\vert_{[i,\infty)} \to B$ is
  $n$-connected for all $i$. Then for each $X \in \cob$, the inclusion
  $$\hocolim_{i \to \infty} \cob_n(X, K''\vert_i) \hookrightarrow \hocolim_{i \to \infty} \cob(X, K''\vert_i)$$
  is a weak equivalence.
\end{lemma}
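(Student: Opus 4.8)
The strategy is to reduce the statement to a single concrete claim about $\pi_0$. For each $i$ the subspace $\cob_n(X, K''\vert_i) \subset \cob(X, K''\vert_i)$ is a union of path components, since whether the structure map $\ell_W : W \to B$ of a cobordism is $n$-connected is invariant under the path component. Moreover $\cob_n(X,-)$ is a subfunctor: if $\ell_W$ is $n$-connected and $K''\vert_{[i,i+1]}$ is $(n-1)$-connected relative to $K''\vert_i$, then the factorisation $W \hookrightarrow W \cup_{K''\vert_i} K''\vert_{[i,i+1]} \to B$ of the $n$-connected map $\ell_W$ through an $(n-1)$-connected cofibration shows $\ell_{W \cup_{K''\vert_i} K''\vert_{[i,i+1]}}$ is again $n$-connected --- this is the argument used just after Definition~\ref{defn:ThetaEnd}. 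Thus the inclusions assemble to a map of mapping telescopes. Since $\cob_n(X, K''\vert_i)$ is a union of path components, for any basepoint lying in it the inclusion is an isomorphism on all positive homotopy groups, levelwise and hence after passing to the colimit; and two components of $\cob_n$ that become merged in $\colim_i \pi_0 \cob(X, K''\vert_i)$ are already merged in some $\cob_n(X, K''\vert_j)$. It therefore suffices to prove that for every $W \in \cob(X, K''\vert_i)$ there is a $j \geq i$ with $W \cup_{K''\vert_i} K''\vert_{[i,j]} \in \cob_n(X, K''\vert_j)$.

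To prove this, fix such a $W$ and write $Y_j = W \cup_{K''\vert_i} K''\vert_{[i,j]}$ for $j \geq i$ and $Y_\infty = W \cup_{K''\vert_i} K''\vert_{[i,\infty)} = \colim_j Y_j$, so $\ell_{Y_\infty}$ restricts to $\ell_{K''\vert_{[i,\infty)}}$ on $K''\vert_{[i,\infty)}$. I would first observe that $\ell_{K''\vert_i}$ is $(n-1)$-connected: the inclusion $K''\vert_i \hookrightarrow K''\vert_{[i,\infty)}$ is $(n-1)$-connected (each $(K''\vert_{[k,k+1]}, K''\vert_k)$ is $(n-1)$-connected and these compose), and $\ell_{K''\vert_{[i,\infty)}}$ is $n$-connected by hypothesis, so $\ell_{K''\vert_i}$, being the composite, is $(n-1)$-connected. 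Combined with $(W, K''\vert_i)$ being $(n-1)$-connected this gives that $\ell_W$, and then each $\ell_{Y_j}$, is $(n-1)$-connected. On the other hand, since $(W, K''\vert_i)$ is $(n-1)$-connected the inclusion $K''\vert_{[i,\infty)} \hookrightarrow Y_\infty$ is $(n-1)$-connected, so the factorisation lemma applied to $K''\vert_{[i,\infty)} \hookrightarrow Y_\infty \xrightarrow{\ell_{Y_\infty}} B$ (an $(n-1)$-connected cofibration followed by a map whose precomposite $\ell_{K''\vert_{[i,\infty)}}$ is $n$-connected) shows $\ell_{Y_\infty}$ is $n$-connected.

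Now set $C_j = \pi_n(B, Y_j)$ (using the maps $\ell_{Y_j}$); since $\ell_{Y_j}$ is $(n-1)$-connected, $\pi_k(B, Y_j) = 0$ for $k < n$, and $\ell_{Y_j}$ is $n$-connected precisely when $C_j = 0$. The long exact sequences of the triples $(B, Y_{j+1}, Y_j)$ --- whose pairs $(Y_{j+1}, Y_j)$ are $(n-1)$-connected --- show the maps $C_j \to C_{j+1}$ are surjective, while $\colim_j C_j = \pi_n(B, Y_\infty) = 0$ since $\ell_{Y_\infty}$ is $n$-connected. Finally $C_i$ is a finitely generated $\bZ[\pi]$-module, $\pi = \pi_1(B)$: this is exactly the argument in Lemma~\ref{lem:BetterEnd}, using Wall's condition ($F_n$) to see that $\pi_n(B, Y_i^{(n-1)})$ is finitely generated over $\bZ[\pi]$ for an $(n-1)$-skeleton $Y_i^{(n-1)}$ of the compact manifold $Y_i$, and then the long exact sequences of the triple $(B, Y_i, Y_i^{(n-1)})$ and the pair $(B, Y_i)$. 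A finite generating set of $C_i$ dies at some finite stage, so the surjection $C_i \to C_j$ is zero --- hence $C_j = 0$, i.e.\ $\ell_{Y_j}$ is $n$-connected --- for $j$ large, as required. The main obstacle is the low-dimensional cases $n = 1$ and $n = 2$, where ``finitely generated $\bZ[\pi]$-module'' must be replaced by ``$\pi_0$ is finite'' respectively ``normally finitely generated'' (via Seifert--van Kampen), exactly as in the corresponding passages of Lemma~\ref{lem:BetterEnd} and Step~3 of the proof of Proposition~\ref{prop:K-hat-to-K}; beyond this one checks only the routine points that homotopy groups commute with the colimit $Y_\infty = \colim_j Y_j$ and that the $\bZ[\pi]$-structures are compatible along the tower (immediate for $n \geq 3$, since $\pi_1(Y_j) \to \pi$ is then an isomorphism).
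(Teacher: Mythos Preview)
Your proof is correct and follows essentially the same strategy as the paper's: reduce to $\pi_0$-surjectivity, use Wall's $(F_n)$ to get finite generation of the obstruction, and exploit $n$-connectivity of $\ell_{K''\vert_{[i,\infty)}}$ to kill it at a finite stage. Your repackaging of the obstruction into the single tower $C_j = \pi_n(B,Y_j)$ with surjective transition maps and vanishing colimit is a slight streamlining of the paper's two-step treatment (first kill $\ker \pi_{n-1}$, then hit $\pi_n$), but the content is the same; both versions leave the $n\leq 2$ modifications to the reader in the same way.
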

\begin{proof}
  Before taking homotopy colimits, the map of direct systems is a
  levelwise inclusion of a collection of path components, so it
  certainly induces an injection on $\pi_0$ on homotopy colimits. We
  shall show that it also induces a surjection on $\pi_0$ on homotopy
  colimits, from which it follows that it is a weak homotopy
  equivalence by considering the induced map on homotopy groups with
  all basepoints.
    
  To show that it is surjective on $\pi_0$, let
  $W : X \leadsto K''\vert_i \in \cob$. The commutative square
  \begin{equation*}
    \xymatrix{
      K''\vert_i \ar[d]_-{(n-1)\text{-connected}} \ar[rrr]^-{(n-1)\text{-connected}}&&& K''\vert_{[i,\infty)} \ar[d]^-{n\text{-connected}}\\
      W \ar[rrr]^-{\ell_W} &&& B
    }
  \end{equation*}
  shows that $\ell_W$ is $(n-1)$-connected, and we must glue on some
  $K''\vert_{[i,k]}$ to $W$ in order to make it $n$-connected. The
  proof of this is similar to that of Lemma \ref{lem:BetterEnd}, and
  uses all of the same techniques. We give it here for $n \geq 3$,
  leaving the necessary changes for $n \leq 2$ to the reader.

  Write $\pi = \pi_1(B)$. As $B$ is assumed to satisfy the finiteness
  condition ($F_n$) and $W$ is a finite CW-complex, as in the proof of
  Lemma \ref{lem:BetterEnd} we deduce that
  $$\Ker((\ell_W)_* : \pi_{n-1}(W) \to \pi_{n-1}(B))$$
  is also a finitely-generated $\bZ[\pi]$-module. As
  $\pi_{n-1}(K''\vert_i) \to \pi_{n-1}(W)$ is onto, we can represent
  $\bZ[\pi]$-module generators of this kernel by finitely many maps
  $\alpha_1, \alpha_2, \ldots, \alpha_k : S^{n-1} \to K''\vert_i$, and
  because these become nullhomotopic in $B$, they must also be
  nullhomotopic in $K''\vert_{[i,\infty)}$, and hence must in fact be
  nullhomotopic in $K''\vert_{[i,j]}$ for some $i \ll j$. Let
  $W' = W \cup_{K''\vert_i} K''\vert_{[i,j]}$. As in the proof of
  Lemma \ref{lem:BetterEnd} it follows that $\ell_{W'} : W' \to B$ is
  an isomorphism on $\pi_{n-1}$.
 
  Wall's condition ($F_n$) still implies that $\pi_n(B, W')$ is a
  finitely-generated $\bZ[\pi]$-module. The exact sequence
  $$\cdots \lra \pi_n(W') \lra \pi_n(B) \lra \pi_n(B,W') \lra 0$$
  shows that $\pi_n(B)$ is generated as a $\bZ[\pi]$-module by the
  image of $\pi_n(W')$ along with finitely many additional elements,
  $\beta_1, \beta_2, \ldots, \beta_l : S^n \to B$. As the map
  $K''\vert_{[j,\infty)} \to B$ is $n$-connected the $\beta$'s can be
  lifted to $K''\vert_{[j,\infty)}$, and hence to $K''\vert_{[j,k]}$
  for some $j \ll k$.

  Letting $W'' = W' \cup_{K''\vert_j}K''\vert_{[j,k]}$, the map
  $(\ell_{W''})_* : \pi_n(W'') \to \pi_n(B)$ is surjective, as its
  image contains both the image of $(\ell_{W'})_*$ and the elements
  $[\beta_i]$. The map $\ell_{W''}$ still induces an isomorphism on
  lower homotopy groups, so is $n$-connected: hence
  $W'' \in \cob_n(X, K''\vert_k)$, as required.
\end{proof}

\begin{proof}[Proof of Proposition \ref{prop:StableHomologyBad}]
  We shall use a version of the group-completion theorem for
  categories, specifically the version given as Theorem
  \ref{thm:generalized-group-completion} in the appendix.  For
  assumption~(\ref{item:fibrancy-condition}) of that theorem we
  require that the combined source/target map is a Serre fibration,
  which follows from the isotopy extension theorem (for deforming the
  underlying manifolds) and the homotopy extension property for the
  restriction map
  $\Bun^\theta(TW) \to \Bun^\theta(TW\vert_{\partial W})$ (for
  deforming $\theta$-structures).  Let $K''$ be a $\theta$-end in
  $\mathcal{C}_{\theta, \partial L}$ provided by Lemma
  \ref{lem:BetterEnd}. In order to apply Theorem
  \ref{thm:generalized-group-completion} we must show that any
  morphism in $\cob$ is sent to an abelian homology equivalence by
  applying
  $$\hocolim_{i \to \infty} \cob(-, K''\vert_i) : \cob^\mathrm{op} \lra \mathrm{Top}.$$
  But by Lemma \ref{lem:UnivEndMakesNConnected} we may replace this by
  $\hocolim\limits_{i \to \infty} \cob_n(-, K''\vert_i)$, and it is
  the content of Theorem \ref{thm:StabStab} that this sends every
  morphism to an abelian homology equivalence. Theorem
  \ref{thm:generalized-group-completion} then allows us to conclude
  that for any object $X \in \cob$,
  \begin{equation}\label{eq:CstatementUniv}
    \hocolim_{i \to \infty} \cob_n(X, K''\vert_i) \lra \hocolim_{i \to \infty} \Omega_{[X, K''\vert_i]} B\cob
  \end{equation}
  is an acyclic map.

  We must now pass from this result to the corresponding statement for
  $K'$. Inductively choose a sequence of composable cobordisms
  $$\cdots \leadsto \overline{L}\vert_{-2} \overset{\overline{L}\vert_{[-2,-1]}}\leadsto \overline{L}\vert_{-1} \overset{\overline{L}\vert_{[-1,0]}}\leadsto \overline{L}\vert_0 = \overline{L}$$
  by letting $\overline{L}\vert_{[-i-1,-i]}$ be obtained from
  $[0,1] \times \overline{L}\vert_{-i}$ by forming the boundary
  connect-sum with $W_{1,1}$ at $\{0\} \times \overline{L}\vert_{-i}$,
  and consider the diagram
  \begin{equation*}
    \xymatrix{
      \hocolim\limits_{i \to \infty} \cob_n(\overline{L}, K''\vert_i) \ar[r] & \hocolim\limits_{j \to \infty}\hocolim\limits_{i \to \infty} \cob_n(\overline{L}_{-j}, K''\vert_i)\\
      & \hocolim\limits_{j \to \infty} \cob_n(\overline{L}_{-j}, K'\vert_0) \ar[u] \ar[d]\\
      \hocolim\limits_{i \to \infty} \cob_n(\overline{L}, K'\vert_i) \ar[r] & \hocolim\limits_{j \to \infty}\hocolim\limits_{i \to \infty} \cob_n(\overline{L}_{-j}, K'\vert_i).
    }
  \end{equation*}
  Each of these four maps is an abelian homology equivalence: the
  horizontal ones by Theorem \ref{thm:StabStab}, and the vertical ones
  by the analogue of Theorem \ref{thm:StabStab} in which stabilisation
  is formed on the left. Comparing this diagram with the analogous one
  of homotopy colimits of path spaces of $B\cob$,
  \eqref{eq:CstatementUniv} implies that
  \begin{equation}\label{eq:Cstatement}
    \hocolim_{i \to \infty} \cob_n(\overline{L}, K'\vert_i) \lra \hocolim_{i \to \infty} \Omega_{[\overline{L}, K'\vert_i]} B\cob
  \end{equation}
  is an abelian homology equivalence, and hence acyclic.
\end{proof}

\subsection{Proof of Theorem \ref{thm:StableHomologyBetter}}

Let $K$ be a $\theta$-end in $\mathcal{C}_\theta$, pick a
self-indexing Morse function $f : K\vert_0 \to [0,2n-1]$, and let
$L = f^{-1}([0, n-\tfrac{1}{2}])$ with induced $\theta$-structure
$\hat{\ell}_L$. By construction, $L$ has a handle structure with
handles of index at most $(n-1)$; equivalently, it may be obtained
from $\partial L$ by attaching handles of index at least $n$. After
moving $K\vert_0 \subset \R^\infty$ by an isotopy, we may suppose that
$L = K\vert_0 \cap ((-\infty,0] \times \bR^{\infty-1})$ as
$\theta$-manifolds: $K\vert_0$ is then an object of
$\mathcal{C}_{\theta,L}$, giving an object $K\vert_0^\circ$ of
$\mathcal{C}_{\theta,\partial L}$.

\begin{lemma}\label{lem:StraightenEnd}
  The proper embedding
  $K \hookrightarrow [0,\infty) \times (-1,1)^\infty$ may be changed
  by an isotopy, and the bundle map $\hat{\ell}_K$ changed by a
  homotopy, relative to $K\vert_0$, so that $(K,\hat{\ell}_K)$ remains
  a $\theta$-end in $\mathcal{C}_\theta$ and so that
  $$K \cap ([0,\infty) \times (-\infty,0] \times \bR^{\infty-1}) = [0,\infty) \times L$$
  as $\theta$-manifolds.
\end{lemma}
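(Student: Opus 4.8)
I would isotope $K$ so that it becomes "cylindrical in the negative half-space" near its zero-end, progressively pushing the non-cylindrical part of each cobordism out of $(-\infty,0] \times \bR^{\infty-1}$, and using the definition of $\theta$-end to keep the two defining properties. Concretely, $K\vert_0$ already satisfies $K\vert_0 \cap ((-\infty,0] \times \bR^{\infty-1}) = L$ as $\theta$-manifolds by the choice of $L = f^{-1}([0,n-\tfrac12])$ and the isotopy moving $K\vert_0$ into position (this is the hypothesis we are building on). So the task is entirely about $K\vert_{[0,\infty)}$, and the condition we want is $W \cap ([0,s]\times(-\infty,0]\times\bR^{\infty-1}) = [0,s]\times L$ for every compact piece $W = K\vert_{[i,j]}$; equivalently, every cobordism in the end should be supported, in the sense of Section~\ref{sec:SupportAndInterchange}, inside $[0,\infty)\times\bR^{\infty-1}$ (the "positive" half), disjointly from $[0,s]\times L$.

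First I would handle one cobordism $K\vert_{[i,i+1]}$ at a time. Its support $\supp(K\vert_{[i,i+1]})\subset\bR^\infty$ is a compact set; after an ambient isotopy of $\bR^\infty$ that is the identity on a neighbourhood of $L$ (possible since $L$ is a proper submanifold of $(-\infty,0]\times\bR^{\infty-1}$ and the support is compact, so we have room in the extra $\bR^\infty$-coordinates to slide it off), we may push $\supp(K\vert_{[i,i+1]})$ into $[0,\infty)\times\bR^{\infty-1}$. By the isotopy extension theorem this is realised by an ambient isotopy of $[i,i+1]\times\bR^\infty$ rel the boundary collars, hence changes the embedding of $K$ by an isotopy rel $K\vert_i$ and $K\vert_{i+1}$; we carry $\hat\ell_K$ along. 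This preserves property (i) of Definition~\ref{defn:ThetaEnd}/the $\theta$-end conditions (connectedness is diffeomorphism-invariant) and, since we moved the embedded $W_{1,1}$ (and its standard $\theta$-structure) along by the isotopy, preserves property (ii) as well. One subtlety: the objects $K\vert_i$ themselves must be moved so as to stay objects of $\mathcal{C}_{\theta,\partial L}$, i.e. to intersect $(-\infty,0]\times\bR^{\infty-1}$ exactly in $L$; so the isotopies on consecutive pieces must match up on the common boundary object. I would arrange this by first choosing, inductively, isotopies of the \emph{objects} $K\vert_i$ into standard position (each equal to $L$ on the negative half-space), which is possible because the inclusion $\partial L \hookrightarrow K\vert_i$ extends to $L\hookrightarrow K\vert_i$ by hypothesis on the Morse function, and then extending over the cobordisms.

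Then I would assemble these into a single isotopy of the proper embedding $K\hookrightarrow[0,\infty)\times(-1,1)^\infty$. Since the pieces $K\vert_{[i,i+1]}$ live in disjoint slabs $[i,i+1]\times\bR^\infty$ and the isotopies agree on the separating objects, they glue to a proper isotopy of $K$ rel $K\vert_0$; properness is preserved because each isotopy has compact support in its slab, so on any compact region of $[0,\infty)\times\bR^\infty$ only finitely many are active. Likewise the homotopies of $\hat\ell_K$ glue. The result is a $\theta$-end, still in $\mathcal{C}_\theta$, with $K\cap([0,\infty)\times(-\infty,0]\times\bR^{\infty-1}) = [0,\infty)\times L$ as $\theta$-manifolds — which is the claim.

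**Main obstacle.** The genuinely delicate point is the bookkeeping needed to keep everything a $\theta$-end and in the right ambient position \emph{simultaneously} across infinitely many compatibility constraints: the isotopies on adjacent slabs must agree on the shared object $K\vert_i$, the objects must end up in standard cylindrical position rel $L$, and the whole infinite concatenation must remain a proper embedding with the $W_{1,1}$'s and their standard structures intact. None of the individual moves is hard — each is a routine application of isotopy extension plus general position in $\bR^\infty$ — but organising the induction so that the $i$-th step does not disturb the normalisations achieved in steps $<i$ (hence the "relative to $K\vert_0$" and the slab-by-slab structure) is where care is required. A clean way to do it is to perform the $i$-th isotopy supported in $(i-\tfrac12,\infty)\times\bR^\infty$ and fixing a neighbourhood of $\{i-\tfrac12\}\times\bR^\infty$ pointwise, so that once the first $i$ slabs are normalised they stay normalised; properness and the gluing then follow formally.
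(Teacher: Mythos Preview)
There is a genuine gap. You propose to push $\supp(K\vert_{[i,i+1]})$ into the positive half-space by an ambient isotopy of $\bR^\infty$ fixing a neighbourhood of $L$, appealing to infinite codimension. But the support may \emph{intersect} $L$: if a handle of $K\vert_{[i,i+1]}$ is attached along a sphere lying in $L \subset K\vert_i$, then the cobordism is non-cylindrical over that region of $L$, so the support meets $L$ there, and no isotopy fixing a neighbourhood of $L$ can move those points into the positive half. The infinite codimension of $L$ in $\bR^\infty$ is irrelevant; the obstruction is intrinsic to the cobordism---it is whether its handle attaching maps can be isotoped \emph{inside} $K\vert_i$ to miss $L$. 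Your auxiliary claim that ``$\partial L \hookrightarrow K\vert_i$ extends to $L \hookrightarrow K\vert_i$ by hypothesis on the Morse function'' is also only justified for $i=0$; the Morse function was chosen on $K\vert_0$, and the embedding of $L$ into later $K\vert_i$ is part of what must be proved inductively.

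The paper's proof supplies exactly this missing handle-theoretic step. Since $(K\vert_{[i,i+1]}, K\vert_{i+1})$ is $(n-1)$-connected, handle-trading (for $2n \geq 6$; with the usual modifications in low dimensions, using the infinite supply of $S^n\times S^n$ summands for $2n=4$) produces a handle structure on $K\vert_{[i,i+1]}$ relative to $K\vert_i$ with handles of index $\leq n$ only. Their attaching spheres have dimension $\leq n-1$, and since $L$ itself is built from $(\leq n-1)$-handles, general position in the $(2n-1)$-manifold $K\vert_i$ lets one isotope all these attaching spheres off $L$. Only after this does the cobordism visibly contain $[i,i+1]\times L$, and isotopy extension then puts it in standard position. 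Your inductive bookkeeping (slab-by-slab, rel earlier stages) is fine once this embedding is available; but the embedding itself is the content, not the packaging.
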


\begin{proof}
  Supposing that
  $K\vert_i \cap ((-\infty,0] \times \bR^{\infty-1}) = L$ as
  $\theta$-manifolds, we shall show that $[i,i+1] \times L$ may be
  embedded into $K\vert_{[i,i+1]}$ relative to $\{i\} \times L$ such
  that $\{i+1\} \times L$ lies in $K\vert_{i+1}$. The claim will then
  follow, by the isotopy extension theorem and induction.

  As $L$ is a $(2n-1)$-manifold with a handle structure only having
  $(n-1)$-handles and smaller, any embedding of a $k$-sphere into
  $K\vert_i$ can be isotoped off of $L$ as long as $k \leq n-1$.

  Let us first suppose that $2n \geq 6$. As
  $(K\vert_{[i,i+1]}, K\vert_{i+1})$ is $(n-1)$-connected, by handle
  trading as in the proof of the $s$-cobordism theorem
  (\cite{KervaireSCobordism}) we can find a handle structure on
  $K\vert_{[i,i+1]}$ relative to $K\vert_i$ having only $n$-handles
  and lower: these are attached along spheres of dimension at most
  $n-1$, so their attaching maps can be isotoped off of $L$, and hence
  $K\vert_{[i,i+1]}$ contains an embedded $[i,i+1] \times L$. The same
  argument applies for $2n=2$.

  If $2n=4$ we must modify the argument slightly. The above goes
  through after perhaps changing $K\vert_{[i,i+1]}$ by connect-sum
  with finitely many copies of $S^2 \times S^2$ so that handle-trading
  becomes available (just as in the proof of Lemma
  \ref{lem:HandleStr}). As $K$ contains countably many
  $S^2 \times S^2$-summands by property (\ref{it:end2:2}) of
  $\theta$-ends in $\mathcal{C}_\theta$, we may realise this by
  sliding in enough such summands down from $K\vert_{[i+1,\infty)}$.
\end{proof}

Once $K$ has been prepared using this lemma, we may form
$K\vert_{[i,i+1]}^\circ$, giving a $\theta$-end in
$\mathcal{C}_{\theta, \partial L}$. Let us write
$$\mathcal{C}_{\theta, \partial L, n}(A,B) \subset \mathcal{C}_{\theta, \partial L}(A,B)$$
for the subspace of those cobordisms $(W, \hat{\ell}_W)$ such that
$\ell_W : W \to B$ is $n$-connected. The weak homotopy equivalence
$$\mathcal{C}_{\theta, \partial L}(\overline{L}, K\vert_i^\circ) \overset{- \cup L}\lra \mathcal{C}_\theta(D(L), K\vert_i) \overset{- \circ V_L}\lra \mathcal{C}_\theta(\emptyset, K\vert_i)$$
of Lemma \ref{lem:ChangeOfModel} thus restricts to a map
\begin{equation}\label{eq:ChangeModel}
  \mathcal{C}_{\theta, \partial L, n}(\overline{L}, K\vert_i^\circ) \lra \mathcal{N}_n^\theta(K\vert_i)
\end{equation}
which is a weak homotopy equivalence. 

By assumption
$\mathcal{N}^\theta_n(K\vert_0) \simeq \mathcal{C}_{\theta, \partial
  L, n}(\overline{L}, K\vert_0^\circ)$ is non-empty, so let
$W^\circ : \overline{L} \leadsto K\vert_0^\circ$ lie in
$\mathcal{C}_{\theta, \partial L, n}(\overline{L}, K\vert_0^\circ)$,
choose a self-indexing Morse function $f : W \to [0,2n]$, and let
$V_0^\circ = f^{-1}([n- \tfrac{1}{2},2n]) : P_0^\circ \leadsto
K\vert_0^\circ$. By construction, $V_0^\circ$ has a handle structure
relative to $P_0^\circ$ having all handles of index at least $n$. The
cobordism
$K\vert_{[0,1]}^\circ \circ V_0^\circ : P_0^\circ \leadsto
K\vert_1^\circ$ is thus $(n-1)$-connected relative to $P_0^\circ$ and
contains an embedded $W_{1,1}$ with standard $\theta$-structure, so
after changing it by a path in
$\mathcal{C}_{\theta, \partial L}(P_0^\circ, K\vert_1^\circ)$ we may
factorise it as
$$P_0^\circ \overset{{_{P_0^\circ} H}}\leadsto P_1^\circ \overset{V_1^\circ}\leadsto K\vert_1^\circ$$
for some cobordism $V_1^\circ$, which will be $(n-1)$-connected
relative to $P_1^\circ$. Continuing in this way, we obtain a homotopy
commutative diagram
\begin{equation}\label{eq:ChangeOfEnds}
  \begin{gathered}
    \xymatrix{
      P_0^\circ \ar[r]^-{{_{P_0^\circ} H}} \ar[d]^-{V_0^\circ}& P_1^\circ \ar[r]^-{{_{P_1^\circ} H}} \ar[d]^-{V_1^\circ}& P_2^\circ \ar[r]^-{_{P_2^\circ} H} \ar[d]^-{V_2^\circ}& \cdots\\
      K\vert_0^\circ \ar[r]^-{K\vert_{[0,1]}^\circ} & K\vert_1^\circ \ar[r]^-{K\vert_{[1,2]}^\circ} & K\vert_2^\circ \ar[r]^-{K\vert_{[2,3]}^\circ} & \cdots
    }
  \end{gathered}
\end{equation}
in the category $\mathcal{C}_{\theta, \partial L}$. 

The composition
$\ell_{P_0^\circ} : P_0^\circ \hookrightarrow W^\circ
\overset{\ell_W}\to B$ has the second map $n$-connected by assumption,
and $W^\circ$ is obtained from $P_0^\circ$ by attaching cells of
dimension $n$ and higher so $P_0^\circ \hookrightarrow W^\circ$ is
$(n-1)$-connected, and hence $\ell_{P_0^\circ}$ is
$(n-1)$-connected. The inclusions
$P_i^\circ \hookrightarrow {_{P_i^\circ} H} \hookleftarrow
P_{i+1}^\circ$ are both $(n-1)$-connected, so it follows that all
$\ell_{P_i^\circ}$ are $(n-1)$-connected.

Applying $\mathcal{C}_{\theta, \partial L, n}(\overline{L}, -)$ to
\eqref{eq:ChangeOfEnds}, we obtain a homotopy commutative diagram of
spaces, and choosing homotopies making each square commute gives a map
\begin{equation}\label{eq:ChangeOfP}
  \hocolim_{i \to \infty} \mathcal{C}_{\theta, \partial L, n}(\overline{L},P_i^\circ) \lra \hocolim_{i \to \infty} \mathcal{C}_{\theta, \partial L, n}(\overline{L},K\vert_i^\circ).
\end{equation}
It follows from Lemma \ref{lem:LadderHtpies} that the property of this
map being an abelian homology equivalence is independent of this
choice of homotopies.

\begin{lemma}
  The map \eqref{eq:ChangeOfP} is an abelian homology equivalence.
\end{lemma}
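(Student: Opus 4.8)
The plan is first to recognise both sides of~\eqref{eq:ChangeOfP} as instances of the construction of Definition~\ref{defn:ThetaEnd}. The codomain is $\mathcal{F}(\overline{L})=\hocolim_i\mathcal{C}_{\theta,\partial L,n}(\overline{L},K\vert^\circ_i)$ for the $\theta$-end $\{K\vert^\circ_i\}$, while the domain is the analogous homotopy colimit for the composable sequence $P_0^\circ\overset{{_{P_0^\circ}H}}\leadsto P_1^\circ\overset{{_{P_1^\circ}H}}\leadsto\cdots$; this sequence is a $\theta$-end in $\mathcal{C}_{\theta,\partial L}$ because each ${_{P_i^\circ}H}$ is obtained from a cylinder by a boundary connect-sum with a copy of $W_{1,1}$ carrying a standard $\theta$-structure, hence is $(n-1)$-connected relative to both of its ends and contains an embedded standard $W_{1,1}$ in a path component meeting $\partial L$. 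The comparison map is induced by postcomposition with the $V_i^\circ$; it is well defined on homotopy colimits because the squares of~\eqref{eq:ChangeOfEnds} homotopy-commute, and by Lemma~\ref{lem:LadderHtpies} the property of being an abelian homology equivalence does not depend on the choice of those homotopies. That $V_i^\circ\circ-$ preserves the subspaces of cobordisms with $n$-connected structure map is the usual connectivity argument, using that $(V_i^\circ,P_i^\circ)$ is $(n-1)$-connected.

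To prove the map is an abelian homology equivalence I would construct an approximate inverse. First I would observe that each $\ell_{V_i^\circ}$ is $n$-connected: $\ell_{V_0^\circ}$ is the composite of the $n$-connected inclusion $V_0^\circ\hookrightarrow W^\circ$ with the $n$-connected map $\ell_{W^\circ}$, and inductively $\ell_{V_{i+1}^\circ}$ is $n$-connected because $V_{i+1}^\circ\hookrightarrow K\vert_{[i,i+1]}^\circ\circ V_i^\circ$ is $(n-1)$-connected while $\ell_{K\vert_{[i,i+1]}^\circ\circ V_i^\circ}$ is $n$-connected. Next, fixing a handle decomposition of $V_i^\circ$ relative to $P_i^\circ$ with all handles of index $\geq n$, I would build a cobordism $\tilde V_i\colon K\vert^\circ_i\leadsto P^\circ_i$, with suitable $\theta$-structure, by attaching to $[0,1]\times K\vert^\circ_i$, in the appropriate order, a handle cancelling each handle of $V_i^\circ$; these cancelling handles all have index $\geq n+1$, so $K\vert^\circ_i\hookrightarrow\tilde V_i$ is $n$-connected. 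Since $\ell_{P_i^\circ}$ is $(n-1)$-connected, any $\theta$-cobordism out of $\overline{L}$ with $n$-connected structure map automatically lies in $\cob$, so postcomposition with $\tilde V_i$ gives a map $\mathcal{F}(\overline{L})\to\hocolim_i\mathcal{C}_{\theta,\partial L,n}(\overline{L},P_i^\circ)$. By iterated handle cancellation $\tilde V_i\circ V_i^\circ$ is diffeomorphic relative to $P_i^\circ$ to the cylinder, hence an isomorphism in $\mathcal{C}_\theta$; so the composite self-map of $\hocolim_i\mathcal{C}_{\theta,\partial L,n}(\overline{L},P_i^\circ)$ is a weak homotopy equivalence, and in particular~\eqref{eq:ChangeOfP} is split injective on homology with every abelian coefficient system.

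It then remains to see that the opposite composite, postcomposition with $V_i^\circ\circ\tilde V_i\colon K\vert^\circ_i\leadsto K\vert^\circ_i$, induces an abelian homology equivalence on $\mathcal{F}(\overline{L})$; this is the step I expect to be the main obstacle. This cobordism is no longer a cylinder, but all of its handles relative to the incoming $K\vert^\circ_i$ have index $\geq n$, and the plan is to run an interchange-of-support argument in the spirit of the proof of Lemma~\ref{lem:InterchangeOfSupport}, stabilising by enough of the $\theta$-end that the handles not cancelled off can be paired, using the standard $W_{1,1}$'s supplied by the $K\vert_{[i,i+1]}^\circ$ and Lemma~\ref{lem:StdIsUnique}, into standard copies of $W_{1,1}$; after doing so, postcomposition with $V_i^\circ\circ\tilde V_i$ becomes, on homotopy colimits, homotopic to a composition of stabilisations of the type treated in Theorem~\ref{thm:SnSnStability}, hence an abelian homology equivalence. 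It is precisely here that the passage to the limit, and the infinite supply of standard $W_{1,1}$'s in the $\theta$-end, are used. Granting this, the two composites together with the $2$-out-of-$3$ property of abelian homology equivalences (Lemma~\ref{lem:W2outof3}) show that~\eqref{eq:ChangeOfP} is an abelian homology equivalence.
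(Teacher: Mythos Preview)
Your construction of the inverse $\tilde V_i$ in the second paragraph does not work.  You propose to ``attach a handle cancelling each handle of $V_i^\circ$'', but a $k$-handle of $V_i^\circ$ (relative to $P_i^\circ$) admits a cancelling $(k+1)$-handle only if its belt sphere in $K\vert_i^\circ$ bounds a suitably embedded disc there, and there is no reason this should happen.  Indeed, if a $\tilde V_i$ with $\tilde V_i\circ V_i^\circ$ a cylinder existed then $V_i^\circ$ would be an $h$-cobordism, which it certainly need not be: $P_i^\circ$ and $K\vert_i^\circ$ are in general not even homotopy equivalent.  So the split injection you claim is illusory, and the ``main obstacle'' you anticipate in the third paragraph never arises because the setup preceding it is already broken.

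The paper's argument avoids any attempt to invert the $V_i^\circ$.  Instead it introduces a \emph{second} direction of stabilisation, by morphisms $\overline{L}\vert_{[-j-1,-j]}\colon \overline{L}_{-j-1}\leadsto\overline{L}_{-j}$ obtained by boundary connect-sum with $W_{1,1}$, and considers the square
\[
\xymatrix{
\hocolim\limits_{i}\mathcal{C}_{\theta,\partial L,n}(\overline{L},P_i^\circ)\ar[r]\ar[d] & \hocolim\limits_{i}\mathcal{C}_{\theta,\partial L,n}(\overline{L},K\vert_i^\circ)\ar[d]\\
\hocolim\limits_{j}\hocolim\limits_{i}\mathcal{C}_{\theta,\partial L,n}(\overline{L}_{-j},P_i^\circ)\ar[r] & \hocolim\limits_{j}\hocolim\limits_{i}\mathcal{C}_{\theta,\partial L,n}(\overline{L}_{-j},K\vert_i^\circ).
}
\]
The vertical maps are abelian homology equivalences by Theorem~\ref{thm:StabStab} applied to the $\theta$-ends $\{P_i^\circ\}$ and $\{K\vert_i^\circ\}$ (the $\overline{L}\vert_{[-j-1,-j]}$ are precisely the morphisms $H_P$ treated in Theorem~\ref{thm:SnSnStability}).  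For the bottom map one swaps the order of the colimits: for each fixed $i$ the map is postcomposition with $V_i^\circ$ on $\hocolim_j\mathcal{C}_{\theta,\partial L,n}(\overline{L}_{-j},-)$, which is an abelian homology equivalence by the left--right symmetric analogue of Theorem~\ref{thm:StabStab} (stabilisation on the source rather than the target).  The top map is then an abelian homology equivalence by $2$-out-of-$3$.  No inverse to the $V_i^\circ$ is needed.
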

\begin{proof}
  Similarly to the proof of Proposition \ref{prop:StableHomologyBad}
  we consider the commutative square
  \begin{equation*}
    \xymatrix{
      \hocolim\limits_{i \to \infty} \mathcal{C}_{\theta, \partial L, n}(\overline{L},P_i^\circ) \ar[r] \ar[d] & \hocolim\limits_{i \to \infty} \mathcal{C}_{\theta, \partial L, n}(\overline{L},K\vert_i^\circ) \ar[d]\\
      \hocolim\limits_{j \to \infty} \hocolim\limits_{i \to \infty} \mathcal{C}_{\theta, \partial L, n}(\overline{L}_{-j}, P_i^\circ) \ar[r] & \hocolim\limits_{j \to \infty} \hocolim\limits_{i \to \infty} \mathcal{C}_{\theta, \partial L, n}(\overline{L}_{-j}, K\vert_i^\circ)
    }
  \end{equation*}
  where
  $\cdots \leadsto \overline{L}\vert_{-2}
  \overset{\overline{L}\vert_{[-2,-1]}}\leadsto \overline{L}\vert_{-1}
  \overset{\overline{L}\vert_{[-1,0]}}\leadsto \overline{L}\vert_0 =
  \overline{L}$ is a sequence of cobordisms formed by letting
  $\overline{L}\vert_{[-i-1,-i]}$ be obtained from
  $[0,1] \times \overline{L}\vert_{-i}$ by boundary connect-sum with
  $W_{1,1}$ at $\{0\} \times \overline{L}\vert_{-i}$.

  The claim then follows as the vertical maps are abelian homology
  equivalences by Theorem \ref{thm:StabStab}, and the lower map is an
  abelian homology equivalence by the analogue of Theorem
  \ref{thm:StabStab} in which stabilisation is formed on the left.
\end{proof}

If
$W^\circ \in \mathcal{C}_{\theta, \partial L,
  n}(\overline{L},P_i^\circ)$ then the map $\ell_{W^\circ}$ is
$n$-connected and $\ell_{P_i^\circ}$ is $(n-1)$-connected. Thus
$(W^\circ, P_i^\circ)$ is $(n-1)$-connected, and so
$W^\circ \in \cob_n(\overline{L},P_i^\circ)$, and hence
$\mathcal{C}_{\theta, \partial L, n}(\overline{L},P_i^\circ) =
\cob_{n}(\overline{L},P_i^\circ)$. The argument is now completed using
the commutative diagram
\begin{equation*}
  \xymatrix{
    \hocolim\limits_{i \to \infty} \mathcal{C}_{\theta, \partial L, n}(\overline{L},P_i^\circ) \ar[r]& \hocolim\limits_{i \to \infty} \mathcal{C}_{\theta, \partial L, n}(\overline{L},K\vert_i^\circ) \ar[dd] \ar[r]^-\simeq& \hocolim\limits_{i \to \infty} \mathcal{N}_{n}^\theta(K\vert_i) \ar[dd]\\
    \hocolim\limits_{i \to \infty} \cob_{ n}(\overline{L},P_i^\circ) \ar@{=}[u] \ar[d]&\\
    \hocolim\limits_{i \to \infty} \Omega_{[\overline{L}, P_i^\circ]} B\mathcal{C}_{\theta, \partial L}  \ar[r]^-\simeq& \hocolim\limits_{i \to \infty} \Omega_{[\overline{L}, K\vert_i^\circ]} B\mathcal{C}_{\theta, \partial L} \ar[r]^-\simeq& \hocolim\limits_{i \to \infty} \Omega_{[\emptyset, K\vert_i]} B\mathcal{C}_{\theta}.
  }
\end{equation*}
Consider the horizontal maps: the left hand top map is the abelian
homology equivalence \eqref{eq:ChangeOfP}; the right hand top map is
given levelwise by the weak homotopy equivalences
\eqref{eq:ChangeModel}; the left hand bottom map is given levelwise by
concatenation of paths, so is a weak homotopy equivalence; the right
hand bottom map is given by the weak homotopy equivalence
$B\mathcal{C}_{\theta, \partial L} \cong B\mathcal{C}_{\theta, L} \to
B\mathcal{C}_\theta$ and concatenation of paths. Finally, the left
hand vertical map is an abelian homology equivalence by Proposition
\ref{prop:StableHomologyBad} and the weak homotopy equivalence
$B\cob \to B\mathcal{C}_{\theta, \partial L}$ from
Theorem~\ref{thm:morphism-surgery-from-acta-paper}.  Thus, it is at
this point that we rely on a major technical result of
\cite{GR-W2}. It follows that the right hand vertical map is also an
abelian homology equivalence, which finishes the proof of Theorem
\ref{thm:StableHomologyBetter}.

Finally, we remind the reader of the main theorem of \cite{GMTW},
concerning the homotopy type of $B\mathcal{C}_\theta$: there is a weak
homotopy equivalence
\begin{equation*}
  B\mathcal{C}_\theta \simeq \Omega^{\infty-1} \MTtheta,
\end{equation*}
defined using a parametrised Pontryagin--Thom construction. This
allows us to translate Theorem \ref{thm:StableHomologyBetter} into the
form given in Theorem \ref{thm:StableHomology}.

\section{Finite genus and stability for closed manifolds}
\label{sec:finite-genus-stab}

In this section we shall combine Theorems \ref{thm:Main} and
\ref{thm:StableHomology} with our results from \cite{GR-W3} to prove
Corollaries \ref{cor:Main1} and \ref{cor:Main2}.

\subsection{Finite genus and homological stability}

By Remark \ref{rem:StdVsAdmissible}, the definition of $\theta$-genus
given in Section \ref{sec:intro:finitegenus} is equivalent to
$$g^\theta(W) = \max\left\{g \in \bN \,\,\bigg|\,\, \parbox{18em}{there are $g$ disjoint copies of $W_{1,1}$ in $W$,\\ each with standard $\theta$-structure}\right\},$$
and if $W$ has non-empty boundary then we have defined the
\emph{stable $\theta$-genus} to be
$$\bar{g}^\theta(W) = \max\{g^\theta(W \natural k(W_{1,1}))-k\,\,|\,\, k \in \bN \}.$$
If $W$ is a closed $\theta$-manifold, we now define
$\bar{g}^\theta(W)$ to be $\bar{g}^\theta(W \setminus D^{2n})$.

\begin{definition}
  A \emph{graded space} is a pair $(X,h_X)$ of a space $X$ and a
  continuous map $h_X : X \to \bZ$. We write $X_{h_X=n} = h_X^{-1}(n)$
  for the subspace of degree $n$. A (degree zero) map
  $f : (X, h_X) \to (Y, h_Y)$ of graded spaces is a continuous map
  $f : X \to Y$ such that $h_Y \circ f = h_X$. Similarly, a degree $k$
  map is a continuous map $f : X \to Y$ such that
  $h_Y \circ f = h_X + k$. The homology of a graded space $(X, h_X)$
  with any system of local coefficients $\mathcal{L}$ on $X$ acquires
  an extra grading
  $$H_i(X;\mathcal{L}) = \bigoplus_{n\geq 0} H_i(X;\mathcal{L})_{h_X=n},$$
  where $H_i(X;\mathcal{L})_{h_X=n} =
  H_i(X_{h_X=n};\mathcal{L})$. Maps of graded spaces respect this
  additional grading (a degree $k$ map induces a map with a shift of
  $k$).
\end{definition}

We may use the function
$\bar{g}^\theta : \mathcal{N}^\theta_n(P) \to \bZ$ to grade the space
$\mathcal{N}^\theta_n(P)$.  Recall that in Section \ref{sec:LeftRight}
we have defined for each $\theta$-manifold $P$ a bordism
${_P H}: P \to P'$ whose outgoing boundary $P'$ is diffeomorphic to
$P$ but possibly with a different $\theta$-structure.  The map
$- \circ {_P H} : \mathcal{N}^\theta_n(P) \to
\mathcal{N}^\theta_n(P')$ then has degree 1 with respect to this new
grading, and from \cite[Theorem 7.5]{GR-W3} we deduce the following.

\begin{theorem}\label{thm:UnstableStability}
  If $2n \geq 6$, $B$ is simply-connected, and $\mathcal{L}$ is an
  abelian local coefficient system on $\mathcal{N}^\theta_n(P')$, then
  the map
  $$(- \circ {_P H})_* :H_i(\mathcal{N}^\theta_n(P);(- \circ {_P H})^*\mathcal{L})_{\bar{g}^\theta=g} \lra H_i(\mathcal{N}^\theta_n(P');\mathcal{L})_{\bar{g}^\theta=g+1}$$
  is an epimorphism if $3i \leq {g-1}$, and an isomorphism if
  $3i \leq {g-4}$ (if $\mathcal{L}$ is constant and $\theta$ is
  spherical, it is an epimorphism if $2i \leq {g-1}$, and an
  isomorphism if $2i \leq {g-3}$).
\end{theorem}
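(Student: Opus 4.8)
The statement is Theorem~\ref{thm:UnstableStability}, and the excerpt explicitly signals that it should be ``deduced from \cite[Theorem 7.5]{GR-W3}''. So the plan is not to reprove homological stability from scratch, but to set up a comparison between the map $-\circ{}_PH$ studied here and the stabilisation map to which \cite[Theorem 7.5]{GR-W3} applies, and then to quote that result. First I would recall the precise form of \cite[Theorem 7.5]{GR-W3}: it is a homological stability statement for the moduli spaces of the previous paper, in which one glues on a copy of $W_{1,1}$ (equivalently, stabilises the genus by one), with a stability range of slope $3$ in general and slope $2$ when the coefficients are constant and $\theta$ is spherical, and ranges $3i\le g-1$ / $3i\le g-4$ (resp.\ $2i\le g-1$ / $2i\le g-3$) for surjectivity / isomorphism, exactly matching what is stated here. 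The content to be checked is that the map $-\circ{}_PH\colon \mathcal{N}^\theta_n(P)\to\mathcal{N}^\theta_n(P')$ of the present paper, graded by $\bar g^\theta$, agrees — as a map of graded spaces, up to the appropriate weak equivalences — with the stabilisation-by-$W_{1,1}$ map of \cite{GR-W3}.

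\textbf{Key steps.} (1) Identify the point-set models: by Definition~\ref{defn:NN} and the discussion preceding it, $\mathcal{N}^\theta_n(P)$ is weakly equivalent to $\coprod_{[W]}\Bun^\theta_{n,\partial}(TW;\hat\ell_P)\hcoker\Diff_\partial(W)$, i.e.\ to the union of the spaces $\mathcal{M}^\theta_n(W,\hat\ell_W)$ over diffeomorphism classes, which is (a homotopy-theoretic repackaging of) the moduli space appearing in \cite{GR-W3}. (2) Check that gluing on ${}_PH$ — which by construction is $[0,1]\times P$ with a boundary-connect-sum of a standardly-$\theta$-structured $W_{1,1}$ at $\{1\}\times P$ — induces, on each path component, precisely the map $W\mapsto W\natural W_{1,1}$ with the $W_{1,1}$ carrying a standard (equivalently, by Remark~\ref{rem:StdVsAdmissible}, admissible) $\theta$-structure; this is the stabilisation map of \cite{GR-W3}. (3) Verify the grading statement: $\bar g^\theta$ of Section~\ref{sec:intro:finitegenus} is exactly the stable genus invariant of \cite{GR-W3}, and forming a boundary-connect-sum with $W_{1,1}$ carrying a standard structure raises $\bar g^\theta$ by exactly $1$ (from the definition $\bar g^\theta(W)=\max_k\{g^\theta(W\natural k\,W_{1,1})-k\}$ one has $\bar g^\theta(W\natural W_{1,1})=\bar g^\theta(W)+1$), so $-\circ{}_PH$ is a degree-$1$ map of graded spaces. (4) Apply Lemma~\ref{lem:StdIsUnique} to see that the choice of standard $\theta$-structure on the glued $W_{1,1}$ does not affect the map up to homotopy, so the statement is independent of the choices made in defining ${}_PH$. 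Then the conclusion — epimorphism for $3i\le g-1$, isomorphism for $3i\le g-4$, and the improved slope-$2$ ranges under the spherical-plus-constant hypothesis — is exactly the transported statement of \cite[Theorem 7.5]{GR-W3}, valid under the hypotheses $2n\ge6$ and $B$ simply-connected required there.

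\textbf{Main obstacle.} The only real work is bookkeeping: matching the ambient/embedded cobordism-category model used in this paper (manifolds in $\bR\times\bR^\infty$, the $(-)^\circ$ and $\langle P,Q\rangle$ formalism, the Borel-construction identification via \cite{MR613004}) with the abstract moduli spaces of \cite{GR-W3}, and checking that the stabilisation maps correspond on the nose up to the relevant weak equivalences rather than merely being ``morally the same''. One must also be careful that $P'$ genuinely has underlying manifold diffeomorphic to $P$ (it does, since ${}_PH$ is a $W_{1,1}$-stabilised cylinder) so that the two moduli spaces being compared are of the expected type, and that the local coefficient system $\mathcal{L}$ on $\mathcal{N}^\theta_n(P')$ pulls back compatibly — here the hypothesis that $\mathcal{L}$ is \emph{abelian} is what makes the comparison of coefficient systems harmless, exactly as in the abelian-coefficient bookkeeping used repeatedly in Section~\ref{sec:Simplifications}. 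I do not expect any genuinely new homotopy-theoretic input; the proof is a reduction, and the delicate part is making the identifications precise enough to invoke \cite[Theorem 7.5]{GR-W3} verbatim.
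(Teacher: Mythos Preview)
Your proposal is correct and matches the paper's approach exactly: the paper gives no separate proof, simply stating that the theorem is deduced from \cite[Theorem 7.5]{GR-W3}, and your outline fills in precisely the identifications (of point-set models, of the stabilisation map, of the stable-genus grading) needed to make that citation apply. The bookkeeping you describe is the correct content of the deduction.
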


\subsection{Proof of Corollary \ref{cor:Main1} for $P \neq \emptyset$}

We wish to combine Theorem \ref{thm:UnstableStability} with Theorem
\ref{thm:Main} in order to deduce that for $M : P \leadsto Q$ a
cobordism which is $(n-1)$-connected relative to $Q$ and has
$P \neq \emptyset$, the map
$$- \circ M : \mathcal{N}^\theta_n(Q) \lra \mathcal{N}^\theta_n(P)$$
induces an isomorphism in homology in a particular range of
homological degrees. However, it is easy to see that
$\bar{g}^\theta(- \circ M) - \bar{g}^\theta(-)$ need not be constant,
so we shall need another grading of $\mathcal{N}^\theta_n(P)$ for this
to be a graded map.

\begin{lemma}\label{lem:ModifiedGenus}
  Let $M : P \leadsto Q$ be a $\theta$-cobordism which is
  $(n-1)$-connected relative to $Q$. Then there is a function
  $$\bar{g}^\theta_M : \pi_0(\mathcal{N}^\theta_n(P)) \lra \bZ$$
  such that $\bar{g}^\theta_M(- \circ M) = \bar{g}^\theta(-)$,
  $\bar{g}^\theta_M(- \circ {_P H}) = \bar{g}^\theta_M(-)+1$, and
  $\bar{g}_M^\theta(-) \leq \bar{g}^\theta(-)$.
\end{lemma}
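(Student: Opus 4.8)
The plan is to define $\bar g^\theta_M$ componentwise, by transporting the stable genus backwards along $M$ and making a compensating correction for the defect. Since $M : P \leadsto Q$ is $(n-1)$-connected relative to $Q$, composition with $M$ gives a map $-\circ M : \mathcal{N}^\theta_n(Q) \to \mathcal{N}^\theta_n(P)$ on path components, but this map is neither injective nor surjective on $\pi_0$ in general; however, the key observation is that \emph{stably} it becomes a bijection. First I would recall from Theorem~\ref{thm:Main} (or rather its proof via Theorem~\ref{thm:StabStab}) that after stabilising by ${}_{(-)}H$ infinitely often, $-\circ M$ induces an isomorphism on homology, and in particular a bijection on path components of the homotopy colimits $\hocolim_{g\to\infty}\mathcal{N}^\theta_n(P^{(g)})$. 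Thus for each component $\mathcal{C}$ of $\mathcal{N}^\theta_n(P)$ there is a well-defined ``eventual image'' component: a component $\mathcal{C}'$ of some $\mathcal{N}^\theta_n(Q^{(g)})$ mapping into $\mathcal{C}\circ ({}_PH)^{\circ g}$ under $-\circ M^{(g)}$. I would then \emph{define}
$$\bar g^\theta_M(\mathcal{C}) = \bar g^\theta(\mathcal{C}') - g$$
where $g$ is chosen large enough that such a $\mathcal{C}'$ exists (and check this is independent of $g$, using that stabilising $Q$ by ${}_QH$ raises both $\bar g^\theta$ and the shift by $1$).

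The three required properties should then follow fairly directly from the construction. For $\bar g^\theta_M(-\circ M) = \bar g^\theta(-)$: a component $\mathcal{C}_Q$ of $\mathcal{N}^\theta_n(Q)$ maps to the component $\mathcal{C}_Q\circ M$ of $\mathcal{N}^\theta_n(P)$, which already \emph{is} in the eventual image at level $g=0$, so the formula gives $\bar g^\theta(\mathcal{C}_Q)$ on the nose — here one uses that $\bar g^\theta$ is a stable invariant, unchanged by applying $-\circ M$ followed by the stabilisation bijection, which is precisely the content of the previous lemmas identifying stabilised moduli spaces. For $\bar g^\theta_M(-\circ {}_PH) = \bar g^\theta_M(-)+1$: stabilising the source by ${}_PH$ shifts the whole picture up by one level, so the eventual-image component gets replaced by its ${}_QH$-stabilisation, whose $\bar g^\theta$ is one larger, while the level shift $g$ increases by one — net effect $+1$. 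For the inequality $\bar g^\theta_M(-)\le \bar g^\theta(-)$: this is the statement that $\bar g^\theta$ does not decrease under $-\circ M$ up to the level shift, i.e. that gluing on the $(n-1)$-connected cobordism $M$ cannot destroy admissible copies of $W_{1,1}$ faster than stabilisation creates them; concretely, $g$ disjoint admissible $W_{1,1}$'s in $W$ give $g$ disjoint admissible $W_{1,1}$'s in $W\circ M$ (they can be pushed off the support of $M$), so $\bar g^\theta(\mathcal{C}'\circ M^{(g)}) \ge \bar g^\theta(\mathcal{C}')$, which rearranges to the claim.

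The main obstacle I anticipate is \emph{well-definedness}: showing that the ``eventual image'' component $\mathcal{C}'$ is genuinely well-defined (the stabilisation maps on $\pi_0$ must stabilise, and the resulting bijection on $\pi_0(\hocolim)$ must be canonical), and that the value $\bar g^\theta(\mathcal{C}')-g$ does not depend on the auxiliary choice of $g$. This requires knowing that $-\circ M$ is eventually a $\pi_0$-bijection after stabilisation, which is a consequence of Theorem~\ref{thm:Main} (an abelian homology equivalence is in particular an $H_0$-isomorphism, hence a $\pi_0$-bijection onto its image, and one must argue surjectivity onto components separately — this is where one uses that every component of $\mathcal{N}^\theta_n(P)$ eventually ``comes from'' $Q$ after adding enough handles, which can be seen by handle-trading as in Lemma~\ref{lem:HandleStr}). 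A secondary technical point is checking compatibility of the level shift: that applying $-\circ{}_PH$ on $P$ corresponds under the stabilisation bijection to applying $-\circ{}_QH$ on $Q$, which follows from the homotopy commutativity of squares of the shape in diagram~\eqref{eq:IntroStabMap} (the diffeomorphism \eqref{eq:diffeo}).
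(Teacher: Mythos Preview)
Your approach is essentially the paper's: define $\bar g^\theta_M$ by transporting $\bar g^\theta$ backwards through the stable $\pi_0$-bijection induced by $-\circ M$ and subtracting the level shift. The paper packages this as the composite
\[
\pi_0(\mathcal{N}^\theta_n(P)) \lra \colim_{g} \pi_0(\mathcal{N}^\theta_n(P, \hat\ell_P^{(g)})) \overset{\sim}{\longleftarrow} \colim_{g} \pi_0(\mathcal{N}^\theta_n(Q, \hat\ell_Q^{(g)})) \xrightarrow{\bar g^\theta} \colim_{+1}\bZ \cong \bZ,
\]
which absorbs your well-definedness check into the colimit formalism (the fact that $\bar g^\theta(-\natural W_{1,1}) = \bar g^\theta(-)+1$ is exactly what makes $\bar g^\theta$ descend to the shifted colimit of $\bZ$'s). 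Your verification of the three properties, including the inequality via $\bar g^\theta(W') \le \bar g^\theta(W'\circ M)$, is the same as the paper's.

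Two small corrections. First, your worry about surjectivity on $\pi_0$ is unnecessary: an abelian homology equivalence is in particular an isomorphism on $H_0(-;\bZ) \cong \bZ[\pi_0(-)]$, which already forces a bijection on $\pi_0$ --- no separate handle-trading argument is needed. Second, Theorem~\ref{thm:Main} as stated requires $P \neq \emptyset$ (the construction of the stabilisation maps uses an embedded path through $M$). The paper handles $P = \emptyset$ by factoring $M$ as $\emptyset \overset{D^{2n}}\leadsto S^{2n-1} \overset{M'}\leadsto Q$ and pulling $\bar g^\theta_{M'}$ back along the bijection $-\circ D^{2n}: \pi_0(\mathcal{N}^\theta_n(S^{2n-1})) \to \pi_0(\mathcal{N}^\theta_n(\emptyset))$.
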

\begin{proof}
  Let us first suppose that $P \neq \emptyset$. Then Theorem
  \ref{thm:Main} applies, and the map
  $$- \circ M : \hocolim_{g \to \infty} \mathcal{N}^\theta_n(Q, \hat{\ell}_Q^{(g)}) \lra \hocolim_{g \to \infty} \mathcal{N}^\theta_n(P, \hat{\ell}_P^{(g)})$$
  is a homology equivalence so in particular a bijection on
  $\pi_0$. Furthermore, as
  $\bar{g}^\theta(W \natural W_{1,1}) = \bar{g}^\theta(W)+1$, there is
  an induced function
  $$\bar{g}^\theta: \colim_{g \to \infty} \pi_0(\mathcal{N}^\theta_n(Q, \hat{\ell}_Q^{(g)})) \lra \colim_{+1} \bZ \cong \bZ.$$
  We may therefore define $\bar{g}^\theta_M$ by
  $$\bar{g}^\theta_M : \pi_0(\mathcal{N}^\theta_n(P)) \lra \colim_{g \to \infty} \pi_0(\mathcal{N}^\theta_n(P, \hat{\ell}_P^{(g)})) \overset{\sim}\longleftarrow \colim_{g \to \infty} \pi_0(\mathcal{N}^\theta_n(Q, \hat{\ell}_Q^{(g)})) \overset{\bar{g}^\theta}\lra \bZ,$$
  which satisfies $\bar{g}^\theta_M(W \circ M) = \bar{g}^\theta(W)$ by
  construction. That
  $\bar{g}^\theta_M(- \circ {_P H}) = \bar{g}^\theta_M(-)+1$ follows
  immediately from the same property of $\bar{g}^\theta(-)$. By the
  definition of $\bar{g}^\theta_M$, when
  $W \circ k (_PH) = W' \circ M$ for some $W'$ then we have
  $$\bar{g}^\theta_M(W) = \bar{g}^\theta(W')-k \leq \bar{g}^\theta(W' \circ M)-k = \bar{g}^\theta(W \circ k (_PH))-k = \bar{g}^\theta(W).$$

  If $P=\emptyset$ then we factor $M$ as
  $M : \emptyset \overset{D^{2n}}\leadsto S^{2n-1}
  \overset{M'}\leadsto Q$, and define
  $$\bar{g}^\theta_M : \pi_0(\mathcal{N}^\theta_n(\emptyset))
  \underset{\sim}{\overset{- \circ D^{2n}}\longleftarrow}
  \pi_0(\mathcal{N}^\theta_n(S^{2n-1}))
  \overset{\bar{g}^\theta_{M'}}\lra \bZ,$$ which also satisfies
  $\bar{g}^\theta_M(W \circ M) = \bar{g}^\theta(W)$, by the way we
  have defined the stable $\theta$-genus of a closed manifold.
\end{proof}

If we use the function
$\bar{g}^\theta_M : \mathcal{N}^\theta_n(P) \to \bZ$ to grade
$\mathcal{N}^\theta_n(P)$, then by Lemma \ref{lem:ModifiedGenus} we
have an induced map
\begin{equation}\label{eq:GeneralStabMap}
  (- \circ M)_* : H_i(\mathcal{N}^\theta_n(Q);(- \circ M)^*\mathcal{L})_{\bar{g}^\theta=g} \lra H_i(\mathcal{N}^\theta_n(P);\mathcal{L})_{\bar{g}^\theta_M=g}
\end{equation}
for each local coefficient system $\mathcal{L}$ on $M$. The following
proves Corollary \ref{cor:Main1} in the case $P \neq \emptyset$.

\begin{proposition}\label{prop:cor:MainPrelim}
  If $2n \geq 6$ and $B$ is simply-connected, then for any system of
  abelian coefficients $\mathcal{L}$ on $\mathcal{N}_n^\theta(P)$ and
  any cobordism $M : P \leadsto Q$ which is $(n-1)$-connected relative
  to $Q$ and which has $P \neq \emptyset$, the map
  \eqref{eq:GeneralStabMap} is an isomorphism
  \begin{enumerate}[(i)]
  \item for $2i \leq {g-3}$ if $\mathcal{L}$ is constant and $\theta$ is spherical, or
  \item for $3i \leq {g-4}$ if $\mathcal{L}$ is constant, or
  \item\label{it:cor:MainPrelim:3} for $3i \leq {g-4}$ if
    $\mathcal{L}$ extends to
    $\hocolim\limits_{h \to \infty} \mathcal{N}^\theta_n(P,
    \hat{\ell}_P^{(h)})$.
  \end{enumerate}
  The extension condition in (\ref{it:cor:MainPrelim:3}) always holds
  for $g \geq 7$, and for $g \geq 5$ if $\theta$ is spherical.
\end{proposition}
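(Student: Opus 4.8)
The plan is to deduce this from the \emph{stable} comparison theorem, Theorem~\ref{thm:Main}, controlling the difference between the unstable and stable situations by the homological stability statement Theorem~\ref{thm:UnstableStability}. Write ${_P H} : P \leadsto P'$ for the stabilisation cobordism of Section~\ref{sec:LeftRight} and put
\[
\mathcal{N}^\theta_n(P)^\infty = \hocolim_{g \to \infty} \mathcal{N}^\theta_n(P, \hat{\ell}_P^{(g)}) = \hocolim\bigl(\mathcal{N}^\theta_n(P) \xrightarrow{- \circ {_P H}} \mathcal{N}^\theta_n(P') \lra \cdots\bigr),
\]
and likewise for $Q$. The homotopy-commutative squares~\eqref{eq:IntroStabMap} induce a map $(- \circ M)^\infty : \mathcal{N}^\theta_n(Q)^\infty \to \mathcal{N}^\theta_n(P)^\infty$ which, by Theorem~\ref{thm:Main}, is an isomorphism on homology with all abelian systems of local coefficients (by Lemma~\ref{lem:LadderHtpies} this property does not depend on the homotopies used to construct it). In cases (i) and (ii) the system $\mathcal{L}$ is constant, and in case (iii) it is by hypothesis pulled back from $\mathcal{N}^\theta_n(P)^\infty$; so in every case $\mathcal{L}$, and hence also $(- \circ M)^*\mathcal{L}$, is pulled back from the relevant infinite stabilisation, and in particular is compatible with all the maps $- \circ {_P H}$ and $- \circ {_Q H}$, which is what is needed in order to apply Theorem~\ref{thm:UnstableStability} with these coefficients.

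First I would record stability on the $Q$-side. By Theorem~\ref{thm:UnstableStability} the degree-one map $- \circ {_Q H}$ induces an isomorphism on $H_i(-;(- \circ M)^*\mathcal{L})_{\bar{g}^\theta = g}$ once $3i \le g-4$ (respectively $2i \le g-3$ when $\mathcal{L}$ is constant and $\theta$ is spherical), and composing the resulting chain of isomorphisms shows that the canonical map $H_i(\mathcal{N}^\theta_n(Q);(- \circ M)^*\mathcal{L})_{\bar{g}^\theta = g} \to H_i(\mathcal{N}^\theta_n(Q)^\infty;(- \circ M)^*\mathcal{L})$ is an isomorphism in the same range of $(i,g)$.

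The $P$-side uses the modified grading $\bar{g}^\theta_M$ of Lemma~\ref{lem:ModifiedGenus}, and this is the one genuinely new point. Since both $\bar{g}^\theta$ and $\bar{g}^\theta_M$ increase by one under $- \circ {_P H}$, the difference $d = \bar{g}^\theta - \bar{g}^\theta_M \ge 0$ is constant on path components and invariant under stabilisation; hence $\mathcal{N}^\theta_n(P)_{\bar{g}^\theta_M = g}$ is the disjoint union, over $d \ge 0$, of the unions of path components on which $\bar{g}^\theta = g+d$, and $- \circ {_P H}$ preserves this decomposition. Applying Theorem~\ref{thm:UnstableStability} to each summand---where it is an isomorphism once $3i \le (g+d)-4$ (respectively $2i \le (g+d)-3$), a condition implied for all $d \ge 0$ by $3i \le g-4$ (respectively $2i \le g-3$)---and summing over $d$, one finds that $- \circ {_P H}$ is an isomorphism on $H_i(-;\mathcal{L})_{\bar{g}^\theta_M = g}$ in those ranges; iterating again, the canonical map $H_i(\mathcal{N}^\theta_n(P);\mathcal{L})_{\bar{g}^\theta_M = g} \to H_i(\mathcal{N}^\theta_n(P)^\infty;\mathcal{L})$ is an isomorphism there too.

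It then remains to assemble the square whose rows are $(- \circ M)_*$ and the isomorphism $((- \circ M)^\infty)_*$ of Theorem~\ref{thm:Main}, and whose columns are the two canonical stabilisation maps just discussed. It commutes because $(- \circ M)^\infty$ restricts on the first stage of the telescopes to $- \circ M$; and because $\bar{g}^\theta_M(- \circ M) = \bar{g}^\theta(-)$ by Lemma~\ref{lem:ModifiedGenus}, the top row is exactly the map~\eqref{eq:GeneralStabMap}. As the bottom row is an isomorphism and---in the ranges of (i), (ii), (iii)---so are the two columns, the map~\eqref{eq:GeneralStabMap} is an isomorphism there, proving the three cases. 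For the final sentence, Theorem~\ref{thm:UnstableStability} with $i = 1$ shows that the $P$-side column $H_1(\mathcal{N}^\theta_n(P))_{\bar{g}^\theta_M = g} \to H_1(\mathcal{N}^\theta_n(P)^\infty)$ is an isomorphism when $3 \le g-4$, i.e.\ $g \ge 7$ (and when $2 \le g-3$, i.e.\ $g \ge 5$, if $\theta$ is spherical); since an abelian coefficient system on a path component of $\mathcal{N}^\theta_n(P)_{\bar{g}^\theta_M = g}$ is the same as a module over the group ring of its first homology, it is then automatically pulled back from $\mathcal{N}^\theta_n(P)^\infty$, so the extension hypothesis of (iii) holds in those cases. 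The substantive content is all in the two cited theorems; the only thing needing care here is the compatibility of $\bar{g}^\theta_M$ with stabilisation---which is what permits Theorem~\ref{thm:UnstableStability} to be transported to the $P$-side with its modified grading---together with the routine check that the comparison square commutes up to homotopy.
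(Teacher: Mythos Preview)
Your proof is correct and follows essentially the same route as the paper: compare the unstable map to the stable one via Theorem~\ref{thm:Main}, and control the two stabilisation maps using Theorem~\ref{thm:UnstableStability} together with the inequality $\bar g^\theta_M \le \bar g^\theta$ from Lemma~\ref{lem:ModifiedGenus} (your decomposition over $d = \bar g^\theta - \bar g^\theta_M$ makes this step explicit where the paper just invokes the inequality). For the final sentence the paper also records the $\pi_0$ bijection alongside the $H_1$ isomorphism; this is implicit in your argument since the required range $g\ge 7$ (resp.\ $g\ge 5$) already forces it, but it is worth stating.
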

\begin{proof}
  Consider the commutative diagram
  \begin{equation*}
    \xymatrix{
      {\mathcal{N}^\theta_n(Q)_{\bar{g}^\theta=g}} \ar[d]^-{- \circ M} \ar[r] & {\hocolim\limits_{h \to \infty} \mathcal{N}^\theta_n(Q, \hat{\ell}_Q^{(h)})_{\bar{g}^\theta=g+h}} \ar[d]^-{- \circ M}\\
      {\mathcal{N}^\theta_n(P)_{\bar{g}^\theta_M=g}} \ar[r] & {\hocolim\limits_{h \to \infty} \mathcal{N}^\theta_n(P, \hat{\ell}_P^{(h)})_{\bar{g}^\theta_M=g+h}},
    }
  \end{equation*}
  where the right-hand map is an abelian homology equivalence, by
  Theorem \ref{thm:Main}.

  By assumption $\mathcal{L}$ is pulled back from
  $\hocolim\limits_{h \to \infty} \mathcal{N}^\theta_n(P,
  \hat{\ell}_P^{(h)})$, so defines a coefficient system on every space
  in this commutative square. By Theorem \ref{thm:UnstableStability}
  the top horizontal map induces an isomorphism on
  $H_i(-;\mathcal{L})$ as long as $3i \leq g-4$ (or $2i \leq g-3$ if
  $\theta$ is spherical). As $\overline{g}_M^\theta=g$ implies that
  $\overline{g}^\theta \geq g$, Theorem \ref{thm:UnstableStability}
  also implies that the lower horizontal map induces an isomorphism on
  $H_i(-;\mathcal{L})$ in this range of degrees, and so the left-hand
  vertical map induces an isomorphism on homology in the same range.

  If $g \geq 7$, or if $\theta$ is spherical and $g \geq 5$, then by
  Theorem \ref{thm:UnstableStability} the lower horizontal map induces
  a bijection on path components and an isomorphism on $H_1(-;\bZ)$,
  and so the two spaces have the same collections of abelian local
  coefficient systems. In particular $\mathcal{L}$ is pulled back from
  $\hocolim\limits_{h \to \infty} \mathcal{N}^\theta_n(P,
  \hat{\ell}_P^{(h)})$.
\end{proof}

\subsection{Proof of Corollary \ref{cor:Main1} in general}

We will now finish the proof of Corollary \ref{cor:Main1} by
explaining how to remove the hypothesis $P \neq \emptyset$ from
Proposition \ref{prop:cor:MainPrelim}. In order to do so, it is enough
to prove the analogue of that proposition for the cobordism
$(D^{2n}, \hat{\ell}_{D^{2n}}) : \emptyset \leadsto S^{2n-1}$, as any
$M : \emptyset \leadsto Q$ can be factorised as
$M : \emptyset \overset{D^{2n}}\leadsto S^{2n-1} \overset{M'}\leadsto
Q$, and Proposition \ref{prop:cor:MainPrelim} applies to $M'$.

We will construct a resolution of the degree zero map of graded spaces
\begin{equation}\label{eq:DiscMap}
- \circ D^{2n} : \mathcal{N}^\theta_n(S^{2n-1}) \lra \mathcal{N}^\theta_n(\emptyset)
\end{equation}
such that the induced map on the space of $p$-simplices in this
resolution is of the form covered by Proposition
\ref{prop:cor:MainPrelim}. This is entirely analogous to the argument
given for surfaces in \cite[Section 11]{R-WResolution}. The resolution
is conceptually very similar to that used in
Section~\ref{sec:StabHomAndGC}, but is a little different in detail.

\begin{definition}
  Let $\mathcal{R}(P)_0$ be the set of tuples $(s, W ; c, \hat{L})$
  where $(s, W) \in \mathcal{N}_n^\theta(P)$,
  $c : D^{2n} \hookrightarrow (-\infty,0) \times \bR^\infty$, and
  $\hat{L} \in \Bun^\theta(TD^{2n})^I$ are such that
  \begin{enumerate}[(i)]
  \item $c$ has image in $W$,
  \item $\hat{L}$ is a path from $c^*\hat{\ell}_W$ to $\hat{\ell}_{D^{2n}}$.
  \end{enumerate}
  We topologise $\mathcal{R}(P)_0$ as a subspace of
  $$\mathcal{N}_n^\theta(P) \times \Emb(D^{2n}, (-\infty,0] \times \bR^\infty) \times \Bun^\theta(TD^{2n})^I.$$
  Let $\mathcal{R}(P)_p$ consist of tuples
  $(s,W; c_0, \hat{L}_0, \ldots, c_p, \hat{L}_p)$ such that
  \begin{enumerate}[(i)]
  \item each $(s,W; c_i, \hat{L}_i)$ lies in $\mathcal{R}(P)_0$,
  \item the $c_i$ are disjoint.
  \end{enumerate}
  We topologise $\mathcal{R}(P)_p$ as a subspace of the $(p+1)$-fold
  fibre product of the projection map
  $\mathcal{R}(P)_0 \to \mathcal{N}_n^\theta(P)$. The collection
  $\mathcal{R}(P)_\bullet$ has the structure of a semi-simplicial
  space augmented over $\mathcal{N}_n^\theta(P)$, where the $i$th face
  maps forgets $(c_i, \hat{L}_i)$, and the augmentation map just
  remembers $(s, W)$. Composition with
  $\bar{g}^\theta : \mathcal{N}_n^\theta(P) \to \bZ$ gives it the
  structure of a semi-simplicial graded space augmented over
  $\mathcal{N}_n^\theta(P)$.
\end{definition}

This semi-simplicial space replaces $\mathcal{L}_j(P)_\bullet$ in
Section \ref{sec:stable-stab2}, and the following theorem may be
proved in precisely the same way as Theorem
\ref{thm:ContractibilityL}.

\begin{theorem}\label{thm:ContractibilityR}
  The augmentation map
  $\vert \mathcal{R}(P)_\bullet \vert \to \mathcal{N}_n^\theta(P)$ is
  a weak homotopy equivalence (onto the components consisting of
  non-empty manifolds).
\end{theorem}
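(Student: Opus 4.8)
\textbf{Proof proposal for Theorem \ref{thm:ContractibilityR}.}
The plan is to mimic the proof of Theorem \ref{thm:ContractibilityL} step by step, replacing the embedded thickened handles $D^{2n-k} \times D^k$ there by embedded $2n$-discs $D^{2n}$ here, and relying on the fact that in both cases the cores (here, the whole discs) are of positive codimension in $W$ so can be made disjoint by general position. First I would show, exactly as in Lemma \ref{lem:qfib}, that the augmentation $\vert \mathcal{R}(P)_\bullet \vert \to \mathcal{N}_n^\theta(P)$ is a quasifibration, with fibre over $(s,W)$ the realisation $\vert R(W)_\bullet \vert$ of the semi-simplicial space of tuples of disjoint embedded discs $c_i : D^{2n} \hookrightarrow W$ together with the path data $\hat L_i$. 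This reduces the theorem to showing that $\vert R(W)_\bullet \vert$ is weakly contractible for each $W$ lying in a non-empty-manifold component of $\mathcal{N}_n^\theta(P)$.

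Next I would pass to a flexible model $\overline R(W)_\bullet$ in which only the interiors (or some small distinguished sub-disc) of the $c_i$ are required to be disjoint, and observe — just as in the proof of Theorem \ref{thm:ContractibilityL} via Lemma \ref{lem:shrinking} — that the inclusion $R(W)_\bullet \hookrightarrow \overline R(W)_\bullet$ is a levelwise weak equivalence. It then suffices to verify that $\overline R(W)_\bullet$ is a topological flag complex satisfying the three conditions of \cite[Theorem 6.2]{GR-W2}. Condition (i) is vacuous since we augment only over a point (working fibrewise). Condition (ii), non-emptiness of $\overline R(W)_0$, amounts to finding one embedded $2n$-disc in $W$ with the prescribed $\theta$-structure up to a homotopy: since $W$ is a $2n$-manifold with non-empty interior, pick any small coordinate disc $c : D^{2n} \hookrightarrow \mathrm{int}(W)$; then $c^*\hat\ell_W$ and $\hat\ell_{D^{2n}}$ are both bundle maps over the contractible space $D^{2n}$ covering maps to the path-connected $B$, hence homotopic, which gives the path $\hat L$ (for $P \neq \emptyset$ one should instead take $c$ near a collar of the boundary, or simply note that the relevant component consists of non-empty $W$ anyway). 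Condition (iii), that any finite family of vertices spans a $1$-simplex with a common extra vertex, follows because the full $2n$-discs have codimension $0$ but can still be shrunk to arbitrarily small discs which, by a general-position/isotopy-extension argument, can be placed disjoint from any finite collection of the given discs inside a fixed chart of $W \setminus \bigcup$(given discs); this is the place where one uses that the given $c_i$ do not fill up $W$, i.e.\ that $W$ is of positive dimension and the discs are compact.

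Finally, invoking \cite[Theorem 6.2]{GR-W2} yields that $\vert \overline R(W)_\bullet \vert$, hence $\vert R(W)_\bullet \vert$, is weakly contractible, and therefore the quasifibration has contractible fibres, giving the weak equivalence $\vert \mathcal{R}(P)_\bullet \vert \to \mathcal{N}_n^\theta(P)$ onto the non-empty-manifold components. I expect the only genuinely delicate point to be the careful formulation of condition (iii) — specifically arranging that after shrinking, the new disc together with its path data $\hat L$ still satisfies the defining conditions of a vertex and lands in the same subcomplex — but this is entirely parallel to the corresponding step in the proof of Theorem \ref{thm:ContractibilityL}, where the analogous shrinking-and-general-position move for $(2n-k)$-dimensional cores was carried out, so no new ideas are needed.
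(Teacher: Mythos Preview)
Your approach is correct and is precisely what the paper intends: it says the theorem ``may be proved in precisely the same way as Theorem \ref{thm:ContractibilityL}'', and your outline (quasifibration, flexible model, topological flag complex, \cite[Theorem 6.2]{GR-W2}) is exactly that argument. One slip: your opening claim that the cores have ``positive codimension'' is false here---the discs $c_i(D^{2n})$ are codimension~$0$---but you correctly abandon this when verifying condition~(iii), where the right move is to relax the disjointness condition to ``centres $c_i(0)$ are distinct'' (so the levelwise equivalence $R(W)_\bullet \hookrightarrow \overline{R}(W)_\bullet$ follows by shrinking discs towards their centres), after which condition~(iii) becomes trivial.
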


The map \eqref{eq:DiscMap} is covered by a semi-simplicial map
$(- \circ D^{2n} )_\bullet : \mathcal{R}(S^{2n-1})_\bullet \to
\mathcal{R}(\emptyset)_\bullet$ given on 0-simplices by
$(s, W ; c, \hat{L}) \mapsto ((s, W) \circ (1, D^{2n}) ; c + e_0,
\hat{L})$, and by the analogous formula on higher simplices.

\begin{proposition}\label{prop:pSxAbEqR}
  If $2n \geq 6$ and $B$ is simply-connected, the map of graded spaces
  $$(- \circ D^{2n})_p : \mathcal{R}(S^{2n-1})_{p} \lra \mathcal{R}(\emptyset)_{p}$$
  induces an isomorphism on homology in the range described in
  Proposition \ref{prop:cor:MainPrelim}.
\end{proposition}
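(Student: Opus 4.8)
The plan is to identify $(- \circ D^{2n})_p$, up to weak homotopy equivalence and compatibly with the $\bar{g}^\theta$-grading, with a stabilisation map $- \circ M_p$ of the kind governed by Proposition~\ref{prop:cor:MainPrelim}, and then to quote that proposition. Fix disjoint standard discs $D_0,\dots,D_p \subset (-\infty,0)\times\bR^\infty$, let $\Sigma_p = \coprod_{i=0}^{p} \partial D_i$ carry the boundary $\theta$-structure induced by $\hat{\ell}_{D^{2n}}$, and let $M_p \colon \Sigma_p \leadsto S^{2n-1}\sqcup\Sigma_p$ be the $\theta$-cobordism which is $D^{2n}$ on the $S^{2n-1}$-component and $[0,1]\times\Sigma_p$ on the remaining components. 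Then $M_p$ is $(n-1)$-connected relative to its outgoing boundary (since $(D^{2n},S^{2n-1})$ is $(2n-1)$-connected and $([0,1]\times\Sigma_p,\Sigma_p)$ is highly connected), and its incoming boundary $\Sigma_p$ is non-empty; moreover, being assembled from a disc and cylinders, gluing $M_p$ on changes neither the stable $\theta$-genus nor therefore the grading, so the modified genus $\bar{g}^\theta_{M_p}$ of Lemma~\ref{lem:ModifiedGenus} equals $\bar{g}^\theta$. Hence Proposition~\ref{prop:cor:MainPrelim} applies to $M_p$ and, using the hypotheses $2n\ge 6$ and $B$ simply-connected, shows that $- \circ M_p$ induces an isomorphism on $H_i(-;\mathcal{L})_{\bar{g}^\theta=g}$ in exactly the three stated ranges.

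The heart of the matter is therefore a levelwise weak homotopy equivalence $\mathcal{R}(P)_p \simeq \mathcal{N}^\theta_n(P\sqcup\Sigma_p)$, for each object $P$, carrying $(- \circ D^{2n})_p$ to $- \circ M_p$. This is established exactly as the corresponding step in the proof of Proposition~\ref{prop:pSxAbEq}: the map $\mathcal{R}(P)_p \to \Emb(\coprod_{i=0}^{p} D^{2n},(-\infty,0)\times\bR^\infty)$ remembering the tuple of embeddings $(c_0,\dots,c_p)$ is a Serre fibration (by parametrised isotopy extension, together with homotopy extension for bundle maps to deform the $\theta$-structures); its base is weakly contractible (parametrised Whitney embedding theorem); and its fibre over the standard embedding with image $\coprod D_i$ is the space of $(s,W)\in\mathcal{N}^\theta_n(P)$ containing $\coprod D_i$ together with paths of $\theta$-structures on the $D_i$ to the standard one. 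Cutting out the interiors of the $D_i$ and standardising the resulting boundary structure by means of the given paths exhibits this fibre as weakly equivalent to $\mathcal{N}^\theta_n(P\sqcup\Sigma_p)$. Along the way one checks: removing an open $2n$-disc from the interior of $W$ is $(2n-1)$-connected, hence leaves $\ell_W$ $n$-connected (as $n\le 2n-1$), and conversely gluing a standard disc onto a boundary sphere with standard structure preserves $n$-connectedness of the structure map; and the equivalence respects the grading by $\bar{g}^\theta$, using that removing an embedded standard disc leaves $\bar{g}^\theta$ unchanged — immediate from the definition when the ambient manifold is closed, and in the bounded case a short isotopy argument (cf.\ \cite{GR-W3}). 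Under these equivalences $(- \circ D^{2n})_p$, which caps the outgoing $S^{2n-1}$ with a disc and merely translates the marked discs, becomes $- \circ M_p$, and transporting back the conclusion of the first paragraph proves the Proposition.

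I expect the main obstacle to be precisely this second step: setting up the weak equivalence $\mathcal{R}(P)_p\simeq\mathcal{N}^\theta_n(P\sqcup\Sigma_p)$ rigorously and verifying that it is compatible with both the $n$-connectedness condition defining $\mathcal{N}^\theta_n$ and the grading by $\bar{g}^\theta$. Once that bookkeeping is in place the statement follows formally from Proposition~\ref{prop:cor:MainPrelim}, with no further homotopy-theoretic input.
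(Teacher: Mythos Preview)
Your proposal is correct and follows essentially the same approach as the paper: identify $\mathcal{R}(P)_p$ with $\mathcal{N}^\theta_n(P \sqcup \Sigma_p)$ via the fibration argument from Proposition~\ref{prop:pSxAbEq}, so that $(- \circ D^{2n})_p$ becomes the disc-capping map $\mathcal{N}_n^\theta(\sqcup^{p+2}S^{2n-1}) \to \mathcal{N}_n^\theta(\sqcup^{p+1}S^{2n-1})$, and then invoke Proposition~\ref{prop:cor:MainPrelim} since the target boundary is non-empty. The paper states this more tersely, but your added details about the fibration, the $n$-connectivity checks, and the grading compatibility are exactly the bookkeeping the paper leaves implicit.
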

\begin{proof}
  An argument completely analogous to that of Propositions
  \ref{prop:pSxAbEq} and \ref{prop:pSxAbEqL} identifies the map in
  question with the map of graded spaces
  $$(- \circ D^{2n}): \mathcal{N}_n^\theta(\sqcup^{p+2}S^{2n-1}) \lra \mathcal{N}_n^\theta(\sqcup^{p+1}S^{2n-1}).$$
  As $\sqcup^{p+1}S^{2n-1}$ is non-empty for all $p \geq 0$, this
  induces isomorphisms in the range claimed, by Proposition
  \ref{prop:cor:MainPrelim}.
\end{proof}

The full statement of Corollary \ref{cor:Main1} now follows from the
usual spectral sequence argument, namely that for any system
$\mathcal{L}$ of abelian local coefficients on
$\mathcal{N}_n^\theta(\emptyset)$ the map of spectral sequences
induced by $(- \circ D^{2n})_\bullet$ has the form
\begin{equation*}
  \xymatrix{
    E^1_{p,q} = H_{q}(\mathcal{R}(S^{2n-1})_p;\mathcal{L})_{\bar{g}^\theta=g} \ar[d]\ar@{=>}[r]& H_{p+q}(\vert \mathcal{R}(S^{2n-1})_\bullet\vert;\mathcal{L})_{\bar{g}^\theta=g} \ar[d]\\
    {\widetilde{E}^1_{p,q} = H_{q}(\mathcal{R}(\emptyset)_p;\mathcal{L})_{\bar{g}^\theta=g}} \ar@{=>}[r]& H_{p+q}(\vert \mathcal{R}(\emptyset)_\bullet\vert;\mathcal{L})_{\bar{g}^\theta=g},
  }
\end{equation*}
and by Theorem \ref{thm:ContractibilityR} its target can also be
identified with the map induced by \eqref{eq:DiscMap} on homology with
$\mathcal{L}$-coefficients. Proposition \ref{prop:pSxAbEqR} shows that
the map of $E^1$-pages is an isomorphism in a certain range of
degrees, and Corollary \ref{cor:Main1} follows.

\subsection{Stable homology and proof of Corollary \ref{cor:Main2}}

If $(W, \hat{\ell}_W)$ is a $\theta$-manifold with non-empty boundary
$(P, \hat{\ell}_P)$ and such that $\ell_W : W \to B$ is $n$-connected,
then we can consider the commutative diagram
\begin{equation*}
  \xymatrix{
    \mathcal{M}^\theta_n(W,\hat{\ell}_W)  \ar[r] \ar[rd] &  \mathcal{N}^\theta_n(P, \hat{\ell}_P) \ar[r] \ar[d] & \hocolim\limits_{g \to \infty} \mathcal{N}_n^\theta(P, \hat{\ell}_{P}^{(g)}) \ar[d]\\
    & \Omega_{[0,(P, \hat{\ell}_P)]}B\mathcal{C}_\theta \ar[r]^-\simeq & \hocolim\limits_{g \to \infty} \Omega_{[\emptyset, (P, \hat{\ell}_P^{(g)})]} B\mathcal{C}_\theta
  }
\end{equation*}
By Corollary \ref{cor:Main1} the composition along the top, restricted
to the path component which it hits, is an abelian homology
isomorphism in degrees satisfying
$3* \leq {\bar{g}^\theta(W,\hat{\ell}_W)-4}$, and is a homology
isomorphism with constant coefficients in degrees satisfying
$2* \leq {\bar{g}^\theta(W,\hat{\ell}_W)-3}$ if $\theta$ is
spherical. By Theorem \ref{thm:StableHomologyBetter} the right hand
vertical map is acyclic, and the target has the homotopy type of a
loop space so all coefficient systems on it are abelian. Therefore the
diagonal map is a homology equivalence (or an acyclic map) onto the
path component which it hits in the same range of degrees. This
establishes Corollary \ref{cor:Main2} in this case, after replacing
$B\mathcal{C}_\theta$ by the homotopy equivalent
$\Omega^{\infty-1} \MTtheta$.

If $(W, \hat{\ell}_W)$ is a $\theta$-manifold with empty boundary and
such that $\ell_W : W \to B$ is $n$-connected, let
$(W', \hat{\ell}_{W'}) : \emptyset \leadsto (P, \hat{\ell}_{W'})$ be a
$\theta$-cobordism obtained by subtracting a $2n$-disc
$D^{2n}=(D^{2n}, \hat{\ell}_{D^{2n}})$ from $W$. We obtain a homotopy
commutative diagram
\begin{equation*}
  \xymatrix{
    \mathcal{M}^\theta_n(W',\hat{\ell}_{W'})  \ar[r] \ar[d]^-{- \circ D^{2n}} & \mathcal{N}^\theta_n(P, \hat{\ell}_P) \ar[r] \ar[d]^-{- \circ D^{2n}} & \Omega_{[\emptyset, P]} B\mathcal{C}_\theta \ar[d]_-{\simeq}^-{- \circ D^{2n}}\\
    \mathcal{M}^\theta_n(W,\hat{\ell}_W)  \ar[r] & \mathcal{N}^\theta_n(\emptyset) \ar[r] & \Omega B\mathcal{C}_\theta
  }
\end{equation*}
where, by Corollary \ref{cor:Main1} and the case treated already, the
left hand vertical map and the top composition are both abelian
homology isomorphisms in degrees satisfying
$3* \leq {\bar{g}^\theta(W,\hat{\ell}_W)-4}$, and homology
isomorphisms in degrees satisfying
$2* \leq {\bar{g}^\theta(W,\hat{\ell}_W)-3}$ if $\theta$ is
spherical. Therefore the bottom composition has the same property,
which establishes Corollary \ref{cor:Main2}.

\section{General tangential structures}\label{sec:GeneralTheta}

So far we have only considered $2n$-manifolds $W$ equipped with
$\theta$-structures $\hat{\ell}_W$ such that the underlying map
$\ell_W : W \to B$ is $n$-connected. In this section we shall
formulate and prove a generalisation of Corollary \ref{cor:Main2}
which allows for manifolds equipped with general tangential
structures. We shall in particular prove Corollary
\ref{cor:GeneralThetaCalc}, but also extend it to manifolds with
non-empty boundary.

\begin{definition}
  Let $\pi : E \to B$ be a fibration and $i : P \to E$ be a
  cofibration, in the usual model structure on $\mathrm{Top}$
  (compactly generated weakly Hausdorff topological spaces, weak
  homotopy equivalences, Serre fibrations, induced cofibrations). Let
  $\hAut(\pi, i)$ be the space of maps $f: E \to E$ which are weak
  equivalences and such that $\pi \circ f = \pi$ and $f \circ i = i$.
\end{definition}

Composition of maps makes $\hAut(\pi, i)$ into a grouplike topological
monoid. If $\gamma$ is a vector bundle over $B$, and we let
$\pi^*\gamma$ be the pulled back vector bundle over $E$, then each $f$
induces an endomorphism $\hat{f} : \pi^*\gamma \to \pi^*\gamma$ by
sending $(e,v) \in \pi^*\gamma \subset E \times \gamma$ to
$(f(e), v) \in E \times \gamma$, which lies in $\pi^*\gamma$ as $f$
commutes with $\pi$. Furthermore, $\hat{f}$ is the identity over the
subspace $i(P) \subset E$.

To apply this to the situation at hand, let
$\theta : B \overset{u}\to B' \overset{\theta'}\to BO(2n)$ be a pair
of tangential structures with a map $u$ between them, which is a
fibration, and write
$\hat{u} : \theta^*\gamma_{2n} \to (\theta')^*\gamma_{2n}$ for the
induced bundle map. Let $W$ be a $2n$-dimensional manifold and
$\hat{\ell}_{\partial W} : TW\vert_{\partial W} \to
\theta^*\gamma_{2n}$ be a boundary condition with underlying map
$\ell_{\partial W} : \partial W \to B$ a cofibration. Then
$\hAut(u, \ell_{\partial W})$ acts on
$\Bun^{\theta}_{n}(TW, \hat{\ell}_{\partial W})$ via
$(f, \hat{\ell}) \mapsto \hat{f} \circ \hat{\ell}$, and this action
commutes with the map
\begin{align*}
  \Bun^{\theta}_{n}(TW, \hat{\ell}_{\partial W}) & \lra \Bun^{\theta'}(TW,\hat{u} \circ \hat{\ell}_{\partial W})\\
  \hat{\ell} & \longmapsto \hat{u} \circ \hat{\ell}
\end{align*}
because $\hat{u} \circ \hat{f} = \hat{u}$.

\begin{lemma}\label{lem:ChangeOfStr}
  Assume as above that $\ell_{\partial W}: \partial W \to B$ is a
  cofibration and that $u: B \to B'$ is a fibration.  Assume in
  addition that the map $u$ is $n$-co-connected. Then the induced map
  $$\Bun^{\theta}_{n}(TW, \hat{\ell}_{\partial W}) \hcoker \hAut(u, \ell_{\partial W}) \lra \Bun^{\theta'}(TW,\hat{u} \circ \hat{\ell}_{\partial W})$$
  is a homotopy equivalence onto the path components which it hits.
\end{lemma}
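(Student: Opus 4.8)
The plan is to analyse the map by comparing it to a map of homotopy fibre sequences obtained by restricting bundle maps to a skeleton. First I would set up the relevant fibration. For a fixed underlying map $\ell_W : W \to B'$ of $\theta'$-structures, the space $\Bun^{\theta'}(TW, \hat u\circ\hat\ell_{\partial W})$ has a map to the space of lifts of $\theta'\circ\ell_W$ along $\gamma_{2n}$; fixing such a lift, i.e.\ fixing a bundle map $TW\to(\theta')^*\gamma_{2n}$ up to vertical homotopy, amounts to fixing the homotopy class of $\ell_W : W\to B'$ together with a contractible choice. The key point is that the map in question, on each path component, is (up to weak equivalence) the map
$$\Map_{\partial W}(W, B) \hcoker \hAut(u,\ell_{\partial W}) \lra \Map_{\partial W}(W, B')$$
landing in the components hit, where the source uses only those components of the mapping space mapping to an $n$-connected $\ell_W : W\to B$. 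This reduction uses that a $\theta$-structure is the data of a map to $B$ together with a contractible space of bundle-map data once the map to $BO(2n)$ is pinned down, exactly as in the identification of $\Bun$-spaces with mapping spaces used implicitly throughout the paper; the $n$-connectedness constraint on the source is automatic because $u$ is $n$-co-connected, so $\ell_W$ is $n$-connected as a map to $B$ iff $u\circ\ell_W$ is $n$-connected as a map to $B'$ — but in fact we only need that the \emph{identity component question} is controlled, which I address below.

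Next I would carry out the homotopy-theoretic computation. Pick a component of the target, represented by some $\ell_W : W\to B'$ that lifts to an $n$-connected $\tilde\ell_W : W\to B$ (components not of this form are not hit, by definition of $\Bun^\theta_n$). Because $u$ is $n$-co-connected and $W$ is a $2n$-manifold, $W$ is homotopy equivalent rel $\partial W$ to a complex of dimension $\leq 2n$; more importantly, the obstruction theory for lifting $\ell_W$ along $u$ rel $\partial W$ lives in groups $H^{j}(W,\partial W;\pi_{j}(\hofib(u)))$, and since $\hofib(u)$ is $(n-1)$-connected these vanish for $j\leq n$. This shows first that a lift exists, and second — comparing two lifts via the difference obstruction in $H^{j-1}$ — that the space of lifts of $\ell_W$ rel $\partial W$ is a collection of components each of which is connected through $\pi_0$... rather, the relevant statement is that $\hAut(u,\ell_{\partial W})$ acts transitively on the set of homotopy classes of lifts (by $n$-co-connectedness the difference obstructions for lifts and for self-equivalences of $B$ over $B'$ match up), and the stabiliser of a lift $\tilde\ell_W$, acting on the \emph{space} of lifts, realises $\Map_{\partial W}(W, B)_{[\tilde\ell_W]}$ as the homotopy fibre of $E\hAut(u,\ell_{\partial W})\times_{\hAut(u,\ell_{\partial W})}\Map_{\partial W}(W,B)\to \Map_{\partial W}(W,B')$. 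Concretely: the map $\Map_{\partial W}(W,B)\to\Map_{\partial W}(W,B')$ is a fibration whose fibre over $\ell_W$ is the space $\mathrm{Lift}(\ell_W)$ of lifts rel $\partial W$; $\hAut(u,\ell_{\partial W})$ acts on the total picture; and the Borel construction of the fibre over the chosen $\ell_W$ maps by a weak equivalence to the base component precisely when $\hAut(u,\ell_{\partial W})\to \mathrm{Lift}(\ell_W)$ (evaluation at $\tilde\ell_W$) is a weak equivalence onto the component(s) it hits. That last statement is exactly an obstruction-theoretic comparison: both $\hAut(u,\ell_{\partial W})$ and $\mathrm{Lift}(\ell_W)$ are built from the same groups $H^{*}(-;\pi_{*}\hofib(u))$, and the $n$-co-connectedness of $u$ together with $\dim W = 2n$ forces the comparison map to be a weak equivalence (after restricting to the hit components).

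The main obstacle I anticipate is the last comparison: showing the evaluation map $\hAut(u,\ell_{\partial W})\to\mathrm{Lift}(\ell_W)$ is a weak equivalence onto its image. This requires knowing that $\hAut(u,\ell_{\partial W})$ is large enough — that every fibrewise self-equivalence of $B$ over $B'$ rel $\partial W$ that one needs is realised — which is where $n$-co-connectedness of $u$ is essential (it makes $\hofib(u)$ highly connected so that self-maps of $E=B$ over $B'$ are abundant and flexible), and $\dim W \le 2n$ bounds the obstruction groups that could obstruct realising a prescribed lift or a prescribed homotopy of lifts. I would phrase this cleanly by first replacing everything with fibrant-cofibrant models (as set up in the definition in the excerpt), then running a relative Postnikov/obstruction argument on $\hofib(u)$ degree by degree: at each stage the relevant obstruction to surjectivity of $\pi_k(\hAut(u,\ell_{\partial W}))\to\pi_k(\mathrm{Lift}(\ell_W))$ and to injectivity lives in $H^{k+?}(W,\partial W;\pi_{?}\hofib(u))$ with $? \ge n$ and cohomological degree $\le 2n$, and these all vanish or match. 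Once this evaluation map is a weak equivalence onto its image, a standard Borel-construction argument (the source is $\mathrm{fib}\hcoker(\text{group acting transitively on }\pi_0)$, hence equivalent to the total space of the fibration over the hit components) yields the claimed homotopy equivalence onto the path components hit, completing the proof.
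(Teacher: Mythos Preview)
Your overall strategy matches the paper's: reduce to mapping spaces, identify the fibre of $\Map_{\partial W}(W,B) \to \Map_{\partial W}(W,B')$ over $\ell'_W$ with the space of lifts, and show that the evaluation-at-$\tilde\ell_W$ map from $\hAut(u,\ell_{\partial W})$ to this space of lifts is a weak equivalence. That is exactly what the paper does.

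However, your proposed execution of the last step has a genuine error and a gap. First, you write that $\hofib(u)$ is $(n-1)$-connected; this is backwards. Since $u$ is $n$-co-connected, $\hofib(u)$ is an $(n-1)$-\emph{type}, i.e.\ $\pi_j(\hofib(u)) = 0$ for $j \geq n$, not for $j < n$. Consequently your vanishing claim for $H^j(W,\partial W;\pi_j\hofib(u))$ in low degrees is false, and the dimension bound $\dim W \leq 2n$ is a red herring---the paper never uses it.

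Second, and more seriously, your obstruction-theory sketch compares groups built from the cohomology of $(W,\partial W)$ on both sides, but $\hAut(u,\ell_{\partial W})$ consists of self-maps of $B$, not of $W$. You need to pass from $B$ to $W$, and this is precisely where the argument lives. The paper handles this cleanly: it rewrites the evaluation map as precomposition with the chosen lift $f\colon W \to B$,
\[
  \Map_{\partial W/\mathrm{Top}/B'}(B,B) \xrightarrow{\ - \circ f\ } \Map_{\partial W/\mathrm{Top}/B'}(W,B),
\]
and shows this is a weak equivalence by induction on a relative CW structure for $(B,W)$. Since $f$ is $n$-connected, all cells have dimension $\geq n+1$; since $\hofib(u)$ is an $(n-1)$-type, the space of extensions over each such cell is contractible. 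Restricting to the components of weak equivalences on the left (which correspond to $n$-connected maps on the right) then gives the desired identification. Your sketch gestures at ``self-maps of $B$ over $B'$ are abundant and flexible'' but does not supply this bridge; without it the argument does not close.
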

\begin{proof}
  Since $u$ is a fibration, so is the induced map
  \begin{equation}\label{eq:ChangeOfTheta}
    \Bun^{\theta}_{n}(TW, \hat{\ell}_{\partial W}) \lra \Bun^{\theta'}(TW,\hat{u} \circ\hat{\ell}_{\partial W}),
  \end{equation}
  and the fibre over $\hat{\ell}'_W$ may be identified with the space
  of relative lifts
  \begin{equation*}
    \xymatrix{
      \partial W \ar[r]^-{\ell_{\partial W}} \ar@{^(->}[d] & B \ar[d]^-{u}\\
      W \ar@{-->}[ru] \ar[r]^-{\ell'_W}& B'
    }
\end{equation*}
which are also $n$-connected maps. If such a lift exists, choose one
and call it $f : W \to B$. We shall consider this $f$ as a morphism in
the category $\partial W / \mathrm{Top} / B'$ of spaces over $B'$ and
under $\partial W$; this category has the structure of a simplicial
model category and $W$ is cofibrant and $B$ is both fibrant and
cofibrant, cf.\ e.g.~\cite[Example 1.7 and Definition
4.11]{GoerssSchemmerhorn}. Composition with $f$ defines a map
\begin{equation}\label{eq:1}
  \mathrm{Map}_{\partial W / \mathrm{Top} / B'}(B, B) \lra \mathrm{Map}_{\partial W / \mathrm{Top} / B'}(W, B)  
\end{equation}
between mapping simplicial sets, which can be seen to be a weak
equivalence by induction on cells in an approximation to $(B,W)$ by a
relative CW complex.  To give more details, let us write
$F: (\partial W / \mathrm{Top}/B')^{\mathrm{op}} \to \mathrm{sSet}$
for the functor represented by the object $B$.  Any representable
functor take pushout diagrams to pullback diagrams and by general
properties of (simplicial) model categories, $F$ sends cofibrations to
fibrations and sends weak equivalences between cofibrant objects to
weak equivalences.  In particular, if $X$ is a cofibrant object and
$Y = X \cup D^k$ is obtained by attaching a cell $D^k \to B'$ to $X$
along some map $\partial D^k \to X$ over $B'$, then there is the
(strict) pullback diagram
\begin{equation*}
  \xymatrix{
    F(Y) \ar[r] \ar[d] & \mathrm{Map}_{\mathrm{Top}/B'}(D^k,B) \ar[d]\\
    F(X) \ar[r] & \mathrm{Map}_{\mathrm{Top}/B'}(\partial D^k,B),
  }
\end{equation*}
in which the vertical maps are actually fibrations.  Up to homotopy, a
point in $\mathrm{Map}_{\mathrm{Top}/B'}(\partial D^k,B)$ is described
by a map from $\partial D^k$ to a homotopy fibre of $u$ and the space
of extensions to $D^k$ is a model for the homotopy fibre of the
horizontal maps in the square.  In particular the fibres are weakly
contractible when $k \geq n+1$ if the homotopy fibres of $u$ are
$(n-1)$-types, as we have assumed.  Hence by induction on cells
$F(Y) \to F(X)$ is a weak equivalence whenever $X$ and $Y$ are
cofibrant and $Y$ is obtained from $X$ by attaching cells of dimension
at least $n+1$.  Up to weak equivalence the morphism $W \to B$ is of
this form, and hence~(\ref{eq:1}) is a weak equivalence.

The simplicial sets in~(\ref{eq:1}) are the singular simplicial sets
of the associated mapping spaces, and composition with $f$ takes the
subspace of the mapping space
$\mathrm{map}_{\partial W / \mathrm{Top} / B'}(B, B)$ consisting of
weak equivalences onto the subspace of the mapping space
$ \mathrm{map}_{\partial W / \mathrm{Top} / B'}(W, B)$ consisting of
$n$-connected maps. Hence we have shown that the homotopy fibres of
\eqref{eq:ChangeOfTheta} are either empty or weakly equivalent to
$\hAut(u, \ell_{\partial W})$, in which case the equivalence is given
by acting on an element. Thus after forming the Borel construction by
$\hAut(u, \ell_{\partial W})$, the fibres of the map
\eqref{eq:ChangeOfTheta} become empty or contractible.
\end{proof}

We obtain a map
$\MM^\theta_n(W, \hat{\ell}_{\partial W})\hcoker \hAut(u,
\ell_{\partial W}) \to \MM^{\theta'}(W, \hat{u}
\circ\hat{\ell}_{\partial W})$ by taking the further Borel
construction by $\Diff_\partial(W)$, and this is also a weak homotopy
equivalence onto those path components which it hits. Therefore, in
terms of the point-set models introduced in Definition \ref{defn:NN},
the map
\begin{equation}\label{eq:555}
\mathcal{M}^\theta_n(W, \hat{\ell}_{\partial W})\hcoker \hAut(u, \ell_{\partial W}) \lra \mathcal{M}^{\theta'}(W, \hat{u} \circ\hat{\ell}_{\partial W})
\end{equation}
is also a weak homotopy equivalence onto those path components which it hits.

Each $f \in \hAut(u)$ gives a bundle map
$\hat{f} : \theta^*\gamma_{2n} \to \theta^*\gamma_{2n}$ which in turn
defines a continuous endofunctor of $\mathcal{C}_\theta$ (by
postcomposition on all $\theta$-structures), and hence a continuous
endomorphism of $B\mathcal{C}_\theta$. This defines an action of
$\hAut(u)$ on $B\mathcal{C}_\theta$. Moreover, if
$(\partial W, \hat{\ell}_{\partial W}) \in \mathcal{C}_\theta$ is an
object satisfying the assumption above that
$\ell_{\partial W} : \partial W \to B$ is a cofibration, then
$\hAut(u,\ell_{\partial W})$ preserves
$(\partial W, \hat{\ell}_{\partial W}) \in \mathcal{C}_\theta$. In
this case $\hAut(u,\ell_{\partial W})$ acts on both the morphism space
$\mathcal{C}_\theta(\emptyset, (\partial W, \hat{\ell}_{\partial W}))$
and on the path space
$\Omega_{[\emptyset, (\partial W, \hat{\ell}_{\partial W})]}
B\mathcal{C}_\theta$, and the map
$$\mathcal{C}_\theta(\emptyset, (\partial W, \hat{\ell}_{\partial W})) \lra \Omega_{[\emptyset, (\partial W, \hat{\ell}_{\partial W})]} B\mathcal{C}_\theta$$
is equivariant for this action.

\begin{definition}
  If $(W, \hat{\ell}_W)$ is a $\theta$-manifold, we write
  $\Omega_{(W, \hat{\ell}_W)}B\mathcal{C}_\theta$ for the path
  component of
  $\Omega_{[\emptyset, (\partial W, \hat{\ell}_{\partial
      W})]}B\mathcal{C}_\theta$ represented by the $\theta$-cobordism
  $(W, \hat{\ell}_W) : \emptyset \leadsto (\partial W,
  \hat{\ell}_{\partial W})$.

  If $(W, \hat{\ell}'_W)$ is a $\theta'$-manifold and
  $\ell'_W: W \overset{\ell_W}\to B \overset{u}\to B'$ is a
  Moore--Postnikov $n$-stage of $\ell'_W$, then we write
  $\Omega_{(W, \hat{\ell}'_W)}B\mathcal{C}_\theta$ for the union of
  those path components of
  $\Omega_{[\emptyset, (\partial W, \hat{\ell}_{\partial
      W})]}B\mathcal{C}_\theta$ represented by the
  $\pi_0\hAut(u, \ell_{\partial W})$-orbit of $(W, \hat{\ell}_W)$.
\end{definition}

\begin{lemma}\label{lem:ChangeOfThetaGenus}
  If $(W, \hat{\ell}'_W)$ is a $\theta'$-manifold and
  $\ell'_W: W \overset{\ell_W}\to B \overset{u}\to B'$ is a
  Moore--Postnikov $n$-stage of $\ell'_W$, then
  $\bar{g}^{\theta'}(W, \hat{\ell}'_W) = \bar{g}^\theta(W,
  \hat{\ell}_W)$.
\end{lemma}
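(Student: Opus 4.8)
The plan is to reduce the equality of stable genera to a local statement about a single embedded copy of $W_{1,1}$, and then to verify that statement by rephrasing admissibility in terms of the nullhomotopy of a map into a frame bundle, where the hypothesis that $u$ is $n$-co-connected can be applied directly.

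First I would note that the Moore--Postnikov factorisation gives $\hat{\ell}'_W = \hat{u} \circ \hat{\ell}_W$ on the nose (the bundle-map part of $\hat{\ell}_W$ is the same isomorphism $TW \cong \ell_W^*\theta^*\gamma_{2n} = (\ell'_W)^*(\theta')^*\gamma_{2n}$). It therefore suffices to prove the following \emph{local claim}: for any $\theta$-manifold $X$ and any embedding $\iota : W_{1,1} \hookrightarrow X$, the restricted $\theta$-structure $\iota^*\hat{\ell}_X$ is admissible if and only if the restricted $\theta'$-structure $\hat{u} \circ \iota^*\hat{\ell}_X$ is admissible. Granting this, a collection of disjoint embedded copies of $W_{1,1}$ in $W$ has all restricted $\theta$-structures admissible precisely when it has all restricted $\theta'$-structures (via $\hat u$) admissible, so $g^\theta(W,\hat{\ell}_W) = g^{\theta'}(W,\hat{\ell}'_W)$; applying this to the stabilised manifolds $W \natural k(W_{1,1})$, and using the local claim once more to see that $\hat{u} \circ \hat{\ell}^{(k)}_W$ restricts to admissible structures on the glued-on copies and hence is a legitimate choice of $(\hat{\ell}'_W)^{(k)}$, yields $\bar{g}^\theta(W,\hat{\ell}_W) = \bar{g}^{\theta'}(W,\hat{\ell}'_W)$. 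The closed case reduces to this one: $\ell_W|_{W \setminus D^{2n}}$ is still $n$-connected and $u$ is still $n$-co-connected, so $W \setminus D^{2n} \to B \to B'$ remains a Moore--Postnikov $n$-stage and the stable genus of a closed manifold was defined via removing a disc.

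To prove the local claim I would reformulate admissibility. By Remark \ref{rem:StdVsAdmissible} a $\theta$-structure $\hat{\ell}$ on $W_{1,1}$ is admissible if and only if $\phi^*\hat{\ell}$ is standard for some self-embedding $\phi : W_{1,1} \hookrightarrow W_{1,1}$, and unwinding the definition of ``standard'' using the standard framings $\xi_{S^n \times D^n}$, this holds if and only if the two core spheres $(\phi \circ \bar{e})|_{S^n \times \{0\}}$ and $(\phi \circ \bar{f})|_{S^n \times \{0\}}$ classify nullhomotopic maps into the frame bundle $\mathrm{Fr}(\theta^*\gamma_{2n})$ of $\theta^*\gamma_{2n}$ --- this is exactly the condition that the induced structures on $S^n \times D^n$ extend over $\bR^{2n}$ (any constant bundle map over $\bR^{2n}$ extends such a structure). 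Now $\theta^*\gamma_{2n} = u^*((\theta')^*\gamma_{2n})$, so $\mathrm{Fr}(\theta^*\gamma_{2n}) = B \times_{B'} \mathrm{Fr}((\theta')^*\gamma_{2n})$; since $\mathrm{Fr}((\theta')^*\gamma_{2n}) \to B'$ is a fibre bundle, the square
\begin{equation*}
  \xymatrix{
    \mathrm{Fr}(\theta^*\gamma_{2n}) \ar[r] \ar[d] & \mathrm{Fr}((\theta')^*\gamma_{2n}) \ar[d]\\
    B \ar[r]^-{u} & B'
  }
\end{equation*}
is homotopy cartesian, so the homotopy fibres of the top map agree with those of $u$ and are $(n-1)$-truncated; in particular the top map is injective on $\pi_0$ and on $\pi_n$ at every basepoint. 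Hence a map $S^n \to \mathrm{Fr}(\theta^*\gamma_{2n})$ is nullhomotopic if and only if its composite to $\mathrm{Fr}((\theta')^*\gamma_{2n})$ is. Since $\hat{u}$ carries the basepoint $\theta$-structure $\tau$ to a basepoint $\theta'$-structure, which we take as $\tau'$, this shows that $\phi^*\hat{\ell}$ is standard (so $\hat{\ell}$ admissible) implies $\phi^*(\hat{u}\circ\hat{\ell})$ standard (so $\hat{u}\circ\hat{\ell}$ admissible), and conversely that $\hat{u}\circ\hat{\ell}$ admissible forces the restrictions of $\hat{\ell}$ along a suitable orientation-preserving hyperbolic pair to extend over $\bR^{2n}$, i.e.\ $\hat{\ell}$ admissible.

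The only delicate point is the bookkeeping in the reformulation of admissibility: identifying ``$e^*\hat{\ell}$ extends over $\bR^{2n}$'' with nullhomotopy of the classifying map of the core of $e$ into $\mathrm{Fr}(\theta^*\gamma_{2n})$, keeping track of the standard framings $\xi_{S^n \times D^n}$, and checking that the chosen basepoint structures for $\theta$ and $\theta'$ correspond under $\hat{u}$ (an orientation check, vacuous when these bundles are non-orientable and a matter of compatible choice otherwise). The homotopy-theoretic input --- cartesianness of the square and $n$-co-connectivity of $u$ giving $\pi_n$-injectivity --- is immediate.
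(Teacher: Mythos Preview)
Your proof is correct and takes essentially the same approach as the paper. Both reduce to the unstable statement that a $\theta$-structure on $S^n \times D^n$ is standard if and only if its composite with $\hat u$ is, and both deduce this from $n$-co-connectivity of $u$: the paper phrases it as the existence of a diagonal in the lifting square
\[
\xymatrix{
S^n \times D^n \ar[r]^-{\ell} \ar[d] & B \ar[d]^-{u} \\
\bR^{2n} \ar[r] \ar@{-->}[ur] & B',
}
\]
whereas you recast the same obstruction as $\pi_n$-injectivity of the pulled-back map on frame bundles. These are equivalent, since $\theta^*\gamma_{2n} = u^*(\theta')^*\gamma_{2n}$ makes the frame-bundle square a strict pullback. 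One small correction: your claim that $\mathrm{Fr}(\theta^*\gamma_{2n}) \to \mathrm{Fr}((\theta')^*\gamma_{2n})$ is injective on $\pi_0$ need not hold (the homotopy fibre of $u$ is only $(n-1)$-truncated, so may be disconnected; e.g.\ $B = *$, $B' = S^1$, $n \geq 2$, with $(\theta')^*\gamma_{2n}$ non-orientable), but you never actually use this---$\pi_n$-injectivity alone suffices for the nullhomotopy statement.
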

\begin{proof}
  By the definition of the stable genus, it is enough to show that
  ${g}^{\theta'}(W, \hat{\ell}'_W) = {g}^\theta(W, \hat{\ell}_W)$. By
  definition of standard $\theta$- or $\theta'$-structure on
  $S^n \times D^n$, and hence on $W_{1,1}$, composing with $u$ sends
  standard $\theta$-structures to standard $\theta'$-structures, and
  so certainly
  ${g}^{\theta'}(W, \hat{\ell}'_W) \geq {g}^\theta(W,
  \hat{\ell}_W)$. Conversely, if
  $\hat{\ell} : T(S^n \times D^n) \to \theta^*\gamma_{2n}$ is a
  $\theta$-structure such that $\hat{u} \circ \hat{\ell}$ is a
  standard $\theta'$-structure, then $\hat{u} \circ \hat{\ell}$
  extends over the embedding $S^n \times D^n \hookrightarrow \bR^{2n}$
  described in Section \ref{sec:StdThetaStr}, so we have a commutative
  square
  \begin{equation*}
    \xymatrix{
      S^n \times D^n \ar[d] \ar[r]^-{\ell}& B \ar[d]^-{u}\\
      \bR^{2n} \ar[r] \ar@{-->}[ru]& B'.
    }
\end{equation*}
The map $u$ is $n$-co-connected, as it arises from the
Moore--Postnikov factorisation of $\ell'_W$. Therefore the square
admits a diagonal map, showing that $\hat{\ell}$ is a standard
$\theta$-structure.
\end{proof}

The following theorem implies Corollary \ref{cor:GeneralThetaCalc}
from the introduction, but also allows manifolds with non-empty
boundary.

\begin{theorem}\label{thm:ChangeOfTheta}
  Let $\theta' : B' \to BO(2n)$ be a tangential structure, and
  $(W, \hat{\ell}'_W)$ be a $\theta'$-manifold. Let
  $\ell'_W: W \overset{\ell_W}\to B \overset{u}\to B'$ be 
  a Moore--Postnikov $n$-stage of $\ell'_W$, i.e.\ a factorisation
  into an $n$-connected cofibration $\ell_W$ and an $n$-co-connected
  fibration $u$, and $\theta = \theta' \circ u$. Then there is a map
  $$\alpha^{\theta'}_W: \mathcal{M}^{\theta'}(W, \hat{\ell}'_{W}) \lra \left(\Omega_{[W, \hat{\ell}'_{W}]}B\mathcal{C}_\theta \right) \hcoker \hAut(u, \ell_{\partial W})$$
  which, if $2n \geq 6$ and $W$ is simply-connected, is acyclic in
  degrees satisfying $3* \leq \bar{g}^{\theta'}(W, \hat{\ell}'_W)-4$,
  and is a homology isomorphism in degrees satisfying
  $2* \leq \bar{g}^{\theta'}(W, \hat{\ell}'_W)-3$ if $\theta'$ is
  spherical.
\end{theorem}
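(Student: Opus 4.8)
The plan is to reduce Theorem~\ref{thm:ChangeOfTheta} to Corollary~\ref{cor:Main2} by means of the change-of-tangential-structure equivalence established in Lemma~\ref{lem:ChangeOfStr} and its consequence~\eqref{eq:555}. First I would set up the point-set picture: choose the Moore--Postnikov $n$-stage $\ell'_W : W \overset{\ell_W}\to B \overset{u}\to B'$, arrange that $\ell_{\partial W}: \partial W \to B$ is a cofibration and that $u$ is a fibration (replacing things up to weak equivalence if necessary), and recall that $\theta = \theta'\circ u$ satisfies the hypotheses of Corollary~\ref{cor:Main2} (namely $2n\geq 6$ and $W$ simply-connected, so $B$ is simply-connected since $\ell_W$ is $n$-connected with $n\geq 3$). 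Then~\eqref{eq:555} gives a weak homotopy equivalence
$$\mathcal{M}^\theta_n(W, \hat{\ell}_W)\hcoker \hAut(u, \ell_{\partial W}) \overset{\sim}\lra \mathcal{M}^{\theta'}(W, \hat{u}\circ\hat{\ell}_W)$$
onto the path components it hits; by definition of $\MM^{\theta'}(W,\hat\ell'_W)$ and the fact that $\hat{u}\circ\hat{\ell}_W$ lies in the same component as (a representative of) $\hat{\ell}'_W$, the target is exactly $\mathcal{M}^{\theta'}(W,\hat{\ell}'_{W})$.

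Next I would take the $\hAut(u,\ell_{\partial W})$-homotopy orbits of the acyclic (resp.\ homology-iso) map of Corollary~\ref{cor:Main2}. That corollary (together with the discussion in Section~\ref{sec:intro:finitegenus} and its proof in Section~\ref{sec:finite-genus-stab}) provides a map $\mathcal{M}^\theta_n(W,\hat{\ell}_W) \to \Omega_{(W,\hat{\ell}_W)}B\mathcal{C}_\theta$ (a component of the path space of $B\mathcal{C}_\theta$, which by \cite{GMTW} is the relevant component of $\Omega^\infty\MTtheta$) which is acyclic in degrees $3*\leq \bar{g}^\theta(W,\hat\ell_W)-4$ and a homology isomorphism in degrees $2*\leq \bar{g}^\theta(W,\hat\ell_W)-3$ when $\theta'$ (equivalently $\theta$, by sphericity being detected on $W_{1,1}$-structures) is spherical. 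The key point is that this map is $\hAut(u,\ell_{\partial W})$-equivariant: the monoid acts on $\mathcal{M}^\theta_n(W,\hat{\ell}_W)$ by postcomposition of $\theta$-structures with $\hat{f}$, and acts on $B\mathcal{C}_\theta$ via the induced endofunctor, and the map $\mathcal{C}_\theta(\emptyset,(\partial W,\hat\ell_{\partial W}))\to \Omega_{[\emptyset,(\partial W,\hat\ell_{\partial W})]}B\mathcal{C}_\theta$ is equivariant for this action, as noted before Definition~\ref{lem:ChangeOfThetaGenus}. (One must check the specific map of Corollary~\ref{cor:Main2}, built out of the cobordism-category machinery in Section~\ref{sec:StabHomAndGC}, is genuinely equivariant and not merely equivariant up to coherent homotopy; this is the first place care is needed.) Since an acyclic (resp.\ homology-iso in a range) map of $G$-spaces induces an acyclic (resp.\ homology-iso in the same range) map on homotopy orbit spaces $(-)\hcoker G$—because the homotopy-orbit spectral sequence $H_p(BG; H_q(-))$ is functorial and acyclicity/range-isomorphism of the fibre map passes to the abutment—we obtain that
$$\mathcal{M}^\theta_n(W,\hat{\ell}_W)\hcoker \hAut(u,\ell_{\partial W}) \lra \left(\Omega_{(W,\hat{\ell}_W)}B\mathcal{C}_\theta\right)\hcoker \hAut(u,\ell_{\partial W})$$
has the stated acyclicity and homology-isomorphism ranges, with $\bar{g}^\theta(W,\hat\ell_W)$ in the bounds.

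Then I would precompose with the weak equivalence from~\eqref{eq:555} to get the desired map
$$\alpha^{\theta'}_W : \mathcal{M}^{\theta'}(W,\hat{\ell}'_W) \lra \left(\Omega_{[W,\hat\ell'_W]}B\mathcal{C}_\theta\right)\hcoker \hAut(u,\ell_{\partial W}),$$
and invoke Lemma~\ref{lem:ChangeOfThetaGenus} to replace $\bar{g}^\theta(W,\hat\ell_W)$ by $\bar{g}^{\theta'}(W,\hat\ell'_W)$ in the range of degrees, which yields exactly the bounds asserted in the theorem. In the closed case ($\partial W=\emptyset$) one proceeds as in the last part of Section~\ref{sec:finite-genus-stab}, subtracting a disc $D^{2n}$ to reduce to the non-empty-boundary case and then applying the above; the disc subtraction and the Moore--Postnikov stage interact cleanly because $\ell_{\partial W}: S^{2n-1}\to B$ has a canonical nullhomotopy structure coming from the standard framing. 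The main obstacle, as indicated, is verifying the strict (or sufficiently rigid) $\hAut(u,\ell_{\partial W})$-equivariance of the comparison map of Corollary~\ref{cor:Main2}: one needs the cobordism category $\mathcal{C}_\theta$, the model $\mathcal{N}^\theta_n(P)\subset\mathcal{C}_\theta(\emptyset,P)$, the semisimplicial resolutions of Section~\ref{sec:finite-genus-stab}, and the identification $B\mathcal{C}_\theta\simeq\Omega^{\infty-1}\MTtheta$ of \cite{GMTW} all to be natural in the tangential structure, so that postcomposition with $\hat f$ induces compatible self-maps at every stage. Granting that naturality—which is essentially a bookkeeping exercise with the constructions already in place—the proof is a formal assembly of Lemma~\ref{lem:ChangeOfStr}, Corollary~\ref{cor:Main2}, and Lemma~\ref{lem:ChangeOfThetaGenus}.
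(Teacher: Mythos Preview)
Your proposal is correct and follows essentially the same route as the paper: invoke the equivalence~\eqref{eq:555} coming from Lemma~\ref{lem:ChangeOfStr}, note that the map $\alpha_W^\theta$ of Corollary~\ref{cor:Main2} is $\hAut(u,\ell_{\partial W})$-equivariant, pass to homotopy orbits, and translate the genus bound via Lemma~\ref{lem:ChangeOfThetaGenus}. One point you should make explicit is that the $\hAut(u,\ell_{\partial W})$-action does not in general preserve the single component $\mathcal{M}^\theta_n(W,\hat\ell_W)$ but rather permutes an orbit of components; the paper observes that every $\theta$-structure in this orbit has the same stable genus $\bar g^{\theta'}(W,\hat\ell'_W)$ (since they all push forward to $\hat\ell'_W$ under $\hat u$, and Lemma~\ref{lem:ChangeOfThetaGenus} applies), so Corollary~\ref{cor:Main2} gives the same range on each component and the spectral-sequence argument on homotopy orbits goes through uniformly. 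Your separate treatment of the closed case and your emphasis on rigidifying equivariance are both unnecessary: Corollary~\ref{cor:Main2} already covers closed $W$, and the equivariance of $\mathcal{C}_\theta(\emptyset,-) \to \Omega_{[\emptyset,-]}B\mathcal{C}_\theta$ under postcomposition with $\hat f$ is immediate from the functoriality of the nerve, exactly as the paper records in the paragraph preceding the theorem.
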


\begin{proof}
  We have $\hAut(u, \ell_{\partial W})$-equivariant maps
  $$\alpha_W^\theta: \mathcal{M}^{\theta}_{n}(W; \hat{\ell}_{\partial W}) \lra \mathcal{C}_\theta(\emptyset, (\partial W, \hat{\ell}_{\partial W})) \lra \Omega_{[\emptyset, (\partial W, \hat{\ell}_{\partial W})]} B\mathcal{C}_\theta$$
  and so on taking Borel constructions and using \eqref{eq:555} we
  obtain a diagram
  \begin{equation*}
    \xymatrix{
      {\mathcal{M}^{\theta'}(W; \hat{u} \circ \hat{\ell}_{\partial W})}  & {\mathcal{M}^{\theta}_{n}(W; \hat{\ell}_{\partial W})\hcoker \hAut(u, \ell_{\partial W})} \ar[l] \ar[d]^-{\alpha_W^{\theta} \hcoker \hAut(u, \ell_{\partial W})}\\
      & {\big(\Omega_{[\emptyset,(\partial W, \ell_{\partial W)}]} B\mathcal{C}_\theta \big) \hcoker \hAut(u, \ell_{\partial W}).}
    }
  \end{equation*}
  The horizontal map is a weak homotopy equivalence onto the path
  components which it hits, and the
  $\hAut(u, \ell_{\partial W})$-orbit of $\hat{\ell}_W$ hits the path
  component containing $\hat{\ell}'_W$. Restricting to this component
  and inverting the homotopy equivalence, we obtain a map we define to
  be $\alpha_W^{\theta'}$.

  Any $\theta$-structure in the $\hAut(u, \ell_{\partial W})$-orbit of
  $\hat{\ell}_W$ becomes homotopic to $\hat{\ell}'_W$ after applying
  $u$, so has stable $\theta$-genus equal to
  $\bar{g}^{\theta'}(W, \hat{\ell}'_W)$ by Lemma
  \ref{lem:ChangeOfThetaGenus}. On each such path component, by
  Corollary \ref{cor:Main2} the map $\alpha_W^{\theta}$ is acyclic in
  degrees satisfying $3* \leq \bar{g}^{\theta'}(W, \hat{\ell}'_W)-4$,
  and a homology isomorphism in degrees satisfying
  $2* \leq \bar{g}^{\theta'}(W, \hat{\ell}'_W)-3$ if $\theta'$ is
  spherical ($\theta$ is spherical if and only if $\theta'$ is, as
  $u : B \to B'$ is $n$-co-connected), onto the path component which
  it hits. It therefore remains so after taking the Borel
  construction, and so the right-hand map has the same property.
\end{proof}

Finally, we observe that the maps used in establishing the main
theorem of \cite{GMTW},
$B\mathcal{C}_\theta \simeq \Omega^{\infty-1} \MTtheta$, are all
$\hAut(\theta)$-equivariant, which allows us to translate Theorem
\ref{thm:ChangeOfTheta} into the form given in the introduction.

\appendix
\section{Homology equivalences and local coefficients}\label{sec:AbHEq}

Parts of this appendix are loosely based on one written by Johannes
Ebert for an early (uncirculated) draft of \cite{BER-W}. The authors
are grateful to him for allowing us to make use of it.

\subsection{Local coefficient systems}
\label{sec:loc-coef}

We shall take the point of view that a local coefficient system on a
space $X$ is a functor $\mathcal{L}: \pi_1(X) \to \mathrm{Ab}$ from
the fundamental groupoid of $X$ to the category of abelian groups.  It
is important that local systems form a \emph{category}, whose
morphisms $T: \mathcal{L} \to \mathcal{L}'$ are the natural
transformations.  In particular each coefficient system $\mathcal{L}$
has an endomorphism ring $R = \End(\mathcal{L})$, and in fact the
functor $\mathcal{L}: \pi_1(X) \to \mathrm{Ab}$ naturally factors
through the category of $R$-modules.  In particular the values
$\mathcal{L}(x)$ are naturally $\bZ[\pi_1(X,x)]$-$R$-bimodules.

Singular homology assigns to such a local coefficient system
$\mathcal{L}$ a chain complex $C_*(X;\mathcal{L})$ and homology groups
$H_*(X;\mathcal{L})$, and a natural transformation
$T: \mathcal{L} \to \mathcal{L}'$ induces maps of chains and of
homology $T: H_*(X;\mathcal{L}) \to H_*(X;\mathcal{L}')$.  If
$f: X \to Y$ is a continuous map and $\mathcal{L}: \pi_1(Y) \to \mathrm{Ab}$
is a coefficient system, then there is a pulled-back coefficient
system $f^*\mathcal{L}: \pi_1(X) \to\mathrm{Ab}$ and an induced map
\begin{equation*}
f_*:  H_*(X;f^*\mathcal{L}) \lra H_*(Y;\mathcal{L}).
\end{equation*}
For a fixed $\mathcal{L}$ the singular chains and the homology groups
$H_*(Y;\mathcal{L})$ are naturally modules over the ring
$R = \End(\mathcal{L})$, and the functoriality with respect to
$f: X \to Y$ is compatible with this structure.

For any space $X$ with coefficient system $\mathcal{L}$ there exists
for any $x \in H_k(X;\mathcal{L})$ a finite CW complex $K$ and a map
$f: K \to X$, such that $x$ is in the image of $f_*: H_*(K;f^*\mathcal{L})
\to H_*(X;\mathcal{L})$.

\subsubsection{Homology equivalences}\label{sec:homol-equiv}

\begin{definition}
  A local coefficient system $\mathcal{L}: \pi_1(X) \to \mathrm{Ab}$ is
  \emph{constant} if the action of $\pi_1(X,x)$ on $\mathcal{L}(x)$ is
  trivial for all $x \in X$.  The system is \emph{abelian} if the
  commutator subgroup of $\pi_1(X,x)$ acts trivially for all
  $x \in X$.
\end{definition}
\begin{definition}
  Let $f: X \to Y$ be a continuous map, and for each coefficient
  system $\mathcal{L}$ on $Y$ let
  \begin{equation}\label{eq:18}
    f_*: H_n(X; f^*\mathcal{L}) \lra H_n(Y;\mathcal{L})
  \end{equation}
  be the map induced by $f$.
  \begin{enumerate}[(i)]
  \item If~\eqref{eq:18} is an isomorphism for all $n$ and all
    $\mathcal{L}$ then $f$ is called an \emph{acyclic map}.
  \item If~\eqref{eq:18} is an isomorphism for all $n$ and all abelian
    $\mathcal{L}$ then $f$ is called an \emph{abelian equivalence}.
  \item If~\eqref{eq:18} is an isomorphism for all $n$ and all
    constant $\mathcal{L}$ then $f$ is called a \emph{homology
      equivalence}.
  \end{enumerate}
\end{definition}

We remark that the class of coefficient
systems $\mathcal{L}: \pi_1(Y) \to \mathrm{Ab}$ for
which~\eqref{eq:18} is an equivalence is closed under filtered
colimits and extensions.  Hence, if $f: X \to Y$ is an abelian
equivalence then~\eqref{eq:18} will be an isomorphism for any
coefficient system $\mathcal{L}$ obtained from abelian ones by
extensions and filtered colimits.  This class of local systems can
alternatively be described as those which for all $y \in Y$ satisfy
that $\mathcal{L}(y)$ admits no non-trivial homomorphism
$\mathcal{L}(y) \to W$ for any $\bZ[\pi_1(Y,y)]$-module $W$ with
$W^{[\pi_1(Y,y),\pi_1(Y,y)]} = 0$.

\subsubsection{Special abelian coefficient systems}

\begin{definition}
  A \emph{special abelian coefficient system} on a space $X$ consists
  of a commutative ring $R$ and a coefficient system
  $\mathcal{L}: \pi_1(X) \to \Mod_R$ such that
  $\{x \in X \mid \mathcal{L}(x) \neq 0\}$ is a path component of $X$,
  and for $x$ in this path component $\mathcal{L}(x)$ is a free
  $R$-module of rank 1.
\end{definition}

Note that a special abelian coefficient system is indeed an abelian
coefficient system, as $\pi_1(X,x)$ acts on $\mathcal{L}(x)$ via
$R^\times$, an abelian group.

\begin{lemma}\label{lem:special-abelian-suffices}
  If $f: X \to Y$ is a continuous map such that
  $f_*: H_*(X;f^*\mathcal{L}) \to H_*(Y;\mathcal{L})$ is an
  isomorphism for all special abelian coefficient systems
  $\mathcal{L}$ on $Y$, then $f$ is an abelian equivalence.
\end{lemma}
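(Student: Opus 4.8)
The plan is to reduce the statement about all abelian coefficient systems to the hypothesis about special abelian ones by a sequence of standard reductions, using the closure properties of the class of coefficient systems for which $f_*$ is an isomorphism (recorded in Section~\ref{sec:homol-equiv}: closed under filtered colimits and extensions). First I would reduce to the case where $Y$ is path connected: a general space is the disjoint union of its path components, homology with local coefficients splits over path components, and $f^{-1}$ of a path component of $Y$ maps to that component, so it suffices to treat each component separately. Having assumed $Y$ path connected and based at $y$, an abelian coefficient system $\mathcal{L}$ is the same as a module $\mathcal{L}(y)$ over $\bZ[H_1(Y;\bZ)] = \bZ[\pi_1(Y,y)^{\mathrm{ab}}]$, i.e.\ a module over the commutative ring $\Lambda = \bZ[H_1(Y;\bZ)]$.

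Next I would show that it suffices to check $f_*$ is an isomorphism for \emph{free rank one} $\Lambda$-modules, i.e.\ for $\mathcal{L} = \Lambda$ itself with its tautological $\pi_1$-action. Indeed, an arbitrary $\Lambda$-module is a filtered colimit of finitely presented ones, and a finitely presented $\Lambda$-module sits in an exact sequence $\Lambda^{a} \to \Lambda^{b} \to \mathcal{L} \to 0$; using the closure of the good class under filtered colimits and under cokernels of maps between good systems (which follows from the five lemma applied to the long exact sequences, since $H_*(-;-)$ is exact in the coefficient variable and finite direct sums are handled by additivity), one reduces to $\mathcal{L} = \Lambda$. This free rank one system over the commutative ring $\Lambda$ is precisely a special abelian coefficient system on $Y$ in the sense of the definition preceding the lemma (with $R = \Lambda$, and the path component in question being all of $Y$). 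Hence the hypothesis applies and gives that $f_*$ is an isomorphism for $\mathcal{L} = \Lambda$, and therefore, unwinding the reductions, for all abelian $\mathcal{L}$.

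The one point requiring care — and the main obstacle — is the passage from ``finitely presented'' to ``arbitrary'' $\Lambda$-modules and the handling of extensions: one must know that the class of coefficient systems on which $f_*$ is an isomorphism really is closed under filtered colimits and under extensions (equivalently, that $H_*(X;f^*(-)) \to H_*(Y;-)$ is a natural transformation of homological $\delta$-functors in the coefficient variable that commutes with filtered colimits). Commuting with filtered colimits is clear because singular chains commute with filtered colimits of coefficients; closure under extensions is the five lemma applied to the map of long exact homology sequences associated to a short exact sequence of coefficient systems. Both facts are already asserted in Section~\ref{sec:loc-coef}--\ref{sec:homol-equiv} of the excerpt, so I would simply invoke them. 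A final small wrinkle is bookkeeping with basepoints: since $Y$ is path connected we lose nothing, but one should note that the identification of abelian systems with $\Lambda$-modules, and of free rank one ones with special abelian systems, is made after choosing a basepoint, and is compatible with $f$ because $f$ sends the basepoint of $X$ (in the relevant component) to that of $Y$.
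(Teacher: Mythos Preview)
Your overall strategy---reduce to $Y$ path connected and then to the single special abelian system $\Lambda = \bZ[H_1(Y;\bZ)]$---is sound and matches the paper's. The gap is in your passage from $\mathcal{L} = \Lambda$ to arbitrary $\Lambda$-modules. You invoke ``closure under cokernels of maps between good systems'' and justify it by the five lemma, but the five lemma applied to the long exact sequence of a short exact sequence of coefficient systems only gives the 2-out-of-3 property. For a map $\phi\colon \Lambda^a \to \Lambda^b$ with cokernel $M$, the relevant short exact sequence is $0 \to \IM(\phi) \to \Lambda^b \to M \to 0$, so concluding that $M$ is good requires $\IM(\phi)$ good; via $0 \to \Ker(\phi) \to \Lambda^a \to \IM(\phi) \to 0$ this in turn requires $\Ker(\phi)$ good. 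But $\Ker(\phi)$ is an arbitrary submodule of a finite free module over $\Lambda = \bZ[H_1(Y;\bZ)]$, and such submodules need not be free, finitely generated, or otherwise tractable. The induction does not close, and the remark you cite from Section~\ref{sec:homol-equiv} asserts only closure under extensions and filtered colimits, not cokernels of arbitrary maps.

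The paper's fix---implicit in the sentence ``$H_*(Y;\mathcal{L}_y)$ and $H_*(X;f^*\mathcal{L}_y)$ calculate the homology of the universal abelian covers and if $f_*$ induces an isomorphism between these, then it induces an isomorphism for all abelian coefficient systems''---is to bypass the module-theoretic reduction and argue at the chain level. The singular chain complexes $C_*(X;f^*\Lambda)$ and $C_*(Y;\Lambda)$ are bounded-below complexes of \emph{free} $\Lambda$-modules, and the hypothesis for $\mathcal{L} = \Lambda$ says the chain map between them is a quasi-isomorphism; any such map is then a chain homotopy equivalence over $\Lambda$. Applying $-\otimes_\Lambda M$ for an arbitrary $\Lambda$-module $M$ preserves the chain homotopy equivalence, and since $C_*(Y;\Lambda) \otimes_\Lambda M \cong C_*(Y;M)$ (and likewise for $X$) this yields the isomorphism for $M$. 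So the ``universal abelian cover'' step is not merely a reformulation of your reduction: it is where the argument actually closes.
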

\begin{proof}
  If $\mathcal{L}: \pi_1(Y) \to \Mod_{\bF_2}$ is the unique special abelian
  coefficient system supported on the path component given by
  $y \in \pi_0(Y)$, then the dimension of $H_0(X;f^*\mathcal{L})$ as
  an $\bF_2$ vector space is the cardinality of the inverse
  image of $y$ under $\pi_0(X) \to \pi_0(Y)$.  Hence $f$ induces a
  bijection $\pi_0(X) \to \pi_0(Y)$.
  
  We can then restrict to the case where both $X$ and $Y$ are path
  connected and deduce that $f_*: H_1(X;\bZ) \to H_1(Y;\bZ)$ is an
  equivalence by using the constant coefficient system
  $\bZ: \pi_1(Y) \to \Mod_{\bZ}$, which is special abelian.  Then we
  pick a point $y \in Y$ and use the Hurewicz homomorphism
  $\pi_1(Y,y) \to A = H_1(Y)$ to define the special abelian coefficient system
  $\mathcal{L}_y: \pi_1(Y) \to \Mod_{\bZ[A]}$ by
  \begin{equation*}
    \mathcal{L}_y(z) = \bZ[A] \otimes_{\bZ[\pi_1(Y,y)]} \bZ[\pi_1(Y)(y,z)].
  \end{equation*}
  Then $H_*(Y;\mathcal{L}_y)$ and $H_*(X;f^*\mathcal{L}_y)$ calculate
  the homology of the universal abelian covers and if $f_*$ induces an
  isomorphism between these, then it induces an isomorphism for all
  abelian coefficient systems.
\end{proof}

The following lemma is the main advantage of special abelian systems
over all systems.  In general if $f: X \to Y$ and $\mathcal{L}$ is an
arbitrary coefficient system on $Y$, then the natural map
$\End(\mathcal{L}) \to \End(f^*\mathcal{L})$ can be very far from
surjective if the system is not abelian, even when $X$ and $Y$ are
path connected.

\begin{lemma}\label{lem:move-coeff-iso-to-target}
  Let $X$ and $Y$ be path connected.  If $f,g: X \to Y$ are two maps
  inducing equal homomorphisms $H_1(X;\bZ) \to H_1(Y;\bZ)$, then
  $f^*\mathcal{L}$ and $g^*\mathcal{L}: \pi_1(X) \to \Mod_R$ are
  isomorphic for any special abelian
  $\mathcal{L}: \pi_1(Y) \to \Mod_R$.

  Let $X$ be path connected and
  $\mathcal{L},\mathcal{L}': \pi_1(X) \to \Mod_R$ be isomorphic
  special abelian coefficient systems.  Then the $R$-module
  $\Hom_R(\mathcal{L},\mathcal{L}')$, consisting of natural
  transformations of functors $\pi_1(X) \to \Mod_R$, is free of rank
  one.

  Let $f: X \to Y$ be a map between path connected spaces and let
  $\mathcal{L}, \mathcal{L}': \pi_1(Y) \to \Mod_R$ be isomorphic
  special abelian coefficient systems.  Then the natural map
  \begin{equation*}
    \Hom_R(\mathcal{L},\mathcal{L}') \lra \Hom_R(f^*\mathcal{L}, f^*\mathcal{L}')
  \end{equation*}
  is an isomorphism.
\end{lemma}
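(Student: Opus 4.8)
I would prove the three assertions in the order second, first, third, since the first and third both reduce to the second. \textbf{Second assertion.} Fix an isomorphism $\mathcal{L}\cong\mathcal{L}'$ of systems on $X$ (it exists by hypothesis); precomposition with its inverse gives an $R$-module isomorphism $\Hom_R(\mathcal{L},\mathcal{L}')\cong\End_R(\mathcal{L})$, so it is enough to show $\End_R(\mathcal{L})$ is free of rank one. Fix a basepoint $x\in X$. The plan is to show that restriction to $x$ is an isomorphism $\End_R(\mathcal{L})\to\End_R(\mathcal{L}(x))$ of $R$-modules, the target being free of rank one because $\mathcal{L}(x)$ is. For the inverse: given an $R$-linear $T_x\colon\mathcal{L}(x)\to\mathcal{L}(x)$ and a point $z\in X$, path-connectedness furnishes $\gamma\in\pi_1(X)(x,z)$, and naturality of a putative extension forces $T_z=\mathcal{L}(\gamma)\circ T_x\circ\mathcal{L}(\gamma)^{-1}$. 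This is independent of $\gamma$: two choices differ by a loop at $x$, which by the special abelian hypothesis acts on $\mathcal{L}(x)$ by multiplication by a unit $u\in R^\times$, and $R$-linearity of $T_x$ makes the induced conjugation $u\,T_x\,u^{-1}=T_x$ trivial; the family $(T_z)_z$ thus obtained is natural, and this is the required inverse.

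\textbf{First assertion.} Since $Y$ is path connected, a special abelian system $\mathcal{L}\colon\pi_1(Y)\to\Mod_R$ is determined up to isomorphism by the homomorphism $\pi_1(Y,y)\to\Aut_R(\mathcal{L}(y))=R^\times$ recording the monodromy on a chosen generator of $\mathcal{L}(y)$; as $R^\times$ is abelian this factors through the Hurewicz map, yielding a classifying homomorphism $\chi_\mathcal{L}\colon H_1(Y;\bZ)\to R^\times$ that does not depend on the choice of basepoint. The pulled-back system $f^*\mathcal{L}$ is classified by $\chi_\mathcal{L}\circ f_*$, where $f_*\colon H_1(X;\bZ)\to H_1(Y;\bZ)$ is the induced map. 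Since $f_*=g_*$ by hypothesis, $f^*\mathcal{L}$ and $g^*\mathcal{L}$ have the same classifying homomorphism and are therefore isomorphic.

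\textbf{Third assertion.} Choose an isomorphism $T\colon\mathcal{L}\to\mathcal{L}'$ of systems on $Y$. Precomposition with $T^{-1}$ identifies $\Hom_R(\mathcal{L},\mathcal{L}')$ with $\End_R(\mathcal{L})$ and carries $T$ to $\mathrm{id}_{\mathcal{L}}$, so by the second assertion $T$ generates the free rank-one $R$-module $\Hom_R(\mathcal{L},\mathcal{L}')$. The natural map sends $T$ to $f^*T$, which is a levelwise isomorphism since $(f^*T)_x=T_{f(x)}$; hence $f^*\mathcal{L}\cong f^*\mathcal{L}'$ (these are special abelian as $X$ is path connected), and the same argument shows $f^*T$ generates the free rank-one $R$-module $\Hom_R(f^*\mathcal{L},f^*\mathcal{L}')$. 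An $R$-linear map between free rank-one $R$-modules carrying a generator to a generator is an isomorphism, proving the claim. The only step needing care is the naturality bookkeeping in the second assertion — in particular noting that the hypothesis $\mathcal{L}\cong\mathcal{L}'$ (equivalently, equal monodromy) is genuinely used, since without it $\Hom_R(\mathcal{L},\mathcal{L}')$ can vanish, and that one must exploit the centrality of the unit scalars $R^\times$ in the commutative ring $R$ for the conjugation cancellation.
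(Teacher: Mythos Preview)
Your proof is correct and follows essentially the same approach as the paper. The paper proves the three claims in the stated order, classifying special abelian systems by elements of $H^1(Y;R^\times)$ (dual to your $H_1(Y;\bZ)\to R^\times$) for the first, showing the restriction $\Hom_R(\mathcal{L},\mathcal{L}')\to\Hom_R(\mathcal{L}(x),\mathcal{L}'(x))$ is an isomorphism for the second (arguing injectivity by path-connectedness and surjectivity because a chosen isomorphism restricts to a generator), and deducing the third exactly as you do; your reordering and your explicit construction of the inverse via conjugation by paths are cosmetic differences.
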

\begin{proof}
  To verify the first claim we pick a basepoint $x \in X$.  Then
  $\mathcal{L}$ gives homomorphisms $\pi_1(Y,f(x)) \to R^\times$ and
  $\pi_1(Y,g(x)) \to R^\times$ given by equal elements of
  $H^1(Y;R^\times)$, and the isomorphism classes of $f^*\mathcal{L}$
  and $g^*\mathcal{L}$ are determined by the image of this element
  under the equal maps
  $f^* = g^*: H^1(Y;R^\times) \to H^1(X;R^\times)$.

  To verify the second claim we also use a basepoint $x \in X$.  It
  suffices to prove that the restriction map
  $\Hom_R(\mathcal{L},\mathcal{L}') \to
  \Hom_R(\mathcal{L}(x),\mathcal{L}'(x))$ is an isomorphism.  It is
  injective as $X$ is path connected, and any choice of an isomorphism
  $\mathcal{L} \cong \mathcal{L}'$ gives an element of
  $\Hom_R(\mathcal{L},\mathcal{L}')$ which restricts to a generator of
  the $R$-module $\Hom_R(\mathcal{L}(x),\mathcal{L}'(x)) \cong R$.

  The third claim now follows because the homomorphism is between free
  $R$-modules of rank one and sends a generator to a generator.
\end{proof}

\subsubsection{Homotopies}\label{sec:homotopies}

Finally, let us discuss homotopies.  The two inclusion
$i_0, i_1: X \to I \times X$ induce two functors
$\pi_1(i_0), \pi_1(i_1): \pi_1(X) \to \pi_1(I \times X)$, and there is
a canonical natural isomorphism $\pi_1(i_0) \Rightarrow \pi_1(i_1)$
given on the object $x$ by the path from $i_0(x) = (0,x)$ to
$i_1(x) = (1,x)$ along the interval.  If $R$ is an associative ring
and $\mathcal{L}: \pi_1(I \times X) \to \Mod_R$ is a coefficient
system, we may compose $\mathcal{L}$ with this natural transformation
to get an induced isomorphism
$T: i_0^*\mathcal{L} \to i_1^*\mathcal{L}$ of functors into $\Mod_R$.
For any homotopy $H: I \times X \to Y$ from $f$ to $g$ and coefficient
system $\mathcal{L}: \pi_1(Y) \to \Mod_R$ we therefore have an induced
isomorphism
\begin{equation*}
  T_H: f^*\mathcal{L} \lra g^*\mathcal{L}
\end{equation*}
of functors $\pi_1(X) \to \Mod_R$, fitting into a commutative
diagram
\begin{equation}\label{eq:20}
  \begin{aligned}
    \xymatrix{ H_*(X;f^*\mathcal{L}) \ar[d]_{T_H} \ar[r]^{f_*} &
      H_*(Y;
      \mathcal{L}) \ar@{=}[d]\\
      H_*(X;g^*\mathcal{L}) \ar[r]_{g_*} & H_*(Y; \mathcal{L}).
    }
  \end{aligned}
\end{equation}

\subsection{Invariance properties for abelian coefficients}
\label{sec:invariants}
The following observation is immediate from diagram~\eqref{eq:20}.
\begin{proposition}\label{prop:homotopic-maps}
  Let $f,g: X \to Y$ be homotopic maps and let
  $\mathcal{L}: \pi_1(Y) \to \Mod_R$ be any coefficient
  system.
  \begin{enumerate}[(i)]
  \item For any $n$,
    $f_*: H_n(X;f^*\mathcal{L}) \to H_n(Y;\mathcal{L})$ is surjective
    if and only if $g_*: H_n(X;g^*\mathcal{L}) \to H_n(Y;\mathcal{L})$
    is surjective.
  \item For any $n$,
    $f_*: H_n(X;f^*\mathcal{L}) \to H_n(Y;\mathcal{L})$ is injective
    if and only if $g_*: H_n(X;g^*\mathcal{L}) \to H_n(Y;\mathcal{L})$
    is injective.
  \end{enumerate}
  In particular, $f$ is a homology equivalence, abelian equivalence,
  or acyclic map, if and only if $g$ has that property.\qed
\end{proposition}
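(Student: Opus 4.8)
The statement to prove is Proposition~\ref{prop:homotopic-maps}, which asserts that homotopic maps $f, g : X \to Y$ induce maps on homology with local coefficients that are surjective (resp.\ injective) in each degree simultaneously, and in particular that being a homology equivalence, abelian equivalence, or acyclic map is a homotopy-invariant property of a map.

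The plan is to read everything off the commutative diagram~\eqref{eq:20} established in Section~\ref{sec:homotopies}. The key input is that a homotopy $H : I \times X \to Y$ from $f$ to $g$, together with a coefficient system $\mathcal{L} : \pi_1(Y) \to \Mod_R$, produces a canonical \emph{isomorphism} $T_H : f^*\mathcal{L} \to g^*\mathcal{L}$ of functors $\pi_1(X) \to \Mod_R$, and hence an isomorphism $T_H : H_n(X; f^*\mathcal{L}) \to H_n(X; g^*\mathcal{L})$ for every $n$; moreover this isomorphism fits into the square~\eqref{eq:20} whose right-hand vertical map is the identity on $H_*(Y;\mathcal{L})$. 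First I would invoke this square: it says $g_* \circ T_H = f_*$ as maps $H_n(X;f^*\mathcal{L}) \to H_n(Y;\mathcal{L})$. Since $T_H$ is an isomorphism, $f_*$ is surjective if and only if $g_*$ is surjective, and $f_*$ is injective if and only if $g_*$ is injective. This proves (i) and (ii) in one stroke, for each fixed $n$ and each fixed $\mathcal{L}$.

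For the final sentence, I would observe that each of the three properties (homology equivalence, abelian equivalence, acyclic map) is by definition the requirement that $f_* : H_n(X; f^*\mathcal{L}) \to H_n(Y;\mathcal{L})$ be an isomorphism for all $n$ and all $\mathcal{L}$ in the relevant class (constant, abelian, or arbitrary). By (i) and (ii) applied to each such $\mathcal{L}$, this holds for $f$ if and only if it holds for $g$. There is nothing more to do — the $\qed$ at the end of the statement already signals that the proof is meant to be this immediate.

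Honestly, there is no real obstacle here: the proposition is a formal corollary of the construction of $T_H$ and the commutativity of~\eqref{eq:20}, both of which are set up in the preceding subsection. If I were to flag anything, it would be the bookkeeping point that $T_H$ genuinely is a \emph{natural} isomorphism of coefficient systems (not merely an isomorphism of the underlying groups $\mathcal{L}(x)$), so that it induces a well-defined map on homology with local coefficients — but this is exactly what Section~\ref{sec:homotopies} provides via the canonical natural transformation $\pi_1(i_0) \Rightarrow \pi_1(i_1)$ coming from the interval paths. So the proof is a two-line diagram chase, and the write-up should simply say: apply~\eqref{eq:20} and use that $T_H$ is an isomorphism.
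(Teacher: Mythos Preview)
Your proposal is correct and matches the paper's approach exactly: the paper marks the proposition with a \qed{} and states just before it that the observation is immediate from diagram~\eqref{eq:20}, which is precisely the diagram chase you describe.
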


It turns out that in the case of abelian equivalences, the assumption
in the above proposition that $f$ and $g$ be homotopic can be replaced
by the following weaker assumption, called ``weakly homotopic'' by
\cite{MR0283788}.  \newcommand{\weakly}{almost}
\begin{definition}
  Two parallel maps
  \begin{equation*}
    \xymatrix{ X \ar@/^/[r]^f \ar@/_/[r]_g & Y}
  \end{equation*}
  are \emph{\weakly{} homotopic} if for all finite CW complexes $K$
  and maps $i: K \to X$ we have $i \circ f \simeq i \circ g$.
\end{definition}

\begin{proposition}
  \label{prop:almost-homotopic} Let $X$ and $Y$ be path connected, let
  $f,g: X \to Y$ be \weakly{} homotopic and let
  $\mathcal{L}: \pi_1(Y) \to \Mod_R$ be a special abelian coefficient
  system.
  \begin{enumerate}[(i)]
  \item If $f_*:H_k(X;f^*\mathcal{L}) \to H_k(Y;\mathcal{L})$ is
    surjective, then
    $g_*:H_k(X;g^*\mathcal{L}) \to H_k(Y;\mathcal{L})$ is also
    surjective.
  \item If $f_*:H_k(X;f^*\mathcal{L}) \to H_k(Y;\mathcal{L})$ is
    injective, then $g_*:H_k(X;g^*\mathcal{L}) \to H_k(Y;\mathcal{L})$
    is also injective.
  \end{enumerate}
\end{proposition}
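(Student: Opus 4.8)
The plan is to reduce both assertions to maps out of finite CW complexes, on which $f$ and $g$ are genuinely homotopic, and then to transport surjectivity or injectivity along the coefficient isomorphisms furnished by diagram~\eqref{eq:20} and Lemma~\ref{lem:move-coeff-iso-to-target}. I would use repeatedly the observation from Section~\ref{sec:loc-coef} that every class in $H_k(X; f^*\mathcal{L})$ or $H_k(X; g^*\mathcal{L})$ is of the form $\iota_*(w)$ for some map $\iota : K \to X$ from a finite CW complex, the fact that any such $\iota$ satisfies $f \circ \iota \simeq g \circ \iota$ because $f$ and $g$ are almost homotopic, and the fact that, since $X$ is path connected, we may take $K$ path connected (joining the finitely many components of $K$ by arcs mapped into $X$ when $k \geq 1$, and taking $K$ to be a single point when $k=0$). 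We may also assume that $\mathcal{L}$ is supported on all of $Y$, as otherwise all the groups involved vanish.

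\emph{Part (i).} Write $x = f_*(z)$ with $z \in H_k(X; f^*\mathcal{L})$, and choose $\iota : K \to X$ and $w \in H_k(K; \iota^* f^*\mathcal{L})$ with $\iota_*(w) = z$, so that $x = (f\iota)_*(w)$. Picking a homotopy $H : f\iota \simeq g\iota$, diagram~\eqref{eq:20} provides an isomorphism $T_H : (f\iota)^*\mathcal{L} \to (g\iota)^*\mathcal{L}$ of coefficient systems on $K$ with $(f\iota)_* = (g\iota)_* \circ (T_H)_*$. Hence $x = (g\iota)_*\big((T_H)_*(w)\big)$ lies in the image of $(g\iota)_* = g_* \circ \iota_*$, and so in the image of $g_*$. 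This half uses nothing about $\mathcal{L}$ beyond its being a coefficient system, and is the analogue of Proposition~\ref{prop:homotopic-maps}(i) filtered through finite CW complexes.

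\emph{Part (ii).} The obstacle is that $f^*\mathcal{L}$ and $g^*\mathcal{L}$ are a priori different coefficient systems on $X$, so that there is no evident comparison between $H_k(X; f^*\mathcal{L})$ and $H_k(X; g^*\mathcal{L})$; dealing with this is exactly where the ``special abelian'' hypothesis enters. First, detecting integral homology classes on finite CW complexes (over which $f$ and $g$ are homotopic) shows $f_* = g_* : H_1(X; \bZ) \to H_1(Y; \bZ)$, so by the first part of Lemma~\ref{lem:move-coeff-iso-to-target} there is an isomorphism $\phi : f^*\mathcal{L} \to g^*\mathcal{L}$, hence an isomorphism $\phi_* : H_k(X; f^*\mathcal{L}) \to H_k(X; g^*\mathcal{L})$. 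Now suppose $g_*(z) = 0$, and write $z = \iota_*(w)$ with $\iota : K \to X$ finite CW and path connected, and fix a homotopy $H : f\iota \simeq g\iota$. By the second and third parts of Lemma~\ref{lem:move-coeff-iso-to-target} the $R$-module $\Hom_R(\iota^* f^*\mathcal{L}, \iota^* g^*\mathcal{L})$ is free of rank one, so, both $T_H$ and $\iota^*\phi$ being isomorphisms, they differ by a unit: $T_H = u \cdot \iota^*\phi$ with $u \in R^\times$. From $(f\iota)_* = (g\iota)_* \circ (T_H)_*$ and $(g\iota)_*(w) = g_*(z) = 0$ we get $(f\iota)_*\big((T_H)_*^{-1}(w)\big) = 0$, so by injectivity of $f_*$ together with $(f\iota)_* = f_* \circ \iota_*$ we obtain $\iota_*\big((T_H)_*^{-1}(w)\big) = 0$ in $H_k(X; f^*\mathcal{L})$; since $(T_H)_* = u \cdot (\iota^*\phi)_*$ and $\iota_*$ is $R$-linear, also $\iota_*\big((\iota^*\phi)_*^{-1}(w)\big) = 0$. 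Finally, naturality of the pushforward with respect to the coefficient morphism $\phi$ gives $\phi_*\big(\iota_*((\iota^*\phi)_*^{-1}(w))\big) = \iota_*(w) = z$, whence $z = \phi_*(0) = 0$.

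The one genuinely delicate ingredient is the identity $T_H = u \cdot \iota^*\phi$, which rests on the rank-one freeness of homomorphism modules between isomorphic special abelian systems over a path-connected base (Lemma~\ref{lem:move-coeff-iso-to-target}); this is precisely the feature that fails for general coefficient systems, and is why the hypothesis cannot be weakened. Everything else is routine bookkeeping with pulled-back coefficient systems, and I would relegate the reduction to path-connected $K$ to a one-line remark.
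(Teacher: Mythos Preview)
Your proof is correct and follows essentially the same approach as the paper's. The only difference is cosmetic: the paper uses the third part of Lemma~\ref{lem:move-coeff-iso-to-target} to lift the homotopy-induced isomorphism $T_{H_K}$ on $K$ to a global isomorphism $T: f^*\mathcal{L} \to g^*\mathcal{L}$ on $X$, whereas you fix a global $\phi$ first (via the first part of the Lemma) and then observe that $T_H$ and $\iota^*\phi$ differ by a unit of $R$ using rank-one freeness over $K$; these are two equivalent ways of packaging the same application of the Lemma.
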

\begin{proof}
  Without loss of generality we may assume $X$ is a CW complex.  The
  surjectivity part in fact holds true for any coefficient system,
  with essentially the same proof as in
  Proposition~\ref{prop:homotopic-maps}: any homology class in
  $H_n(Y;\mathcal{L})$ is by assumption the image of some
  $x \in H_n(X;f^*\mathcal{L})$ under $f_*$, and any such $x$ is
  supported on some finite subcomplex $K \subset X$, but the map
  \begin{equation*}
    (g\vert_{K})_*: H_n(K;g\vert_{K}^*\mathcal{L}) \to H_n(Y;\mathcal{L})
  \end{equation*}
  has the same image as $(f\vert_{K})_*$, by
  diagram~\eqref{eq:20}.

  Injectivity is not true for general coefficient systems, but when
  $\mathcal{L}: \pi_1(Y) \to \Mod_R$ is special abelian we may argue
  as follows.  Suppose
  $f_*: H_n(X;f^*\mathcal{L}) \to H_n(Y;\mathcal{L})$ is injective and
  suppose $x \in \Ker(g_*) \subset H_n(X;g^*\mathcal{L})$.  Again $x$
  is supported on some finite subcomplex $K \subset X$ and by
  assumption we may choose a homotopy
  $H_K: f\vert_{K} \simeq g\vert_{K}$, inducing an isomorphism of
  coefficient systems
  $T_{H_K}: f\vert_{K}^* \mathcal{L} \to g\vert_{K}^* \mathcal{L}$ and
  a commutative diagram
  \begin{equation*}
    \xymatrix{ H_*(K;f\vert_{K}^*\mathcal{L}) \ar[d]_{T_{H_K}} \ar[r]^-{(f\vert_{K})_*} &
      H_*(Y;
      \mathcal{L}) \ar@{=}[d]\\
      H_*(K;g\vert_{K}^*\mathcal{L}) \ar[r]^-{(g\vert_{K})_*} & H_*(Y; \mathcal{L}).
    }
  \end{equation*}
  Since any class in $H_1(X;\bZ)$ is supported on a finite subcomplex
  of $X$, we see that $f_* = g_*: H_1(X;\bZ) \to H_1(Y;\bZ)$.  If we
  assume, as we may, that $K$ is path connected, the special
  abelianness of $\mathcal{L}$ implies by
  Lemma~\ref{lem:move-coeff-iso-to-target} that
  \begin{equation*}
    \Hom_R(f^*\mathcal{L}, g^*\mathcal{L}) \lra
    \Hom_R(f\vert_{K}^* 
    \mathcal{L}, g\vert_{K}^*\mathcal{L}) 
  \end{equation*}
  is an isomorphism of $R$-modules, and in particular the isomorphism
  $T_{H_K}$ is induced by an isomorphism
  $T \in \Hom_R(f^*\mathcal{L},g^*\mathcal{L})$. We may therefore form
  a diagram
  \begin{equation*}
    \xymatrix{ H_*(K;f\vert_{K}^*\mathcal{L}) \ar[d]_{T_{H_K}} \ar[r] &
      H_*(X;
      f^*\mathcal{L}) \ar[d]_{T} \ar[r]^{f_*} & H_*(Y;\mathcal{L}) \ar@{=}[d]\\
      H_*(K;g\vert_{K}^*\mathcal{L}) \ar[r] & H_*(X; g^*\mathcal{L})
      \ar[r]^{g_*} & H_*(Y;\mathcal{L}),
    }
  \end{equation*}
  whose horizontal maps are induced by the maps of underlying spaces
  and whose vertical maps are induced by isomorphisms of coefficient
  systems, in which the left-hand square and the outer rectangle
  commute. Now the element $x \in H_n(X;g^*\mathcal{L})$ lifts to an
  $x' \in H_n(K;g\vert_{K}^*\mathcal{L})$ by assumption, and by the
  commutativity of the outer rectangle we have
  $(f\vert_{K})_*T_{H_K}^{-1}(x') = (g\vert_{K})_*(x') = g_*(x)=0$.
  As $f_*$ is injective and the left-hand square commutes, it follows
  that $x=0$, as required.
\end{proof}

\begin{corollary}\label{cor:AlmHtcImpliesAbEq}
  Let $f,g: X \to Y$ be \weakly{} homotopic maps.  Then $f$ is an
  abelian equivalence if and only if $g$ is.
\end{corollary}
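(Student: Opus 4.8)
The plan is to reduce the statement to Proposition~\ref{prop:almost-homotopic} via the criterion of Lemma~\ref{lem:special-abelian-suffices}. By symmetry it suffices to show that if $f$ is an abelian equivalence then so is $g$, and by Lemma~\ref{lem:special-abelian-suffices} this amounts to checking that $g_* : H_*(X;g^*\mathcal{L}) \to H_*(Y;\mathcal{L})$ is an isomorphism for every special abelian coefficient system $\mathcal{L}$ on $Y$.

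First I would observe that almost homotopic maps agree on path components: taking $K$ to be a point in the definition shows $f(x)$ and $g(x)$ lie in the same component of $Y$ for all $x\in X$. Since $f$ is an abelian equivalence, the $\bF_2$-valued constant special abelian systems supported on the individual components of $Y$ already force $f$ — hence also $g$ — to induce a bijection $\pi_0(X)\to\pi_0(Y)$. For a pair of corresponding components $X_\alpha\subset X$ and $Y_\beta\subset Y$ I would note that the restrictions $f|_{X_\alpha}, g|_{X_\alpha}:X_\alpha\to Y_\beta$ remain almost homotopic: for a connected finite CW complex $K$ and a map $i:K\to X_\alpha$, any homotopy $f\circ i\simeq g\circ i$ in $Y$ has connected domain and therefore lands in $Y_\beta$.

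Now fix a special abelian $\mathcal{L}$ on $Y$, supported on some component $Y_\beta$, and let $X_\alpha$ be the unique component of $X$ mapped into $Y_\beta$ by $f$ (equivalently by $g$). Then $H_*(Y;\mathcal{L})=H_*(Y_\beta;\mathcal{L})$, while $f^*\mathcal{L}$ and $g^*\mathcal{L}$ are supported on $X_\alpha$, where they restrict to special abelian systems, so that $f_*$ and $g_*$ on $\mathcal{L}$-homology are exactly the maps induced by $f|_{X_\alpha}$ and $g|_{X_\alpha}$. Since $f$ is an abelian equivalence the former is an isomorphism, in particular injective and surjective; Proposition~\ref{prop:almost-homotopic}, applied to the almost homotopic maps $f|_{X_\alpha},g|_{X_\alpha}$ between the path-connected spaces $X_\alpha$ and $Y_\beta$, then shows $(g|_{X_\alpha})_*$ is also injective and surjective, hence an isomorphism. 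Thus $g_*$ is an isomorphism for every special abelian $\mathcal{L}$, and Lemma~\ref{lem:special-abelian-suffices} gives that $g$ is an abelian equivalence.

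The argument is purely formal once Proposition~\ref{prop:almost-homotopic} is in hand; the only point needing attention is that that proposition is stated for path-connected spaces, which is why the $\pi_0$-bookkeeping above — together with the fact that special abelian systems live on a single path component — is needed to treat $X$ and $Y$ in general.
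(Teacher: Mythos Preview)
Your proof is correct and follows exactly the approach of the paper: reduce to special abelian coefficient systems via Lemma~\ref{lem:special-abelian-suffices}, reduce to the path-connected case, and apply Proposition~\ref{prop:almost-homotopic}. The paper's own proof simply says ``it is easy to reduce to the case of path connected spaces,'' and what you have written is precisely a careful version of that reduction.
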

\begin{proof}
  It is easy to reduce to the case of path connected spaces, and by
  Lemma~\ref{lem:special-abelian-suffices} it suffices to consider
  special abelian coefficient systems, where the claim follows from
  Proposition~\ref{prop:almost-homotopic}.
\end{proof}

\subsection{Ladder diagrams and abelian equivalences}

A \emph{ladder diagram} is a diagram of spaces of the shape
\begin{equation}\label{long-ladder-diagram}
  \begin{aligned}
    \xymatrix{
      X_0 \ar[r]^{g_1} \ar[d]^{f_0} &  X_1 \ar[r]^{g_2} \ar[d]^{f_1} & X_2\ar[d]^{f_2}\ar[r] & \cdots \\
      Y_0 \ar[r]^{h_1}  &  Y_1 \ar[r]^{h_2}  & Y_2\ar[r] & \cdots. \\
    } 
  \end{aligned}
\end{equation}
A commutative ladder diagram is one in which each square commutes, and
a homotopy commutative ladder diagram is one in which each square
commutes up to homotopy.

For a ladder diagram as above we shall write $X_\infty$ for the
telescope (homotopy colimit) of the top row and $Y_\infty$ for the
bottom row.  If the diagram commutes, there is an induced map
$X_\infty \to Y_\infty$.  If the diagram homotopy commutes, then a
choice of homotopies $H_i: h_i \circ f_{i-1} \simeq f_i \circ g_i$
induces a map $X_\infty \to Y_\infty$.  The homotopy class of the map
of telescopes in general depends on the choice of homotopies $H_i$,
but we have the following result.

\begin{lemma}\label{lem:LadderHtpies}
  For a homotopy commutative ladder
  diagram~\eqref{long-ladder-diagram}, any two choices of homotopies
  $H_i: h_i \circ f_{i-1} \simeq f_i \circ g_i$ induce \weakly{}
  homotopic maps $X_\infty \to Y_\infty$.  In particular, the property
  of whether the induced map $X_\infty \to Y_\infty$ is an abelian
  equivalence depends only on the underlying commutative ladder
  diagram in the homotopy category, not on the choices of homotopies
  $H_i$.
\end{lemma}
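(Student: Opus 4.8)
The plan is to show that any two systems of homotopies $\{H_i\}$ and $\{H_i'\}$ produce maps of mapping telescopes $\Phi,\Phi'\colon X_\infty\to Y_\infty$ which become homotopic after precomposition with any map out of a finite CW complex, i.e.\ are almost homotopic in the sense of the definition preceding Proposition~\ref{prop:almost-homotopic}; the statement about abelian equivalences is then immediate from Corollary~\ref{cor:AlmHtcImpliesAbEq}.

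First I would reduce to finite stages. Fix a finite CW complex $K$ and a map $i\colon K\to X_\infty$, where $X_\infty=\mathrm{Tel}(X_0\xrightarrow{g_1}X_1\xrightarrow{g_2}\cdots)$. The telescope is the increasing union of its finite sub-telescopes $\mathrm{Tel}_N=\mathrm{Tel}(X_0\to\cdots\to X_N)$, and since $K$ is compact its image lies in $\mathrm{Tel}_N$ for some $N$. As $\mathrm{Tel}_N$ deformation retracts onto (a copy of) $X_N$, the map $i$ factors up to homotopy as $K\xrightarrow{k}X_N\xrightarrow{\iota_N}X_\infty$, where $\iota_N$ is the canonical map from the $N$-th stage of the telescope. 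The same discussion of course applies to $Y_\infty$.

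The key point is that the composite $X_N\xrightarrow{\iota_N}X_\infty\xrightarrow{\Phi}Y_\infty$ is homotopic to $X_N\xrightarrow{f_N}Y_N\xrightarrow{\iota_N}Y_\infty$, \emph{with no reference to the chosen homotopies $H_i$}. Indeed, $\Phi$ restricts on the sub-telescope $\mathrm{Tel}_N(X_\bullet)$ to the finite telescope map $\Phi_N\colon\mathrm{Tel}_N(X_\bullet)\to\mathrm{Tel}_N(Y_\bullet)$ assembled from $f_0,\dots,f_N$ and $H_1,\dots,H_N$; but in the standard model of this telescope map the homotopies $H_i$ are used only to traverse the mapping-cylinder intervals, so the value of $\Phi_N$ on the top $X_N$-slice is precisely $f_N$ followed by the inclusion of the top $Y_N$-slice. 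Composing with the deformation retraction $\mathrm{Tel}_N(X_\bullet)\simeq X_N$ gives the claim. Carrying out this bookkeeping of the mapping telescope correctly — establishing that $\Phi\circ\iota_N$ is independent of the $H_i$ up to homotopy — is the one step that requires genuine care; the rest is formal.

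Granting this, for any finite CW complex $K$ and map $i\colon K\to X_\infty$ we get
\[
\Phi\circ i\;\simeq\;\iota_N\circ f_N\circ k\;\simeq\;\Phi'\circ i,
\]
so $\Phi$ and $\Phi'$ are almost homotopic, and Corollary~\ref{cor:AlmHtcImpliesAbEq} shows that $\Phi$ is an abelian equivalence if and only if $\Phi'$ is. For the final remark that only the underlying ladder diagram in the homotopy category matters, one notes that replacing each $f_i,g_i,h_i$ in~\eqref{long-ladder-diagram} by a homotopic map — with a compatible choice of homotopies — changes $\Phi$ again only within its almost-homotopy class by the very same telescope bookkeeping, so Corollary~\ref{cor:AlmHtcImpliesAbEq} applies once more. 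I expect the mapping-telescope bookkeeping in the third paragraph to be the main obstacle; everything downstream is a direct appeal to the results of the appendix.
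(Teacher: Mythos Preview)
Your proposal is correct and follows essentially the same approach as the paper: both arguments observe that on the slice $X_N \subset X_\infty$ the two induced maps are literally equal to $\iota_N \circ f_N$ (the homotopies $H_i$ only live on the cylinder pieces), deduce that the maps agree up to homotopy on each finite sub-telescope via the deformation retraction onto $X_N$, and then use that any map from a finite CW complex factors through some finite sub-telescope. The paper's proof is simply terser, compressing your third paragraph into the single sentence ``The induced maps become equal when restricted to each $X_i \subset X_\infty$ and hence they become homotopic when restricted to the finite telescope of $(X_0 \to \dots \to X_i)$ for all $i$.''
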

\begin{proof}
  The induced maps become equal when restricted to each
  $X_i \subset X_\infty$ and hence they become homotopic when
  restricted to the finite telescope of $(X_0 \to \dots \to X_i)$ for
  all $i$.  Any map from a finite CW complex will factor through one of
  these finite telescopes.
\end{proof}

\begin{definition}
  A map $f: X \to Y$ is an \emph{almost homotopy
    equivalence} if there exists a map $g: Y \to X$ such that
  $f \circ g$ and $g \circ f$ are each \weakly{} homotopic to the
  identity.
\end{definition}

The following is immediate from Corollary \ref{cor:AlmHtcImpliesAbEq}
and this definition.

\begin{corollary}
  An almost homotopy equivalence is an abelian equivalence.\qed
\end{corollary}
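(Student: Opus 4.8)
The plan is to reduce the statement to a formal ``two-out-of-six''-type argument for the maps induced on homology with special abelian coefficients, using Corollary~\ref{cor:AlmHtcImpliesAbEq} to convert the almost-homotopy data about $f$ and $g$ into abelian equivalences, and Lemma~\ref{lem:move-coeff-iso-to-target} to keep track of the coefficient systems.

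First I would record that, since $fg$ is almost homotopic to $\id_Y$ and $\id_Y$ is an abelian equivalence, Corollary~\ref{cor:AlmHtcImpliesAbEq} shows $fg$ is an abelian equivalence, and similarly $gf$. In particular both are homology equivalences, so $(f)_{\pi_0}$ and $(g)_{\pi_0}$ have bijective composites in both orders and are thus mutually inverse bijections; restricting everything to path components (which is compatible with the relation of being almost homotopic) I may assume $X$ and $Y$ are path connected. Next, since $fg$ and $gf$ are almost homotopic to identities they induce the identity on $H_1(-;\bZ)$ (cf.\ the proof of Proposition~\ref{prop:almost-homotopic}), so $f_* : H_1(X;\bZ)\to H_1(Y;\bZ)$ is an isomorphism with inverse $g_*$. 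This isomorphism identifies the collections of special abelian coefficient systems on $X$ and on $Y$: writing $\mathcal L_X=f^*\mathcal L$ for a special abelian $\mathcal L : \pi_1(Y)\to\Mod_R$, the systems $g^*\mathcal L_X$ and $\mathcal L$ are canonically isomorphic, as are $f^*g^*\mathcal L_X$ and $\mathcal L_X$, because $(fg)^*$ and $(gf)^*$ act trivially on $H^1(-;R^\times)$; by Lemma~\ref{lem:move-coeff-iso-to-target} these isomorphisms of coefficient systems are compatible with the maps induced by $f$ and $g$ on homology.

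With this in place, fix a special abelian $\mathcal L$ on $Y$; by Lemma~\ref{lem:special-abelian-suffices} it suffices to show $f_* : H_*(X;f^*\mathcal L)\to H_*(Y;\mathcal L)$ is an isomorphism. I would observe that, after the identifications above, the composites
\[
  H_*(X;\mathcal L_X)\overset{f_*}\lra H_*(Y;g^*\mathcal L_X)\overset{g_*}\lra H_*(X;f^*g^*\mathcal L_X)
  \qquad\text{and}\qquad
  H_*(Y;\mathcal L)\overset{g_*}\lra H_*(X;\mathcal L_X)\overset{f_*}\lra H_*(Y;g^*\mathcal L_X)
\]
are exactly $(gf)_*$ and $(fg)_*$, hence isomorphisms, and the two instances of $f_*$ occurring in them coincide under the canonical isomorphisms $\mathcal L\cong g^*\mathcal L_X$ and $\mathcal L_X\cong f^*g^*\mathcal L_X$. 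Since $(gf)_*$ is an isomorphism the map $f_*$ is injective, and since $(fg)_*$ is an isomorphism it is surjective; therefore $f_*$ is an isomorphism and the corollary follows.

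The genuinely delicate point --- though not a deep one --- will be the bookkeeping in the last paragraph: one must be certain that the three special abelian systems $\mathcal L$, $g^*\mathcal L_X$ and $f^*g^*\mathcal L_X$ are identified in a way that is natural with respect to the maps induced by $f$ and $g$, which is precisely what the triviality of $(fg)^*,(gf)^*$ on $H^1(-;R^\times)$ together with Lemma~\ref{lem:move-coeff-iso-to-target} guarantees. Everything else is the elementary remark that a homomorphism of abelian groups admitting a one-sided inverse on each side --- here provided by $g_*$, against $(gf)_*$ and $(fg)_*$ respectively --- is an isomorphism.
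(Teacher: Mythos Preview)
Your argument is correct and is precisely the expansion of what the paper leaves as ``immediate from Corollary~\ref{cor:AlmHtcImpliesAbEq} and this definition'': since $fg$ and $gf$ are almost homotopic to identities, Corollary~\ref{cor:AlmHtcImpliesAbEq} makes them abelian equivalences, and then the two-out-of-six reasoning you give (surjectivity of $f_*$ from $(fg)_*$, injectivity from $(gf)_*$) goes through once the special abelian coefficient systems are identified via the $H_1$-isomorphism. Your careful treatment of the coefficient-system bookkeeping using Lemma~\ref{lem:move-coeff-iso-to-target} is exactly what is needed to justify that the two instances of $f_*$ coincide, and the paper evidently regards this as routine enough to suppress.
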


The main technical result of this subsection is as follows.

\begin{proposition}\label{acyclicity-key-proposition}
  A homotopy commutative ladder diagram as
  in~\eqref{long-ladder-diagram} induces an almost homotopy
  equivalence $X_\infty \to Y_\infty$, and hence an abelian
  equivalence, provided that the following condition holds: for each $n$,
  there exists a $k>n$, a map $i: Y_n \to X_k$ such that
  $f_{k} \circ i \simeq h_k \circ \dots \circ h_{n+1}$, as well as a
  map $j: Y_n \to X_k$ such that
  $j \circ f_{n} \simeq g_k \circ \dots \circ g_{n+1}$.
\end{proposition}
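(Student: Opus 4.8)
The statement is Proposition~\ref{acyclicity-key-proposition}, and the goal is to show that the map of telescopes $F\colon X_\infty \to Y_\infty$ induced by a homotopy commutative ladder (with a choice of homotopies $H_i$) is an almost homotopy equivalence under the hypothesis that the ladder is ``shiftable'' in both directions. First I would recall (or invoke Lemma~\ref{lem:LadderHtpies}) that the homotopy class of $F$ only matters up to \emph{almost} homotopy, so I may work at the level of the telescopes and their finite subtelescopes; in particular I only ever need to produce a candidate inverse $G\colon Y_\infty \to X_\infty$ and check that $F\circ G$ and $G\circ F$ restrict, on each finite stage, to maps homotopic to the relevant inclusion into the next-higher finite stage of the telescope. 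This reduces the whole proposition to a ``zig-zag of finite telescopes'' bookkeeping exercise.

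The construction of $G$ is the heart of the argument. The hypothesis gives, for each $n$, an integer $k=k(n)>n$ and maps $i=i_n\colon Y_n\to X_{k(n)}$ with $f_{k(n)}\circ i_n \simeq h_{k(n)}\circ\cdots\circ h_{n+1}$ and $j=j_n\colon Y_n\to X_{k(n)}$ with $j_n\circ f_n \simeq g_{k(n)}\circ\cdots\circ g_{n+1}$. I would choose the indices $k(n)$ to be strictly increasing in $n$ (possible by replacing each $k(n)$ by something larger — the two conditions are clearly stable under postcomposing $i_n,j_n$ with further $g$'s, up to homotopy) so that the maps $Y_n \xrightarrow{i_n} X_{k(n)} \hookrightarrow X_\infty$ assemble, using the homotopies $h_{k(n)}\circ\cdots\circ h_{n+1}$ built into the telescope structure of $Y_\infty$, into a single map $G\colon Y_\infty\to X_\infty$. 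This is the same mechanism by which homotopy commutative ladders induce maps of telescopes, applied to the ``ladder'' whose vertical maps are the $i_n$.

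Next I would verify the two composites. For $F\circ G$: restricted to $Y_n$, it is (up to homotopy, using the defining property of $i_n$ and the homotopies $H_\bullet$ defining $F$) the map $Y_n \xrightarrow{h_{k(n)}\circ\cdots\circ h_{n+1}} Y_{k(n)} \hookrightarrow Y_\infty$, which is exactly the structure map of the telescope $Y_\infty$; hence $F\circ G$ agrees, after restriction to each $Y_n$, with the canonical inclusion, so it is almost homotopic to $\id_{Y_\infty}$. For $G\circ F$: restricted to $X_n$, one uses $f_n$ together with the property of $j_n$, namely $j_n\circ f_n\simeq g_{k(n)}\circ\cdots\circ g_{n+1}$, to identify $G\circ F|_{X_n}$ up to homotopy with $X_n\xrightarrow{g_{k(n)}\circ\cdots\circ g_{n+1}} X_{k(n)}\hookrightarrow X_\infty$, again the telescope structure map; so $G\circ F$ is almost homotopic to $\id_{X_\infty}$. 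In both verifications one must carefully track that the homotopies supplied by the hypothesis are compatible, as $n$ varies, with the telescope mapping-cylinder homotopies and with the chosen $H_i$ — but since ``almost homotopic'' only tests against finite CW complexes, and any such map factors through a finite subtelescope, it suffices to have the homotopies agree on each finite stage, and there one has only finitely many commuting-up-to-homotopy triangles to splice. The conclusion that $F$ is then an abelian equivalence is immediate from the Corollary preceding the Proposition (an almost homotopy equivalence is an abelian equivalence, via Corollary~\ref{cor:AlmHtcImpliesAbEq}).

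**Main obstacle.** The one genuinely fiddly point is arranging the bookkeeping so that the maps $i_n$ (and $j_n$) can be \emph{chosen coherently} — i.e.\ so that $G$ is well-defined as an actual map of telescopes, not merely a compatible family of maps on finite stages. Strictly one does not need full coherence: since we only care about $F$ and $G$ up to almost homotopy, and every map out of a telescope from a finite complex factors through a finite stage, it is enough to define $G$ stagewise and check the composites stagewise, absorbing all incoherence into the (harmless) passage to almost homotopy. I expect the cleanest write-up is to phrase everything in terms of finite subtelescopes from the start, exactly as in the proof of Lemma~\ref{lem:LadderHtpies}, so that no coherence beyond ``commutes up to homotopy on each finite stage'' is ever required.
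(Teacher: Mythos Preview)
There is a genuine gap in your verification of $G\circ F$. You construct $G$ from the maps $i_n$, so that $G\vert_{Y_n}$ is (up to the telescope shift) $i_n\colon Y_n\to X_{k(n)}$. Consequently $(G\circ F)\vert_{X_n}$ is, up to homotopy, the composite $i_n\circ f_n$, \emph{not} $j_n\circ f_n$. The hypothesis gives you control over $f_{k(n)}\circ i_n$ and over $j_n\circ f_n$, but says nothing directly about $i_n\circ f_n$, so invoking the property of $j_n$ at this step is a non-sequitur. A related issue infects the coherence needed to assemble the $i_n$'s into a single map $G$: the two candidates $g_{\cdots}\circ i_n$ and $i_{n+1}\circ h_{n+1}$ only become homotopic \emph{after} postcomposing with $f_{k(n+1)}$, and cancelling that $f$ is exactly where a $j$-type property would be needed.

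The paper's key idea, which you are missing, is to \emph{merge} $i$ and $j$ into a single diagonal before doing anything else. After reindexing so that $i_n,j_n\colon Y_{n-1}\to X_n$, set
\[
k_n \;=\; j_n\circ f_{n-1}\circ i_{n-1}\colon Y_{n-2}\lra X_n.
\]
A short calculation then gives both $f_n\circ k_n\simeq h_n\circ h_{n-1}$ and $k_n\circ f_{n-2}\simeq g_n\circ g_{n-1}$, so after one more reindexing there is a \emph{single} diagonal $k_n\colon Y_{n-1}\to X_n$ making both triangles commute up to homotopy. Now the coherence you were worried about is automatic, since
\[
k_{n+1}\circ h_n \;\simeq\; k_{n+1}\circ f_n\circ k_n \;\simeq\; g_{n+1}\circ k_n,
\]
so the $k_n$'s genuinely assemble into a map $k\colon Y_\infty\to X_\infty$, and both composites $k\circ f$ and $f\circ k$ are almost homotopic to shift maps of the respective telescopes, hence to identities.
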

\begin{proof}
  After restricting to cofinal subsequences and reindexing, we may
  assume that we have maps $i_n, j_n: Y_{n-1} \to X_n$, making each
  triangle in the diagram
  \begin{equation*}
    \xymatrix{
      X_{n-1} \ar[r]^{g_n}\ar[d]_{f_{n-1}} & X_{n} \ar[d]^{f_n}\\
      Y_{n-1} \ar@/^/[ur]^-{j_n} \ar@/_/[ur]_{i_n} \ar[r]_{h_n} & Y_{n},
    }
  \end{equation*}
  homotopy commute, but possibly $i_n$ and $j_n$ are not homotopic.
  If we write
  $k_n = j_{n} \circ f_{n-1} \circ i_{n-1} : Y_{n-2} \to X_n$, we may
  calculate
  \begin{align*}
    f_n \circ k_n = f_n \circ j_n \circ f_{n-1} \circ i_{n-1} \simeq
                    f_n \circ g_n \circ i_{n-1} \simeq h_n \circ f_{n-1} &\circ i_{n-1}
                     \simeq h_n \circ h_{n-1},\\
    k_n \circ f_{n-2}  = j_n \circ f_{n-1} \circ i_{n-1} \circ f_{n-2}
                        \simeq j_n \circ h_{n-1} \circ f_{n-2} & \simeq j_n \circ f_{n-1}
                        \circ g_{n-1} \\
    &= g_n \circ g_{n-1}.
  \end{align*}
  In other words, after passing to a further subsequence and
  reindexing we have a single diagonal map
  \begin{equation*}
    \xymatrix{
      X_{n-1} \ar[r]^{g_n}\ar[d]_{f_{n-1}} & X_{n} \ar[d]^{f_n}\\
      Y_{n-1} \ar[ur]^{k_n} \ar[r]_{h_n} & Y_{n},
    }
  \end{equation*}
  such that both triangles homotopy commute.  The resulting diagram
  may be reinterpreted as a homotopy commutative diagram
  \begin{equation*}
    \xymatrix{
      X_0 \ar[r]^{g_1} \ar[d]^{f_0} &  X_1 \ar[r]^{g_2} \ar[d]^{f_1} &
      X_2\ar[d]^{f_2}\ar[r] & \cdots \\
      Y_0 \ar[r]^{h_1}\ar[d]^{k_1}  &  Y_1 \ar[r]^{h_2}\ar[d]^{k_2}
      & Y_2\ar[r]\ar[d]^{k_3} & \cdots \\ 
      X_1 \ar[r]^{g_2} &  X_2 \ar[r]^{g_3} &
      X_3\ar[r] & \cdots \\
    }
  \end{equation*}
  such that the vertical maps from the top row to the bottom row are
  homotopic to $g_n: X_{n-1} \to X_n$.  Any choice of homotopies in
  the squares now induce maps
  $k \circ f: X_\infty \to Y_\infty \to X_\infty$, whose composition
  $X_\infty \to X_\infty$ is \weakly{} homotopic to the ``shift map'' of
  the telescope, and hence \weakly{} homotopic to the identity map of
  $X_\infty$.  By a similar argument we see that
  $f \circ k: Y_\infty \to X_\infty \to Y_\infty$ is \weakly{} homotopic
  to the identity, and hence $f: X_\infty \to Y_\infty$ is an almost
  homotopy equivalence, as claimed.
\end{proof}

\subsection{The generalised group completion theorem with local coefficients}

Let $\mathcal{C}$ be a (possibly non-unital) topological category, and
write $N_{\bullet} \mathcal{C}$ for the nerve, so that
$N_0 \mathcal{C}$ is the space of objects and $N_1 \mathcal{C}$ is the
space of morphisms. Let $B \mathcal{C}$ be the geometric realisation
of the nerve as a \emph{semi-simplicial} space. The space of morphisms
from $a$ to $b$ is denoted $\mathcal{C}(a,b)$.

Fix a sequence of objects and morphisms
$$c_0 \stackrel{f_1}{\lra} c_1 \stackrel{f_2}{\lra} c_2 \stackrel{f_3}{\lra}\cdots $$
in $\mathcal{C}$. Then, for objects $a$ and $b$ of $\mathcal{C}$ and a
morphism $g: a \to b$, we obtain a commutative ladder diagram
\begin{equation*}
  \begin{gathered}
    \xymatrix{
      {\mathcal{C}(b,c_0)} \ar[d]^{- \circ g}\ar[r]^{f_1 \circ -} & {\mathcal{C}(b,c_1)} \ar[d]^{- \circ g} \ar[r]^{f_2 \circ -} & {\mathcal{C} (b,c_2 )} \ar[d]^{- \circ g}\ar[r]& \cdots \\
      {\mathcal{C}(a,c_0)} \ar[r]^{f_1 \circ -} & {\mathcal{C}(a,c_1)}  \ar[r]^{f_2 \circ -} & {\mathcal{C} (a,c_2 )} \ar[r]& \cdots. \\}
  \end{gathered}
\end{equation*}

The generalised version of the group completion theorem is then as follows.

\begin{theorem}\label{thm:generalized-group-completion}
  Let $\mathcal{C}$ be a (perhaps non-unital) topological category and
  let $c_n$ and $f_n$ ($n \geq 0$) be as above.  Suppose in addition
  that
  \begin{enumerate}[(i)]
  \item\label{item:fibrancy-condition} The map
    $(d_0, d_1): N_1\mathcal{C} \to (N_0\mathcal{C})^2$ is a
    fibration.
  \item\label{item:asdf} For any morphism $g \in \mathcal{C}(a,b)$,
    the induced map
    \begin{equation}\label{eq:5}
      \hocolim_{i \to \infty} \mathcal{C} (b,c_i) \to \hocolim_{i \to \infty} \mathcal{C} (a,c_i)
    \end{equation}
    is an abelian homology equivalence.
  \end{enumerate}
  Then the natural map 
  \begin{equation}\label{eq:3}
    \hocolim_{i \to \infty} \mathcal{C} (a,c_i) \lra \hocolim_{i \to
      \infty}\Omega_{[a, c_i]} B \mathcal{C}
  \end{equation}
  is acyclic.
\end{theorem}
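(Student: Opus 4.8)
The plan is to run the categorical group-completion / homology-fibration machinery of McDuff--Segal, in the local-coefficient form that the appendix has been set up to support. Since the target $\hocolim_{i}\Omega_{[a,c_i]}B\mathcal{C}$ is (when nonempty) a mapping telescope of loop spaces, it has abelian fundamental group, so every local system on it is abelian and ``acyclic'' is equivalent to ``abelian homology equivalence''; by Lemma~\ref{lem:special-abelian-suffices} it then suffices to check isomorphism on homology with special abelian coefficients pulled back from the target.

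First I would recast the statement in terms of homotopy fibres. Working in $\mathcal{C}^{\mathrm{op}}$ (which does not change $B\mathcal{C}$), consider the functor $F\colon \mathcal{C}^{\mathrm{op}}\to\mathrm{Top}$ with $F(b)=\hocolim_{i}\mathcal{C}(b,c_i)$, where a morphism of $\mathcal{C}^{\mathrm{op}}$ (i.e.\ a morphism $g$ of $\mathcal{C}$) acts by precomposition; hypothesis~(\ref{item:asdf}) says exactly that $F$ carries every morphism to an abelian homology equivalence. Form $\mathbf{F}=\hocolim_{b\in\mathcal{C}^{\mathrm{op}}}F(b)$ with the forgetful map $q\colon\mathbf{F}\to B\mathcal{C}^{\mathrm{op}}=B\mathcal{C}$. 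Two observations: (a) the genuine fibre of $q$ over the vertex $[a]$ is $F(a)=\hocolim_i\mathcal{C}(a,c_i)$; and (b) interchanging the two homotopy colimits, $\mathbf{F}\simeq\hocolim_{i}\bigl(\hocolim_{b\in\mathcal{C}^{\mathrm{op}}}\mathcal{C}^{\mathrm{op}}(c_i,b)\bigr)$, and each inner term is the homotopy colimit of a corepresented functor, hence weakly contractible; so $\mathbf{F}$ is weakly contractible (for non-unital $\mathcal{C}$ the corepresentability step needs the standard device of adjoining cylindrical identities, cf.\ the remark after Definition~\ref{defn:CobCat}). Tracking the basepoints $\mathrm{id}_{c_i}$ in (b) identifies $\hofib_{[a]}(q)$ with $\hocolim_i\Omega_{[a,c_i]}B\mathcal{C}$, and under this identification the inclusion $F(a)\hookrightarrow\hofib_{[a]}(q)$ of the fibre into the homotopy fibre is, up to homotopy, precisely the map~\eqref{eq:3} of the theorem. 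So everything reduces to showing that, over the vertex $[a]$, the fibre-to-homotopy-fibre map of $q$ is an abelian homology equivalence.

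The main step is to establish that $q$ is an ``abelian homology fibration''. I would deduce this from a homological refinement of Segal's simplicial criterion for quasifibrations: writing $\mathbf{F}$ as the realisation of the semi-simplicial space $[p]\mapsto\coprod_{b_0\to\cdots\to b_p}F(b_0)$ over $N_\bullet\mathcal{C}^{\mathrm{op}}$, one checks that for each $p$ and each of the $(p+1)$ vertex maps the resulting square of spaces is abelian-homology cartesian --- over the $0$th vertex the map of fibres is the identity, and over the $j$th vertex it is $F(b_0\to b_j)$, an abelian homology equivalence by hypothesis~(\ref{item:asdf}). Hypothesis~(\ref{item:fibrancy-condition}) --- that $(d_0,d_1)\colon N_1\mathcal{C}\to(N_0\mathcal{C})^2$ is a fibration --- is what guarantees that $q$ restricts to an honest fibration over every simplex of $B\mathcal{C}$, so that these ``fibres over simplices'' are homotopy-invariant and the criterion applies.

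I expect the main obstacle to be making the McDuff--Segal criterion run with abelian (equivalently, special abelian) local coefficients rather than ordinary homology: one must compare the skeletal filtrations of $\mathbf{F}$ and of $B\mathcal{C}$, check that special abelian systems pull back compatibly along all face maps, and run the spectral-sequence / five-lemma comparison level by level, using that the relevant monodromy factors through an abelian group. The appendix material on local coefficient systems, abelian equivalences, almost-homotopic maps and ladder diagrams (Sections~\ref{sec:loc-coef}--\ref{sec:invariants}) is exactly what is needed here, together with a careful but routine treatment of the non-unital case and of the fact that the various identifications above only commute up to homotopy.
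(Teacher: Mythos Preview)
Your proposal is correct and follows essentially the same McDuff--Segal homology-fibration strategy as the paper's proof sketch: build a contractible total space over $B\mathcal{C}$ whose levelwise projections are fibrations by hypothesis~(\ref{item:fibrancy-condition}), apply the (abelian) homology-fibration criterion using hypothesis~(\ref{item:asdf}), and identify the fibre-to-homotopy-fibre map with~\eqref{eq:3}. The one notable difference is in how contractibility of the total space is established in the non-unital setting: you invoke corepresentability and propose to adjoin cylindrical identities, whereas the paper avoids this entirely by observing that \emph{appending} $f_{i+1}$ to a string of composable morphisms supplies an explicit null-homotopy of each transition map $|E^i_\bullet|\to|E^{i+1}_\bullet|$, so the telescope is contractible without ever needing units (and without needing each individual $|E^i_\bullet|$ to be contractible).
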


\begin{proof}[Proof sketch]
  If we replace ``abelian homology equivalence'' and ``acyclic'' in
  the statement by ``homology equivalence'', this is a standard
  argument based on the notion of homology fibration from
  \cite{McDuff-Segal}, cf.\ \cite[Theorem 3.2]{Tillmann}, \cite[\S
  7]{GMTW}, or \cite[\S 7.4]{GR-W2}, with a small modification to deal
  with the possible lack of units.  Let us outline the argument.  For
  $i \geq 0$ let $E^i_\bullet$ be the semi-simplicial space with
  $E^i_p \subset N_{p+1}\mathcal{C}$ the subspace consisting of
  sequences of $(p+1)$ composable morphisms ending in $c_i$, i.e.\ the
  inverse image of $c_i$ under
  $d_0^{p+1}: N_{p+1}\mathcal{C} \to N_0 \mathcal{C}$.  Postcomposing
  the last morphism with $f_{i+1} : c_i \to c_{i+1}$ defines a
  semi-simplicial map $F^i_\bullet: E^i_\bullet \to E^{i+1}_\bullet$,
  and instead appending $f_{i+1}$ to the sequence of composable
  morphisms defines maps $E^i_p \to E^{i+1}_{p+1}$ which supply a
  homotopy between
  $\vert F^i_\bullet \vert: |E^i_\bullet| \to |E^{i+1}_\bullet|$ and
  the constant map to the vertex $f_{i+1} \in E_0^{i+1}$. Thus the
  mapping telescope $\hocolim\limits_{i \to \infty} |E^i_\bullet|$ is
  contractible.

  The map $d_{p+1}: N_{p+1}\mathcal{C} \to N_p\mathcal{C}$ restricts
  to a fibration $E^i_p \to N_p \mathcal{C}$, by assumption
  (\ref{item:fibrancy-condition}), which induces a quasi-fibration
  \begin{equation*}
    \hocolim\limits_{i \to \infty} E^i_p \lra N_p\mathcal{C}.
  \end{equation*}
  By \cite[Propositions 4 and 5]{McDuff-Segal} this becomes a homology
  fibration after taking geometric realisation, using
  assumption~(\ref{item:asdf}).  Hence the inclusion of the fiber over
  $a \in B\mathcal{C}$ into the homotopy fiber is a homology
  equivalence, and since the domain is contractible this inclusion may
  be identified with~(\ref{eq:3}).

  When the assumption is strengthened to~(\ref{eq:5}) being an abelian
  equivalence, the conclusion may be strengthened to~(\ref{eq:3})
  being an abelian equivalence as well.  (See \cite{MillerPalmer} for
  a treatment of the main results of \cite{McDuff-Segal} in this
  setting.)  Since the codomain has abelian fundamental group, this
  implies the map is acyclic.
\end{proof}

\bibliographystyle{amsalpha}
\bibliography{biblio}

\end{document}